\documentclass[12pt,reqno]{amsart}
\usepackage{amssymb}
\usepackage{enumerate}

\usepackage{amsmath,amsthm}

\usepackage[margin=1.4in]{geometry}

\usepackage{todonotes}

\usepackage{hyperref,cleveref}

\title[Boundary reactions]{Minimizers for boundary reactions: renormalized energy, location of singularities, and applications
 }
\date{\today}

\author[X. Cabr\'e]{Xavier Cabr\'e}
\address{X. Cabr\'e \textsuperscript{1,2,3}
\newline
\textsuperscript{1} 
ICREA, Pg. Lluis Companys 23, 08010 Barcelona, Spain
\newline
\textsuperscript{2} 
Universitat Polit\`ecnica de Catalunya, Departament de Matem\`{a}tiques and IMTech, Av. Diagonal 647, 08028 Barcelona, Spain
\newline
\textsuperscript{3} 
Centre de Recerca Matem\`atica, Edifici C, Campus Bellaterra, 08193 Bellaterra, Spain.}
\email{xavier.cabre@upc.edu}

\author[N. Consul]{Neus C\'onsul}
\address{N. C\'onsul
\newline
Departament de Matem\`atica Aplicada I, Universitat Polit\`ecnica de Catalunya, 
Diagonal
647, 08028 Barcelona, Spain}
\email{neus.consul@upc.edu}

\author[M. Kurzke]{Matthias Kurzke}
\address{M. Kurzke
\newline
School of Mathematical Sciences, University of Nottingham, 
University Park, Nottingham NG7 2RD, United Kingdom}
\email{matthias.kurzke@nottingham.ac.uk}

\newtheorem{lemma}{Lemma}[section]
\newtheorem{theorem}[lemma]{Theorem}
\newtheorem{proposition}[lemma]{Proposition}
\newtheorem{corollary}[lemma]{Corollary}
\theoremstyle{definition}
\newtheorem{definition}[lemma]{Definition}
\newtheorem{remark}[lemma]{Remark}
\newtheorem{example}[lemma]{Example}

\def\Xint#1{\mathchoice
   {\XXint\displaystyle\textstyle{#1}}%
   {\XXint\textstyle\scriptstyle{#1}}%
   {\XXint\scriptstyle\scriptscriptstyle{#1}}%
   {\XXint\scriptscriptstyle\scriptscriptstyle{#1}}%
   \!\int}
\def\XXint#1#2#3{{\setbox0=\hbox{$#1{#2#3}{\int}$}
     \vcenter{\hbox{$#2#3$}}\kern-.5\wd0}}
\def\avint{\Xint-}

\newcommand{\NN}{{\mathbb N}}               
\newcommand{\RR}{{\mathbb R}}               
\newcommand{\R}{{\mathbb R}}               
\newcommand{\Ss}{{\mathbb S}}               

\newcommand{\ol}[1]{\overline{#1}}

\newcommand{\e}{{\varepsilon}}
\newcommand{\de}{{\partial}}
\newcommand{\dOm}{{\partial\Omega}}
\newcommand{\calA}{\mathcal{A}}
\newcommand{\calC}{\mathcal{C}}

\usepackage{xcolor}

\usepackage[notcite,notref]{}

\hypersetup{colorlinks=true, linkcolor=blue, citecolor=red}

\numberwithin{equation}{section}

\begin{document}

\thanks{Xavier Cabr\'e and Neus C\'onsul are supported by the Spanish grants PID2021-123903NB-I00 and RED2022-134784-T
funded by MCIN/AEI/10.13039/501100011033 and by ERDF “A way of making Europe”, and by the Catalan grant 2021-SGR-00087. This work is supported by the Spanish State Research Agency, through the Severo Ochoa and Mar\'{\i}a de Maeztu Program for Centers and Units of Excellence in R\&D (CEX2020-001084-M)}

\begin{abstract}
The Casten-Holland and Matano theorem for interior reactions states that no nonconstant stable solutions exist in convex domains  $\Omega$ of $\mathbb{R}^n$ under zero Neumann boundary conditions. In this paper we establish that the analogous statement fails for boundary reactions when $n=2$ (that is, for harmonic functions in~$\Omega$ with a Neumann reaction term on its boundary~$\partial\Omega$). For instance, nonconstant stable solutions exist when $\Omega$ is a square, or a smooth strictly convex approximation of it. In regular polygons of many sides, which approach the circle, we can prove the existence of as many nonconstant stable solutions as wished. Instead, in the circle such stable solutions do not exist.

More importantly, we can predict the existence or not of nonconstant stable solutions, as well as the location of its boundary ``vortices'' $(p,q)$, through the properties of a real function defined on $\partial\Omega\times\partial\Omega$ (the renormalized energy) which depends only on the conformal structure of the domain $\Omega$. This requires the development of a new Ginzburg-Landau theory for real-valued functions and the analysis of the half-Laplacian on the real line.
\end{abstract}

\maketitle

\section{Introduction}\label{sec:intro}

In the following subsection we describe a classical result on reaction-diffusion problems when the reaction takes place in the interior of a domain of $\R^n$. We turn later, as well as in the rest of the paper, to the analogue question when the reaction term takes place on the boundary of a planar domain.

\subsection{Interior reactions under zero Neumann boundary conditions}

The following is a  celebrated result from the late 1970s, proved by Casten and Holland, and independently by Matano. It establishes the nonexistence of nonconstant stable solutions
of the Neumann problem in bounded convex domains of $\R^n$.

\begin{theorem}[\cite{Casten:1978aa,Matano:1979aa}]\label{thm:Casten-Holland}
Let $\Omega\subset\R^n$ be a bounded, smooth, convex domain, and let $f\in C^1(\R)$. 
Assume $u\in C^2(\overline\Omega)$ is a solution of the Neumann problem
\begin{equation}\label{BVP-Neumann}
\left\{
\begin{array}{cl}
-\Delta u = f(u) & \text{in }\Omega \\[6pt]
\partial_{\nu} u = 0 & \text{on }\partial\Omega
\end{array}
\right.
\end{equation}
and that $u$ is stable, in the sense that
\[
\int_{\Omega} f'(u)\,\xi^2\,dx \le \int_{\Omega} |\nabla\xi|^2\,dx
\qquad\text{for all }\xi\in C^1(\overline\Omega).
\]

Then, $u$ is constant.
\end{theorem}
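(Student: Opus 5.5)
The plan is the classical Casten--Holland/Matano argument: test the stability inequality with the partial derivatives of $u$. Differentiating the equation $-\Delta u = f(u)$ in the direction $x_k$ gives $-\Delta u_k = f'(u)\,u_k$ in $\Omega$ for $u_k=\partial_k u$. Some interior regularity is needed for this: since $f\in C^1$ and $u\in C^2$, the right-hand side $f(u)$ is $C^1$, so $u\in C^{2,\alpha}_{loc}$ and indeed $W^{3,p}_{loc}$. This is enough to make the identities below rigorous, after approximation if necessary.

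\textbf{Step 1 (stability with $\xi=u_k$).} Take $\xi=u_k$ in the stability inequality and sum over $k$. For this we need $u_k\in C^1(\overline\Omega)$, or at least $H^1$ with the inequality extended by density. Multiplying $-\Delta u_k=f'(u)u_k$ by $u_k$ and integrating by parts gives
\[
\int_\Omega f'(u)\,u_k^2\,dx=\int_\Omega |\nabla u_k|^2\,dx-\int_{\partial\Omega} u_k\,\partial_\nu u_k\,d\sigma .
\]
Summing over $k$ and inserting this into the stability inequality yields
\[
\int_{\partial\Omega} \tfrac12\,\partial_\nu |\nabla u|^2\,d\sigma \;=\;\sum_k\int_{\partial\Omega} u_k\,\partial_\nu u_k \;\ge\; 0 .
\]

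\textbf{Step 2 (boundary sign from convexity).} This is the key geometric point. Near $\partial\Omega$ write $\partial\Omega=\{\rho=0\}$ with $\rho<0$ inside, so that $\nu=\nabla\rho/|\nabla\rho|$. The condition $\nabla u\cdot\nabla\rho=0$ holds on $\partial\Omega$, and $\nabla u$ is tangential there. Differentiating this condition along the tangential direction $\nabla u$ gives
\[
D^2u(\nabla u,\nabla\rho)+D^2\rho(\nabla u,\nabla u)=0 \quad\text{on } \partial\Omega .
\]
Hence $\tfrac12\,\partial_\nu|\nabla u|^2 = D^2u(\nabla u,\nu) = -\,\mathrm{II}(\nabla u,\nabla u)\le 0$, where $\mathrm{II}$ is the second fundamental form, which is nonnegative by convexity. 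Combined with Step 1, the boundary integral vanishes. Consequently all the inequalities are equalities, so $\xi_k=u_k$ attains equality in the stability quadratic form, which is nonnegative by assumption.

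\textbf{Step 3 (conclusion).} The next task is to show that $\nabla u\equiv 0$, and this is the main obstacle. Equality alone only says that each $u_k$ minimizes the quadratic form $Q(\xi)=\int|\nabla\xi|^2-f'(u)\xi^2$ with value $0$. Since $Q\ge 0$, each $u_k$ is then a first eigenfunction of the linearized Neumann operator $-\Delta-f'(u)$ with eigenvalue $0$, and it satisfies $\partial_\nu u_k=0$ on $\partial\Omega$ (the natural boundary condition from minimization). First eigenfunctions are simple and do not change sign, so $u_k=c_k\varphi_1$ with $\varphi_1>0$. Then $\nabla u=\varphi_1\,c$ for a constant vector $c$. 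If $c\neq 0$, the function $u$ is strictly monotone in the direction $c$, and $\nabla u\parallel c$ everywhere. On the other hand, $\partial_\nu u=\varphi_1\, c\cdot\nu=0$ on $\partial\Omega$, so $c\cdot\nu=0$ on all of $\partial\Omega$. This is impossible for a bounded domain: at the point of $\partial\Omega$ maximizing $x\cdot c$ we have $\nu=c/|c|$. Hence $c=0$, so $\nabla u\equiv0$ and $u$ is constant.

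The delicate technical point I expect is the regularity needed to use $u_k$ as a test function up to the boundary, and to justify the boundary identity of Step 2, which needs $u\in C^2(\overline\Omega)$ with the equation differentiated. I would handle this by using that $\partial_\nu u=0$ on a smooth boundary with $C^1$ data gives $u\in C^{2,\alpha}(\overline\Omega)$, and $W^{3,p}$ up to the boundary by Schauder/$L^p$ Neumann theory. The integrations by parts are then legitimate.
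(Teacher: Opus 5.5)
The paper gives no proof of this theorem; it only cites Casten--Holland and Matano and calls the proof simple. Your argument is that classical proof: test stability with $\partial_k u$, use convexity to get $\tfrac12\partial_\nu|\nabla u|^2=-\mathrm{II}(\nabla u,\nabla u)\le 0$ on $\partial\Omega$, and settle the equality case through the simple, positive first Neumann eigenfunction. It is correct. The one step you leave implicit is that $\varphi_1>0$ up to $\partial\Omega$ (by the strong maximum principle and Hopf's lemma), which is what lets you pass from $\varphi_1\,c\cdot\nu=0$ to $c\cdot\nu=0$ on the boundary.
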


The proof of the result is simple.
To understand its statement, it is interesting to look at {\it the Allen-Cahn problem}
\begin{equation}\label{eq:pde_int}
\left\{
\begin{array}{cl}
- \Delta u = \dfrac{1}{\varepsilon^2} (u-u^3) & \text{in }\Omega \\[2mm]
\partial_{\nu} u = 0 & \text{on }\partial\Omega,
\end{array}
\right.
\end{equation}
which is the first variation of the functional 
\begin{equation}\label{int_ener}
\hat E_\varepsilon(u)=\int_\Omega\left( \frac12|\nabla u|^2 + \frac1{4\varepsilon^2} (1-u^2)^2\right)\, dx.
\end{equation}
Here the potential energy corresponds to a double-well potential. Critical points will satisfy $-1\leq u\leq 1$ by the maximum principle.

A well-known $\Gamma$-convergence result, \cite{Modica:1987,KohnSternberg:1989},  states that the rescaled energy $\varepsilon\hat E_\varepsilon$ converges, as $\varepsilon\to 0$,  to a multiple of the perimeter of the transition set where characteristic functions $u$ change from $-1$ to $1$. More precisely, for a universal constant $c>0$,
the limit energy is 
\begin{equation}\label{eq:energy_hat}
\hat{E}(u) =
\begin{cases}
c\,\mathrm{Per}_\Omega(\{ u = 1\}) & \text{if } u(x) = \pm 1 \text{ a.e. in }\Omega \\[1mm]
+\infty & \text{otherwise}.
\end{cases}
\end{equation}
To give some light into Theorem~\ref{thm:Casten-Holland}, note that given a strictly convex domain\footnote{\label{strict-convex}A domain of $\R^n$ is said to be strictly convex whenever two different points in the closure of the domain are given, the entire open segment between them lies in the interior of the domain.} in the plane $\R^2$ and any straight line segment (being the limiting transition layer in $\Omega$ where the solution goes from $-1$ to 1 ---we take it to be a straight line since it must minimize the perimeter or length functional) connecting two points on~$\partial\Omega$, it is always possible to find a shorter straight line connecting two other near-by points on the boundary, by the convexity of $\Omega$. This gives nonexistence of local minimizers for the limiting problem, suggesting that the same could be true for the Allen-Cahn problem for $\epsilon$ small. This fact is indeed true by Theorem~\ref{thm:Casten-Holland}.\footnote{The theorem admits some nice extensions to certain unbounded convex domains, as shown by Nordmann~\cite{Nordmann2021}.}

Instead in non-convex domains, for example in dumbbell-shaped ones, it is possible to construct nonconstant stable solutions of \eqref{eq:pde_int} near a local minimizer of 
the limiting functional, as done by Kohn and Sternberg~\cite{KohnSternberg:1989}. Note that in many dumbbell-shaped domains there will be a unique straight line segment connecting two opposite points in the thin neck with such straight segment being an isolated local minimizer of the length functional.\footnote{For a connection between unstable equilibria of the perimeter and 
solutions to \eqref{eq:pde_int} we refer to Jerrard--Sternberg \cite{Jerrard:2009ts}.}

\subsection{Boundary reaction problems}

Let us now turn into the case of {\it boundary reactions}. Here, the potential energy is computed on the boundary $\partial\Omega$ instead of~$\Omega$. Our functional is now
\begin{equation}\label{eq:ener}
E_{\varepsilon}(u) =  \int_{\Omega} \frac{1}{2}|\nabla u|^2 \, dx +
\int_{\partial \Omega} \dfrac{1}{\varepsilon} G(u) \, d\mathcal{H}^{n-1},
\end{equation} 
instead of \eqref{int_ener}, where $\varepsilon>0$ is a parameter. Denoting  $f=-G'$, the first variation of the functional is the boundary reaction problem
\begin{equation}\label{eq:pde}
\left\{
\begin{array}{cl}
\Delta u = 0 & \text{in }\Omega \\[6pt]
\partial_{\nu} u = \dfrac{1}{\varepsilon}f(u) & \text{on }\partial\Omega.
\end{array}
\right.
\end{equation}

A solution $u$ to \eqref{eq:pde} is said to be stable when
\[
\int_{\partial\Omega} \frac{1}{\e} f'(u)\,\xi^2\,d\mathcal{H}^{n-1}\le \int_{\Omega} |\nabla\xi|^2\,dx
\qquad\text{for all }\xi\in C^1(\overline\Omega).
\]
In particular, every local minimizer of the functional will be a stable solution.

Of particular interest, as in the interior reactions above, is the case of balanced bistable nonlinearities $f$, such as $f(u)=u-u^3$, corresponding to double-well potentials~$G$, such as $G(u)=(1-u^2)/4$. To describe some results for this problem, let us recall that in this case the functional is given by
\begin{equation}\label{eq:enerGL}
E_{\varepsilon}(u) =  \int_{\Omega} \frac{1}{2}|\nabla u|^2 \, dx +
\int_{\partial \Omega} \frac{1}{4\varepsilon}(1-u^2)^2 \, d\mathcal{H}^{n-1},
\end{equation} 
and the corresponding boundary reaction problem by
\begin{equation}\label{eq:pdeGL}
\left\{
\begin{array}{cl}
\Delta u = 0 & \text{in }\Omega \\[6pt]
\partial_{\nu} u = \dfrac{1}{\varepsilon}(u-u^3) & \text{on }\partial\Omega,
\end{array}
\right.
\end{equation}
for which solutions will satisfy $-1\leq u\leq 1$ by the maximum principle.

For this problem, the limit $\varepsilon\to 0$ is quite different than in the interior reaction case. Let us describe it in the case of interest in our paper: the planar case $n=2$.
By a 1994 result of Alberti, Bouchitt\'{e}, and Seppecher~\cite{ABS:1994,ABS:1998}, when $n=2$ the functionals $E_\varepsilon(u)/|\log\varepsilon|$ in \eqref{eq:enerGL} $\Gamma$-converge as $\varepsilon\to 0$ to the following limit functional:
\begin{equation}\label{eq:energy_limit}
\dfrac{E_\varepsilon(u)}{|\log\varepsilon|} \longrightarrow
\begin{cases}
\frac{2}{\pi}\,\mathcal{H}^0(\partial\{u=1\}) & \text{if } u(x) = \pm 1 \text{ a.e. on }\partial\Omega \\[1mm]
+\infty & \text{otherwise}.
\end{cases}
\end{equation}
Along the boundary, the transition sets (to go from $-1$ to 1) converge to \textit{points},
and the relevant $\Gamma$-limit only counts the number of points, which is discrete and hence does not have nontrivial local minima. Hence, the limiting problem gives no information about the existence, or not, of local minimizers. This is in contrast with the interior reaction case where existence (respectively, nonexistence), was obvious for the limiting problem in dumbbell-shaped domains (respectively, in strictly convex domains).

In 1996, the second author C\'onsul~\cite{Consul1996} proved the nonexistence of nonconstant stable solutions to \eqref{eq:pde} when the domain is a ball of $\R^n$, for any nonlinearity $f$ and parameter $\varepsilon>0$. On the other hand, the analogue of the interior dumbbell result was established in 1999 by C\'onsul and Sol\`a-Morales~\cite{ConsulSola-Mor:1999a}. They proved the existence of nonconstant stable solutions to \eqref{eq:pdeGL} when $\Omega$ are appropriate dumbbell domains in $\R^n$ and $\e >0$ is small enough. 

Still, the validity of the analogue result to the Casten-Holland and Matano result, Theorem~\ref{thm:Casten-Holland}, for boundary reactions in convex domains remained open. 
In those years there were at least two announcements that the analogue for the boundary reaction problem \eqref{eq:pde} in convex domains was also true. However, the proofs were incorrect. 

In fact, in 2005 C\'onsul and Jorba~\cite{Consul:2005aa} designed a finite element method together with a continuation-bifurcation procedure  to compute solutions to the boundary reaction problem \eqref{eq:pdeGL} with the bistable nonlinearity in the unit square $(0,1)^2$ of the plane. The method allowed to easily check, numerically, that the solutions making the transition near two mid points of opposite sides of the square are stable for $\varepsilon$ small enough, indeed for $\varepsilon < 0.352$. This was the origin of the current work.

The following is our first main result. We establish the existence of nonconstant stable solutions in a square for the bistable nonlinearity $(u-u^3)/\varepsilon$ when $ \varepsilon>0$ is small enough. These are the stable solutions found numerically in \cite{Consul:2005aa}. We can also find analogue stable solutions in smooth strictly convex domains approaching 
the square.\footnote{In all our results claiming the existence of a strictly convex domain (a notion defined in footnote~\ref{strict-convex}), we can further ensure the existence of a convex domain with positive curvature at all boundary points.}

\begin{theorem}\label{thm:squareREN}
Let  $\Omega$ be either the square $(0,1)\times (0, 1)$ in $\R^2$ or a smooth, bounded, and strictly convex domain sufficiently close to the square. 
Then, for $\varepsilon$ sufficiently small, there exists a nonconstant stable solution to problem \eqref{eq:pdeGL}. In addition, such solutions converge on $\partial\Omega$, as $\varepsilon\to 0$ and in the case of the square, to the function which is $-1$ on the left of the mid points $(1/2,0)$ and $(1/2,1)$ in opposite sides, and is $1$ on  the part of $\partial \Omega$ at their right.

The same result holds true in the rectangle $(0,L)\times(0,H)$ with $L\leq H$ and limiting jump points at $(L/2,0)$ and $(L/2,H)$.
\end{theorem}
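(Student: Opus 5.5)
We will obtain the stable solutions as \emph{local minimizers} of $E_\varepsilon$ in a suitable constrained class, and identify the limit by a renormalized energy for boundary vortices. The approach mirrors Kohn--Sternberg: find a strict local minimum of a limiting functional and show that constrained minimizers of $E_\varepsilon$ near it are interior to the constraint, hence genuine local minimizers and therefore stable.

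\textbf{Step 1: the renormalized energy.} For two distinct points $p,q\in\partial\Omega$, let $u_{p,q}$ be the limit configuration equal to $-1$ on one arc of $\partial\Omega\setminus\{p,q\}$ and $+1$ on the other. Its harmonic extension to $\Omega$ is $\frac{2}{\pi}$ times the harmonic measure angle. Its Dirichlet energy diverges like $\frac{2}{\pi}\cdot 2|\log\delta|$ when balls of radius $\delta$ around $p,q$ are removed. We define
\[
W(p,q)=\lim_{\delta\to0}\Big(\int_{\Omega\setminus(B_\delta(p)\cup B_\delta(q))}\tfrac12|\nabla u_{p,q}|^2-\tfrac{4}{\pi}|\log\delta|\Big).
\]
By conformal invariance of the Dirichlet integral, $W$ can be computed by mapping $\Omega$ to the half plane. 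Up to constants this gives $W(p,q)=-\frac{4}{\pi}\log|\Phi'|$-type terms plus $\frac{4}{\pi}\log|\Phi(p)-\Phi(q)|$, where $\Phi$ is the Riemann map. At corners of the square, $|\Phi'|$ blows up or vanishes, which makes $W\to+\infty$ when a vortex approaches a corner, since the angle $\pi/2$ increases the cost.

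We then prove a $\Gamma$-expansion at second order. For configurations with exactly two transitions near $(p,q)$,
\[
E_\varepsilon=\tfrac{2}{\pi}\cdot 2\,|\log\varepsilon|+W(p,q)+2\gamma_0+o(1),
\]
where $\gamma_0$ is the core energy of the one-dimensional half-Laplacian layer solution on $\R$. The lower bound will use ball-construction-type estimates near each vortex together with the optimal profile (the explicit layer $\frac{2}{\pi}\arctan(x/\varepsilon)$ for $f=u-u^3$, or the general layer from the half-Laplacian analysis). The upper bound will be obtained by gluing the layer profile near $p,q$ to $u_{p,q}$.

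\textbf{Step 2: a strict local minimum of $W$ on the square.} By the symmetries of the square (and of the rectangle with $L\le H$), the pair of midpoints $p_0=(1/2,0)$, $q_0=(1/2,1)$ is a critical point of $W$. We must show it is a strict local minimum. I would compute $W$ explicitly via the Schwarz--Christoffel map of the rectangle onto the half plane; the map is an elliptic integral, so $W$ is expressed through Jacobi elliptic functions. The plan is then to check the Hessian at $(p_0,q_0)$ directly, using the symmetry to diagonalize it into a ``translation'' direction and an ``opposite'' direction. This Hessian computation, including the condition $L\le H$ (moving both points along the long sides should increase the energy, consistent with the numerics), is what I expect to be the main obstacle.

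\textbf{Step 3: construction of the solutions.} Let $U$ be a small closed neighborhood of $(p_0,q_0)$ with $W>W(p_0,q_0)+\eta$ on $\partial U$. Consider the class $\mathcal{A}_\varepsilon$ of $u$ whose boundary values are near $-1$ and $+1$ away from $\delta$-neighborhoods of some pair $(p,q)\in U$, with the $\{u=0\}$ points in those neighborhoods. We minimize $E_\varepsilon$ in $\mathcal{A}_\varepsilon$, which is weakly closed by truncation and compactness of traces. Steps 1 and 2 then give:
\begin{enumerate}[(i)]
\item the minimizer's transition points converge to $(p_0,q_0)$;
\item the minimizer is not on the boundary of the constraint, since $E_\varepsilon$ would otherwise exceed the competitor built at $(p_0,q_0)$ by about $\eta$.
\end{enumerate}
Hence the minimizer is a local minimizer in $H^1$, thus a solution and stable, and it is nonconstant with the stated limit.

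\textbf{Perturbed domains.} For smooth strictly convex $\Omega$ close to the square, the Riemann maps converge locally uniformly away from the corners, so $W_\Omega\to W_{\text{square}}$ in $C^0$ on compact subsets of $U$. A strict local minimum (in the sense of the $\eta$-gap on $\partial U$) survives this perturbation, and the same argument applies.
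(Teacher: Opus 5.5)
Your architecture is the paper's: renormalized energy through the Riemann map, a second-order expansion with the core energy of the layer, and constrained minimization near a strict local minimum of $W_\Omega$. But the proposal leaves open exactly the two places where the work lies, so as it stands it has genuine gaps.

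\textbf{Step 2 is not carried out, and it is the only rectangle-specific input.} You stop at ``this Hessian computation is the main obstacle''. The paper avoids elliptic functions by using $W_\Omega=-\frac{2}{\pi}\log\frac{\partial^2G^D}{\partial\nu_p\partial\nu_q}(p,q)$. For $p=(x,0)$, $q=(\tilde x,H)$, the series for the Dirichlet Green's function gives $\phi(x,\tilde x)=\frac{2\pi}{L^2}\sum_{n\ge1}\frac{n}{\sinh(n\pi H/L)}\sin\frac{n\pi x}{L}\sin\frac{n\pi\tilde x}{L}$. At the midpoints, $\nabla\phi(\frac L2,\frac L2)=0$, $\phi_{xx}=\phi_{\tilde x\tilde x}=-\frac{2\pi^3}{L^4}\sum_{n\ \mathrm{odd}}\frac{n^3}{\sinh(n\pi H/L)}$, and $\phi_{x\tilde x}=\frac{2\pi^3}{L^4}\sum_{n\ \mathrm{even}}\frac{n^3}{\sinh(n\pi H/L)}\ge0$. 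So the tilting direction $(1,-1)$ is automatically stable. The delicate direction is the parallel translation $(1,1)$, which needs $\sum_{k\ge1}\bigl(\frac{(2k-1)^3}{\sinh((2k-1)\alpha)}-\frac{(2k)^3}{\sinh(2k\alpha)}\bigr)>0$ with $\alpha=\pi H/L$. This holds termwise because $t\mapsto t^3/\sinh(\alpha t)$ decreases for $\alpha t\ge t_0\approx2.98$ (the root of $t=3\tanh t$), and $\pi>t_0$ covers $H\ge L$. Note also that for $L\le H$ the two points lie on the short sides, not the long ones.

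\textbf{Step 3 presupposes the sharp lower bound, and your constraint blocks the paper's way of proving it.} To say the minimizer is not on the boundary of $\mathcal{A}_\varepsilon$, you need for the constrained minimizers the lower bound with the exact constant $2C_f$ and $o(1)$ error. Your one-line ``ball construction plus optimal profile'' does not give this. Moreover, $\frac2\pi\arctan$ is the layer for the Peierls--Nabarro nonlinearity $\frac{1}{\pi a}\sin(\pi u)$, not for $u-u^3$, whose layer is not explicit.

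A class defined by pointwise conditions on traces (zeros of an $H^{1/2}$ trace are not well defined) only yields a variational inequality. You would also have to rule out contact with the ``near $\pm1$'' conditions, which an energy gap on $\partial U$ does not do.

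The paper instead minimizes over a single closed $L^2(\partial\Omega)$ ball around a smoothed $\chi^{p,q}_\varepsilon$. Minimizers then solve \eqref{eq:elelam} with a scalar multiplier $|\lambda_\varepsilon|\le C\sqrt{|\log\varepsilon|/\varepsilon}$, which disappears under $\varepsilon$-blow-up. From there:
\begin{itemize}
\item the Poho\u{z}aev identity covers $\{|u_\varepsilon|\le t_*\}$ by boundedly many $\varepsilon$-balls;
\item blow-up limits solve the half-plane problem;
\item the classification into layers or constants, including the new exclusion of homoclinic solutions (Theorem~\ref{thm:no_homoclinic}, which your sketch never addresses), leaves exactly two transition regions, each carrying $\frac{2}{\pi}\log\frac{\rho}{\varepsilon}+C_f$.
\end{itemize}

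Finally, for perturbed domains, $W_\Omega$ involves $|\varphi'|$ on $\partial\Omega$. So you need convergence of the Riemann maps in $C^1$ up to the boundary near the midpoints (closeness of the sides in $C^{1,\alpha}$ or better), not just locally uniform convergence. With $C^2$ convergence of $W_\Omega$, the nondegenerate minimum from Step 2 stays isolated and Theorem~\ref{thm:T1} applies. Your gap-on-$\partial U$ formulation would avoid the isolatedness requirement only once Step 3 is made rigorous.
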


The limiting points on the boundary where the jump is done will also be called {\em vortices} (as in complex Ginzburg-Landau theory) or singularities.
 
We announced this result in 2006 in several conferences. The paper in preparation was cited by Focardi and Garroni~\cite{FG:2007} back to 2007, and later by Gonz\'alez~\cite{Gonz:2009} and
Davila, del Pino, and Musso~\cite{DavilaPinoMusso:2011a}. In 2025, the paper~\cite{Cab:2025} by the first author Cabr\'e, to appear in the International Congress of Mathematicians (ICM) Proceedings 2026, stated Theorem~\ref{thm:squareREN} and described its proof.

Besides rectangles, we can also find planar domains in which there exist as many nonconstant stable solutions as wished. Perhaps surprisingly, these domains can be taken to approach the unit ball. Recall that in the unit ball there are no nonconstant stable solutions by the result of the second author, C\'onsul~\cite{Consul1996}.

\begin{theorem}\label{thm:T0}
For every $k\in \NN$, there exists a smooth, bounded, and strictly convex domain $\Omega$ of $\R^2$ in which, for $\varepsilon>0$ sufficiently small, there exists at least~$k$ different nonconstant stable solutions to problem \eqref{eq:pdeGL}. Such domains can be taken to have inradius and outradius (or circumradius) as close as wished to each other as $k\to\infty$. They do converge, in particular, to a ball.

The same result holds when $\Omega$ is a regular polygon with a sufficiently large number  of vertices, large enough depending on $k$.
\end{theorem}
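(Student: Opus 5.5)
The plan is to deduce Theorem~\ref{thm:T0} from the general principle announced in the abstract. For $\e$ small, every strict local minimizer $(p,q)$ of the renormalized energy $W_\Omega$ on $\{(p,q)\in\partial\Omega\times\partial\Omega:\ p\neq q\}$ produces a nonconstant local minimizer $u_\e$ of \eqref{eq:enerGL}. This $u_\e$ is stable and jumps from $-1$ to $1$ near $p$ and $q$. We take this principle, developed later in the paper, as the main tool. It is obtained by minimizing $E_\e$ over the class of functions whose boundary traces are close to $\pm1$ away from small arcs around $p$ and $q$. The expansion $E_\e = \frac{2}{\pi}\cdot 2\,|\log\e| + W_\Omega(p,q) + C + o(1)$, uniform on that class, shows that the minimizer lies in the interior of the class, hence is a genuine local minimizer. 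Distinct strict local minimizers of $W_\Omega$ give solutions whose boundary vortices converge to distinct pairs, so the solutions are different for $\e$ small. The theorem therefore reduces to a question about $W_\Omega$: find domains with at least $k$ distinct strict local minima of the renormalized energy.

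Conformal invariance makes $W_\Omega$ computable. Let $\Phi:\Omega\to\mathbb{D}$ be the Riemann map, extended to the boundary. The limiting harmonic function is $\pm1$ on the two arcs determined by $p,q$. Its Dirichlet energy outside $\e$-neighbourhoods of $p,q$ then equals that on the disk, with a correction from the local stretching. This gives, up to constants,
\[
W_\Omega(p,q) = -\tfrac{2}{\pi}\Big(2\log|\Phi(p)-\Phi(q)| + \log|\Phi'(p)| + \log|\Phi'(q)|\Big) + \text{const}.
\]
On the disk, $|\Phi'|\equiv1$ and $W$ is minimized exactly at antipodal pairs. These form a degenerate circle of minima, consistent with the nonexistence result of \cite{Consul1996}. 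For a domain, the term $-\log|\Phi'(p)|-\log|\Phi'(q)|$ breaks this degeneracy. Concretely, $W$ prefers vortices where $|\Phi'|$ is large (the map expands there), that is, where the boundary is locally "flat/far", such as midpoints of sides. It penalizes corners, where $|\Phi'|$ vanishes for convex corners.

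For the regular $N$-gon with $N=2m$ even, we use the dihedral symmetry. The restriction of $W$ to antipodal pairs $(p,-p)$ is a periodic function whose minima occur at pairs of midpoints of opposite sides; there are $m$ such pairs. We show these are strict local minima of $W$ on the full two-dimensional space. The Hessian splits into a symmetric direction and an antisymmetric direction. The antisymmetric direction moves $p$ and $q$ the same way in the disk coordinate, and there the $\log|\Phi'|$ term gives a positive second derivative, since $|\Phi'|$ has a strict maximum at midpoints by Schwarz–Christoffel. The symmetric direction, which moves $\Phi(p),\Phi(q)$ toward each other, is controlled by the disk term $-2\log|\Phi(p)-\Phi(q)|$, which has a positive second derivative at antipodes. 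Both facts follow from explicit Schwarz–Christoffel formulas, $\Phi^{-1}(w)=c\int (1-w^N)^{-2/N}dw$. Hence there are $m\ge k$ distinct strict minima. Odd $N$ is handled analogously, with the pairs being a midpoint and the opposite vertex region, or we simply restrict to even $N$ when $N$ is large. The square case of Theorem~\ref{thm:squareREN} is the case $N=4$.

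For smooth strictly convex domains, we approximate the $2m$-gon by smooth domains with positive curvature, for example by slightly rounding the corners and bending the sides. Strict local minima of $W$ persist under $C^0$-small perturbations of $W$ on compact sets away from the diagonal. This holds because $\Phi_n\to\Phi$ and $\Phi_n'\to\Phi'$ locally uniformly near the midpoints, which stay away from the corners, by Kellogg–Warschawski type estimates. For large $N$ the inradius-to-circumradius ratio $\cos(\pi/N)\to1$. The main obstacle I expect is the rigorous link between strict local minima of $W_\Omega$ and stable solutions. This requires uniform lower bounds for the energy of functions with vortices close to prescribed points, a $\Gamma$-expansion at order one for real-valued boundary Ginzburg–Landau, so that the constrained minimizer does not touch the constraint. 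The persistence of nondegenerate minima under smoothing is a secondary, more technical step.
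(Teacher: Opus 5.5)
Your reduction to finding $k$ isolated local minimizers of $W_\Omega$ and invoking Theorem~\ref{thm:T1} is the same as the paper's. However, your formula for $W_\Omega$ has the wrong sign on the interaction term, and your Hessian argument depends on that sign. From Definition~\ref{defi:renen} and M\"obius invariance, a Riemann map $\Phi:\Omega\to\mathbb{D}$ gives $W_\Omega(p,q)=\frac2\pi\log\frac{|\Phi(p)-\Phi(q)|^2}{|\Phi'(p)||\Phi'(q)|}$. The interaction term is therefore $+\frac4\pi\log|\Phi(p)-\Phi(q)|$, so the two opposite transitions \emph{attract}. On the disk, $W=\frac4\pi\log|p-q|$ is \emph{maximal} at antipodal pairs and tends to $-\infty$ as $q\to p$, so there is no circle of minima to be selected.

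Write $\Phi(p)=e^{i\sigma_1}$, $\Phi(q)=e^{i\sigma_2}$ and $g(\sigma)=\log|(\Phi^{-1})'(e^{i\sigma})|$. Then $\frac\pi2 W=\log(2-2\cos(\sigma_1-\sigma_2))+g(\sigma_1)+g(\sigma_2)$. At an antipodal critical pair with $g''=\gamma$ at both points, the Hessian of $\frac\pi2 W$ has eigenvalue $\gamma$ in the rotation direction and $\gamma-1$ in the direction in which the two points approach each other. In that second direction the disk term works \emph{against} you, so strict minimality requires the quantitative inequality $\gamma>1$, which you never check.

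As written, your argument proves too much. It would give strict minima at the ends of the minor axis of a nearly circular ellipse, where $|\Phi'|$ does have strict maxima. There, however, $\gamma$ is small and these pairs are saddles. For the regular $N$-gon the inequality does hold: $(\Phi^{-1})'(w)=c(1-w^N)^{-2/N}$ gives $g=\mathrm{const}-\frac2N\log|\sin(N\sigma/2)|$. Hence $\gamma=\frac N2$ at midpoints, and the eigenvalues $\frac N2$ and $\frac N2-1$ are positive for $N\ge4$. Your even-$N$ conclusion survives, but for the opposite reason from the one you give. The corners make $\log|(\Phi^{-1})'|$ curve strongly enough (of order $N$) to beat the attraction, which is exactly the barrier mechanism the paper exploits.

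Two further steps need repair.
\begin{enumerate}
\item \emph{Odd $N$.} There is no midpoint--vertex pair to use, since $W_\Omega=+\infty$ whenever a point sits at a convex corner ($|\Phi'|\to0$ there). Restricting to even $N$ does not give the polygon statement for every sufficiently large $N$.
\item \emph{Smoothing.} $C^0$-closeness of $W$ only yields \emph{some} local minimum nearby, not an isolated one as Theorem~\ref{thm:T1} requires. You need $C^2$ convergence near the critical pairs together with the nondegeneracy computed above. For instance, with $\Omega_r=\psi(B_r)$ one gets $g_r''=\frac{2Ns}{(1+s)^2}$, $s=r^N$.
\end{enumerate}

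The paper's route (Theorem~\ref{thm:lomi} and the remark after it) needs no Hessian. It smooths the polygon by taking the image of a shifted half-plane $\{y>b\}$ (or of $B_r$, $r<1$) under the Schwarz--Christoffel map; this image is convex by Study's theorem and keeps $|\psi'|$ explicit. It then compares values of $F(P,Q)=|P-Q|^2|\psi'(P+ib)||\psi'(Q+ib)|$. On the boundary of each box $(A,A+1)\times(B,B+1)$ with $B\ge A+2$, one point lies over a prevertex and $F\ge b^{-2/N}(N^2+1)^{1/N-2}$, whereas $F\le 16N^2$ at the center. This barrier comparison works for any $N\ge4$ regardless of parity and gives one local minimizer per pair of non-adjacent sides, $\frac12(N-2)(N-3)$ in total. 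Isolation is then obtained from analyticity of $F$.
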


Theorems \ref{thm:squareREN} and \ref{thm:T0} follow easily from a more important result, Theorem~\ref{thm:T1}. For every planar simply connected domain $\Omega\subset\R^2$, it establishes a criterion  which allows to ensure the existence of nonconstant stable solutions  for a sequence of parameters $\varepsilon$ tending to~0. In addition,
when such solutions exist, we can predict the location of the limiting boundary points $(p,q)$ of jump through the properties of a real valued function $W_\Omega=  W_\Omega (p,q)$ defined on $\partial\Omega\times\partial\Omega$. Such function depends only on the conformal structure of the domain $\Omega$ (or, equivalently, on its Green's function) and it is called {\it the renormalized energy} for problem \eqref{eq:pdeGL}. The proof of such result, stated next, requires the development of a new Ginzburg-Landau theory similar to the one of Bethuel-Brezis-Helein~\cite{BethuelBrezisHelein:1994a} for the complex valued equation. In our framework, the functions involved are now real valued and take $S^0=\{ -1,1\}$ values on the boundary~$\partial\Omega$. 

A related problem was studied by the third author Kurzke \cite{Kurzke:2006b}. He considered the energy
\begin{equation}\label{eq:introkurzke}
F_\e(u)=\frac12\int_\Omega |\nabla u|^2 dx dy + \frac1{2\e} \int_{\dOm} \sin^2(u-g)d\ell,
\end{equation}
where $\Omega\subset\R^2$ is bounded and simply connected and  $e^{ig}:\partial\Omega \to S^1$ is a map of degree $D\neq 0$. This is motivated
by applications to micromagnetics: if $e^{ig}$ is a unit tangent vectorfield and $m=e^{iu}$ is $S^1$ valued, then 
\[
F_\e(u) = \frac12\int_\Omega |\nabla m|^2 dx dy+ \frac1{2\e} \int_{\dOm} (m\cdot \nu)^2 d\ell,
\]
where the boundary integral comes from an expansion of the nonlocal magnetostatic energy. 

The introduction of $g$ acts as a forcing term, and since $D\neq 0$ the functional $F_\e$ has nontrivial minimizers. These converge
as $\e\to 0$ to a harmonic function $u_*$ with $\sin^2(u_*-g)=0$ on the boundary that jump by $\pi$ at exactly $2D$ points on the boundary. 
These points can be found as the global minimizers of a renormalized energy that depends on $g$ as well as on $\Omega$.

Going back to our problem, to state Theorem~\ref{thm:T1} we must introduce some objects. Given two points $p$ and $q$ on $\partial\Omega$, $p\neq q$, let $I_{p,q}$ be the connected component of $\partial\Omega\setminus\{p,q\}$ corresponding to going from $p$ to $q$ counterclockwise. We define $\chi^{p,q}$ as the characteristic  function on the boundary given by
\begin{equation}\label{eq:chif}
\chi^{p,q} =
\begin{cases}
-1 & \text{ on } I_{p,q}  \\
1 & \text{ on } \partial\Omega\setminus  I_{p,q}.
\end{cases}
\end{equation}
Note that this function
$\chi^{p,q}$ minimizes the boundary term in the energy \eqref{eq:enerGL} since $(1-(\chi^{p,q})^2)^2=0$ on all $\partial\Omega$. Hence, it could be natural
to guess that a minimizer of the total energy would be obtained by the harmonic extension to
$\Omega$ of $\chi^{p,q}$, that is, $u_0^{p,q}$ satisfying
\begin{equation}\label{hext}
\left\{ \begin{array}{rcll} \Delta u_0^{p,q} & = & 0 &\mbox{ in } \Omega \, , \\
u_0^{p,q}& = & \chi ^{p,q}  & \mbox{ on } \partial\Omega\, .
\end{array} \right.
\end{equation}
Notice that since $\chi^{p,q}\not\in H^{1/2}(\partial\Omega)$, we have $u_0^{p,q}\not\in H^1(\Omega)$ and thus it
can not be a minimizer of the energy.\footnote{It can be shown that $\chi^{p,q}\in
H^r(\dOm)$, $r<1/2$ and, therefore $u_0^{p,q} \in H^s(\Omega)$, $s<1$. It is also easy to see $u_0^{p,q}$ is unique, for example by using a removal of singularities argument. We have that $-1\leq u_0^{p,q}\leq 1$ and  $u_0^{p,q}\in C^{\infty} (\Omega)\cap C^0(\overline{\Omega}\setminus \{p,q\})$. 
}

Even though the Dirichlet energy of $u_0^{p,q}$ is infinite, we can still study its dependence on the choice of points $p,q$ by renormalizing appropriately.
More precisely, as in the complex valued Ginzburg-Landau theory, we can remove small balls around $p$ and $q$ and study the behavior of the Dirichlet energy on $\Omega_\rho=\Omega\setminus (B_\rho(p) \cup
B_\rho(q))$ as $\rho\to 0$. In Theorem~\ref{thm:calcW1} we will prove that
\begin{equation}\label{eq_ener2}
\frac12\int_{\Omega_{\rho}} |\nabla u_0^{p,q}|^2 \, dxdy = \frac 4\pi  \log \frac{1}{\rho} +
W_{\Omega}(p,q) + O(\rho), 
\end{equation}
where $W_\Omega$ is called the {\it renormalized energy} of the problem and can be computed by means of the conformal map from $\Omega$ to the half-plane  (see Definition~\ref{defi:renen}) or from Green's functions of~$\Omega$ (see Proposition~\ref{propo:altren}). Indeed, in Section~\ref{sec:renen} we will see that
\begin{align}\label{renorm}
W_{\Omega}(p,q)&=-\frac{2}{\pi} \log \frac{\partial ^2 G^D}{\partial \nu_p
\partial \nu_q}(p,q)\notag \\
&=2\left( R^N(p,q)+R^N(q,q)-2 G^N(p,q)\right)\\
&= {\frac{4}{\pi} \log |p-q| + 2 \left( R^N(p,p)+R^N(q,q)-2R^N(p,q)\right) }, \notag
\end{align}
where $G^D(p,q)$ is the Dirichlet Green's function of
$\Omega$, $G^N(p,q)$ is the Neumann Green's function of $\Omega$, and $R^N(p,q)$ is the regular
part of the last one.

We can now state our main result. It holds for all bistable balanced nonlinearities~$f$. More precisely, throughout the paper, for the potential $G$ (recall that $f=-G'$) we will assume that, for some $\alpha\in (0,1)$,
\begin{equation}\label{potential}
G\in C^{2,\alpha}([-1,1]), \,\, G\ge 0, \,\, G(t)=0 \text{ only for } t=\pm 1,\text{ and }G''(\pm 1)>0.
\end{equation} 
The classical potential with these
properties is $G(t)=\frac14(1-t^2)^2$; here $f(t)=t-t^3$. A different example is 
$G(t)=\frac{2}{\pi^2} \cos^2(\frac{\pi t}{2})$; here $f(t)=\frac{1}{\pi}\sin{\pi t}$. In particular,  there exists a number $t_*\in (0,1)$ such that 
\begin{equation}\label{convexG}
G \text{ is convex in the two connected components of } \{ t_*\le |t|\le 1\}.
\end{equation}
Notice that we allow for non-even potentials $G$.

\begin{theorem}\label{thm:T1}
Let $f=-G'$ satisfy \eqref{potential}. Let $\Omega$ be a smooth, bounded, and simply connected domain of $\R^2$ such that the renormalized energy $W_\Omega$ admits an isolated local minimizer $(p,q)\in \partial\Omega\times\partial\Omega$.

Then, there is an $\e_0>0$  for which there exists a nonconstant stable solution 
$u_\e$ to \eqref{eq:pde} for every $\varepsilon\in (0,\varepsilon_0)$. 
Moreover, these solutions satisfy $u_\e\to \chi^{p,q}$ in $L^2(\dOm)$, 
where $(p,q)$ is the given isolated local minimizer of $W_\Omega$.
\end{theorem}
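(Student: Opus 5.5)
The strategy is a constrained minimization in the spirit of Kohn--Sternberg, with the renormalized energy replacing the perimeter. Let $(p,q)$ be the isolated local minimizer of $W_\Omega$, and fix $\delta>0$ small enough that $(p,q)$ is the unique minimizer of $W_\Omega$ on $\overline{B_\delta(p)}\times\overline{B_\delta(q)}$ (balls taken along $\partial\Omega$). We then minimize $E_\e$ over a class $\calA_\delta$ of functions $u\in H^1(\Omega)$, $-1\le u\le 1$, whose boundary trace is close in $L^2(\dOm)$ to the set $\{\chi^{a,b}: a\in \overline{B_\delta(p)},\, b\in \overline{B_\delta(q)}\}$. Concretely, we impose
$$
\operatorname{dist}_{L^2(\dOm)}\bigl(u|_{\dOm},\{\chi^{a,b}\}\bigr)\le \eta
$$
for a small fixed $\eta$. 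This constraint set is weakly closed, since the trace map is compact into $L^2(\dOm)$. Together with the coercivity of $E_\e$ for fixed $\e$, this gives a minimizer $u_\e$ by the direct method.

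The heart of the argument is to show that, for $\e$ small, $u_\e$ lies in the \emph{interior} of the constraint. Once that holds, $u_\e$ is a local minimizer of $E_\e$ in $H^1$, hence a stable solution of \eqref{eq:pde}; it is nonconstant because its trace is close to $\chi^{p,q}$. To prove interiority we need a sharp two-sided expansion for competitors that jump near points $(a,b)$:
$$
\min\{E_\e(u): u|_{\dOm}\approx \chi^{a,b}\} = \frac{2}{\pi}\cdot 2\,|\log\e|\cdot\tfrac12\cdot\ldots + C_G + W_\Omega(a,b)+o(1).
$$
More precisely, the energy should equal $\frac{4}{\pi}\log\frac1\e$ (matching \eqref{eq_ener2} with $\rho\sim\e$), plus twice a universal core constant $\gamma_G$ coming from the one-dimensional layer solution of the half-Laplacian problem on the half-plane, plus $W_\Omega(a,b)+o(1)$.

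\emph{Upper bound.} We build a competitor from $u_0^{p,q}$ outside $B_\rho(p)\cup B_\rho(q)$. Inside these balls we glue in rescaled half-plane layer solutions $v(x/\e)$ of $\Delta v=0$, $\partial_\nu v=f(v)$, which is possible after flattening by the conformal map. Then we use \eqref{eq_ener2}.

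\emph{Lower bound.} This is where the Ginzburg--Landau analysis enters. Using the energy bound $E_\e(u_\e)\le \frac4\pi|\log\e|+C$ and the $\Gamma$-convergence \eqref{eq:energy_limit}, the trace concentrates to $\pm1$ with exactly two jumps $(a_\e,b_\e)$. Next, a ball-construction/lower-bound argument adapted to real-valued traces gives $\frac2\pi\log\frac{\rho}{\e}+\gamma_G-o(1)$ of energy in each $B_\rho$ around a jump. On $\Omega_\rho$, Dirichlet minimality of the harmonic extension, together with the convergence of the trace to $\chi^{a,b}$, gives at least $\frac4\pi\log\frac1\rho+W_\Omega(a,b)-o(1)$. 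Combining the two bounds, the jump points $(a_\e,b_\e)$ of the minimizer satisfy $W_\Omega(a_\e,b_\e)\le W_\Omega(p,q)+o(1)$, and therefore $(a_\e,b_\e)\to(p,q)$ by isolation. It follows that $u_\e|_{\dOm}\to\chi^{p,q}$ in $L^2(\dOm)$, so the constraint is not active.

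\emph{Main obstacle.} The hardest step is the sharp lower bound with an $o(1)$ error, which is needed to compare values of $W_\Omega$. This requires a precise analysis of the half-Laplacian layer problem on $\R$: uniqueness and decay of the layer, and the existence of the constant $\gamma_G$ as the limit of (energy in a ball of radius $R$) minus $\frac2\pi\log R$. It also requires showing that near each jump the minimizer is close to the layer at scale $\e$ without losing $O(1)$ energy in the matching annuli. I would first try to handle the annuli $B_\rho\setminus B_{K\e}$ by a one-dimensional slicing in polar coordinates: on each semicircle the trace is nearly $\pm1$ at both ends, which forces angular energy $\ge \frac{2}{\pi r}(1-o(1))$ per circle. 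The uniform estimates needed for this would use the convexity \eqref{convexG} near $\pm1$ to control the deviation of $u$ from $\pm1$ away from the cores.
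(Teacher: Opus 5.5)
Your architecture matches the paper's. You minimize $E_\e$ in a closed $L^2(\dOm)$-neighbourhood of $\chi^{p,q}$, compare the upper bound of Proposition~\ref{prop:ub} with a sharp lower bound $\frac4\pi\log\frac1\e+W_\Omega(p_0,q_0)+2C_f-o(1)$, and use isolation to force $(p_0,q_0)=(p,q)$, so the constraint is inactive. The gap is that you do not supply the sharp lower bound, which you yourself flag as the main obstacle, and your plan for it ignores what the constrained minimizer actually satisfies. Until you know the constraint is inactive (the very conclusion you want), $u_\e$ need not solve \eqref{eq:pde} nor be a local minimizer. So any claim that ``near each jump $u_\e$ is close to the layer'' must come from the Euler--Lagrange equation with a multiplier, $\partial_\nu u_\e=\frac1\e f(u_\e)+\lambda_\e(u_\e-\chi^{p,q}_\e)$, together with a bound like \eqref{eq:lamu}, $|\lambda_\e|\le C\sqrt{|\log\e|/\e}$. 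The paper obtains this bound in Proposition~\ref{pro:conmin} by testing with $u_\e-v_\e$ and using the upper bound. It is exactly what gives $\e\lambda_\e\to0$ in the blow-up and $\e^{\beta_0}\lambda_\e\to0$ in the Poho\u{z}aev estimates. Your constraint, the $L^2$-distance to the two-parameter family $\{\chi^{a,b}\}$, is not differentiable where the nearest point is not unique, so even the multiplier rule needs extra care. The paper's single smooth constraint $\|u-\chi^{p,q}_\e\|^2_{L^2(\dOm)}\le a^2$ avoids this and still yields $|p_0-p|+|q_0-q|\le Ca$.

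Second, the $\Gamma$-limit \eqref{eq:energy_limit} only identifies the limiting trace and says nothing at scale $\e$. It therefore does not justify the jump points $(a_\e,b_\e)$ you blow up around, nor the convergence of the energy in $B_{R\e}$ to $I(1,R)=\frac2\pi\log R+C_f+o(1)$. The paper gets this $\e$-scale structure from the equation:
\begin{itemize}
\item The Poho\u{z}aev identity (Proposition~\ref{prop:mainpohocalc}, Lemma~\ref{lem:UpsG}) covers $\{|u_\e|\le t_*\}$ by boundedly many $\e$-balls with bounded potential energy nearby (Propositions~\ref{prop:finitecoverprop} and~\ref{pro:57}).
\item The blow-ups converge strongly in $H^1_{\mathrm{loc}}$ to bounded half-plane solutions with limits $\pm1$ at $\pm\infty$ (Proposition~\ref{pro:61}).
\item Theorem~\ref{thm:uni} says such solutions are constants or translated/reflected layers, with no nonconstant homoclinic ones. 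This identifies the blow-up at a zero of $u_\e$ as the layer and shows every ball of the cover carries a sign change.
\item A Struwe-type merging argument against the upper bound leaves exactly two balls (Proposition~\ref{pro:just2}).
\end{itemize}
Your slicing idea for the annuli is essentially Lemma~\ref{l:lem1} and works once this localization is available. However, the deviation from $\pm1$ should be absorbed by the boundary term as in Lemma~\ref{lem:l2}: $G(u)\ge c(1-|u|)^2$ where $|u|\ge t_*$, and $-\frac{2v}{\pi r}+\frac{cv^2}{\e}\ge-\frac{C\e}{r^2}$. Do not rely on a pointwise decay estimate, which you do not have.
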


Notice that $W_\Omega(p,q)\to -\infty$ if $q\to p$. However, for certain domains $W_\Omega$ may also have isolated local minimizers. The simplest example in which this occurs is a square and $(p,q)$ are centered points in opposite sides. In polygons with more sides, a larger number of isolated local minimizers exist. Thus, we see that Theorems~\ref{thm:squareREN} and \ref{thm:T0} follow easily from Theorem~\ref{thm:T1}.

The proof of Theorem \ref{thm:T1} requires to prove sharp upper and lower bounds for the energy of the following constrained minimizers. They will allow to construct stable solutions from the existence of an isolated minimizer $(p, q)$ of the renormalized energy. Given such minimizing points $p, q$ on $\partial \Omega$, let $u_\e$ be a minimizer of the functional $E_\e$ among functions having boundary values in a small {\it closed} $L^2(\partial\Omega)$ neighborhood of~$\chi^{p,q}$. Then, the lower bound reads as
\begin{equation}\label{lowerb}
E_\e(u_\e) \ge \frac4\pi \log\frac1\e + W_\Omega(p,q) +2C_f -o_\e(1),
\end{equation}
while the upper bound (which is much simpler to prove) is the same with $-o_\e(1)$ replaced by $+o_\e(1)$. Here, $C_f$ is a constant depending only on the nonlinearity $f$.

As a byproduct of this analysis, we also have the following result in the spirit of $\Gamma$-convergence. It connects the energy $E_\e(u)$ with the renormalized energy $W_\Omega(p,q)$. 

\begin{theorem}\label{thm:gammac}
Let $f=-G'$ satisfy \eqref{potential}. Let $\Omega\subset\R^2$ be any smooth, bounded, and simply connected domain, and $p\neq q$ two arbitrary points on $\partial\Omega$. Then,
\begin{enumerate}
\item[(a)] Let $(w_\e)$ be functions in $H^1(\Omega)$ such that the traces satisfy $w_\e\to \chi^{p,q}$ in $L^2(\dOm)$. 
Then as $\e\to 0$,
\[
E_\e(w_\e) \ge \frac4\pi \log\frac1\e + W_\Omega(p,q) +2C_f -o_\e(1).
\]
\item[(b)] There exist functions $v_\e\in H^1(\Omega)$ such that $v_\e \to \chi^{p,q}$ in $L^2(\dOm)$ and
\[
E_\e(v_\e) \le \frac4\pi \log\frac1\e + W_\Omega(p,q) +2C_f +o_\e(1)
\]
as $\e\to 0$.
\end{enumerate}
\end{theorem}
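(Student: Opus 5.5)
The plan is to prove both bounds by splitting $\Omega$ into a far region $\Omega_\rho=\Omega\setminus(B_\rho(p)\cup B_\rho(q))$ and two small half-discs around the jump points, with a fixed $\rho$ sent to $0$ after $\e\to0$. On the far region the energy is compared with the renormalized expansion \eqref{eq_ener2}, namely $\frac4\pi\log\frac1\rho+W_\Omega(p,q)+O(\rho)$. Near each point the boundary is flattened by a conformal map to the half-plane; after rescaling by $\e$ we get the one-dimensional layer problem for the half-Laplacian on $\R$. We define $C_f$ through the cell problem
\[
\frac12\int_{B_R^+}|\nabla u|^2+\int_{-R}^{R}G(u)\,dx=\frac2\pi\log R+C_f+o_R(1),
\]
minimized among $u$ with boundary trace close to the sign function. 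Its minimizer is the monotone layer solution of $(-\Delta)^{1/2}u=f(u)$. Each point then contributes $\frac2\pi\log\frac{\rho}{\e}+C_f$. Summing gives $\frac4\pi\log\frac1\e+W_\Omega(p,q)+2C_f$, because the $\log\rho$ terms cancel.

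For the upper bound (b), I would use an explicit construction. Take $v_\e=u_0^{p,q}$ on $\Omega_\rho$. Inside $B_\rho(p)$, in conformal coordinates, place the rescaled layer $u_{\text{layer}}(\cdot/\e)$ and interpolate between the two with a cutoff in an annulus $B_\rho\setminus B_{\rho/2}$. Near $p$, the function $u_0^{p,q}$ equals $\pm1$ minus $\frac2\pi\arg$ up to $O(|z|)$, and the layer has tail $\operatorname{sign}-c\,\e/x$. The interpolation error is therefore $O(\rho)+O(\e/\rho)$. The conformal change distorts the boundary measure by a factor $1+O(\rho)$ on $\int G$. Letting $\e\to0$ and then $\rho\to0$, along a diagonal sequence, gives (b).

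For the lower bound (a), on $\Omega_\rho$ I would use that the harmonic function $u_0^{p,q}$ minimizes the Dirichlet energy among functions with its boundary data. The traces $w_\e$ are not exactly $\pm1$, so I would first pass to a truncated or modified function. The key quantitative input is an $\e$-independent estimate on the half-annuli $B_\rho(p)\setminus B_{r}(p)$ with $r\gg\e$. It should say that if the energy is at most $\frac2\pi\log\frac\rho r+C$, then $w_\e$ is close to $\pm1$ on the boundary arcs, outside a set of small measure, and its Dirichlet energy on each semicircle is at least $\frac2\pi\log$ minus errors. This is proved by integrating the one-dimensional bound over semicircles: a function going from $-1$ to $1$ across a half-circle of radius $s$ has angular Dirichlet energy at least $\frac{4}{\pi s}$ times $(1-\delta)$. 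We can assume $E_\e(w_\e)\le\frac4\pi\log\frac1\e+C$, otherwise there is nothing to prove. A bound on the number of transitions, from the Alberti–Bouchitté–Seppecher-type lower bound \eqref{eq:energy_limit} combined with $w_\e\to\chi^{p,q}$ in $L^2$, then shows that exactly one transition occurs near each of $p$ and $q$. In the inner region $B_r$, after rescaling, a compactness argument shows the energy is at least $\frac2\pi\log\frac r\e+C_f-o(1)$, using the definition of $C_f$ as a limit of minimal cell energies. This requires that $C_f$ be finite and that the cell-problem minimal energies converge; I take these as established in the half-Laplacian analysis.

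The main obstacle is matching the pieces without losing $O(1)$ terms, since the lower bound must be sharp at order $o(1)$. Lower bounds on disjoint regions add, but each region's bound must be stated with boundary data that is only approximately prescribed. My first attempt would be this. On $\Omega_\rho$, use a Dirichlet-type lower bound on $\frac12\int_{\Omega_\rho}|\nabla w|^2$ given traces close to $\chi^{p,q}$ on $\partial\Omega\cap\partial\Omega_\rho$ and free on the inner semicircles. The semicircle traces are then controlled via a good-radius selection (Fubini / mean value over $s\in(\rho/2,\rho)$). We choose a radius where the tangential energy is near optimal and the trace is close to the radial profile $1-\frac{2}{\pi}\theta$. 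Continuity of the minimal energies in the boundary data (in $H^{1/2}$ on the semicircle) then closes the $o(1)$ gap.
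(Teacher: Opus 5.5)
Your part (b) is essentially the paper's construction (Proposition~\ref{prop:ub}). The only care needed is to rescale the layer by $|\psi'(0)|/\e$ (resp.\ $|\psi'(1)|/\e$) in half-plane coordinates, so that the boundary weight $|\psi'|$ is matched and the core constant is exactly $C_f$.

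Part (a), however, has a genuine gap. You try to prove the sharp lower bound directly for an arbitrary sequence $w_\e\in H^1(\Omega)$, and the two steps that must carry $o(1)$ precision fail for functions that solve no equation.

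First, the Alberti--Bouchitt\'e--Seppecher bound \eqref{eq:energy_limit} only sees order $|\log\e|$ and only counts the jumps of the limit $\chi^{p,q}$. An additional dipole of size $\ell_\e\to0$ costs roughly $\frac4\pi\log(\ell_\e/\e)=o(|\log\e|)$ and is invisible to it. So it cannot show that ``exactly one transition occurs near each of $p$ and $q$''.
\begin{itemize}
\item Ruling out such clusters requires an $O(1)$-accurate ball-growth bound of the form $\frac2\pi D\log\frac r\e-C$, as in Proposition~\ref{pro:just2}.
\item That bound needs $\{|w_\e|\le t_*\}$ to be covered by boundedly many $\e$-balls, with $|w_\e|\ge t_*$ and fixed signs on the annuli in between, so that Lemma~\ref{lem:l2} holds with error $C\e/S$.
\item For a general $H^1$ function none of this holds. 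Its trace lies in the critical space $H^{1/2}$, so it can dip into $\{|w|\le t_*\}$, even into the opposite well, on many tiny intervals at negligible cost.
\item Where $w_\e$ sits near the wrong well, the deficit term in Lemma~\ref{l:lem1} is not controlled by $\frac1\e\int G(w_\e)$.
\end{itemize}

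Second, your inner ``compactness argument'' yields only weak $H^1_{\mathrm{loc}}$ limits of the blow-ups, which are arbitrary functions, not solutions. Lower semicontinuity on $B_R^+$ then gives $\frac2\pi\log R+C_f-o_R(1)$ only if you know that the layer minimizes the renormalized energy among \emph{all} competitors with the right behaviour at infinity. This is the variant $\hat I(\e,\rho)$ in the remark after Proposition~\ref{prop:i_er}. It is proved neither in the paper nor in your proposal, where you simply assume it.

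The missing idea is the paper's reduction to constrained minimizers. Since $\|\chi^{p,q}_\e-\chi^{p,q}\|_{L^2(\dOm)}\to0$, for each $a>0$ you have $w_\e\in\calC_a^\e$ for small $\e$. Hence $E_\e(w_\e)\ge E_\e(u_\e)$, with $u_\e$ a minimizer of $E_\e$ over $\calC_a^\e$. These $u_\e$ solve the Euler--Lagrange equation with a multiplier $|\lambda_\e|\le C\sqrt{|\log\e|/\e}$ and satisfy $|\nabla u_\e|\le C/\e$ (Proposition~\ref{pro:conmin}). This is what makes the Poho\u{z}aev covering (Proposition~\ref{prop:finitecoverprop}) work. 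It also gives strong $H^1_{\mathrm{loc}}$ convergence of blow-ups to half-plane \emph{solutions}, which Theorem~\ref{thm:uni} identifies as layers or constants (Proposition~\ref{pro:61}). Minimality together with the upper bound then forces exactly two transitions, and the core energy is exactly that of the layer. This gives the sharp bound with $W_\Omega(p_0,q_0)$, where $|p_0-p|+|q_0-q|\le Ca$, and one concludes by letting $a\to0$ and using the continuity of $W_\Omega$. Without this step, or a local analogue replacing $w_\e$ near $p,q$ by minimizers with the same boundary data, your lower bound does not close.
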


To establish \eqref{lowerb}, and also the upper bound, one blows-up problem \eqref{eq:pde} near the point $p$, obtaining the boundary reaction problem 
\begin{equation}\label{eq:halfhalf}
\left\{
\begin{array}{cl}
\Delta u = 0 & \text{in }\R^2_+:=\{x\in\R, y>0\} \\[6pt]
\partial_{\nu} u = f(u) & \text{on }\partial\R^2_+
\end{array}
\right.
\end{equation}
in the half-plane, where $f=-G'$ satisfies \eqref{potential}. Note that the parameter $\varepsilon$ does not appear anymore. Notice that, with $v=u(\cdot,0)$, problem \eqref{eq:halfhalf} is equivalent to the equation
\begin{equation*}\label{half-lapl}
(-\Delta)^{1/2} v = f(v) \quad\text{in } \R
\end{equation*}
for the half-Laplacian.

For our pourposes, we will need a classification result for solutions to problem  \eqref{eq:halfhalf} that we could not find in the literature, except for $f(u)=\frac{1}{\pi}\sin(\pi u)$ by a classical result of Toland~\cite{Toland:1997}.\footnote{\label{fn:Toland}For the half-plane problem \eqref{eq:halfhalf}, using complex variables Amick and Toland~\cite{AmickTol:1991} (for $f(u)=-u+u^2$, $u>0$,  i.e., the Benjamin-Ono equation in hydrodynamics; notice that this nonlinearity is not a bistable one) and Toland~\cite{Toland:1997} (for $f(u)=\frac{1}{\pi}\sin(\pi u)$, i.e., the Peierls-Nabarro equation in crystal dislocations) were able to find explicit expressions for all their bounded solutions. It turns out that these two equations (and for these particular choices of nonlinearities) are ``completely integrable systems'' and together they form a Lax pair.} In particular, our result is new even for $f(u)=u-u^3$.

\begin{theorem}[]\label{thm:no_homoclinic}
Let $f=-G'$ satisfy \eqref{potential}.
Let $u$ be a solution to \eqref{eq:halfhalf} such that $-1\leq u\leq 1$ and $u(x,0)\to -1$ as $x\to \pm \infty$ . Then, $u\equiv -1$.
\end{theorem}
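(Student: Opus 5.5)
The plan is to prove Theorem~\ref{thm:no_homoclinic} with a Pohozaev-type identity that uses invariance under dilations. In dimension two the Dirichlet energy $\int_{\R^2_+}\frac12|\nabla u|^2$ is invariant under $u\mapsto u(\cdot/\lambda)$. The boundary term $\int_{\R}G(u(x,0))\,dx$ instead scales like $\lambda$. Stationarity of $u$ under dilations, made rigorous with the multiplier $X\cdot\nabla u=xu_x+yu_y$, should therefore force
\[
\int_{\R} G(u(x,0))\,dx=0 .
\]
Since $G\ge0$ vanishes only at $\pm1$, this gives $u(x,0)\in\{-1,1\}$ for all $x$. The function $u(\cdot,0)$ is continuous (elliptic regularity up to the boundary, with $f\in C^{1,\alpha}$) and tends to $-1$ at $\pm\infty$, so $u(\cdot,0)\equiv-1$. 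Then $u\equiv-1$, by uniqueness of the bounded harmonic extension to the half-plane (Poisson kernel representation).

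\emph{Step 1: decay at infinity.} Set $w=u+1\ge0$, which is bounded and harmonic in $\R^2_+$. On $\{y=0\}$ it satisfies $-w_y=f(u)=-G'(-1+w)$. By \eqref{potential}, for $|x|$ large, where $w(x,0)$ is small, this reads $-w_y=-G''(-1)w+O(w^2)$. In half-Laplacian form, $(-\Delta)^{1/2}w+c(x)w=0$ with $c\ge G''(-1)/2>0$ outside a compact set. I would compare $w$ with the explicit barrier $\phi(x,y)=\mathrm{Re}\,\frac{A}{(x+i(y+1))}$-type functions. Concretely, I would use the harmonic extension of $(1+x^2)^{-1}$, which satisfies $(-\Delta)^{1/2}(1+x^2)^{-1}\sim -C|x|^{-2}$ at infinity. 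Adding a large multiple together with the maximum principle on the exterior region should give $0\le w(x,0)\le C(1+x^2)^{-1}$. The Poisson representation then yields $|u+1|\le C/(1+r)$ and, by interior plus boundary gradient estimates on half-annuli, $|\nabla u|\le C r^{-2}$ in $\R^2_+$ with $r=|(x,y)|$. This is the step I expect to be the main obstacle: the linearized operator is nonlocal, so decay is only algebraic, and the barrier argument has to be set up carefully on the boundary.

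\emph{Step 2: Pohozaev identity on half-discs.} Let $B_R^+=B_R\cap\R^2_+$. Multiply $\Delta u=0$ by $xu_x+yu_y$ and integrate over $B_R^+$. In two dimensions the bulk term vanishes identically: $\mathrm{div}\big((X\cdot\nabla u)\nabla u-\frac12|\nabla u|^2X\big)=0$. On the flat part $\{y=0\}$ we have $X\cdot\nu=0$ and $X\cdot\nabla u=xu_x$. The flux term $\int_{-R}^R xu_x\,(-u_y)\,dx$ equals $\int_{-R}^R xu_x f(u)\,dx=-\int_{-R}^R x\,\frac{d}{dx}G(u(x,0))\,dx$. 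Integrating by parts, this equals $\int_{-R}^R G(u(x,0))\,dx-R\big(G(u(R,0))+G(u(-R,0))\big)$. The curved part contributes terms bounded by $CR\cdot R\cdot\sup_{\partial B_R^+}|\nabla u|^2\le CR^{-2}$. Moreover $RG(u(\pm R,0))\le CRw^2\le CR^{-3}$.

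\emph{Step 3: conclusion.} Letting $R\to\infty$ in Step~2 gives $\int_\R G(u(x,0))\,dx=0$; the integral is finite since $G(u)\le Cw^2$. The argument of the first paragraph then concludes. An alternative route, if the decay in Step~1 proves delicate, is the conserved quantity $\int_0^\infty\frac12(u_x^2-u_y^2)\,dy-G(u(x,0))$. Differentiating in $x$ and using $u_y=G'(u)$ on $\{y=0\}$ shows it is constant, and it vanishes at $\pm\infty$. However, this alone does not close the argument. So I would commit to the dilation identity, which needs only the algebraic decay of Step~1.
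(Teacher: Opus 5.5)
Your proposal is correct and is essentially the paper's proof: you compare $u+1$ with the Poisson-kernel barrier $\phi^a_x=\frac{y+a}{x^2+(y+a)^2}$ via the half-plane maximum principle of Lemma~\ref{lemSM}, obtain quadratic decay of $\nabla u$, and then use the Poho\u{z}aev identity on $B_R^+$, whose flat part integrates by parts to $\int_{-R}^R G(u)\,dx$ up to errors that vanish as $R\to\infty$. Two details need fixing: first, the barrier must be taken at scale $a\ge 2/G''(-1)$ rather than as the unit-scale extension of $(1+x^2)^{-1}$, since for the latter $(-\Delta)^{1/2}\phi+c\phi=\frac{(1+c)-(1-c)x^2}{(1+x^2)^2}<0$ for large $|x|$ whenever $c<1$; second, to get $|\nabla u|\le Cr^{-2}$ near $\{y=0\}$ you need the sharper bound $u+1\le C\frac{y+a}{x^2+(y+a)^2}$, which makes $u+1=O(r^{-2})$ on unit half-balls where unit-scale Neumann estimates apply, and not merely $|u+1|\le C/(1+r)$ on half-annuli of size $r$ (the paper instead reruns the barrier on $\pm u_x$ through $\partial_\nu u_x=f'(u)u_x$ and then compares with $M\phi^1_{xy}$ on the sectors $\{|x|>2(y+1)\}$).
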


In particular, this result prevents minimizers of $E_\e$ to make an homoclinic oscillation from $-1$ to a certain value in $(-t_*, 1)$ and back to the value $-1$ near in an open set with length of order $\e$.

We briefly comment on related works. For the energy with forcing \eqref{eq:introkurzke}, a $\Gamma$-convergence result 
in the spirit of Alberti-Bochitt\'e-Seppecher \cite{ABS:1994} was shown by Kurzke~\cite{Kurzke:2006a}, followed by
an analysis of the energy of minimizers and some critical points~\cite{Kurzke:2006b} and of the relevant gradient flow~\cite{Kurzke:2007a}. In a forthcoming paper~\cite{BEK19}, convergence and energy expansion results for critical points are shown assuming only a natural energy bound.
The PDE-based analysis of minimizers was generalized to a full second order $\Gamma$-expansion of the energy by Ignat and Kurzke~\cite{IK_jac},
who also rigorously connected the problem to a thin film limit of micromagnetics~\cite{IK_bdv}.  

There are also works generalising the work of Bethuel-Brezis-H\'elein to situations allowing for interior and boundary vortices; we mention 
as representative examples
Moser~\cite{Moser2003}, who studied a problem motivated by micromagnetics and Alama-Bronsard-Golovaty~\cite{ABG2020}, who 
studied nematic liquid crystals.

It is also of interest the paper by Davila, del Pino, and Musso~\cite{DavilaPinoMusso:2011a}, where they construct
solutions that develop multiple transitions from $-1$ to 1 and vice-versa along a
connected component of the boundary of an arbitrary bounded smooth domain of~$\R^2$. Such solutions will often be unstable. 

\subsection{Proof of the main theorems}\label{sec:proofmainth}

In this subsection we collect our central intermediate results and show how to combine them to
prove two of the main theorems stated above. Indeed, at the end of this subsection we include the proofs of Theorems~\ref{thm:T1} and \ref{thm:gammac}. Detailed proofs of the intermediate results are given in later sections. 

Let us first discuss the existence of local minimizers to the renormalized energy. In a square,
this result (Theorem~\ref{thm:squareREN}) follows  by a direct computation using the series
representation of the Green's function; see Theorem~\ref{thm:rectangle}.
The proof of Theorem~\ref{thm:T0}, i.e., the existence of domains where $W_\Omega$ has large numbers of 
nontrivial local minimizers, is a computation implementing the idea that this holds true in polygons and stays correct for small perturbations; see Theorem~\ref{thm:lomi}.

For the proof of Theorem~\ref{thm:T1} (which uses all ingredients of the paper), we take a simply connected bounded domain which admits an isolated local minimizer $(p,q)\in \partial\Omega\times\partial\Omega$ of $W_\Omega$. 

We  first construct functions that are near-optimal in energy without necessarily being solutions of the PDE. This is the so-called upper bound.

\begin{proposition}[proved later as Proposition ~\ref{prop:ub}]\label{prop:ub_intro}
For any $p\neq q\in\dOm$, there exist functions $v_\e$ such that $-1\le v_\e\le 1$, $v_\e\to \chi^{p,q}$ in $L^2(\dOm)$ and 
\begin{equation}\label{eq:ub_intro}
E_\e(v_\e) \le \frac4\pi \log \frac1\e + W_\Omega(p,q) + 2 C_f + o_\e(1).
\end{equation}
\end{proposition}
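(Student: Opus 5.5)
The upper bound follows the standard Ginzburg--Landau recipe. Away from $p$ and $q$ we take the harmonic extension $u_0^{p,q}$, and in $\e$-size neighborhoods of $p$ and $q$ we glue in a rescaled optimal half-plane layer.

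\emph{Step 1: the half-plane layer.} I would use the one-dimensional layer solution of \eqref{eq:halfhalf}: a function $\phi$ on $\overline{\R^2_+}$, harmonic, with $\phi(\cdot,0)$ increasing from $-1$ to $1$. Its existence comes from minimization or from the half-Laplacian theory. Far away, $\phi$ is asymptotic to the angular function $\frac{2}{\pi}\theta - 1$ (up to orientation), which is the model of $u_0^{p,q}$ near a jump point. The constant $C_f$ should be defined by
\[
\frac12\int_{B_R^+}|\nabla\phi|^2 + \int_{-R}^R G(\phi(x,0))\,dx = \frac{2}{\pi}\log R + C_f + o_R(1).
\]
Here $\frac2\pi$ is half of $\frac4\pi$, since there are two vortices. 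Proving that this limit exists requires the decay $\phi(x,0)\mp1 = O(1/|x|)$ and $\nabla(\phi - \frac2\pi\theta) = O(r^{-2})$. The Dirichlet energy of $\frac2\pi\theta$ on a half-annulus is exactly $\frac{2}{\pi}\log(R_2/R_1)$. If it is simpler, I could instead define $C_f$ through the minimal energy among functions on a half-disk $B_R^+$ with boundary values $\frac2\pi\theta-1$ on the arc, minus $\frac2\pi\log R$. Monotonicity in $R$, obtained by extending competitors with $\theta$, gives existence of the limit. This variant avoids needing fine asymptotics of $\phi$.

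\emph{Step 2: local coordinates.} I would flatten $\Omega$ near $p$ with a conformal map $\Phi_p$ sending a neighborhood of $p$ to a half-disk, with $\Phi_p(p)=0$ and $|\Phi_p'(p)|=1$. Dirichlet energy is conformally invariant, and boundary length changes by the factor $1+O(\rho)$. Near $p$, the function $u_0^{p,q}\circ\Phi_p^{-1}$ equals $\frac2\pi\theta - 1 + h$ with $h$ smooth and $h(0)=0$, so $h=O(r)$.

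\emph{Step 3: construction of $v_\e$.} Fix $\rho$ small, and later let $\rho\to0$ slowly, for instance along a diagonal sequence.
\begin{itemize}
\item On $\Omega_\rho$, set $v_\e=u_0^{p,q}$. By \eqref{eq_ener2} its energy is $\frac4\pi\log\frac1\rho + W_\Omega(p,q) + O(\rho)$, and its boundary term vanishes because $u_0^{p,q}=\pm1$ there.
\item On $B_{\rho/2}(p)\cap\Omega$, in flattened coordinates, set $v_\e(z)=\psi_\e(z/\e)$. Here $\psi_\e$ is the near-optimal competitor on $B^+_{\rho/(2\e)}$ from Step 1, which costs $\frac2\pi\log\frac{\rho}{2\e} + C_f + o(1)$.
\item In the half-annulus $\rho/2<r<\rho$, interpolate linearly between $\frac2\pi\theta-1$ and $\frac2\pi\theta - 1 + h$ using a cutoff. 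The gradient error is $O(1)$ on an area $O(\rho^2)$, so the energy error is $O(\rho)$. On the flat boundary both functions equal $\pm1$, so the boundary term vanishes.
\item Do the same at $q$.
\end{itemize}
Adding up, the $\log\rho$ terms cancel and
\[
E_\e(v_\e)\le \frac4\pi\log\frac1\e + W_\Omega(p,q) + 2C_f + O(\rho) + o_{\rho/\e}(1).
\]
Choosing $\rho=\rho(\e)\to0$ with $\rho/\e\to\infty$ gives \eqref{eq:ub_intro}.

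\emph{Remaining requirements.} Truncating to $[-1,1]$ does not increase the energy, so we may assume $-1\le v_\e\le1$. The convergence $v_\e\to\chi^{p,q}$ in $L^2(\dOm)$ holds because $v_\e$ differs from $\chi^{p,q}$ only on a set of length $O(\rho)$ and is bounded.

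\emph{Main obstacle.} The main obstacle is Step 1: showing that the half-plane energy has the precise asymptotics $\frac2\pi\log R + C_f + o(1)$, with a constant that is finite and independent of the construction. I would first try the monotone-minimization definition. For a lower bound on the minimal energy, which prevents it from diverging to $-\infty$, I would use the Dirichlet lower bound $\frac2\pi\log(R/R_0)$ on the half-annulus for functions that are $\pm1$ on most of the boundary, controlling the deviation through the potential term. This is where the mismatch with the boundary data on the arc has to be quantified.
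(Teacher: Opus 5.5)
Your architecture matches the paper's: $u_0^{p,q}$ on $\Omega_\rho$ with energy from Theorem~\ref{thm:calcW1}, a rescaled layer near $p$ and $q$, a cutoff on an annulus of size $\rho$, then $\rho\to0$. The gap is in matching the inner profile to the outer function, which is exactly the step the paper's proof spends its effort on.

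Your interpolation on $\rho/2<r<\rho$ runs between $\frac2\pi\theta-1$ and $u_0^{p,q}$. It therefore presupposes that the inner function equals $\frac2\pi\theta-1$ on the arc $r=\rho/2$. For the layer solution this is false: $\phi(z/\e)\neq\frac2\pi\theta-1$ there, so your $v_\e$ jumps across the arc and is not in $H^1$. You do need the layer, because the $C_f$ in the statement is the constant of Proposition~\ref{prop:i_er}, defined by the layer's energy. It must match the constant produced by the blow-up lower bound in Proposition~\ref{prop:inner}.

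The fix is to cut off directly between the rescaled layer and $u_0^{p,q}$; the paper uses $V_\e=\xi U_0+(1-\xi)\hat U_\e$ on $A^+_{\rho/h_0,2\rho/h_0}$. The extra Dirichlet cost is small only because
\[
\int_{A^+_{s,Ms}} \Bigl(|\nabla U_\e + \tfrac2\pi \nabla \theta^0|^2 + \tfrac1{s^2} |U_\e-(1-\tfrac2\pi \theta^0)|^2 \Bigr)\, dxdy \to 0 ,
\]
which is Theorem~\ref{thm:layerprops}(v). The $L^2/s^2$ term is needed because $|\nabla\xi|\sim 1/\rho$ multiplies the difference. Part (v) rests on (iv), which is proved via Lemma~\ref{lem5p5}, $f'(\pm1)<0$ and the $C/|x|$ decay of the trace.

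You also have to control the potential in the transition region. On the flat boundary the interpolant lies between $\phi(\cdot/\e)$ and $\pm1$, and $G$ is monotone on $[t_*,1]$ and on $[-1,-t_*]$. Hence $G(v_\e)\le G(\phi(\cdot/\e))$ there, and that contribution is at most $(1+C\rho)\int_{|x|>\rho/(2\e)}G(\phi)\,dx\to0$. The decay rates you list in Step~1 would give the gradient part, but you never use them for the gluing.

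Your variant, using near-minimizers with data exactly $\frac2\pi\theta-1$ on the arc, makes Step~3 consistent. However, the inner cost is then $\frac2\pi\log\frac{\rho}{2\e}+\hat C_f+o(1)$ with $\hat C_f=\lim_R\bigl(\hat I(1,R)-\frac2\pi\log R\bigr)$, not $C_f$. The paper only remarks that these agree; it does not prove it. To obtain the statement you must show $\hat C_f\le C_f$. That inequality is proved by the same cutoff of the layer against $\frac2\pi\theta-1$ on an annulus $A^+_{R,2R}$, giving $\hat I(1,2R)\le I(1,R)+\frac2\pi\log 2+o_R(1)$. So the variant does not avoid the asymptotics of $\phi$.

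Conversely, the obstacle you single out, finiteness of $\hat C_f$, is irrelevant for an upper bound. Your monotonicity argument already gives a limit in $[-\infty,\infty)$, and only the inequality $\hat C_f\le C_f$ matters. The existence of the limit defining $C_f$ itself is Proposition~\ref{prop:i_er}, which you may take as given.
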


Next, we try to construct functions $u_\e$ that solve the PDE \eqref{eq:pde} and converge to the characteristic function $\chi^{p,q}$ on the boundary. 
For technical reasons, we replace $\chi^{p,q}$ by a smoothed version $\chi^{p,q}_\e$ in Lemma~\ref{lem42}. 
We now consider for a small $a>0$ the sets
\begin{equation}\label{neigh}
\calC_a^\e = \{ u\in H^1(\Omega) : \|u-\chi^{p,q}_\e \|_{L^2(\dOm)}^2 \le a^2 \}.
\end{equation}
Note that the set $\calC_a^\e$ is closed under weak $H^1$ convergence. We can thus minimize
$E_\e$ among $u\in \calC_a^\e$ and find a minimizer $u_\e$. It solves the following problem.

\begin{proposition}[proved later as Proposition~\ref{pro:conmin}]\label{pro:conmin_intro}
Any minimizer $u_\e$ of $E_\e$ over the closed neighborhood $\calC_a^\e$ in \eqref{neigh} satisfies \begin{equation}\label{eq:elelam_intro}
\begin{cases}
\Delta u_\e &=0\qquad\qquad\qquad\qquad\qquad\text{in $\Omega$,}\\
\frac{\partial u_\e}{\partial \nu} &=\frac1\e f(u_\e)+ \lambda_\e (u_\e-\chi^{p,q}_\e) \quad \text{on $\dOm$},
\end{cases}\end{equation}
for some $\lambda_\e\in\R$ with $|\lambda_\e|\le C\sqrt\frac{|\log\e|}{\e}$.
Solutions to \eqref{eq:elelam_intro} are smooth up to the boundary and satisfy 
$|\nabla u_\e|\le \frac C\e.$
\end{proposition}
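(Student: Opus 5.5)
We prove the three assertions in order: the Euler--Lagrange system via a Lagrange multiplier, the bound on $\lambda_\e$ by testing the equation against a suitable function, and regularity with the gradient estimate via blow-up at scale $\e$.

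\emph{Euler--Lagrange system.} Set $h(u)=\|u-\chi^{p,q}_\e\|_{L^2(\dOm)}^2$, so that $\calC_a^\e=\{h\le a^2\}$. If $h(u_\e)<a^2$, then $u_\e$ is an interior (unconstrained) local minimizer. Hence it satisfies \eqref{eq:elelam_intro} with $\lambda_\e=0$.

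If $h(u_\e)=a^2$, then $h'(u_\e)\xi=2\int_{\dOm}(u_\e-\chi^{p,q}_\e)\xi$ is a nonzero functional on $H^1(\Omega)$. By the Lagrange multiplier rule (Kuhn--Tucker for an inequality constraint), there is $\mu\ge 0$ with $E_\e'(u_\e)+\mu h'(u_\e)=0$. Writing this weakly gives $\int_\Omega\nabla u_\e\cdot\nabla\xi-\frac1\e\int_{\dOm}f(u_\e)\xi+2\mu\int_{\dOm}(u_\e-\chi^{p,q}_\e)\xi=0$ for all $\xi\in H^1(\Omega)$. Interior test functions give $\Delta u_\e=0$, and the boundary terms give the Neumann condition with $\lambda_\e=-2\mu$. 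Here we use that $E_\e$ is $C^1$ on $H^1$, which holds after truncating $G$ outside $[-1,1]$. By a truncation/maximum principle argument, replacing $u$ by $\max(-1,\min(u,1))$ lowers the energy and keeps $u$ in $\calC_a^\e$ since $|\chi^{p,q}_\e|\le1$. So we may assume $|u_\e|\le1$.

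\emph{Bound on $\lambda_\e$.} Test the weak equation with $\xi=u_\e-\chi^{p,q}_\e$, extended harmonically or by a fixed smooth extension of $\chi_\e^{p,q}$ from Lemma~\ref{lem42}. This gives
\[
|\lambda_\e|a^2\le \|\nabla u_\e\|_{L^2}\|\nabla \xi\|_{L^2}+\frac1\e\int_{\dOm}|f(u_\e)||\xi| .
\]
The upper bound, Proposition~\ref{prop:ub_intro}, applies if $v_\e\in\calC_a^\e$ for small $\e$, which holds since $v_\e\to\chi^{p,q}$ and $\chi_\e^{p,q}\to\chi^{p,q}$ in $L^2$. It gives $\int|\nabla u_\e|^2\le C|\log\e|$ and $\frac1\e\int_{\dOm}G(u_\e)\le C|\log\e|$. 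The extension of $\chi_\e^{p,q}$, smoothed at scale $\e$, has $\|\nabla\|_{L^2}^2\le C|\log\e|$. Using $|f(t)|^2\le CG(t)$ on $[-1,1]$ (from \eqref{potential}) and Cauchy--Schwarz, the last term is at most $\frac1\e(\int G(u_\e))^{1/2}\cdot C\le C\sqrt{|\log\e|/\e}$. The gradient term is $O(|\log\e|)$, which is smaller. This yields $|\lambda_\e|\le C\sqrt{|\log\e|/\e}$.

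\emph{Regularity and gradient estimate.} The boundary datum $\frac1\e f(u)+\lambda_\e(u-\chi_\e)$ is bounded by $C/\e$, since $\lambda_\e\ll 1/\e$. Rescale by $x=\e y$ near a boundary point. The rescaled function $\tilde u$ is harmonic in a domain that is almost a half-ball of radius $1$, with $|\tilde u|\le1$. Its Neumann datum $f(\tilde u)+\e\lambda_\e(\tilde u-\tilde\chi)$ is bounded, and it is H\"older once $\tilde u$ is. The datum $\tilde\chi$ is smooth at unit scale because $\chi_\e$ is smoothed at scale $\e$. Standard Neumann $W^{1,p}$ and then $C^{1,\alpha}$ estimates, bootstrapped with $G\in C^{2,\alpha}$, give $|\nabla\tilde u|\le C$. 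Scaling back gives $|\nabla u_\e|\le C/\e$. Interior points at distance $\ge\e$ from the boundary follow from interior harmonic estimates, using $|u_\e|\le 1$. Smoothness up to the boundary for fixed $\e$ follows by the same Schauder bootstrap.

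\emph{Main obstacle.} The delicate points are the Lagrange multiplier bound, which needs a test function whose energy is $O(|\log\e|)$, and the uniform $C/\e$ bound near the smoothing of $\chi_\e$. Both hinge on the scale-$\e$ smoothing in Lemma~\ref{lem42}.
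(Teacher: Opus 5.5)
Your proposal is correct and follows the paper's proof essentially step by step: you use a Lagrange multiplier, then test the Euler--Lagrange equation against $u_\e$ minus a low-energy function and combine the upper bound $E_\e(u_\e)\le C|\log\e|$ with $f^2\le CG$, and finally carry out an $\e$-scale blow-up with an elliptic/Schauder bootstrap. The differences are cosmetic. You subtract an extension $\tilde\chi_\e$ of $\chi^{p,q}_\e$ where the paper subtracts the comparison function $v_\e$, which makes the left side exactly $\lambda_\e a^2$; even a crude extension with $\|\nabla\tilde\chi_\e\|_{L^2}^2\le C/\e$ would suffice there. You also apply Neumann estimates directly in the rescaled curved domain, whereas the paper flattens conformally and uses the antiderivative $W_\e=\int_0^y V_\e\,dt$ to reduce to a Dirichlet problem.
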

The Lagrange multiplier $\lambda_\e$ will only be nonzero (i.e., active) when the minimizer $u_\e$ lies on the 
boundary of the set $\calC_a^\e$. A central point in our proof is to show that, if $(p,q)$ is an isolated local
 minimizer of the renormalized energy, then it is possible to make a suitable choice of $a$ which forces the minimizer
 $u_\e$ to lie in the interior of $\calC_a^\e$. We will then have $\lambda_\e=0$,
which makes \eqref{eq:elelam_intro} become our true problem \eqref{eq:pde} and we will have found a stable solution to it. To accomplish this we need numerous intermediate results. We describe the main ones next.

We mainly aim to prove a lower bound for the  minimizers $u_\e$ in the closed neighborhood~$\calC_a^\e$
that complements the upper bound \eqref{eq:ub_intro}. This proceeds in 
several steps, first by considering the set $S_\e=\{ x\in \dOm: |u_\e|\le t_*\}$. In relation to $t_*$, recall \eqref{convexG}. 

\begin{proposition}[see Section~\ref{sec:poho}, especially Proposition~\ref{pro:57}]
Let $(u_\e)$ be a sequence of solutions to \eqref{eq:elelam_intro} with $E_\e(u_\e)\le K|\log\e|$ for some constant $K$ independent of $\e$.

Then, for a subsequence of parameters $\e$, there exists $M>0$ (independent of~$\e$) such that $S_\e$ can be covered by a uniformly bounded number of disjoint balls of radius $M\e$. In addition, the mutual distance between any two of such balls, divided by $\e$, tend to $\infty$.
\end{proposition}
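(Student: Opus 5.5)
The plan is the standard Bethuel--Brezis--H\'elein ``bad discs'' argument, adapted to the real-valued boundary problem; the Pohozaev identity is the tool that counts the bad discs. Let $u_\e$ solve \eqref{eq:elelam_intro} with $E_\e(u_\e)\le K|\log\e|$. By Proposition~\ref{pro:conmin_intro} we have $|\nabla u_\e|\le C/\e$, so for every $x\in S_\e$ we get $|u_\e|\le (1+t_*)/2$ on the boundary arc $\dOm\cap B_{c\e}(x)$, for a fixed $c>0$. Hence
\[
\frac1\e\int_{\dOm\cap B_{c\e}(x)} G(u_\e)\,d\ell \ge c_0>0 ,
\]
with $c_0$ independent of $\e$ and $x$.

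\textbf{Step 1: counting bad arcs.} The crude bound only gives that a maximal $2c\e$-separated subset of $S_\e$ has at most $CK|\log\e|/c_0$ points, which is not yet uniform. To improve it I will establish, via a Pohozaev-type identity, a bound on the boundary potential energy
\[
\frac1\e\int_{\dOm} G(u_\e)\,d\ell\le C
\]
independent of $\e$. The argument tests the equation against $x\cdot\nabla u_\e$ in $\Omega$, or against a smooth vector field $X$ tangent to $\dOm$ near the boundary, as in Section~\ref{sec:poho}. Three terms have to be controlled.
\begin{itemize}
\item The interior term $\int_\Omega |\nabla u_\e|^2\,\mathrm{div}X$ is only bounded by $C|\log\e|$. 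This is precisely why the identity should be localized: apply it on $\Omega\cap B_r(x_0)$ and compare with the radial decay.
\item The boundary tangential derivative terms produce $\frac1\e\int_{\dOm} G(u_\e)\,\mathrm{div}_{\dOm}X$.
\item The Lagrange term $\lambda_\e\int_{\dOm}(u_\e-\chi^{p,q}_\e)\,X\cdot\nabla u_\e$ is handled with $|\lambda_\e|\le C\sqrt{|\log\e|/\e}$, the $L^2$ bound $a$, and the $C/\e$ gradient bound on the $O(\e)$-sized regions where $\chi^{p,q}_\e$ varies.
\end{itemize}
A cleaner alternative, which I would try first, is to derive an $\e$-independent bound on the number of points directly. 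Each bad arc costs at least $\frac1\pi\log(r/\e)-C$ of Dirichlet energy in the annular region $B_r(x)\setminus B_{M\e}(x)$, because $u_\e$ must pass from near $t_*$ back to near $\pm1$ on the boundary. Combining this with the energy bound $K|\log\e|$ at a scale $r=\e^{1/2}$ (a ball-growing argument) bounds the number $N$ of clusters by $N\le 4K\pi+1$.

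\textbf{Step 2: covering by balls of radius $M\e$.} Once the number $N_\e$ of $c\e$-separated bad points is uniformly bounded, I pass to a subsequence on which $N_\e=N$ is constant. For each pair of points, the rescaled distance $|x_i^\e-x_j^\e|/\e$ either stays bounded or tends to $\infty$, and I pass to a further subsequence where every pair is in one of these two regimes. Points whose mutual distances remain bounded are grouped into one cluster, with the grouping defined transitively. Each cluster then fits in a ball of radius $M\e$, where $M$ is the bound on the intra-cluster distances plus $c$. Distinct clusters have mutual distances divided by $\e$ tending to $\infty$, so for $\e$ small the balls are disjoint and their separation over $\e$ diverges. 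Finally, $S_\e$ is covered because every point of $S_\e$ lies within $2c\e$ of a point of the maximal separated set.

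\textbf{Main obstacle.} The hard step is the uniform bound on $N_\e$: the logarithmic energy cost per bad arc. This requires a lower bound on
\[
\frac12\int_{\Omega\cap(B_r\setminus B_{M\e})}|\nabla u|^2
\]
in terms of the boundary oscillation of $u$. I would prove it by restricting to half-circles $\partial B_s(x)\cap\Omega$ and using the 1D estimate
\[
\int_{\text{half-circle}}|\partial_\theta u|^2\,d\theta\ge\frac{(\mathrm{osc})^2}{\pi}.
\]
The difficulty is that a bad point need not be a full transition from $-1$ to $1$; it could be a homoclinic excursion. Controlling this case is where the Pohozaev identity is needed, and where the perturbation by $\lambda_\e$ must be shown to be negligible.
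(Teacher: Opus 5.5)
\textbf{There is a genuine gap in Step 1.} Your Step 2 is the paper's Proposition~\ref{pro:57}: you pass to subsequences and group points whose rescaled distances stay bounded. Your observation that each bad point carries potential energy $\ge c_0$ on an $\e$-arc is also used in the paper. But the uniform bound on $N_\e$, which is the real content of the statement, is never established.

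\emph{Your ``cleaner alternative'' cannot work.} The half-circle estimate gives a cost $\sim 1/s$ at scale $s$ only when the two endpoints of $\partial B_s(x)\cap\Omega$ on $\dOm$ have opposite signs. That happens for clusters with an odd number of genuine sign changes, not because $u_\e$ ``must pass from near $t_*$ back to $\pm1$''. A homoclinic excursion, or a pair of opposite transitions at distance $O(\e)$, has a trace of bounded $H^{1/2}$-seminorm, hence only $O(1)$ Dirichlet cost. Energy counting alone is therefore compatible with $\sim|\log\e|$ such bad points. You flag exactly this as the main obstacle but do not resolve it. The ball-growing argument you have in mind is what the paper uses later in Proposition~\ref{pro:just2}. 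It is applied there only once $N$ is already bounded and homoclinic balls have been excluded by blow-up and Theorem~\ref{thm:uni}.

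\emph{Your Pohozaev route is not carried out either.} A global identity only reproduces the trivial $O(|\log\e|)$ bound, as you note yourself. The multiplier term cannot be controlled globally with the available bounds: the transitions of $u_\e$ need not sit where $\chi^{p,q}_\e$ varies, and $\|\partial_\tau u_\e\|_{L^2(\dOm)}\sim\e^{-1/2}$. The phrase ``localize and compare with the radial decay'' does not say what inequality results or how the new term on $\partial B_r\cap\Omega$ is controlled. Moreover, a global bound on $\frac1\e\int_{\dOm}G(u_\e)$ is more than is needed.

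\textbf{The missing idea is the two-scale argument of Section~\ref{sec:poho}.} Fix $\beta_0<\beta_1<\beta_2<\beta_3<1$ and set $\Gamma_\rho(z_0)=\dOm\cap B_\rho(z_0)$. Apply the Pohozaev identity on $\Omega\cap B_\rho(z_0)$ with a field $Z$ tangent to $\dOm$. For $\e<\rho<\e^\beta$ this gives
\[
\frac1\e\int_{\Gamma_\rho(z_0)}G(u_\e)\,d\ell\le 2\calA(\rho)+C\e^{\beta-\beta_0},\qquad
\calA(\rho)=\rho\int_{\partial B_\rho(z_0)\cap\Omega}|\nabla u_\e|^2\,d\ell+\frac\rho\e\sum_{\partial\Gamma_\rho(z_0)}G(u_\e).
\]
The multiplier term $h_\e=\lambda_\e(u_\e-\chi^{p,q}_\e)$ is harmless at these scales, because $|Z\cdot\tau|\le C\rho$ and $\e^{\beta_0}\sup|h_\e|\to0$. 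The argument then runs as follows.
\begin{enumerate}
\item Combined with $|\nabla u_\e|\le C/\e$, the inequality gives clearing-out: $\calA(\rho)<\kappa$ forces $|u_\e|>t_*$ on $\Gamma_{\rho/2}$.
\item Since $\int\calA(s)\,\frac{ds}{s}\le 2E_\e$, every $z\in S_\e$ has energy at least $\frac{\kappa(\beta_3-\beta_2)}2|\log\e|$ in $B_{\e^{\beta_2}}(z)$.
\item Hence a Vitali family of disjoint mesoscale balls $B_{\e^{\beta_2}}(\zeta_j)$ has $|J_\e|\le CK/\kappa$ members.
\item Averaging again over $[5\e^{\beta_2},5\e^{\beta_1}]$ gives radii $\rho_j$ with $\calA(\rho_j)\le C$. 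Therefore $\frac1\e\int_{\Gamma_{\rho_j}(\zeta_j)}G(u_\e)\le C$ on arcs covering $S_\e$.
\item Your bound $c_0$ per $\e$-separated bad point then yields $N_\e\le C|J_\e|/c_0$.
\end{enumerate}
In this way homoclinic excursions are counted through their potential energy, which the localized Pohozaev identity controls, rather than through Dirichlet energy.
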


In fact, we later show that only two of such balls are sufficient for minimizers in $\calC_a^\e$.

\begin{proposition}[proved as Proposition~\ref{pro:just2}]
If $u_\e$ is a sequence of  minimizers in the closed neighborhood $\calC_a^\e$, then for a subsequence, the 
 set $S_\e$ can be covered with exactly two balls of radius $M\e$, with $M$ independent of $\e$.
\end{proposition}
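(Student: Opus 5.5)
We argue by contradiction, using the covering result of Section~\ref{sec:poho} (Proposition~\ref{pro:57}) as the starting point. That result says that, for a subsequence, $S_\e$ is covered by a bounded number $N$ of disjoint balls $B_{M\e}(x_\e^i)$, whose mutual distances divided by $\e$ tend to infinity. The energy bound $E_\e(u_\e)\le K|\log\e|$ needed there follows from the upper bound of Proposition~\ref{prop:ub_intro}. Indeed, for $\e$ small the competitor $v_\e$ lies in $\calC_a^\e$, since $v_\e\to\chi^{p,q}$ and $\chi^{p,q}_\e\to\chi^{p,q}$ in $L^2(\dOm)$. Hence
\[
E_\e(u_\e)\le E_\e(v_\e)\le \tfrac4\pi\log\tfrac1\e+C.
\]

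We then classify each ball by the behaviour of $u_\e$ on $\dOm$ outside it. Blowing up at scale $\e$ around $x^i_\e$, we get a limit solving \eqref{eq:halfhalf}, since $\lambda_\e\e\to0$ by Proposition~\ref{pro:conmin_intro}. The limit has bounded gradient by Proposition~\ref{pro:conmin_intro}, and outside $B_M$ its trace stays in $\{|t|\ge t_*\}$. By convexity \eqref{convexG} and the maximum principle, it converges to $\pm1$ at $\pm\infty$. If the signs agree, Theorem~\ref{thm:no_homoclinic} forces the limit to be constant. This contradicts the fact that $x^i_\e\in S_\e$, so that $|u_\e(x^i_\e)|\le t_*$. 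Hence every ball is a genuine transition (a ``vortex'') between $-1$ and $1$. Since $\dOm$ is a closed curve, the number of transitions is even, say $N=2m$. Moreover $N\ge2$: the trace is $L^2$-close to $\chi^{p,q}_\e$ with $a$ small, so it must take values near $-1$ on most of $I_{p,q}$ and near $1$ on most of its complement.

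Next we show $m=1$ by an energy lower bound. Around each vortex, the energy on the annulus $B_{r}(x_\e^i)\setminus B_{M\e}(x^i_\e)$ is at least $\frac2\pi\log\frac{r}{M\e}-C$. This follows by comparing with the Dirichlet energy of a half-annulus carrying a jump of size $2$ on the boundary, since the trace goes from about $-1$ to about $1$. We make these annuli disjoint by taking $r$ to be a fraction of the minimal mutual distance, if that distance stays bounded below. If some vortices coalesce at a macroscopic scale, we use the standard ball-growing argument of Bethuel–Brezis–Hélein/Jerrard type, adapted to boundary half-balls. In either case the total energy is at least $\frac{2N}{\pi}\log\frac1\e-C$. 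With $N\ge4$ this exceeds $\frac4\pi\log\frac1\e+C$, contradicting the upper bound for $\e$ small. So $N=2$, and for a subsequence $S_\e$ is covered by exactly two balls of radius $M\e$.

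The main obstacle is the ball-growing lower bound in the regime where vortices are close but still $\gg\e$ apart. Here two vortices of opposite orientation could in principle nearly cancel, so the naive count $\frac{2}{\pi}$ per vortex fails on scales beyond their separation. What I would try first is to assign the $\frac2\pi\log$ cost only on scales between $M\e$ and the separation distance $d_\e$. Because $d_\e/\e\to\infty$, this gives an extra $\frac2\pi\log\frac{d_\e}{\e}\to\infty$ for each extra pair of vortices. Adding the unavoidable cost $\frac4\pi\log\frac1\e$ of the two macroscopic transitions forced by proximity to $\chi^{p,q}$, we get a lower bound exceeding the upper bound by a divergent amount. This again yields a contradiction.
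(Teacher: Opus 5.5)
This is essentially the paper's argument: blow-up plus the nonexistence of homoclinic solutions shows that every $M\e$-interval carries a sign change. A Struwe-type growing and merging of intervals, with the annular bound of Lemma~\ref{lem:l2}, is then compared against the upper bound $\frac4\pi\log\frac1\e+C$; note that Lemma~\ref{lem:l2} needs the boundary penalty, since only $|\tilde u_\e|\ge t_*$ is known on the annuli, not $\tilde u_\e\approx\pm1$. Your intermediate claim that the energy is at least $\frac{2N}{\pi}\log\frac1\e-C$ is false when vortices coalesce; the correct accounting is the one in your last paragraph, which the paper carries out by counting the number $D_\ell\ge2$ of sign-changing intervals at each merging stage and telescoping to $\frac4\pi\log\frac1\e-C+(D_{\ell_0}-2)\frac2\pi\log\frac{\beta_{\ell_0}}{\e}$, where $\beta_{\ell_0}\ge\beta_1\gg\e$.
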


Taking another subsequence, we obtain the following convergence result.

\begin{proposition}[proved as Corollary~\ref{cor:pq}]\label{corpqintro}
Let $u_\e$ be  minimizers in the closed neighborhood $\calC_a^\e$. Then, for a subsequence, we have $u_\e \to \chi^{p_0,q_0}$ in $L^2(\dOm)$, where $p_0,q_0\in \dOm$ are points with
$|p_0-p|+|q_0-q|\le Ca$ for some constant $C$ independent of $\e$.
\end{proposition}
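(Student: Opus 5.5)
We have to prove Proposition \ref{corpqintro}: minimizers $u_\e$ in $\calC_a^\e$ converge, along a subsequence, to some $\chi^{p_0,q_0}$ in $L^2(\dOm)$, with $|p_0-p|+|q_0-q|\le Ca$.

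The plan is to combine the two-ball covering (Proposition~\ref{pro:just2}) with the energy bound and compactness, and then use the constraint defining $\calC_a^\e$ to locate the limit jump points.

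\emph{Step 1: the two centers and their limits.} By the upper bound of Proposition~\ref{prop:ub_intro}, and since $\chi^{p,q}_\e$ is close to $\chi^{p,q}$ so that the comparison function $v_\e$ lies in $\calC_a^\e$ for small $\e$, we get $E_\e(u_\e)\le \frac4\pi\log\frac1\e + C$. Proposition~\ref{pro:just2} then says that, for a subsequence, $S_\e=\{|u_\e|\le t_*\}\cap\dOm$ is covered by two balls $B_{M\e}(p_\e)$ and $B_{M\e}(q_\e)$ with $p_\e,q_\e\in\dOm$. Passing to a further subsequence, $p_\e\to p_0$ and $q_\e\to q_0$. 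The constraint rules out $p_0=q_0$ once $a$ is small: otherwise, on $\dOm$ minus a shrinking set, $u_\e$ would have a single sign, and its trace would stay at $L^2$ distance about $\min(|I_{p,q}|,|\dOm\setminus I_{p,q}|)^{1/2}>a$ from $\chi^{p,q}_\e$.

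\emph{Step 2: convergence of the trace.} Off $B_{M\e}(p_\e)\cup B_{M\e}(q_\e)$ we have $|u_\e|>t_*$ on $\dOm$. Since $u_\e$ is continuous, its sign is constant on each of the two boundary arcs between the balls. Next we need $|u_\e|\to1$ in measure on $\dOm$. This follows from $\int_{\dOm}G(u_\e)\le \e E_\e(u_\e)\le C\e|\log\e|\to0$ together with $G>0$ away from $\pm1$. Since $|u_\e|\le1$ (truncation does not increase the energy, and projecting onto $[-1,1]$ does not leave $\calC_a^\e$ because $\chi_\e^{p,q}$ takes values in $[-1,1]$), dominated convergence gives $u_\e\to \pm\chi^{p_0,q_0}$ in $L^2(\dOm)$, with one sign on each arc.

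The signs on the two arcs must be opposite. If they were equal, the limit would be the constant $\pm1$, which is at $L^2$ distance greater than $a$ from $\chi^{p,q}$; this again contradicts $u_\e\in\calC_a^\e$ for small $a$. Also, $\chi^{q_0,p_0}=-\chi^{p_0,q_0}$ is far from $\chi^{p,q}$ when $p_0,q_0$ are near $p,q$. So, after relabeling, the limit is $\chi^{p_0,q_0}$ with the correct orientation.

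\emph{Step 3: the quantitative location.} Passing to the limit in the constraint gives $\|\chi^{p_0,q_0}-\chi^{p,q}\|_{L^2(\dOm)}\le a$, using $\chi_\e^{p,q}\to\chi^{p,q}$ in $L^2$. Now $\chi^{p_0,q_0}-\chi^{p,q}$ equals $\pm2$ exactly on the symmetric difference of the arcs $I_{p_0,q_0}$ and $I_{p,q}$. This symmetric difference contains the arc between $p_0$ and $p$ and the arc between $q_0$ and $q$, when these are short. Hence
\[
4\big(\ell(p_0,p)+\ell(q_0,q)\big)\le a^2,
\]
where $\ell$ denotes arclength. The remaining configurations, with long arcs, are excluded for small $a$ by the same measure bound. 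Therefore $|p_0-p|+|q_0-q|\le Ca^2\le Ca$ for $a\le1$.

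\emph{Main obstacle.} The main difficulty is Step 2, namely controlling the sign structure and showing that no extra sign changes hide where $|u_\e|>t_*$ fails only on a tiny set. This is handled precisely by Proposition~\ref{pro:just2}, which is assumed here. Given that, the remaining work is a careful check that the constraint with small $a$ excludes degenerate limits.
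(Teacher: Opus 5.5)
Your proposal is correct and follows essentially the same route as the paper. Both arguments take limits of the two centers supplied by Proposition~\ref{pro:just2}, obtain $L^2(\dOm)$ convergence to $\chi^{p_0,q_0}$ from the bound $\int_{\dOm} G(u_\e)\,d\ell\le C\e|\log\e|$, and pass to the limit in the constraint via the triangle inequality to get $\|\chi^{p_0,q_0}-\chi^{p,q}\|_{L^2(\dOm)}\le a$. You are more explicit than the paper in two places: you rule out the degenerate limits (coinciding points, equal signs, reversed orientation), and you convert the $L^2$ bound into a location estimate of order $a^2$, which is stronger than the stated $Ca$.
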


Using that $|\lambda_\e|\le C\sqrt{|\log\e|/\e}$, we have that the $\e$-scale blow-up of \eqref{eq:elelam_intro} is a solution~to  
\begin{equation}\label{eq:layerpde_intro}
\begin{cases}
\Delta V &=0  \quad\qquad\text{in $\RR^2_+=\{(x,y) : y>0\}$} \\
\frac{\partial V}{\partial \nu} &= f(V) \quad\text{ on $\partial\RR^2_+=\RR$}.
\end{cases}
\end{equation}

We perform this blow-up in Proposition~\ref{pro:61} after a conformal transformation sending~$\Omega$ to a half-plane. In this subsection, for simplicity of exposition, we will ignore this transformation.

We then need the following new classification result for solutions of \eqref{eq:layerpde_intro} 
with limits at $\pm\infty$. In particular, we show that every homoclinic solution $V$ (i.e., 
a solution with $V(x,0)$ tending to the same limit in $\{-1,1\}$ at $-\infty$ and at $+\infty$) must be a constant. 
In the following, $U$ is the unique (up to translation) ``layer" solution to \eqref{eq:layerpde_intro}, i.e., a solution with $U_x>0$ and $U(x,0)\to \pm 1$ as $x\to \pm \infty$;
see Theorem~\ref{thm:layerprops}. 

\begin{theorem}[proved as Theorem~\ref{thm:uni} and Proposition~\ref{prop:i_er}]\label{thm:uni_intro}
Let $V$ be a solution to \eqref{eq:layerpde_intro} with values in $[-1,1]$ and such that $|V(x,0)|\to 1$ as $x\to \pm \infty$ . Then,
either $V$ is constant $\pm 1$ or a layer solution, i.e., there exists $b\in \R$ with $V(x,y)=U(\pm (x-b),y)$.

In addition, writing $B_R^+=\{|(x,y)|<R, y>0\}$ and  $I_R=[-R,R]$, the 
energy of a layer solution $U$ satisfies
\[
\frac12\int_{B_R^+ }  |\nabla U|^2 \, dxdy + \int_{I_R} G(U)\, d\ell =\frac2\pi \log R+C_f +o(1)
\] 
as $R\to\infty$,
where $C_f$ is a constant depending only on the nonlinearity $f$.
\end{theorem}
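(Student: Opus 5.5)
The plan has two parts: the classification and the energy expansion.

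\emph{Classification.} Let $v=V(\cdot,0)$. If both limits of $v$ at $\pm\infty$ coincide, say both are $-1$ (the case $+1$ is symmetric after replacing $G(t)$ by $G(-t)$, which still satisfies \eqref{potential}), then Theorem~\ref{thm:no_homoclinic} gives $V\equiv-1$. So only the heteroclinic case remains: $v\to -1$ at $-\infty$ and $v\to 1$ at $+\infty$, or the reverse, which reduces to this one via $x\mapsto -x$. Here I want to show $V_x>0$ and then invoke the uniqueness up to translation of layer solutions from Theorem~\ref{thm:layerprops}.

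To prove monotonicity I would use a sliding method. First I need decay at the ends: by \eqref{convexG}, $f'(\pm1)=-G''(\pm1)<0$, and a comparison argument with barriers built from the Poisson kernel shows that $1-|v|$ tends to zero uniformly, with $V\to\pm1$ uniformly in half-planes $\{\pm x>R\}$. Next, set $V^t(x,y)=V(x+t,y)$. For $t$ large, $V^t\ge V$ holds outside a compact region, and on the regions where both functions lie in $\{|\cdot|\ge t_*\}$ the maximum principle applies to $w=V^t-V$. Indeed, $w$ is harmonic and satisfies $\partial_\nu w=c(x)w$ with $c=\frac{f(V^t)-f(V)}{V^t-V}\le0$ there, so $w\ge0$ for $t$ large. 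Then decrease $t$ to $t_0=\inf\{t: V^s\ge V \text{ for all } s\ge t\}$. If $t_0>0$, the strong maximum principle and the Hopf lemma force a contact point inside a bounded region, which gives a contradiction unless $V^{t_0}\equiv V$. That case is impossible because $v$ has different limits at $\pm\infty$. Hence $V^t\ge V$ for all $t>0$, so $V_x\ge0$, and $V_x>0$ follows by the strong maximum principle applied to the linearized problem. The main obstacle I expect is making the sliding argument rigorous near the ends, where the contact point might escape to infinity. I would handle this with the uniform convergence at infinity together with the negativity of $f'$ near $\pm1$.

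\emph{Energy.} For the layer $U$, I would use the known decay $1-|U(x,0)|\sim c/|x|$, which comes from $f'(\pm1)<0$ and the $1/x$ tail of the half-Laplacian kernel. Then compare $U$ on $B_R^+\setminus B_{R_0}^+$ with the harmonic function $u_0=\frac2\pi\theta-1$, whose boundary values are the jump $\pm1$. Its Dirichlet energy on the half annulus is exactly $\frac12\int|\nabla u_0|^2=\frac12\cdot\frac{4}{\pi^2}\cdot\pi\log\frac R{R_0}=\frac2\pi\log\frac{R}{R_0}$. The difference $U-u_0$ has gradient $O(1/r^2)$ after using the decay estimates, so its cross term contributes a convergent tail. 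Moreover $\int_{I_R}G(U)$ converges since $G(U)=O(1/x^2)$. Hence the quantity $\frac12\int_{B_R^+}|\nabla U|^2+\int_{I_R}G(U)-\frac2\pi\log R$ is Cauchy as $R\to\infty$, and its limit defines $C_f$. It depends only on $f$, since $U$ is unique up to translation, and a translation changes the energy only by $o(1)$.
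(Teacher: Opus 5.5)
\textbf{Main gap: the homoclinic case is not proved.} You dispose of it by citing Theorem~\ref{thm:no_homoclinic}. That theorem is exactly the homoclinic case of the present statement, and the paper proves it only inside the proof of Theorem~\ref{thm:uni}, which is the proof this statement refers to. The heteroclinic case, by contrast, the paper simply quotes from \cite{CabreSola-Mor:2005a}. So the one genuinely new part of the statement is missing from your proposal.

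The missing idea is to prove decay estimates first and then apply a Poho\u{z}aev identity.
\begin{itemize}
\item Write $\tilde V=V+1\ge 0$. The explicit harmonic function $\phi^a_x=\frac{y+a}{x^2+(y+a)^2}$ satisfies $\partial_\nu\phi^a_x+\frac1a\phi^a_x=\frac{2a^2}{(x^2+a^2)^2}\ge0$ on $\R$. Since $f'(-1)<0$, one also has $\partial_\nu\tilde V+\frac1a\tilde V\le 0$ for $|x|\ge A$.
\item The maximum principle of Lemma~\ref{lemSM}, applied to $t\phi^a_x-\tilde V$, and to $M\phi^a_x\pm V_x$ via the linearized equation $\partial_\nu V_x=f'(V)V_x$, gives $0\le V+1\le \frac{C(y+a)}{x^2+(y+a)^2}$ and $|\nabla V(x,0)|\le \frac{C}{1+x^2}$.
\item From these, $|\nabla V|\le \frac{C}{x^2+(y+1)^2}$ in all of $\R^2_+$. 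Inside the cone $|x|<2(y+1)$ this follows from the mean value property; outside it, from the barrier $\phi^1_{xy}$.
\item Plug this into Lemma~\ref{lem:rellich} on $B_R^+$ and integrate $f(V)V_x x=-(G(V))_x x$ by parts:
\[
\int_{-R}^R G(V)\,dx=R\bigl(G(V(R,0))+G(V(-R,0))\bigr)+\frac R2\int_{\partial B_R\cap\R^2_+}\bigl(|\nabla V|^2-2|\partial_\nu V|^2\bigr)\,d\ell=O(R^{-2}).
\]
Hence $G(V(\cdot,0))\equiv 0$ and $V\equiv -1$.
\end{itemize}
The $r^{-2}$ gradient decay is the whole point. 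For the layer, $|\nabla U|\sim \frac{2}{\pi r}$, and the same identity gives $\int_\R G(U)\,dx=\frac2\pi$ instead of $0$.

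\textbf{The heteroclinic case.} Sliding to get $V_x>0$ and then invoking uniqueness of layers is a reasonable substitute for the citation of \cite{CabreSola-Mor:2005a}, but two points need repair.
\begin{itemize}
\item $V$ does \emph{not} tend to $\pm1$ uniformly in $\{\pm x>R\}$. As a bounded harmonic function it is the Poisson integral of its trace, so $V(x,y)\to 0$ as $y\to\infty$; see for instance $\frac2\pi\arctan\frac{x}{y+a}$. For the same reason, ``$V^t\ge V$ outside a compact set'' cannot be checked in the half-plane.
\item Run the sliding on the trace instead. The function $w=V^t-V$ is bounded and harmonic, so $\inf_{\R^2_+}w=\inf_\R w(\cdot,0)$. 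Since $w(x,0)\to0$ as $|x|\to\infty$, a negative infimum is attained at a boundary point, where Hopf's lemma gives $\partial_\nu w<0$. This contradicts $\partial_\nu w=f(V^t)-f(V)\ge 0$ whenever both values lie in one component of $\{t_*\le |s|\le 1\}$. This is what stops contact points from escaping to infinity.
\item Theorem~\ref{thm:layerprops}(iii) gives uniqueness only among solutions satisfying (ii). You must therefore show $\int_\R G(V(x,0))\,dx<\infty$, which needs a rate such as $|V(x,0)\mp 1|\le C/|x|$, not just convergence.
\end{itemize}

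\textbf{The energy part.} The comparison function for the increasing layer is $1-\frac2\pi\theta^0$, not $\frac2\pi\theta-1$. More substantively, you assert without justification that $|\nabla(U-(1-\frac2\pi\theta^0))|=O(r^{-2})$ up to $\partial\R^2_+$. This is more than the boundary decay gives directly: on $\R$ it amounts to $U_y=-f(U)=-\frac{2}{\pi x}+O(x^{-2})$, that is, to knowing the exact constant in $1-|U(x,0)|\sim c/|x|$. The paper avoids pointwise gradient bounds altogether:
\begin{itemize}
\item Lemma~\ref{l:lem1} turns the cross term into the boundary integral $\frac2\pi\int \frac{|U-\mathrm{sgn}\,x|}{|x|}\,dx$, which converges by the boundary decay.
\item The remaining Dirichlet term is finite by Theorem~\ref{thm:layerprops}(iv). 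That bound comes from Lemma~\ref{lem5p5}, using only the $C/|x|$ boundary decay and the sign of $f'$ near $\pm1$.
\end{itemize}
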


This result is central for obtaining lower bounds for the energy of the minimizers~$u_\e$ in the closed neighborhood $\calC_a^\e$.
In balls of radius $\rho$ around  the points $p_0$ and $q_0$ found in Proposition~\ref{corpqintro}, the function $u_\e$ is close to 
a rescaling of a layer solution. With some additional estimates (see Propositions~\ref{prop:inner} and~\ref{prop:middle}), this 
yields a sharp lower bound:
the energy of 
 $u_\e$ in $B_\rho(p_0)\cap \Omega$ (or in $B_\rho(q_0)\cap \Omega$) is 
  bounded below as follows
  \begin{equation}\label{new-lower}
    \frac12 \int_{B_\rho(p_0)\cap \Omega} |\nabla u_\e|^2 dx dy + \int_{B_\rho(p_0)\cap \partial\Omega} \frac{1}{\e} G(u_\e)\, d\ell
    \geq  \frac{2}{\pi} \log \frac\rho\e +C_f -o_\rho(1), 
  \end{equation}
which is unbounded as $\e\to 0$.
  As $u_\e$ is a minimizer, it satisfies the energy upper bound \eqref{eq:ub_intro}, that is,
\[
E_\e(u_\e) \le \frac4\pi \log \frac1\e + W_\Omega(p,q) + 2 C_f + o_\e(1).
\]
As the divergent part in this upper bound matches exactly with the one in the local lower bound \eqref{new-lower},  writing $\Omega_\rho=\Omega\setminus (B_\rho(p_0)\cup B_\rho(q_0))$, we find that
\[
\frac12 \int_{\Omega_\rho} |\nabla u_\e|^2 dx dy\le \frac4\pi \log \frac1\rho+ C.
\] 
This estimate gives $H^1$ convergence away from the points $p_0$ and $q_0$, i.e., in $\Omega_\rho$ as above for every $\rho>0$.

The weak lower semicontinuity of the Dirichlet integral together with the convergence $u_\e\to \chi^{p_0,q_0}$ 
  on the boundary (Proposition~\ref{corpqintro}) lead to 
  \[
\liminf_{\e\to 0}  \frac12\int_{\Omega_\rho} |\nabla u_\e|^2 \,dx dy \ge 
\frac12\int_{\Omega_\rho} |\nabla u_0^{p_0,q_0}|^2 \,dxdy,
\]
and using \eqref{eq_ener2} or Theorem~\ref{thm:calcW1} we find 
 \[
\liminf_{\e\to 0}  \frac12\int_{\Omega_\rho} |\nabla u_\e|^2 \,dx dy \ge 
\frac4\pi \log\frac1\rho + W_\Omega(p_0,q_0)+O(\rho).
  \]
Together with the sharp local lower bound on the energy near $p_0$ and $q_0$, stated in \eqref{new-lower}, we deduce the following sharp lower bound
for the energy in all $\Omega$.

\begin{proposition}[proved as Proposition~\ref{pro:finallb}]\label{pro:finallb_intro}
Let $u_\e$ be  minimizers in the closed neighborhood $\calC_a^\e$. We have
\begin{equation}\label{eq:lowerb_intro}
E_\e(u_\e) \ge \frac4\pi \log\frac1\e + W_\Omega(p_0,q_0)+ 2C_f -o_\e(1)
\end{equation}
as $\e\to 0$.
\end{proposition}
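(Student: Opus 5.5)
The plan is to split the energy of $u_\e$ into three regions: the two small half-discs around the limit points $p_0,q_0$ from Proposition~\ref{corpqintro}, and their complement $\Omega_\rho=\Omega\setminus(B_\rho(p_0)\cup B_\rho(q_0))$. Here $\rho>0$ is fixed and small, and we first let $\e\to0$ and then $\rho\to0$. Since $G\ge0$, we may discard the boundary term on $\partial\Omega\cap\overline{\Omega_\rho}$. We are then left to bound from below
\[
E_\e(u_\e)\ \ge\ \sum_{z\in\{p_0,q_0\}}\Big(\tfrac12\int_{B_\rho(z)\cap\Omega}|\nabla u_\e|^2+\tfrac1\e\int_{B_\rho(z)\cap\dOm}G(u_\e)\Big)+\tfrac12\int_{\Omega_\rho}|\nabla u_\e|^2 .
\]

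For the two local terms I would invoke the sharp local lower bound \eqref{new-lower}, which comes from Propositions~\ref{prop:inner} and~\ref{prop:middle}. In the inner region of radius $R\e$, the blow-up of $u_\e$ converges to a layer solution; this uses Proposition~\ref{pro:61}, the bound $|\lambda_\e|\le C\sqrt{|\log\e|/\e}$ (so that the Lagrange term disappears in the limit), and the classification Theorem~\ref{thm:uni_intro}. Convergence holds locally in $C^1$ thanks to the gradient bound $|\nabla u_\e|\le C/\e$ of Proposition~\ref{pro:conmin_intro}. Hence this region contributes $\frac2\pi\log R+C_f+o_R(1)$. In the annular region between $R\e$ and $\rho$, the set $S_\e$ is empty (Proposition~\ref{pro:just2}), so the boundary values stay near $\pm1$ on each side. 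A comparison with the half-annulus harmonic function with jump data then gives at least $\frac2\pi\log\frac{\rho}{R\e}-o(1)$. Each term is therefore at least $\frac2\pi\log\frac\rho\e+C_f-o_\rho(1)-o_\e(1)$.

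For the outer term, I would combine \eqref{new-lower} with the upper bound of Proposition~\ref{prop:ub_intro} (valid for $u_\e$ by minimality, since $v_\e\in\calC_a^\e$ for small $\e$). This gives $\frac12\int_{\Omega_\rho}|\nabla u_\e|^2\le\frac4\pi\log\frac1\rho+C$, so a further subsequence converges weakly in $H^1(\Omega_\rho)$. The traces converge to $\chi^{p_0,q_0}$ on $\dOm\cap\partial\Omega_\rho$. The limit $u_*$ is harmonic, because each $u_\e$ is, and I expect $u_*=u_0^{p_0,q_0}$. The point is that $u_*$ is harmonic in all of $\Omega$ (diagonalize in $\rho$), bounded by $1$, and has boundary values $\chi^{p_0,q_0}$ away from two points; uniqueness of such bounded harmonic functions (removable singularities) identifies it. Actually, for the inequality one only needs the Dirichlet principle on $\Omega_\rho$: the harmonic function $u_0^{p_0,q_0}$ minimizes energy among functions with the same trace on $\dOm\cap\partial\Omega_\rho$ and the same values on the arcs $\partial B_\rho\cap\Omega$. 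Lower semicontinuity together with Theorem~\ref{thm:calcW1} then gives $\liminf\frac12\int_{\Omega_\rho}|\nabla u_\e|^2\ge\frac4\pi\log\frac1\rho+W_\Omega(p_0,q_0)+O(\rho)$.

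Adding the three pieces, the $\log\rho$ terms cancel exactly, leaving
\[
\liminf_{\e\to 0}\Big(E_\e(u_\e)-\tfrac4\pi\log\tfrac1\e\Big)\ \ge\ W_\Omega(p_0,q_0)+2C_f-o_\rho(1)+O(\rho),
\]
and letting $\rho\to0$ gives \eqref{eq:lowerb_intro}. Since the argument applies to every subsequence, it yields the statement along the whole family. The main obstacle is the local bound \eqref{new-lower}, specifically the annular region $R\e<|x-z|<\rho$. There the boundary values are only close to $\pm1$ rather than equal, and the $L^2$ traces converge without matching exactly. The first thing I would try is a polar-coordinate slicing: on each half-circle of radius $r$, the angular derivative must carry an oscillation of nearly $2$. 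By Cauchy--Schwarz this gives $\int|\partial_\theta u|^2\,d\theta\ge(2-\delta(r))^2/\pi$, hence $\frac12\int|\nabla u_\e|^2\ge\frac{(2-\delta)^2}{2\pi}\int\frac{dr}{r}$. The error $\delta(r)$ must then be shown to decay, for instance using the convexity of $G$ near $\pm1$ in \eqref{convexG} to get exponential-type closeness to $\pm1$ away from $S_\e$, so that the total error $\int\delta(r)\,dr/r$ is $o(1)$.
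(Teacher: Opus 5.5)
Your three-region decomposition matches the paper's proof (Propositions~\ref{prop:inner}, \ref{prop:w1p} and~\ref{pro:finallb}): blow-up to the layer in the inner disc, then weak $H^1$ convergence, identification of the limit as $u_0^{p_0,q_0}$, lower semicontinuity and Theorem~\ref{thm:calcW1} in the outer region. The genuine gap is the intermediate annulus $R\e<|x-z|<\rho$. You flag it as the main obstacle but leave it open, and the route you suggest for it does not work as stated. Your slicing correctly gives
\[
\frac12\int|\nabla u_\e|^2\ \ge\ \frac2\pi\log\frac{\rho}{R\e}-\frac2\pi\int_{R\e}^{\rho}\delta(r)\,\frac{dr}{r}-O(\rho),
\]
which is essentially Lemma~\ref{l:lem1}. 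You then try to make the deficit small through decay of $\delta$. Exponential-type closeness to $\pm1$ is false here: the problem is nonlocal, and the layer itself satisfies $|U(x,0)\mp1|\sim c/|x|$. The best you can hope for is $\delta(r)\lesssim \e/r$, plus a term of order $\e|\lambda_\e|\lesssim\sqrt{\e|\log\e|}$. That extra term is forced by the Lagrange multiplier on the arc between $p$ and $p_0$, where $u_\e$ and $\chi^{p,q}_\e$ have opposite signs. Such a bound would suffice, but it needs a global barrier argument (in the spirit of Step~1 of Theorem~\ref{thm:uni}) that you have not supplied. Convexity of $G$ alone does not give it.

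The missing idea is that no decay estimate is needed. You keep the penalty $\frac1\e\int_{B_\rho(z)\cap\dOm}G(u_\e)$ in your local terms but never use it in the annulus. Since $S_\e$ does not meet that part of the boundary, $|u_\e|\ge t_*$ there, so $G(u_\e)\ge c\,\delta^2$ pointwise, where $\delta$ is the distance of $u_\e$ from the relevant well. Pair the Dirichlet deficit and the penalty at each $r$ and minimize in $\delta$:
\[
-\frac{2\delta}{\pi r}+\frac{c\,\delta^2}{\e}\ \ge\ -\frac{\e}{c\,\pi^2 r^2}.
\]
The total error over $(R\e,\rho)$ is then at most $C\e/(R\e)=C/R$, uniformly in $\e$. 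This is exactly the paper's Lemma~\ref{lem:l2} and Proposition~\ref{prop:middle}. It plugs directly into your slicing, and your argument then closes by letting $\e\to0$, then $R\to\infty$, then $\rho\to0$.

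Three smaller points. First, the Dirichlet-principle shortcut you offer for the outer region is circular. $u_*$ and $u_0^{p_0,q_0}$ need not agree on the arcs $\partial B_\rho\cap\Omega$ unless you already know $u_*=u_0^{p_0,q_0}$, so it is your removable-singularity identification that does the work. Second, the gradient bound $|\nabla u_\e|\le C/\e$ alone gives only locally uniform convergence of the blow-ups. Strong local $H^1$ convergence comes from the Caccioppoli argument of Proposition~\ref{pro:61}, or from the $C^{2,\beta}$ bounds in Proposition~\ref{pro:conmin}. Third, $p_0,q_0$ depend on the subsequence of Corollary~\ref{cor:pq}, so the bound holds along that subsequence, not for the whole family.
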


With these preparations, the proofs of the main results are now very short.

\begin{proof}[Proof of Theorem~\ref{thm:T1}]
Let $(p,q)$ be an isolated local mimimizer of $W_\Omega$. 
Let $u_\e$ be as in Proposition~\ref{pro:conmin_intro}, i.e., minimizing $E_\e$ as defined in \eqref{eq:ener} in the closed neighborhood $\calC_a^\e=\{ u\in H^1(\Omega): \|u-\chi_\e^{p,q}\|^2_{L^2(\dOm)}\le a^2\}$.

We take subsequences such the results above
can be applied to $u_\e$. 
In particular, we have that $u_\e\to \chi^{p_0,q_0}$ in $L^2(\dOm)$ for some $p_0,q_0\in \dOm$ by Proposition~\ref{corpqintro}.
Comparing the lower bound \eqref{eq:lowerb_intro} and the upper bound \eqref{eq:ub_intro} (note that $E_\e(u_\e)\leq E_\e(v_\e)$ by minimality), we find
\[
W_\Omega(p_0,q_0)\le W_\Omega(p,q),
\]
where $(p,q)$ is the isolated minimizer of $W_\Omega$.
From Proposition~\ref{corpqintro}, $|p-p_0|+|q-q_0|\le Ca$.
Choosing $a$ small enough, the fact that $(p,q)$ are is an isolated  local minimizer leads to $p=p_0$ and $q=q_0$.
However, this means that $\|u_\e-\chi^{p,q}_\e\|_{L^2(\dOm)}\to 0$. Thus, for $\e$ small enough, we have $\|u_\e-\chi^{p,q}_\e\|_{L^2(\dOm)}<a^2$.
This gives that the Lagrange multiplier in 
\eqref{eq:elelam_intro} is not active and $u_\e$ is a nonconstant solution of \eqref{eq:pde} as claimed. In addition, being a local minimizer, $u_\e$ is stable.
\end{proof}

\begin{proof}[Proof of Theorem~\ref{thm:gammac}]
The upper bound is just Proposition~\ref{prop:ub_intro}.

We now consider the lower bound. If $p\neq q$ are arbitrary points on $\dOm$ and
$(w_\e)$ are functions in $H^1(\Omega)$ such that the traces satisfy $w_\e\to \chi^{p,q}$ in $L^2(\partial \Omega)$, then
 for every $a>0$ there is $\e_0>0$ such that for $0<\e<\e_0$,
   $w_\e\in \calC_a^\e$. This means that we can compare the energy of $w_\e$ to that of a minimizer $u_\e$ of 
   $E_\e$ over the closed neighborhood $\calC_a^\e$ (as in Proposition~\ref{pro:conmin_intro})
   and obtain 
   \[
   E_\e(w_\e)\ge E_\e(u_\e). 
   \] 
By Propositions~\ref{corpqintro} and~\ref{pro:finallb_intro}, we find $p_0,q_0\in \dOm$ 
with $|p_0-p|+|q_0-q|\leq Ca$ and 
\[
E_\e(u_\e)\ge \frac4\pi \log\frac1\e + 
W_\Omega(p_0,q_0)
+2C_f -o_\e(1).
\]
Letting $a\to 0$, we have $p_0\to p$ and $q_0\to q$. Hence, $W_\Omega(p_0,q_0)\to W_\Omega(p,q)$
and we conclude the result. 
\end{proof}

\subsection{Plan of the paper}
To make the reading easier and since the section titles are quite explanatory, we simply list them here.
\begin{itemize}
\item[2.] The layer solution in the half-plane. Nonexistence of homoclinic solutions.
\item[3.] Renormalized energy for boundary values in $\Ss^0$.
\item[4.] Minimization in a closed neighborhood and energy upper bounds.
\item[5.] Poho\u{z}aev balls and covering arguments.
\item[6.] Blow-up and the sharp lower bound.
\item[7.]  Domains where the  renormalized energy has nontrivial local minimizers. 
\item[A.] Appendix. Computing the value of $C_f$ in a special case.
\end{itemize}

\section{The layer solution in the half-plane. Nonexistence of homoclinic solutions}\label{sec:layer}

In this section we study a natural blow-up problem related to \eqref{eq:pde}. In the half-plane $\R^2_+=\{ (x,y)\in \R^2: y>0\}$, we consider 
\begin{equation*}
\begin{cases}
\Delta U &=0 \quad \text{ in } \R^2_+\\
\frac{\de U }{\de \nu} & = f(U) \quad\text{on }\partial\R^2_+=\R\times\{0\},
\end{cases}
\end{equation*}
where we recall that $f=-G'$ satisfies \eqref{potential}.

We will classify the solutions with $|U(x,0)|\to 1$ as $x\to \pm \infty$, and will show that they are either constant or \emph{layer solutions}.
A complete classification of solutions (including all periodic solutions) is only known for the Peierls-Nabarro nonlinearity $f(t)=\frac{1}{\pi} \sin(\pi t)$ by results of~\cite{ Toland:1997}; see footnote~\ref{fn:Toland}.

We start with two simple results.
\begin{lemma}\label{lem5p5}
Let $U$ and $V$ be harmonic in $\R^2_+$. 
Then for every  $\xi \in C^\infty(\ol{\R^2_+})$, the quantity
\[
D=\int_{\R^2_+} \xi^2 |\nabla (U-V)|^2 \, dx dy
\]
satisfies
\[
D\le 2D^{1/2} \left( \int_{\R^2_+} |\nabla \xi|^2 |U-V|^2 \,dxdy\right)^{1/2} + \int_\R \xi^2 \left( \frac{\de U}{\de\nu}-\frac{\de V}{\de \nu}\right)(U-V)\, dx
\]
\end{lemma}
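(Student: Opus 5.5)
The plan is a weighted energy identity for $w=U-V$, obtained by testing the equation $\Delta w=0$ against $\xi^2 w$. Since both $U$ and $V$ are harmonic, $w$ is harmonic in $\R^2_+$. Multiplying by $\xi^2 w$ and integrating by parts over $\R^2_+$ gives
\[
0=-\int_{\R^2_+}\xi^2 w\,\Delta w\,dxdy=\int_{\R^2_+}\nabla(\xi^2 w)\cdot\nabla w\,dxdy-\int_{\R}\xi^2 w\,\frac{\de w}{\de\nu}\,dx,
\]
where $\nu$ is the outer normal to $\de\R^2_+$, that is, $\de_\nu=-\de_y$. Expanding $\nabla(\xi^2 w)=\xi^2\nabla w+2\xi w\nabla\xi$ then yields
\[
D=\int_{\R}\xi^2\Big(\frac{\de U}{\de\nu}-\frac{\de V}{\de\nu}\Big)(U-V)\,dx-2\int_{\R^2_+}\xi w\,\nabla\xi\cdot\nabla w\,dxdy .
\]

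The remaining step is to bound the cross term by Cauchy--Schwarz:
\[
\Big|2\int \xi w\,\nabla\xi\cdot\nabla w\Big|\le 2\Big(\int\xi^2|\nabla w|^2\Big)^{1/2}\Big(\int|\nabla\xi|^2w^2\Big)^{1/2}=2D^{1/2}\Big(\int|\nabla\xi|^2|U-V|^2\Big)^{1/2}.
\]
Substituting this bound into the identity gives exactly the claimed inequality.

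The only point needing care is justifying the integration by parts, i.e.\ that the boundary terms at infinity vanish and that $U,V$ are regular enough up to $\{y=0\}$. This is where I expect the one real issue to lie. I would read the lemma as implicitly requiring $\xi$ to have compact support in $\ol{\R^2_+}$ (and $U,V\in C^1(\ol{\R^2_+})$, as for the solutions considered in this section). Then the computation is performed on a bounded half-disc containing $\operatorname{supp}\xi$. The only boundary contribution is on $\R\times\{0\}$, and every integral is finite.

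If $\xi$ is not compactly supported but all quantities are finite, I would instead apply the compactly supported case to $\xi\eta_R$, with $\eta_R$ a standard cutoff. The extra term involves $|\nabla\eta_R|\le C/R$, and letting $R\to\infty$ should recover the statement. In the generality stated, though, the compact-support interpretation is the natural one.
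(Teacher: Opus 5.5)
Your proof is correct and follows the same route as the paper: test the harmonic equation for $U-V$ against $\xi^2(U-V)$, integrate by parts, and bound the cross term by Cauchy--Schwarz. Your sign for the cross term, $-2\int \xi w\,\nabla\xi\cdot\nabla w$, is the correct one (the paper writes $+$, which is harmless once absolute values are taken), and your compact-support reading of $\xi$ matches the paper's later use, where $\xi$ vanishes outside a bounded half-annulus.
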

\begin{proof}
Integrating by parts,
\[
D = \int_{\R^2_+} 2 \xi (U-V) \nabla (U-V)\cdot \nabla \xi \, dxdy + \int_\R \xi^2 \left( \frac{\de U}{\de\nu}-\frac{\de V}{\de \nu}\right)(U-V)\, dx.
\]
We immediately obtain the claim by using the Cauchy-Schwarz inequality.
\end{proof}

We introduce some notation.
We let $\theta^{x_0}:\R^2_+\to (0,\pi)$ denote the argument function with respect to $x_0\in\R$, more precisely we set 
\[
\theta^{x_0}(x,y)=\frac\pi 2-\arctan\frac{x-x_0}y.
\]
Then $\theta^{x_0}$ is an argument function in the sense that
\[e^{i\theta^{x_0}(x,y)}=x+iy\]
for $(x,y)\in \R^2_+$. Moreover, we have
\[\nabla \theta^{x_0}(x,y)=\frac{1}{(x-x_0)^2+y^2}\begin{pmatrix} y \\ -(x-x_0)
\end{pmatrix}, \qquad |\nabla \theta^{x_0}(x,y)|=\frac1{\sqrt{(x-x_0)^2+y^2}}.
\]
Regarding the boundary values, we have 
$
\theta^{x_0}(x,y)\to 0
$ as $y\to 0$
for $x>x_0$ and $\theta^{x_0}(x,y)\to \pi$ as $y\to 0$ for $x<x_0$. 

For the open half-annulus with inner radius $S$ and outer radius $R$ and the straight part of its boundary, we use the notation \begin{equation}\label{eq:notationA}
A^+_{S,R}=\R^2_+ \cap (B_R(0)\setminus \overline{B_S(0)}), \qquad A^0_{S,R}=\{ x: S\le |x|\le R\}\subset\R
\end{equation}

With this notation, we have the following useful identity.
\begin{lemma}\label{l:lem1}
For
every function $V\in H^1_{\mathrm{loc}}(\RR^2_+)$ such that its trace on $A_{S,R}^0$ has values in
the interval $[-1,1]$,
we have the identity
\begin{align*}
 \int_{A_{S,R}^+} \frac{1}{2} |\nabla V|^2 \, dxdy = \frac{2}{\pi}\log
\frac{R}{S}&-\frac{2}{\pi}\int_{A_{S,R}^0} \frac{|V-\frac{2}{\pi}\theta^0+1|}{|x|}\,
dx \\ & +\int_{A_{S,R}^+} \frac{1}{2} |\nabla (V-\frac{2}{\pi}\,\theta^0)|^2 \, dxdy.
\end{align*}
\end{lemma}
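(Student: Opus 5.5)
Write $\phi=\frac{2}{\pi}\theta^0$. Expanding the square gives
\[
\tfrac12|\nabla V|^2=\tfrac12|\nabla(V-\phi)|^2+\nabla V\cdot\nabla\phi-\tfrac12|\nabla\phi|^2 .
\]
The identity therefore reduces to two computations: the energy of $\phi$ on the half-annulus, and the cross term $\int\nabla V\cdot\nabla\phi$.

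For the first, since $|\nabla\theta^0|=1/r$, polar coordinates give
\[
\int_{A^+_{S,R}}\tfrac12|\nabla\phi|^2=\tfrac12\cdot\tfrac{4}{\pi^2}\cdot\pi\log\tfrac RS=\tfrac{2}{\pi}\log\tfrac RS .
\]

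For the cross term, integrate by parts using $\Delta\theta^0=0$. On the two semicircular arcs $\partial_\nu\theta^0=0$, because $\nabla\theta^0$ is tangential to circles. On the flat part $A^0_{S,R}$ the outer normal is $-e_y$, and $\nabla\theta^0\cdot(-e_y)=x/(x^2+y^2)\to 1/x$. So
\[
\int_{A^+_{S,R}}\nabla V\cdot\nabla\phi=\frac{2}{\pi}\int_{A^0_{S,R}}\frac{V}{x}\,dx .
\]
Now evaluate $V/x$ on each side. For $x>0$ we have $\phi=0$, so $V/x=(V-\phi+1)/|x|-1/|x|$. For $x<0$ we have $\phi=2$, so $V/x=-V/|x|=(-(V-\phi+1)+1)/|x|$.

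Here the hypothesis $|V|\le1$ enters. On $x>0$, $V-\phi+1=V+1\ge0$; on $x<0$, $V-\phi+1=V-1\le0$. In both cases the expression equals $\pm|V-\phi+1|$ with the sign making the contribution $-|V-\phi+1|/|x|$. The constant pieces are $\mp1/|x|$, which I need to track carefully. The piece $-1/|x|$ on $x>0$ integrates to $-\log(R/S)$, and the piece $+1/|x|$ on $x<0$ integrates to $+\log(R/S)$.

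If these cancel, the cross term is $-\frac2\pi\int_{A^0_{S,R}}|V-\phi+1|/|x|$ plus a correction. Summing $-\frac2\pi\log$ from the $-\frac12|\nabla\phi|^2$ term, this would produce $-\frac2\pi\log$ instead of the $+\frac2\pi\log$ in the statement. So I expect the main obstacle to be the sign bookkeeping, and I would recheck the boundary signs.

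Redoing the $x>0$ side: $V/x=(V+1)/x-1/x$ gives $\int(V+1)/|x|-\log$. The contribution is then $+|V-\phi+1|$, not $-$, so the formula as stated suggests the opposite orientation. Possibly $\nabla\theta^0\cdot(-e_y)=-1/x$ instead. I will verify directly: $\theta^0_y=x/(x^2+y^2)$, so $-\theta^0_y=-1/x$ on the boundary. The cross term is then $-\frac2\pi\int V/x$. On $x>0$ this gives $-\frac2\pi\int|V+1|/x+\frac2\pi\log$; on $x<0$ it gives $-\frac2\pi\int|V-1|/|x|+\frac2\pi\log$.

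Total cross term: $\frac4\pi\log\frac RS-\frac2\pi\int_{A^0_{S,R}}|V-\phi+1|/|x|$. Subtracting $\frac2\pi\log\frac RS$ yields exactly the claimed identity.

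For $V$ merely in $H^1_{\mathrm{loc}}$, the integration by parts is justified by approximating with smooth functions, since $\phi$ is smooth on the closed half-annulus away from the origin. If the right-hand terms are infinite the identity holds in the extended sense, and finiteness of the left-hand side makes all terms finite.
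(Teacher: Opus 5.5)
Your final computation, starting from $-\partial_y\theta^0=-1/x$ on the flat boundary, is correct and is essentially the paper's proof. The paper uses the same pointwise splitting $\frac12|\nabla V|^2=\frac12|\nabla(V-\frac2\pi\theta^0)|^2+\frac{2}{\pi r^2}V_\theta-\frac{2}{\pi^2 r^2}$ and integrates $V_\theta$ in the angle to get $\frac2\pi\int_S^R\frac{V(-r,0)-V(r,0)}{r}\,dr$, which is exactly what your Green's formula step gives with $\partial_\nu\theta^0=0$ on the arcs and $\partial_\nu\theta^0=-1/x$ on $A^0_{S,R}$. In a write-up, delete the first pass: both the sign $+1/x$ and the rewriting $V/x=(-(V-\phi+1)+1)/|x|$ for $x<0$ are wrong (the latter should read $(-(V-\phi+1)-1)/|x|$).
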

\begin{proof}
We compute the integral in polar coordinates, with $r$ denoting the radius and $\theta$ the angle. Note that 
\[
|\nabla V|^2 = V_r^2 + \frac1{r^2}V_\theta^2.
\]
As $\theta^0_r=0$, $\theta^0_\theta=1$, we see that 
\[
|\nabla(V-\frac2\pi\theta^0)|^2 =  \frac1{r^2} \left( V_\theta-\frac2\pi\right)^2 + V_r^2.
\]
Writing 
\[V_\theta^2=\left( V_\theta-\frac2\pi+\frac2\pi\right)^2= \left( V_\theta-\frac2\pi\right)^2 -\frac4{\pi^2} +\frac4\pi V_\theta,
\]
we obtain
\begin{align*}
\frac12|\nabla V|^2 &= \frac12 V_r^2 + \frac1{2r^2}\left( V_\theta-\frac2\pi\right)^2 + \frac2{\pi r^2} V_\theta - \frac2{\pi^2r^2} \\
&=\frac12|\nabla(V-\frac2\pi \theta^0)|^2 + \frac2{\pi r^2} V_\theta - \frac2{\pi^2r^2}
\end{align*}

Integrating over $A^+_{S,R}$, we see
\begin{align*}
\int_{A_{S,R}^+} \frac12|\nabla V|^2\, dx dy&= \int_{A_{S,R}^+} \frac12|\nabla(V-\frac2\pi \theta^0)|^2 \, dx dy+ 
\frac2\pi \int_S^R \frac{V(re^{i\pi})-V(re^{i0})}{r}\, dr\\  \ &\qquad-\frac2\pi \log \frac RS \\
&= \int_{A_{S,R}^+} \frac12|\nabla(V-\frac2\pi \theta^0)|^2 \, dxdy \\ &\qquad -\frac2\pi \int_S^R \frac{(1-V(-r,0))+(1+V(r,0))}{r}\, dr +\frac2\pi \log \frac RS.
\end{align*}
As $(\frac2\pi\theta^0(x,0)-1)$ takes the value  $1$ for $x<0$ and the value $-1$ for $x>0$, 
we can use $-1\le V\le 1$ to see that
\[
|V(-r,0)-\frac2\pi \theta^0(-r,0)+1| =|V(-r,0)-1|=1-V(-r,0)\]
 and  \[
 |V(r,0)-\frac2\pi \theta^0(r,0)+1|=|V(r,0)+1| = 1+V(r,0).\]
We obtain the claim of the lemma.
\end{proof}

We now consider the so-called \emph{layer solution} and study its properties.
\begin{theorem}\label{thm:layerprops}
There exists a solution $U$ of
\begin{equation}\label{eq:layerPDE}
\left\lbrace
\begin{aligned}
&\Delta U =0 \qquad \,\,\text{ in } \R^2_+\\
&\frac{\de U }{\de \nu} = f(U) \quad\text{on }\partial\R^2_+=\R\times\{0\}
\end{aligned}
\right.
\end{equation}
with the following properties:
\begin{enumerate}[(i)]
\item $U_x>0$ and  $-1<U<1$ in $\overline{\R^2_+}$. In addition, $U(x,0)\to \pm 1$ as $x\to \pm\infty$.
\item $\int_\R G(U(x,0))dx<\infty$
\item With the additional constraint $U(0,0)=0$, the solution is unique. 
All other solutions of \eqref{eq:layerPDE} that satisfy (i) and (ii) are translations of $U$ in the $x$-~direction.
\item $\int_{\R^2_+\setminus \ol B_1^+} |\nabla  U + \frac2\pi \nabla \theta^0|^2 \, dxdy  <\infty$.
\item Setting $U_\e(x,y)=U(\frac {x\vphantom{y}} \e, \frac {y} \e)$
we have, for every $s>0$ and $M>1$
\[
\int_{A^+_{s,Ms}} \left(|\nabla U_\e + \frac2\pi \nabla \theta^0|^2 + \frac1{s^2} |U_\e-(1-\frac2\pi \theta^0)|^2 \right)\, dxdy \to 0
\]
as $\e\to 0$.
\end{enumerate} 
\end{theorem}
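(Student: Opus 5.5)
Existence, monotonicity and uniqueness in (i)--(iii) will be taken from the known theory of layer solutions for the half-Laplacian, $(-\Delta)^{1/2}v=f(v)$ on $\R$. The reference here is Cabr\'e--Sol\`a-Morales (layer solutions of boundary reactions), whose hypotheses are exactly the balanced bistable conditions in \eqref{potential}: $G\ge0$, $G(\pm1)=0$, $G''(\pm1)>0$. That theory provides the following.
\begin{itemize}
\item A layer solution exists; it is obtained by minimizing the energy $\frac12\int|\nabla u|^2+\int G(u)$ on half-balls with the right boundary data and passing to the limit.
\item $U_x>0$, and $|U|<1$ by the strong maximum principle and Hopf's lemma.
\item Uniqueness up to translation, via a sliding argument that uses the convexity of $G$ near $\pm1$ from \eqref{convexG}.
\end{itemize}
For (ii), I would use the Modica-type estimate from the same work, $\int_0^\infty U_x^2(x,y)\,dy \cdot(\ldots)\le G(U(x,0))$, combined with the decay of $1\mp U(x,0)$. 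The linearization at $\pm1$ is $(-\Delta)^{1/2}w+G''(\pm1)w=0$, which yields algebraic decay $1-|U(x,0)|\sim c/|x|$. Since $G(t)\sim \frac12G''(\pm1)(1\mp t)^2$, this makes $\int G(U(x,0))\,dx$ finite. So (i)--(iii) are essentially quoted, and the new content is (iv)--(v).

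For (iv), the model is $w:=1-\frac2\pi\theta^0$. It is harmonic with boundary values $\mathrm{sign}(x)$, so $\nabla w=-\frac2\pi\nabla\theta^0$. I would set $\phi=U-w$. Then $\phi$ is harmonic in $\R^2_+$ with boundary values $U(x,0)-\mathrm{sign}(x)$, which are $O(1/|x|)$ at infinity, and with Neumann data $\partial_\nu\phi=f(U)+\partial_\nu w=f(U)$ for $x\neq0$, since $\partial_y\theta^0=0$ on the boundary away from the origin. In addition $f(U(x,0))=O(1/|x|)$. To show $\int_{\R^2_+\setminus B_1^+}|\nabla\phi|^2<\infty$, I would compute the energy of $\phi$ in $A^+_{1,R}$ by integrating $|\nabla\phi|^2$ by parts. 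This leaves boundary terms $\int_{A^0_{1,R}}\phi\,f(U)\,dx$, which are bounded by $\int |x|^{-2}<\infty$, and the terms on the arcs $|x|=1$ and $|x|=R$. For the arc at $R$, I would show $|\nabla\phi|=O(R^{-1-\delta})$ or at least $O(\log R/R^2)$. This follows by writing $\phi$ through the Poisson representation of its boundary data, using $\phi(\cdot,0)\in L^1\cap L^\infty$ away from $0$ up to $O(1/|x|)$ tails together with interior gradient bounds for bounded harmonic functions. As a cross-check, Lemma~\ref{l:lem1} with $V=U$ gives a second route: combined with the upper-bound energy comparison (the layer minimizes on half-annuli against competitors built from $w$), it forces $\int_{A^+_{1,R}}\frac12|\nabla\phi|^2$ to stay bounded, because the term $\frac2\pi\int|1-|U||/|x|$ is finite by the decay.

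For (v), I would rescale. With $U_\e=U(\cdot/\e)$, scaling invariance of the Dirichlet integral in two dimensions gives
\[
\int_{A^+_{s,Ms}}|\nabla U_\e+\tfrac2\pi\nabla\theta^0|^2=\int_{A^+_{s/\e,Ms/\e}}|\nabla\phi|^2,
\]
which tends to $0$ as the tail of a convergent integral by (iv). For the $L^2$ term, rescaling gives $\frac1{s^2}\int_{A^+_{s,Ms}}|U_\e-w|^2=\frac{\e^2}{s^2}\int_{A^+_{s/\e,Ms/\e}}|\phi|^2$. This tends to $0$ if $\phi\to0$ uniformly at infinity, which holds because $\phi$ is the Poisson extension of boundary data decaying to $0$, plus a harmonic remainder controlled by (iv). Alternatively, one can use Poincaré on the annulus together with the fact that the boundary trace is small.

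The main obstacle is the precise decay of $U(x,0)\mp1$ and of $\nabla\phi$, which underlies both (ii) and (iv). I would first try to get it by comparison: build explicit barriers of the form $\pm1\mp c\,\mathrm{Re}\frac{-i}{z+i\lambda}$, that is, Poisson kernels, which are sub- or supersolutions of the linearized problem near $\pm1$ thanks to $G''(\pm1)>0$. The maximum principle then yields $|U\mp1|\le C/(1+|x|)$ in the region where $|U|\ge t_*$.
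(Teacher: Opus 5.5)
Your plan is sound, and for (i)--(iii) and the overall structure of (iv)--(v) it is the paper's route. Items (i)--(iii) are quoted from Cabr\'e--Sol\`a-Morales. Item (iv) is an energy estimate for $\phi=U-w$, $w=1-\frac2\pi\theta^0$, whose flat-boundary term is controlled by the decay $|U(x,0)-\mathrm{sign}\,x|\le C/|x|$. Item (v) follows by rescaling.

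There is one wrong computation to fix. It is not true that $\partial_y\theta^0=0$ on $\partial\R^2_+\setminus\{0\}$. The function $\theta^0$ is constant along each half-line, so its \emph{tangential} derivative $\partial_x\theta^0$ vanishes there, but $\partial_y\theta^0(x,0)=1/x$. Hence $w$ has nonzero Neumann data, $|\partial_\nu w|=\frac{2}{\pi|x|}$, and $\partial_\nu\phi=f(U)-\partial_\nu w$, not $f(U)$. This does not break (iv): using $|\phi(x,0)|\le C/|x|$, the extra boundary term $\int\phi\,\partial_\nu w\,dx$ is again bounded by $C\int_1^\infty x^{-2}\,dx$. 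This $1/x$ term is exactly the one the paper's proof estimates.

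The second weak point is the outer arc $|z|=R$. You propose bounding $\nabla\phi$ pointwise via the Poisson representation and interior gradient estimates. Those estimates degenerate near the ends $(\pm R,0)$ of the semicircle, so you would additionally need boundary Schauder estimates together with decay of $U_x(x,0)$ or of the Neumann data.

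The paper avoids any gradient decay. It applies Lemma~\ref{lem5p5} to $U$ and $w$ with a radial cutoff $\xi$ that equals $1$ on $A_{S,R}$, vanishes for $|z|<S/2$ and $|z|>2R$, and has $|\nabla\xi|\le c/S$ and $|\nabla\xi|\le c/R$ on the two transition annuli. The bulk error $2D^{1/2}\bigl(\int|\nabla\xi|^2|U-w|^2\bigr)^{1/2}$ is then $\le CD^{1/2}$ using only $|U-w|\le 2$. The term $f(U)(U-w)=f'(\eta)(U-w)^2$ is $\le 0$ for $S$ large, since $f'(\pm1)<0$, and can be dropped; your cruder bound $|f(U)\phi|\le C/x^2$ would also do. The resulting inequality $D\le CD^{1/2}+C$ gives a bound independent of $R$. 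This is the cleanest way to close your argument.

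For (v), your treatment of the gradient term matches the paper. For the $L^2$ term, your main route is valid: $\phi$ is bounded and harmonic, hence the Poisson integral of boundary data tending to $0$, so $\phi\to0$ uniformly at infinity and $R^{-2}\int_{A^+_{R,MR}}|\phi|^2\le \pi M^2\sup_{A^+_{R,MR}}|\phi|^2\to 0$. The paper instead uses your stated alternative: Poincar\'e on the annulus, with the trace term controlled by (ii) through $|\phi(x,0)|^2\le C\,G(U(x,0))$ for large $|x|$.
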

Regarding property (v), note that $\theta^0$ is invariant under the rescaling $(x,y)\mapsto (\frac x \e, \frac y \e)$. 
Hence we can view this as a statement that the layer $U$ is (before rescaling) $H^1$ close to 
the function $1-\frac2\pi \theta^0$ when $|(x,y)|$ is large.
\begin{proof}[Proof of Theorem \ref{thm:layerprops}]
The statements (i), (ii), and (iii) were proved in \cite{CabreSola-Mor:2005a}. 

To show (iv), we apply Lemma~\ref{lem5p5}
to the layer solution $U$ and to
 $V=1-\frac2\pi \theta^0$, with $\xi$ a test function that is radial and satisfies $0\le \xi\le 1$, $\xi=1$ in $A_{S,R}$, $\xi(z)=0$ for $|z|<\frac S 2$ or $|z|>2R$, and 
 such that $|\nabla \xi|<\frac c S$ in $A_{\frac S 2, S}$  as well as  $|\nabla \xi|<\frac c R$ in $A_{R, 2R}$. Then, with $D$ 
 defined as in Lemma~\ref{lem5p5},
 \[
 \int_{A_{S,R}} |\nabla  U + \frac2\pi \nabla \theta^0|^2 \, dxdy  \le D \]
 and
 \[
 D\le CD^{1/2} \left(\int_{A_{S/2,2R}} |\nabla \xi|^2 \right)^{1/2} + \int_{\frac S 2<|x|<2R} \xi^2(f(U)+\frac 1x) (U-(1-\frac2\pi \theta^0))\, dx.
 \]
 Using the mean value theorem and $f(1-\frac2\pi \theta^0)=0$ we note that  \[f(U)=f(U)-f(1-\frac2\pi \theta^0) =f'(\eta) (U-(1-\frac2\pi \theta^0))\] for a suitable $\eta$. For $S$ large enough,
we must have $f'(\eta)<0$. Hence we have the estimate 
\[
D\le CD^{1/2} + \int_{\frac S 2<|x|<2R} \frac{|U(x,0)-(1-\frac2\pi \theta^0(x,0))| }{|x|} \, dx.
\]

From the decay estimates (see Theorem 1.6 and (6.19) in \cite{CabreSola-Mor:2005a}, keeping in mind that the roles of $x$ and $y$ are reversed between
that paper and ours), we see for $S$ large enough that $|U(x,0)-(1-\frac2\pi \theta^0(x,0))| \le \frac C{|x|}$. Hence
\[
D\le CD^\frac12 + C\int_{\frac S2}^\infty  \frac{1}{x^2} \, dx,
\]
which leads to $D\le C$ independently of $R$, and hence(iv) follows. Part (v) follows from (iv) using a Poincar\'e inequality, keeping in mind (ii).
\end{proof}

Next we prove a result 
on the nonexistence of nontrivial solutions of homoclinic type 
(meaning that $\lim_{x\to-\infty} U(x,0)=\lim_{x\to\infty} U(x,0)$)
 that is needed later on. It is somewhat analogous to the classification results for degree $0$ and
degree  $\pm 1$ 
 solutions of the Ginzburg-Landau equations in $\R^2$ that are due to Brezis-Merle-Rivi\`ere 
 \cite{BrezisMerleRiviere:1994a} and Mironescu \cite{Mironesc:1996a}, respectively.
 
\begin{theorem}\label{thm:uni}
Let $V$ be a half-plane solution with values in $[-1,1]$ such that $|V(x,0)|\to 1$ as $x\to \pm \infty$ . Then 
either $V$ is constant $\pm 1$ or a layer solution, i.e., there exists $b\in \R$ with $V(x,y)=U(\pm (x-b),y)$.
\end{theorem}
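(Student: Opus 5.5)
The proof splits into two cases according to the limits of $V(x,0)$ at $\pm\infty$. When the two limits are different, say $V(x,0)\to\mp1$ as $x\to\mp\infty$ (the other case follows by reflecting $x\mapsto -x$), I will show that $V$ is monotone in $x$ and then invoke the uniqueness statement (iii) of Theorem~\ref{thm:layerprops}. When the two limits coincide, the solution is homoclinic, and I will show it must be constant.

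\emph{Heteroclinic case.} I will use the sliding method, comparing $V$ with its translates $V^t(x,y)=V(x+t,y)$. The first step is to obtain bounds near infinity. Since $|V(x,0)|\to1$ and $G''(\pm1)>0$, we have $f'<0$ near $\pm1$, so the boundary condition acts like a zero-order term with a favorable sign in the regions where $V$ is close to $\pm1$. A maximum principle with boundary reaction then controls $V$ there. For this I plan to use a positive barrier of the type $\mathrm{Re}\,(x+i(y+1))^{-1}$-shaped functions, or simply a comparison based on the Poisson kernel. This shows that $V^t\ge V$ on $\{|x|\ge R\}$ for $t$ large. Next I slide $t$ down to the infimum $t_0$ of admissible shifts and argue by the strong maximum principle together with the Hopf lemma at the boundary that $t_0=0$. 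Compactness comes from harmonicity and boundedness, which give $C^{1,\alpha}$ estimates up to the boundary. This yields $V_x\ge0$, and then $V_x>0$ by the strong maximum principle applied to the linearized problem. To apply (iii) it remains to check (ii), namely $\int G(V(x,0))\,dx<\infty$. This should follow from the decay estimates of \cite{CabreSola-Mor:2005a} for monotone solutions, or directly from an energy comparison using Lemma~\ref{l:lem1}.

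\emph{Homoclinic case, say $V(x,0)\to-1$ at both ends.} I aim to show that $V\equiv-1$. The plan is a sliding argument against the constant $-1$ combined with a Pohozaev-type identity. Here is the key computation. Multiply by $x V_x + yV_y$ and integrate over half-balls $B_R^+$. Harmonicity gives, for boundary reaction problems in the half-plane, the identity
\[
\int_{\R} G(V(x,0))\,dx = \text{boundary terms on } \partial B_R^+\cap \R^2_+ ,
\]
because the Dirichlet term is scale invariant in dimension two and only the potential term survives. Now the decay $V(x,0)\to-1$ at both ends makes the boundary trace asymptotically constant with zero ``degree''. Using Lemma~\ref{lem5p5} with $V$ and the constant $-1$, and an argument like that for (iv) of Theorem~\ref{thm:layerprops} but with no $\theta^0$ term, I expect to get $\int_{\R^2_+}|\nabla V|^2<\infty$ together with decay $|V(x,0)+1|\le C/|x|$. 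The decay comes from linearizing at $-1$: the operator $(-\Delta)^{1/2}+|f'(-1)|$ has a kernel decaying like $|x|^{-2}$. With these bounds the Pohozaev boundary terms tend to $0$ as $R\to\infty$ along a sequence, so $\int_\R G(V(x,0))\,dx=0$. Hence $V(\cdot,0)\in\{\pm1\}$, and by continuity and the limits, $V(\cdot,0)\equiv-1$. Then $V\equiv-1$, since $V$ is bounded and harmonic with constant trace.

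The main obstacle is making the Pohozaev boundary terms vanish in the homoclinic case, which requires finite Dirichlet energy and sufficient decay of $V+1$ and $\nabla V$ at infinity in the whole half-plane, not only on the boundary line. I would first try to get this by combining Lemma~\ref{lem5p5} with a cutoff $\xi$ (to show finite energy) and then gradient bounds via rescaled interior and boundary estimates on half-annuli $A^+_{R,2R}$. Those estimates would show that $R\int_{\partial B_R^+}|\nabla V|^2\to0$ along a sequence $R\to\infty$.
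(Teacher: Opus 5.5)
Your proposal is correct in its main line, but it follows a different route from the paper. In the heteroclinic case the paper gives no argument of its own and cites \cite[Proposition 6.1]{CabreSola-Mor:2005a}. Your sliding reconstruction is the standard one. Its cleanest initialization is as follows: for $t$ large, $V^t>V$ on the whole boundary interval $[-R-t,R]$, because $-1<V<1$ and the limits hold. Outside that interval both functions lie in the same well, so Lemma~\ref{lemSM} gives $V^t>V$ everywhere.

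In the homoclinic case the paper proves pointwise decay with explicit barriers. It shows $V+1\le t\,\phi^a_x$ via Lemma~\ref{lemSM}, then $|\nabla V(x,0)|\le C/(1+x^2)$, then $|\nabla V|\le C/(x^2+(y+1)^2)$ in all of $\R^2_+$ (mean-value estimates in a cone, comparison with $\hat\phi^1$ in the two sectors). With these bounds every Poho\u{z}aev term vanishes as $R\to\infty$.

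Your energy route is shorter than you anticipate. Apply Lemma~\ref{lem5p5} with the constant $-1$ and $\xi(z)=\eta(|z|/R)$. In dimension two $\int|\nabla\xi|^2\le C$, and $f(V)(V+1)\le -c(V+1)^2$ for $|x|\ge A$, so
\[
\int_{\R^2_+}\xi^2|\nabla V|^2+c\int_{\{|x|\ge A\}}\xi^2(V+1)^2\,dx\le C\Bigl(\int_{\R^2_+}\xi^2|\nabla V|^2\Bigr)^{1/2}+C .
\]
Hence both $\int_{\R^2_+}|\nabla V|^2$ and $\int_\R G(V(x,0))\,dx$ are finite. Set $g(R)=R\bigl(G(V(R,0))+G(V(-R,0))\bigr)+\frac R2\int_{\partial B_R\cap\R^2_+}|\nabla V|^2\,d\ell$. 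The Poho\u{z}aev identity gives $\int_{-R}^R G(V)\,dx\le g(R)$. Since $\int_1^\infty g(R)\,\frac{dR}{R}<\infty$, we get $\liminf_{R\to\infty} g(R)=0$, and therefore $\int_\R G(V)\,dx=0$.

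So you need neither of the two extra steps you planned:
\begin{enumerate}
\item[(a)] The pointwise bound $|V(x,0)+1|\le C/|x|$. You justify it only heuristically by linearization; making it rigorous needs a comparison function as in the paper's Step~1, which actually gives $O(x^{-2})$.
\item[(b)] Gradient bounds on half-annuli. These would be delicate anyway: rescaling $A^+_{R,2R}$ to unit size turns the boundary condition into $\partial_\nu V_R=R\,f(V_R)$, so standard boundary estimates are not uniform in $R$.
\end{enumerate}

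As for what each approach buys: the paper's barriers give quantitative decay of $V+1$ and $\nabla V$, and the Poho\u{z}aev terms vanish for every $R$. Your route uses only the scale invariance of the two-dimensional cutoff energy and the sign of $f'(-1)$, with no comparison functions.

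The same cutoff estimate also settles the integrability (ii) that you need in the heteroclinic case. Use the reference function $1-\frac2\pi\theta^0$ with $\xi$ vanishing near $0$, and absorb the term $\frac{2}{\pi|x|}\bigl|V-(1-\frac2\pi\theta^0)\bigr|$ by Young's inequality, as in Lemma~\ref{lem:l2}.
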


For the case of the Peierls-Nabarro nonlinearity $f(u)=\frac{1}{\pi}\sin(\pi u)$, the classification result of Toland \cite{Toland:1997} shows that the
theorem holds true, with the increasing layer solution given by $\frac2\pi \phi^1$, with
\begin{equation}\label{phia}
\phi^a(x,y)=\arctan \frac{x}{y+a}.
\end{equation}
In this case, all solutions other than constants and the layer solutions are periodic, and hence none of them satisfies the homoclinic conditions about limits at $\pm\infty$.

For general $f$, the
case
 with the limit at $- \infty$ different from the limit at $+\infty$ was treated by Cabr\'e and Sol\`a-Morales \cite[Proposition 6.1]{CabreSola-Mor:2005a}, who showed that the 
 solution must be a (possibly reflected) translate of the layer solution. We proceed now to show 
that in the case of equal limits in Theorem~\ref{thm:uni}, the function must be constant. 

For the proof, we will use several times the following maximum principle for superharmonic functions in a half-space taken from the paper by the first author and Sol\`a-Morales~\cite{CabreSola-Mor:2005a}. Its proof in our case $n=2$ is simple; see~\cite{CabreSola-Mor:2005a}.

\begin{lemma}[\cite{CabreSola-Mor:2005a}, Lemma 2.8]
\label{lemSM} 
Let $d:\R^{n-1}\to\R$ be a continuous bounded function. Let $v$ be a bounded function in $\R^n_+$ satisfying 
\begin{equation} \label{maxpr}
\begin{cases}
-\Delta v\ge 0&\text{ in } \R^n_+\\ 
\dfrac{\partial v}{\partial\nu}+d(x)v\ge 0&\text{ on }\partial\R^n_+ =\R^{n-1}.
\end{cases}
\end{equation}
Assume that there exist a set $H\subset\partial\R^n_+=\R^{n-1}$ $($possibly empty$)$
and a constant $a>0$ such that
$$
v(x,0)> 0 \quad \mbox{for } y\in H \qquad \mbox{and}\qquad
d(x)\ge a^{-1} \quad \mbox{for } x\not\in H .
$$

Then $v>0$ in $\overline{\R^n_+}$, unless $v\equiv 0$.
\end{lemma}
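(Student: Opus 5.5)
We prove the lemma for general $n$ in two stages: first $v\ge 0$, by comparison with an explicit harmonic barrier on large flat cylinders; then strict positivity, by the strong maximum principle and Hopf's lemma. We assume throughout, as is implicit in \eqref{maxpr}, that $v$ is continuous on $\overline{\R^n_+}$ and has a classical normal derivative on $\partial\R^n_+$. Write points as $(x,y)$ with $x\in\R^{n-1}$, $y>0$, so that $\partial_\nu=-\partial_y$ on $\{y=0\}$. Set $M=\|v\|_{L^\infty}$.

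\emph{Step 1: the barrier.} Fix $\gamma\in(0,1)$ small and set $K=2/\gamma$, so that $K\gamma-(n-1)\gamma^2\ge 1$ and $K\ge (n-1)\gamma$. For $R\ge 1$ define
\[
h_R(x,y)=\frac{|x|^2-(n-1)y^2+KRy+aKR}{R^2}
\]
on the cylinder $\mathcal C_R=\{|x|<R,\ 0<y<\gamma R\}$. The function $h_R$ is harmonic, since the quadratic terms contribute $2(n-1)/R^2-2(n-1)/R^2=0$. It has the following properties:
\begin{enumerate}[(a)]
\item On the lateral boundary $\{|x|=R\}$, $h_R\ge 1+y(KR-(n-1)y)/R^2\ge 1$, because $y\le\gamma R$ and $K\ge(n-1)\gamma$.
\item On the top $\{y=\gamma R\}$, $h_R\ge K\gamma-(n-1)\gamma^2\ge 1$.
\item On $\{y=0\}$, $\partial_\nu h_R=-K/R$, while $h_R\ge aK/R$ there. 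Hence, wherever $d\ge a^{-1}$, we get $\partial_\nu h_R+d\,h_R\ge -K/R+K/R=0$.
\item For each fixed $(x,y)$, $h_R(x,y)=O(1/R)\to 0$ as $R\to\infty$.
\end{enumerate}

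\emph{Step 2: $v\ge0$.} Let $w=v+2Mh_R$. Then $-\Delta w\ge0$ in $\mathcal C_R$, and by (a) and (b) $w\ge M\ge 0$ on the lateral and top boundaries of $\mathcal C_R$. Suppose $\min_{\overline{\mathcal C_R}}w<0$. By the minimum principle for superharmonic functions, this minimum is attained at a bottom point $(x_0,0)$ with $|x_0|<R$.
\begin{itemize}
\item If $x_0\in H$, then $w(x_0,0)\ge v(x_0,0)>0$, which is impossible.
\item If $x_0\notin H$, then $d(x_0)\ge a^{-1}>0$. Combining $\partial_\nu v+dv\ge0$ with (c) gives $\partial_\nu w+d\,w\ge0$ at $(x_0,0)$, hence $\partial_\nu w\ge -d\,w>0$ there. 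But at a boundary minimum the outward derivative satisfies $\partial_\nu w=-\partial_y w\le 0$, a contradiction.
\end{itemize}
Thus $w\ge0$ on $\mathcal C_R$. Fixing a point and letting $R\to\infty$, property (d) gives $v\ge0$ on $\overline{\R^n_+}$.

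\emph{Step 3: strict positivity.} Suppose $v\ge0$, $v\not\equiv0$, and $v(z)=0$ at some $z\in\overline{\R^n_+}$.
\begin{itemize}
\item If $z$ is interior, the strong minimum principle for superharmonic functions forces $v\equiv0$, which is excluded.
\item If $z=(x_0,0)$ is a boundary point, then $x_0\notin H$, since $v>0$ on $H$. Hopf's lemma, applied in a small ball inside $\R^n_+$ touching $z$, gives $\partial_\nu v(z)<0$. On the other hand, $\partial_\nu v(z)\ge -d(x_0)v(z)=0$, a contradiction.
\end{itemize}
Hence $v>0$ in $\overline{\R^n_+}$ unless $v\equiv0$.

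\emph{Main obstacle.} The delicate step is Step 1. The half-space is unbounded and $v$ is only bounded, so we need a barrier that simultaneously (i) is harmonic, (ii) dominates $1$ on the whole boundary of a large truncated region, (iii) satisfies the Robin inequality on the part of the flat boundary outside $H$, and (iv) vanishes locally as $R\to\infty$. The anisotropic quadratic $|x|^2-(n-1)y^2$ supplies harmonicity in every dimension; truncating the cylinder at height $\gamma R$ with slope $KR$ satisfies (ii); and the constant $aKR$ is chosen exactly to balance the Neumann term in (iii). Steps 2 and 3 are standard once the barrier is in place.
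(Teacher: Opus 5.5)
Your proof is correct. Note that the paper gives no proof of this lemma: it cites Cabr\'e--Sol\`a-Morales and only remarks that the case $n=2$ is simple, so there is no in-text argument to compare with step by step.

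Your barrier checks out:
\begin{itemize}
\item $\Delta|x|^2=2(n-1)$ cancels $\Delta(-(n-1)y^2)$, so $h_R$ is harmonic.
\item $h_R\ge1$ on the lateral and top faces once $(n-1)\gamma^2\le1$.
\item $\partial_\nu h_R+d\,h_R\ge0$ wherever $d\ge a^{-1}$.
\item $h_R\to0$ pointwise as $R\to\infty$.
\end{itemize}
Steps 2 and 3 are then sound under the regularity you make explicit, and that regularity holds in all of the paper's applications.

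In the plane, the only case the paper needs, you can avoid truncation altogether. For any $b>0$, the harmonic function $\Phi=\frac ab+\log\bigl(\sqrt{x^2+(y+b)^2}/b\bigr)$ satisfies $\Phi\ge\frac ab$ and $\partial_\nu\Phi\ge-\frac1b$ on $\{y=0\}$. Hence $\partial_\nu\Phi+d\,\Phi\ge0$ wherever $d\ge a^{-1}$, and $\Phi\to+\infty$ at infinity. Running your Step 2 for $v+\epsilon\Phi$ on all of $\overline{\R^2_+}$ and letting $\epsilon\to0$ then gives $v\ge0$ directly. What your anisotropic quadratic on flat cylinders buys is the result in every dimension $n$ with one explicit, elementary barrier.

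Finally, your argument uses $\partial_\nu v+d\,v\ge0$ only on $\partial\R^n_+\setminus H$. On $H$ only $v>0$ enters, and a zero of $v$ cannot lie in $H$. It also never uses the continuity or boundedness of $d$. This slightly stronger form is exactly what Step 1 of the proof of Theorem~\ref{thm:uni} requires. There, the Robin inequality for $t\phi^a_x-\tilde V$ and for $M\phi^a_x\pm V_x$ is only checked for $|x|\ge A$, with $H=[-A,A]$.
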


\begin{proof}[Proof of Theorem \ref{thm:uni}]
As mentioned above, the case
 with the limit at $- \infty$ different from the limit at $+\infty$ was treated by Cabr\'e and Sol\`a-Morales \cite[Proposition 6.1]{CabreSola-Mor:2005a}, who showed that the 
 solution must be a (possibly reflected) translate of the layer solution. We proceed now to treat the case of equal limits.
 
Thus, let $V$ be a solution and assume that $V(x,0)\to -1$ as $x\to \pm\infty$. We proceed in three steps. In the two first ones we use the  function $\phi^a$ in \eqref{phia} to construct some comparison functions that will be used to show decay estimates for $\nabla V$.  In the third step, these estimates will 
allow us to use a Poho\u{z}aev identity from which we will then deduce that $V$ is constant.

\smallskip

{\it Step 1.  Here we show that}
\begin{equation*}
|\nabla V(x,0)| \le \frac{C}{x^2+1} \qquad\text{for all } x\in\R.
\end{equation*}

\smallskip

To show this, we start differentiating $\phi^a$ to find the harmonic function
\[
\phi^a_x = \frac{y+a}{x^2+(y+a)^2}.
\]
We also define another harmonic function $\hat \phi^a$ by differentiating again as follows:
\[
\hat \phi^a:=\phi^a_{xy}= \frac{x^2-(y+a)^2}{(x^2+(y+a)^2)^2}.
\]
On $\R=\partial \R^2_+$, we have 
\[
\frac{\partial}{\partial \nu} \phi^a_x = -\hat \phi^a(x,0)= \frac{a^2-x^2}{(x^2+a^2)^2}
\]
and hence
\[
\frac{\partial}{\partial \nu} \phi^a_x  + \frac1a \phi^a_x = \frac{a^2-x^2}{(x^2+a^2)^2}+ \frac{1}{x^2+a^2}= \frac{2a^2}{(x^2+a^2)^2} \ge 0
\]
on $\R$. 

The function $\tilde V=V+1$ is harmonic and satisfies $\tilde V\ge 0$ and  $\frac{\partial \tilde V}{\partial \nu} =f(\tilde V-1)$ on $\R$. As $f'(-1)<0$, we 
can choose $a, A>0$ such that 
\[
\frac{\partial \tilde V}{\partial \nu} +\frac1a \tilde V \le 0 \quad\text{ for }|x|\ge A.
\]
With this fixed $a$, for any $t>0$, we can now consider the harmonic function 
\[
\tilde V_t = t\phi^a_x-\tilde V,
\]
which satisfies 
\[
\frac{\partial \tilde V_t}{\partial \nu}+ \frac1a \tilde V_t \ge 0 \quad\text{ for }|x|\ge A.
\]
Choosing $t$ large enough, we have $\tilde V_t>0$ for $|x|\le A$ (as $\phi^a_x>0$). Using the maximum principle of Lemma~\ref{lemSM}, 
we see that $\tilde V_t>0$ on $\R^2_+$. Hence 
\begin{equation}\label{eq:decay1}
0\le \tilde V<\frac{t(y+a)}{x^2+(y+a)^2}.
\end{equation}

On $\R$, we have that 
\begin{equation}\label{eq:vyb}
|V_y(x,0)|=|f(V)|\le C|V+1| = C\tilde V\le \frac{C}{x^2+a^2}.
\end{equation}
The derivative $V_x$ is harmonic and satisfies
\[
\frac{\partial V_x}{\partial \nu} = f'(V)V_x.
\]
As $f'(-1)<0$, we can choose $A>0$, $a>0$ large enough such that for $|x|\ge A$,
\[
f'(V)\phi^a_x \le -\frac1a \phi^a_x \le \frac{\partial}{\partial\nu} \phi^a_x.
\]
We can now use Lemma~\ref{lemSM}  (with $d(\cdot)=-f'(V(\cdot,0))$, which is positive 
for $|x|>A$),
 we find for a sufficiently large $M>0$ that
\[
M\phi^a_x \pm V_x \ge 0 \quad\text{in $\R^2_+$},
\]
and hence $|V_x(x,0)|\le M\frac{a}{x^2+a^2}$. Adjusting the constant, together with \eqref{eq:vyb}, we find that for all $x\in \R$,
\begin{equation}\label{eq:fugrabl}
|\nabla V(x,0)| \le \frac{C}{x^2+1}.
\end{equation}

\smallskip

{\it Step 2.  We now find bounds on $|\nabla V|$ in the upper half-plane. We show that}
\begin{equation*}
|\nabla V|\le \frac {C}{x^2+(y+1)^2}\quad \text{in $\R^2_+$}.
\end{equation*}

\smallskip

We start by considering the truncated cone $K_1=\{(x,y)\in \R^2_+: |x|<2(y+1)\}$.

For $(x,y)\in K_1$, which we will assume for the moment, 
we have $\overline{B_{y/2}(x,y)}\subset \R^2_+$. Consider the harmonic 
functions $(\tilde V_x,\tilde V_y)=(\de_1 \tilde V, \de_2 \tilde V)$. 

Using the mean value theorem and the divergence theorem with $\nu=(\nu^1,\nu^2)$ the outer unit normal to $B_{y/2}(x,y)$, for $i=1,2$ we have
\[
\de_i \tilde V(x,y)= \avint_{B_{y/2}} \de_i \tilde V \, dx = \frac{4}{\pi y^2} \int_{B_{y/2}} \de_i \tilde V \, dx = \frac{4}{\pi y^2} \int_{\de B_{y/2}} \tilde V \nu^i\, d\ell.
\]
As \eqref{eq:decay1} gives $|\tilde V|\le \frac{C}{y+a}\le \frac C y $, we obtain (with constants changing from use to use)
\[
|\de_i W(x,y)| \le \frac C{y^2}\frac1y 2\pi \frac y 2 \le \frac{C}{y^2}
\] 
For $y>1$ and recalling $|x|<2(y+1)$, we can thus bound
\begin{equation}\label{eq:nWinK}
|\nabla V|=|\nabla \tilde V|\le \frac{C}{y^2}\le \frac{C}{(y+1)^2}\le \frac{C}{x^2+(y+1)^2}.
\end{equation}
For $0<y\le 1$, we have $|x|<4$ and hence$|\nabla \tilde V|\le C$; adjusting the constant we see \eqref{eq:nWinK} holds in the entire set $K_1$.

We consider the remaining set $\R^2_+\setminus K_1$, which consists of two conical sectors. Recall the harmonic function $\hat \phi:=\hat \phi^1$ from above, where 
we have set $a=1$. On 
the set $|x|=2(y+1)$, we have 
\[
\hat \phi = \frac{3(y+1)^2}{(5(y+1)^2)^2} = \frac{3}{25}\frac 1{(y+1)^2}.
\]
Choosing a suitably large constant $M$, we thus have, using our estimate  \eqref{eq:nWinK},
\[
M\hat \phi \pm \de_i \tilde V \ge 0 \quad\text{on }\{ |x|=2(y+1), y>0\}.
\]
For $|x|>2$ and $y=0$ we have for some small $c>0$
\[
\hat \phi = \frac{x^2-1}{(x^2+1)^2} \ge \frac {c}{x^2+1},
\]
and hence by  \eqref{eq:fugrabl} we also have (choosing $M$ larger if necessary)
\[
M\hat \phi \pm \de_i \tilde V \ge 0 \quad\text{on }\{ y=0, |x|>2\}.
\]
We can now use the maximum principle (see, e.g., Lemma 2.1 in Berestycki et al.~\cite{Berestycki:1997aa}), together with 
\eqref{eq:nWinK}, to conclude the decay estimate
\begin{equation}\label{eq:ubq}
|\nabla V| = |\nabla \tilde V| \le 2 M\hat \phi \le \frac {C}{x^2+(y+1)^2}\quad \text{in $\R^2_+$}.
\end{equation}

\smallskip

{\it Step 3.  Conclusion, using the Poho\u{z}aev identity.}

\smallskip

We use the Poho\u{z}aev identity for harmonic functions, Lemma \ref{lem:rellich}, to obtain for any large $R>0$
\[
R\int_{\de B_R\cap \R^2_+} \left| \frac{\de V}{\de \nu}\right|^2 \, d\ell + \int_{-R}^R f(V) V_x x \, dx  = \frac R 2 \int_{\de B_R\cap \R^2_+}
|\nabla V|^2 \, d\ell.
\]
Now integrating by parts in 1D we see, using $G'=-f$,
\[
\int_{-R}^R G(V)1 \, dx = R(G(V(R,0)+G(V(-R,0)) + \int_{-R}^R f(V) V_x  x\, dx,
\]
hence we see 
\begin{align*}
\int_{-R}^R G(V) \, dx &= R\big( G(V(R,0)+G(V(-R,0) \big) + \frac R 2 \int_{\de B_R\cap \R^2_+}
|\nabla V|^2 \, d\ell \\
& \qquad - R\int_{\de B_R\cap \R^2_+} \left| \frac{\de V}{\de \nu}\right|^2 \, d\ell .
\end{align*}
However, $G(V(\pm R,0))\le C|\tilde V(R,0)|^2\le \frac C{R^4}$ by  \eqref{eq:decay1} and $|\nabla V|^2 \le \frac C{R^4}$ by \eqref{eq:ubq}. 
Thus,
\[
\int_{-R}^R G(V) \, dx \le \frac {CR}{R^4} + \frac{CR^2}{R^4} \to 0 \quad\text{as }R\to \infty,
\]
hence $G(V)=0$ everywhere, and $V$ is constant.
\end{proof}

We now define a useful quantity, related to a Ginzburg-Landau quantity studied by Bethuel-Brezis-H\'elein \cite{BethuelBrezisHelein:1994a}: 

\begin{definition}
For $0<\e<\rho$, we set (with $U_\e$ as in (v) of Theorem~\ref{thm:layerprops}, normalized by setting $U_\e(0,0)=0$) 
\begin{equation}
I(\e,\rho) := \min \left\{ \frac12\int_{B_\rho^+} |\nabla  \Psi|^2 \,dxdy + \frac1\e\int_{-\rho}^\rho G( \Psi) \,dx : \Psi = U_\e \text{ on }\partial B_\rho^+\cap \R^2_+ \right\}.
\end{equation}
\end{definition}
Studying this quantity, we find a behavior similar to the result in Ginzburg-Landau theory.
\begin{proposition}\label{prop:i_er}
The quantity $I(\e,\rho)$ has the following properties:
\begin{enumerate}[(i)]
\item $I(\e, \rho)=I(1,\frac\rho\e)$.
\item $I(1,R)$ increases as $R>1$ increases.
\item With $U$ denoting the layer solution, we have 
\begin{equation}
I(1,R)=  \frac12\int_{B_R^+} |\nabla  U|^2 \,dxdy + \int_{-R}^R G( U) \,dx = \frac2\pi \log R + C_f +o(1)
\end{equation}
 as $R\to \infty$,
where $C_f$ is a constant depending only on the choice of nonlinearity $f$.
\end{enumerate}
\end{proposition}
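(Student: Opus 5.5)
For (i), I will rescale. Given an admissible $\Psi$ for $I(\e,\rho)$, I set $\tilde\Psi(x,y)=\Psi(\e x,\e y)$ on $B^+_{\rho/\e}$. Since $U_\e(\e x,\e y)=U(x,y)$, the boundary condition $\Psi=U_\e$ on $\partial B_\rho^+\cap\R^2_+$ becomes $\tilde\Psi=U$ on $\partial B^+_{\rho/\e}\cap\R^2_+$. The Dirichlet integral is conformally invariant in two dimensions, and $\frac1\e\int_{-\rho}^\rho G(\Psi)\,dx=\int_{-\rho/\e}^{\rho/\e}G(\tilde\Psi)\,dx$. This correspondence between admissible classes is a bijection preserving the functional, so $I(\e,\rho)=I(1,\rho/\e)$. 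The minimum exists by the direct method: the admissible class is weakly closed in $H^1$, and the functional is coercive and weakly lower semicontinuous since the trace embeds compactly into $L^2_{\rm loc}$ of the flat boundary. I will also truncate to $[-1,1]$, which does not increase the energy.

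I will prove (iii) before (ii). The first key step is to show that $U$ itself is the minimizer in $I(1,R)$. Let $\Psi$ be any minimizer. After truncating we may assume $-1\le\Psi\le1$, and $\Psi$ solves the boundary reaction problem in $B_R^+$ with Dirichlet data $U$ on the arc. Uniqueness is where the monotonicity of $U$ enters. I would use a sliding argument in the $x$-direction, or equivalently foliate by the translates $U(\cdot-t,\cdot)$, which are strict sub- and supersolutions ordered in $t$, as in the layer-solution minimality theory of \cite{CabreSola-Mor:2005a}. This shows that $U$ is the unique solution with these boundary data, hence the unique minimizer. Alternatively, I can quote the local minimality of layer solutions from \cite{CabreSola-Mor:2005a}: their proof shows exactly that $U$ minimizes the energy in every bounded half-ball among functions with the same data on the curved boundary. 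I expect this step to be the main obstacle, and I would try to cite this minimality result first.

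Once $U$ is the minimizer, (ii) follows at once. We have $I(1,R)=\frac12\int_{B_R^+}|\nabla U|^2+\int_{-R}^RG(U)$, which is an integral of a nonnegative density over increasing domains, so it is nondecreasing in $R$.

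For the asymptotics in (iii), I apply Lemma~\ref{l:lem1} with $V=-U$ (or with the reflection $\theta^0\mapsto\pi-\theta^0$) on $A^+_{1,R}$. This gives
\[
\frac12\int_{A^+_{1,R}}|\nabla U|^2=\frac2\pi\log R-\frac2\pi\int_{A^0_{1,R}}\frac{|U-(1-\frac2\pi\theta^0)|}{|x|}\,dx+\frac12\int_{A^+_{1,R}}|\nabla(U-1+\tfrac2\pi\theta^0)|^2 .
\]
The boundary integral converges as $R\to\infty$ because of the decay $|U(x,0)-\mathrm{sign}(x)|\le C/|x|$ quoted in the proof of Theorem~\ref{thm:layerprops}. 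The last integral converges by Theorem~\ref{thm:layerprops}(iv). Moreover, $\int_{-R}^RG(U)\to\int_\R G(U)<\infty$ by (ii) of that theorem. Adding the fixed energy in $B_1^+$, I obtain $I(1,R)=\frac2\pi\log R+C_f+o(1)$. Here $C_f$ is the sum of these limits, and it depends only on $f$ because the normalized layer $U$ is unique by Theorem~\ref{thm:layerprops}(iii).
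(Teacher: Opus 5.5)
Your proposal is correct and follows essentially the same route as the paper. The paper gets (i) by rescaling, obtains the first equality in (iii), and hence (ii), from the minimality of the layer solution with respect to its own boundary data (Lemma~3.1 of \cite{CabreSola-Mor:2005a}), and derives the asymptotics from Lemma~\ref{l:lem1} together with Theorem~\ref{thm:layerprops}(ii),(iv) and the $C/|x|$ decay of $U(x,0)-\mathrm{sign}(x)$. Your choice $V=-U$ in Lemma~\ref{l:lem1} correctly matches that lemma's sign convention, and the sliding argument you sketch is the standard proof of the cited minimality result.
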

\begin{remark}
We will not use this, but it may be interesting to notice that the same results hold (with the same value for $C_f$) for
a variant where the boundary condition is fixed instead of depending on $\e$:
\[
\hat I(\e,\rho) = \min \left\{ \frac12\int_{B_\rho^+} |\nabla  \Psi|^2 \,dxdy + \frac1\e\int_{I_\rho} G( \Psi) \,dx : \Psi = 1-\frac2\pi\theta^0 \text{ on }\partial B_\rho^+\cap \R^2_+ \right\}.
\]
For a special case, this was shown in \cite[Lemma 4.14]{IK_jac}.
\end{remark}
\begin{proof}[Proof of Proposition~\ref{prop:i_er}]
The first part of Proposition~\ref{prop:i_er} is obvious by rescaling. The first equality in (iii), and as a consequence (ii), follows from Lemma 3.1 in \cite{CabreSola-Mor:2005a}, which gives that 
the layer solution is minimizing with respect to its own boundary conditions. 

To prove the second equality in (iii), we write 
\[
I(1,R)=\int_{B_1^+} \frac12|\nabla U|^2 \, dx + \int_{I_R} G(U)\, d\ell+ \int_{A_{1,R}} \frac12|\nabla U|^2 \, dx.
\]
Using Lemma~\ref{l:lem1}, we find 
\[
\int_{A_{1,R}} \frac12|\nabla U|^2 \, dxdy=\frac2\pi \log R -\frac2\pi \int_{A^0_{1,R}} \frac{|U+\frac2\pi \theta^0-1|}{|x|}\,dx + \int_{A_{1,R}}  \frac12|\nabla U + \frac2\pi \theta^0|^2 \, dxdy
\]
Using Theorem~\ref{thm:layerprops} and the decay estimates from \cite{CabreSola-Mor:2005a}, we can let $R\to\infty$ and find that 
\[
I(1,R)-\frac2\pi \log R 
\]
converges as $R\to\infty$. The limit $C_f$ depends only on the layer solution, and hence only on the choice of $f$.
\end{proof}
\begin{remark}
For $f(u)=f^a(u)=\frac1{\pi a} \sin(\pi u)$ we can compute explicitly that 
\[
C_{f^a}=\frac2\pi \left( 1-\log a -a \log 2\right);
\]
 see
Theorem~\ref{thm:Cf} in the appendix.
\end{remark}

\section{Renormalized energy for boundary values in $\Ss^0$}\label{sec:renen}

In this section we collect some results on harmonic functions
that map the boundary of a two-dimensional domain into $\Ss^0=\{\pm 1\}$. 
The results are analogous to those for $\Ss^1$-valued harmonic 
maps found in \cite{BethuelBrezisHelein:1994a}. 
We also introduce some notation and certain conformal maps that will be used throughout the article. For conciseness, and since this is all we need subsequently, we only consider the 
case of two jumps. The general case, which is slightly more involved, can be treated with some modifications.

We generally assume that $p\neq q\in\dOm$ where $\Omega\subset\R^2$ is simply connected and, throughout this section, also smooth.
For small $\rho>0$, we set  $$\Omega_{\rho}=\Omega \setminus (\overline{B}_{\rho}(p)\cup
\overline{B}_{\rho}(q)).$$ We also let $\varphi:\Omega\to{\R^2_+}:=\{x+iy \, : \, x\in\R, y> 0\}$ 
 be a conformal map with inverse $\psi$. By the Kellogg-Warschawski theorem, $\varphi$ extends to a smooth
 function $\ol\Omega \to \overline{\R^2_+}=\{x+iy \, : \, x\in\R, y\ge 0\}$ whose inverse is the corresponding extension of $\psi$.

We now introduce the renormalized energy.
\begin{definition}\label{defi:renen}
The \emph{renormalized energy} $W$ is defined in $(\dOm\times\dOm)\setminus \{ (p,p):p\in \dOm\}$ by
$$ W(p,q)=W_{\Omega}(p,q):=\frac{2}{\pi}\log \frac{|\varphi(p)-\varphi(q)|^2}{|\varphi'(p)||\varphi'(q)|}
$$
\end{definition}
\begin{remark}\label{rem:crossr} 
Using the invariance of the cross-ratio under M\"obius transformations, it follows immediately that $W$ does not 
depend on the choice of the conformal map
 $\varphi$. We can thus choose $\varphi$ such that $\varphi(p)=0$ and $\varphi(q)=1$ 
and such that a point $p_\infty\in\dOm$ on the oriented
segment of $\dOm$ between $q$ and $p$ is mapped to $\infty$.
This determines $\varphi$ up to the choice of the point $p_\infty$. 

Furthermore, the renormalized energy can be computed without reference to conformal maps; see Proposition~\ref{propo:altren} below.
\end{remark}
\begin{example}
For $\Omega=B_1(0)$, a conformal map $\phi:\Omega\to \R^2_+$ is given by  $\varphi(z)=i\frac{1+z}{1-z}$.
After a short computation, we obtain that $\frac{|\varphi(p)-\varphi(q)|^2}{|\varphi'(p)||\varphi'(q)|}=|p-q|^2$.
Setting   $p=e^{i\sigma_1}$ and $q=e^{i\sigma_2}$ we have $|p-q|^2=2-2\cos(\sigma_1-\sigma_2)$, so
 \[
W_\Omega(p,q) = \frac4\pi \log|p-q|=\frac2\pi \log(2(1-\cos(\sigma_1-\sigma_2))).
\]
Hence, this renormalized energy is maximal for $p=-q$. From 
the expression in terms of $\sigma_1$ and $\sigma_2$, we easily see that it has no other critical points. 
\end{example}

The following theorem justifies calling $W$ the renormalized energy. Recall from \eqref{eq:chif} and \eqref{hext}
that $u_0^{p,q}$ is the 
harmonic function that only takes the  boundary values $-1$ and $1$, 
jumping from $1$ to $-1$ at $p$ and from $-1$ to $1$
at $q$ when the boundary is traversed counterclockwise.

\begin{theorem}\label{thm:calcW1}
The Dirichlet energy of $u_0=u_0^{p,q}$ has the following expansion as $\rho\to 0$:
\[ \frac12\int_{\Omega_{\rho}} |\nabla u_0|^2 \, dx= \frac 4\pi  \log \frac{1}{\rho} +
W_{\Omega}(p,q) + O(\rho). \]
The constant implicit in the $O(\rho)$ notation depends only on the domain $\Omega$.
\end{theorem}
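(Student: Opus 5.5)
We will prove the expansion by moving the problem to the half-plane. There the harmonic function with boundary values in $\Ss^0$ is explicit, and its Dirichlet energy on the complement of two small half-disks can be computed in closed form. Let $\varphi:\Omega\to\R^2_+$ be the conformal map normalized as in Remark~\ref{rem:crossr}, so that $\varphi(p)=0$, $\varphi(q)=1$, and some point $p_\infty$ of the arc from $q$ to $p$ goes to $\infty$. The Dirichlet integral is conformally invariant in two dimensions. Harmonic functions are preserved under composition with $\psi=\varphi^{-1}$. Hence $\frac12\int_{\Omega_\rho}|\nabla u_0|^2=\frac12\int_{\varphi(\Omega_\rho)}|\nabla v|^2$, where $v=u_0\circ\psi$ is the bounded harmonic function on $\R^2_+$ with boundary values $-1$ on $(0,1)$ and $1$ on $\R\setminus[0,1]$. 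We first check that the counterclockwise arc $I_{p,q}$ is sent to $(0,1)$ with the right orientation. Using the argument functions $\theta^{x_0}$ of Section~\ref{sec:layer}, we then identify $v$ explicitly as
\[
v=1+\tfrac2\pi\bigl(\theta^0-\theta^1\bigr).
\]
This identification uses boundedness and the uniqueness noted in the footnote after \eqref{hext}.

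Next we compute the energy of $v$ outside two half-disks. Since $\nabla\theta^{x_0}$ corresponds to the conjugate of $\nabla\log|z-x_0|$, we have
\[
|\nabla v|^2=\frac4{\pi^2}\Bigl|\frac1z-\frac1{z-1}\Bigr|^2=\frac{4}{\pi^2}\,\frac1{|z|^2|z-1|^2}.
\]
This density is even under reflection $y\mapsto -y$. So the half-plane integral outside $B_{r_1}(0)\cup B_{r_2}(1)$ equals half of the full-plane integral. The full-plane integral is the classical Ginzburg--Landau/Green's function computation for $\nabla(\log|z|-\log|z-1|)$:
\[
\int_{\R^2\setminus(B_{r_1}(0)\cup B_{r_2}(1))}\frac{dA}{|z|^2|z-1|^2}=2\pi\log\frac{1}{r_1r_2}+O(r_1+r_2).
\]
We obtain this by integrating by parts against $\log|z|-\log|z-1|$ and reading off boundary terms on the two small circles; the contribution at infinity vanishes because the gradient decays like $|z|^{-2}$. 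This gives
\[
\tfrac12\int_{\R^2_+\setminus(B_{r_1}(0)\cup B_{r_2}(1))}|\nabla v|^2=\tfrac2\pi\log\frac1{r_1r_2}+O(r_1+r_2).
\]

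Finally we choose $r_1=\rho|\varphi'(p)|$ and $r_2=\rho|\varphi'(q)|$. Since $|\varphi(p)-\varphi(q)|=1$, the main term becomes
\[
\tfrac4\pi\log\tfrac1\rho-\tfrac2\pi\log\bigl(|\varphi'(p)||\varphi'(q)|\bigr)=\tfrac4\pi\log\tfrac1\rho+W_\Omega(p,q).
\]

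The step needing care, and the main obstacle, is the comparison between $\varphi(\Omega_\rho)$ and the half-plane minus the two exact half-disks. Because $\varphi$ is smooth up to the boundary (Kellogg--Warschawski), we have $\varphi(z)=\varphi'(p)(z-p)+O(|z-p|^2)$ near $p$. Hence $\varphi(B_\rho(p)\cap\Omega)$ is sandwiched between the half-disks of radii $r_1(1\pm C\rho)$, and similarly at $q$. The energy density is bounded by $C/r^2$ near $0$, so the energy in the discrepancy region is at most
\[
\int_{r_1(1-C\rho)}^{r_1(1+C\rho)}\frac{C}{r}\,dr=O(\rho).
\]
The same bound holds near $1$. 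It also covers the parts of the sandwich region touching the real axis, since the image of $\dOm$ is exactly the real axis. Together with the $O(r_1+r_2)=O(\rho)$ error from the explicit computation, this yields the claimed expansion, with a constant depending only on $\Omega$ through bounds on $\varphi'$ and $\varphi''$ near $\dOm$. The remaining bookkeeping is that the expression is independent of the normalization of $\varphi$, which is Remark~\ref{rem:crossr}, and that the other choice of $p_\infty$ does not matter.
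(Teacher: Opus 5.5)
Your proposal is correct and follows the paper's proof. Both transfer the problem to $\R^2_+$ by the normalized conformal map and identify $u_0\circ\psi=1+\frac2\pi(\theta^0-\theta^1)$. Both compute its energy outside two half-disks as $\frac2\pi\log\frac1{r_1r_2}+O(r_1+r_2)$ and sandwich $\varphi(\Omega_\rho)$ using radii $|\varphi'(p)|\rho(1\pm C\rho)$ and $|\varphi'(q)|\rho(1\pm C\rho)$.

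The only real difference is how you evaluate the explicit integral. You reflect to the whole plane and integrate by parts once against $\log|z|-\log|z-1|$; the mean value property makes the error there even $O(r_1^2+r_2^2)$. The paper instead splits $|\nabla U_0|^2$ into $|\nabla\theta^0|^2+|\nabla\theta^1|^2$ plus a cross term on a truncated half-disk of radius $R$. It then cancels the resulting $\frac4\pi\log R$ against the cross-term boundary integral along the real axis.
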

\begin{proof}
We set $U_0=u_0\circ \psi$, and hence $U_0:\R^2_+\to \R$. By the conformal invariance of the Dirichlet integral, we find that 
\[
\frac12\int_{\Omega_\rho} |\nabla u_0|^2 \, dxdy = \frac12\int_{\varphi(\Omega_\rho)} |\nabla U_0|^2 \, dxdy.
\]

We note that $U_0$ is a bounded harmonic function on $\R^2_+$ whose boundary values satisfy
 $U_0(x,0)=-1$ for $0<x<1$ and $U_0(x,0)=1$ for $|x-\frac12|>1$.
By uniqueness, it follows that we must have 
\begin{equation}\label{eq:p7a}
-U_0=\frac2\pi(\theta^1-\theta^0)-1 = \frac2\pi \theta^1-2+(1-\frac2\pi \theta^0)
\end{equation}
In the following we take two positive numbers $r_0,r_1$ with $r_0+r_1<1$ and will assume $R>1$.
In the domain $H^R_{r_0,r_1}= B^+_R(0) \setminus (\overline{B_{r_0}(0)}\cup \overline{B_{r_1}(1)}$, we compute,
integrating the cross term arising from the decomposition \eqref{eq:p7a} by parts,
\begin{align*}
\frac12\int_{H^R_{r_0,r_1}} |\nabla U_0|^2\, dxdy &= \frac2{\pi^2}\int_{H^R_{r_0,r_1} }
\left( {|\nabla \theta^1|^2} 
+{|\nabla \theta^0|^2}\right) \, dxdy \\
&\quad +\int_{\partial H^R_{r_0,r_1}} \left(\frac2\pi \theta^1-2\right)\frac{\partial}{\partial\nu}\left(1-\frac2\pi \theta^0\right) \,d\ell.   
\end{align*}
Note that $|\nabla \theta^0(re^{i\theta})|=\frac1r$ and hence, as $r_1\to 0$, $r_2\to 0$ and $R\to \infty$, we have
\[
\int_{{B^+_{r_1}(1)}} |\nabla \theta^0|^2 \, dxdy = O(r_1^2),\quad \int_{{B^+_{r_0}(0)}} |\nabla \theta^1|^2 \, dxdy = O(r_0^2)
\]
and 
\[
\int_{B^+_{R}(0) \setminus \overline{B_{R-1}(1)}} |\nabla \theta^1|^2 \, dxdy \le \pi \int_{R-1}^R \frac1r \, dr =O(\frac1R).
\]
Note that this leads to
\[
\int_{H^R_{r_0,r_1}} |\nabla \theta^0|^2 \, dxdy  = \frac2\pi \log \frac{R}{r_0}+ O(r_1^2) 
\]
and
\[
\int_{H^R_{r_0,r_1}} |\nabla \theta^1|^2 \, dx dy = \frac2\pi \log \frac{R}{r_1}+ O(r_0^2)+O(\frac1R).
\]

For the boundary term, we note that $\frac{\partial\theta^0}{\partial\nu}=0$ on $\partial B_{r_0}(0)$ and $\partial B_R(0)$, while 
$\frac{\partial\theta^0}{\partial\nu}(x,0)=-\frac1x$ for $x>1$. As $\frac2\pi \theta^1(x,0)$ is equal to $2$ for $x<1$ and $0$ for $x>1$, we find
\begin{align*}
\int_{\partial H^R_{r_0,r_1}} \left(\frac2\pi \theta^1-2\right)\frac{\partial}{\partial\nu}\left(1-\frac2\pi \theta^0\right) \,d\ell &=
-\frac4\pi \int_{1+r_1}^R \frac1x\, dx + O(r_1)\\ &=-\frac4\pi \log R + O(r_1).
\end{align*} 
Combining everything, we obtain 
\begin{align*}
\frac12\int_{H^R_{r_0,r_1}} |\nabla U_0|^2\, dxdy &= \frac2\pi \log\frac{R}{r_0} + \frac2\pi \log\frac{R}{r_1}  +O(r_0)+O(r_1)-\frac4\pi \log R+O(\frac1R)
\\ &=  \frac2\pi \log \frac{1}{r_0r_1} + O(r_0)+O(r_1)+O(\frac1R).
\end{align*}
Letting $ R\to \infty$ and writing $H_{r_1,r_2}=\R^2_+\setminus{\ol{B_{r_0}(0)}\cup \ol{B_{r_1}(1)}}$, we see that
\begin{equation}\label{eq:dirintO}
\frac12\int_{H_{r_1,r_2}}|\nabla U_0|^2\, dx dy= \frac2\pi \log \frac{1}{r_0r_1} + O(r_0)+O(r_1).
\end{equation}
From the smoothness of $\varphi$, it follows that $\phi(\Omega_\rho)$ is
for small $\rho$ close to  $\R^2_+$ with two suitable half-balls removed; more precisely
there exists $c>0$ with 
\[
H_{|\phi'(p)|\rho+c\rho^2,|\phi'(q)|\rho+c\rho^2} \subset \phi(\Omega_\rho) \subset H_{|\phi'(p)|\rho+c\rho^2,|\phi'(q)|\rho+c\rho^2}.
\]
We use \eqref{eq:dirintO} for $r_0=|\varphi'(p)|\rho\pm c\rho^2$ and $r_1=|\varphi'(q)|\rho\pm c\rho^2$.

Using the conformal invariance of the Dirichlet integral this leads to
\begin{multline*}
\frac12 \int_{\Omega_\rho} |\nabla u_0|^2\, dx dy \ge 
\frac2\pi \log \frac{1}{(|\varphi'(p)|\rho+ c\rho^2 )(|\varphi'(q)|\rho+ c\rho^2)} \\+ O(|\varphi'(p)|\rho+ c\rho^2)+O(|\varphi'(q)|\rho+ c\rho^2)
\end{multline*}
and the reverse inequality 
\begin{multline*}
\frac12 \int_{\Omega_\rho} |\nabla u_0|^2\, dx  
 \le \frac2\pi \log \textstyle\frac{1}{(|\varphi'(p)|\rho- c\rho^2 )(|\varphi'(q)|\rho- c\rho^2)} \\+ O(|\varphi'(p)|\rho- c\rho^2)+O(|\varphi'(q)|\rho- c\rho^2)
\end{multline*}
Since 
\[\log \frac{1}{|\varphi'(p)|\rho\pm c\rho^2} = \log \frac1{\rho|\varphi'(p)|}+ \log \frac{1}{1\pm \frac{c\rho}{|\varphi'(p)|}}
= \log \frac1{|\varphi'(p)|}+ \log \frac 1\rho +O(\rho),
\]
we can thus compute
\begin{align*}
\frac12\int_{\Omega_\rho} |\nabla u_0|^2 \, dx = \frac4\pi \log \frac1{\rho} + \frac2\pi 
\log \frac1{|\varphi'(p)||\varphi'(q)|} + O(\rho).
\end{align*}
As $\varphi(q)-\varphi(p)=1$, we obtain the claim. The constants in the $O$ notation depend only on $\varphi$, hence only on $\Omega$. 
\end{proof}
The following alternate characterisations of $W$ hold.
\begin{proposition}\label{propo:altren}
We have 
\begin{align*}
W_{\Omega}(p,q)&=-\frac{2}{\pi} \log \frac{\partial ^2 G^D}{\partial \nu_p
\partial \nu_q}(p,q) \\
&=2(R^N(p,q)+R^N(q,q)-2 G^N(p,q))\\
&= {\frac{4}{\pi} \log |p-q| + 2 (R^N(p,p)+R^N(q,q)-2R^N(p,q))}\, ,
\end{align*}
for $p,q\in \partial \Omega$ with $p\neq q$ and with $G^D(p,q)$ the Dirichlet Green's function of
$\Omega$, $G^N(p,q)$ the Neumann Green's function of $\Omega$, and $R^N(p,q)$ the regular
part of the last one.
\end{proposition}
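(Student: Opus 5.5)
The plan is to prove the three expressions in turn. The first comes from conformal invariance of the Dirichlet Green's function. The second comes from an independent Bethuel--Brezis--H\'elein type energy computation using Theorem~\ref{thm:calcW1}. The third follows from the second by algebra. (I read the second line as $2(R^N(p,p)+R^N(q,q)-2G^N(p,q))$, which is what makes it consistent with the third.) The main obstacle is the second step.

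\emph{Dirichlet Green's function.} Let $\varphi:\Omega\to\R^2_+$ be as in Definition~\ref{defi:renen}. The Dirichlet Green's function is conformally invariant: $G^D_\Omega(z,w)=G^D_{\R^2_+}(\varphi(z),\varphi(w))$, where $G^D_{\R^2_+}(z,w)=\frac1{2\pi}\log\frac{|z-\bar w|}{|z-w|}$ with the normalization $-\Delta G=\delta$. Near the boundary, $\varphi$ maps unit normals to multiples $|\varphi'|$ of unit normals, since it is conformal and smooth up to $\partial\Omega$ by Kellogg--Warschawski. Hence
\[
\frac{\partial^2 G^D_\Omega}{\partial\nu_p\partial\nu_q}(p,q)=|\varphi'(p)||\varphi'(q)|\,\frac{\partial^2 G^D_{\R^2_+}}{\partial y\,\partial y'}(\varphi(p),\varphi(q)).
\]
The half-plane mixed normal derivative at two boundary points $x\ne x'$ is an explicit computation, giving $\frac{c}{|x-x'|^2}$ with $c>0$. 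Taking $-\frac2\pi\log$ reproduces Definition~\ref{defi:renen} up to the additive constant $-\frac2\pi\log c$. I will check that the paper's normalization of $G^D$ makes $c=1$, and otherwise record that the convention for $G^D$ absorbs this constant.

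\emph{Neumann Green's function.} I normalize $G^N$ by $-\Delta G^N(\cdot,w)=0$ in $\Omega$, $\partial_\nu G^N(\cdot,w)=\delta_w-\frac1{|\partial\Omega|}$ on $\partial\Omega$ for $w\in\partial\Omega$, with zero mean. Then $G^N(z,w)=-\frac1\pi\log|z-w|+R^N(z,w)$ with $R^N$ smooth. Consider the harmonic conjugate $v$ of $u_0=u_0^{p,q}$. Along $\partial\Omega$ the tangential derivative of $u_0$ is $2(\delta_q-\delta_p)$, so $\partial_\nu v=\pm 2(\delta_p-\delta_q)$. It follows that $v=\pm2\,(G^N(\cdot,p)-G^N(\cdot,q))+\text{const}$, and the constant flux terms cancel.

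Since $|\nabla v|=|\nabla u_0|$, it suffices to expand $\frac12\int_{\Omega_\rho}|\nabla v|^2$. Integrating by parts, this equals $\frac12\int_{\partial\Omega_\rho}v\,\partial_\nu v$. On $\partial\Omega\cap\partial\Omega_\rho$ we have $\partial_\nu v=0$, so only the two half-circles $\partial B_\rho(p)\cap\Omega$ and $\partial B_\rho(q)\cap\Omega$ contribute.

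On $\partial B_\rho(p)$ I will use $v=2\bigl(-\frac1\pi\log\rho+R^N(p,p)-G^N(p,q)\bigr)+O(\rho)$, with flux $\int\partial_\nu v=\mp2+O(\rho)$; the symmetric expansion holds at $q$. This gives
\[
\frac4\pi\log\frac1\rho+2\bigl(R^N(p,p)+R^N(q,q)-2G^N(p,q)\bigr)+O(\rho).
\]
Comparing with Theorem~\ref{thm:calcW1} identifies the second expression. The main obstacle is to control the $O(\rho)$ errors and boundary curvature on the small half-circles: the circle is not exactly half inside $\Omega$, and the flux must be checked to be exactly $2$ up to $O(\rho)$. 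I would handle this with the smoothness of $R^N$ up to the boundary and the same inner/outer half-ball comparison used at the end of the proof of Theorem~\ref{thm:calcW1}.

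\emph{Third expression.} Substituting $G^N(p,q)=-\frac1\pi\log|p-q|+R^N(p,q)$ into the second expression gives the third immediately.
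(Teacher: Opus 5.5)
Your proof is correct. For the Dirichlet line it is the paper's argument, and your caution about the constant is justified. With the normalization the paper itself uses, $G^D=-\frac1{2\pi}\bigl(\log|\varphi(p)-\varphi(q)|-\log|\varphi(p)-\overline{\varphi(q)}|\bigr)$, one gets $c=\frac1\pi$. The paper's proof does arrive at $\frac{\partial^2G^D}{\partial\nu_p\partial\nu_q}(p,q)=\frac1\pi\frac{|\varphi'(p)||\varphi'(q)|}{|\varphi(p)-\varphi(q)|^2}$. So the first line holds only up to the additive constant $\frac2\pi\log\pi$, which the paper drops; this is harmless for minimizers. Your reading of the second line with $R^N(p,p)$ is also the right one.

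For the two Neumann lines your route is genuinely different. The paper takes the boundary-pole Neumann function to be $G^N(z,w)=-\frac1\pi\log|\varphi(z)-\varphi(w)|$, so the compensating sink sits at $\psi(\infty)$ rather than being spread uniformly over $\partial\Omega$. It reads off $R^N(q,q)=-\frac1\pi\log|\varphi'(q)|$, and both remaining equalities then become algebra from Definition~\ref{defi:renen}.

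You never compute $G^N$. Instead you identify the harmonic conjugate of $u_0^{p,q}$ with $\pm2\bigl(G^N(\cdot,p)-G^N(\cdot,q)\bigr)+\mathrm{const}$ and match its energy on $\Omega_\rho$ with Theorem~\ref{thm:calcW1}. The paper's version takes two lines but relies on a $\varphi$-dependent normalization of $G^N$. Yours applies to the genuine Neumann function and shows at once that the combination is normalization-free, since uniform sinks and additive terms cancel in $G^N(\cdot,p)-G^N(\cdot,q)$. It also explains, in the Bethuel--Brezis--H\'elein spirit, why this particular combination is the renormalized energy. The cost is that it depends on Theorem~\ref{thm:calcW1}.

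What you flag as the main obstacle is not one:
\begin{itemize}
\item On $\partial B_\rho(p)\cap\Omega$ one has $|z-p|=\rho$ exactly, so $v$ equals your constant up to $O(\rho)$ by the Lipschitz bound on $R^N$.
\item The flux through that arc is exactly $2$ for every small $\rho$. It is independent of $\rho$ by the divergence theorem, since $\partial_\nu v=0$ on $\partial\Omega\setminus\{p,q\}$, and the logarithmic asymptotics give the value $2$.
\item Also $|\nabla v|\le C/\rho$ on an arc of length $O(\rho)$.
\item Since $\Omega_\rho$ is literally the set in Theorem~\ref{thm:calcW1}, no inner/outer comparison is needed.
\end{itemize}

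The step that does deserve a line is identifying $v$. The difference between $v$ and $\pm2\bigl(G^N(\cdot,p)-G^N(\cdot,q)\bigr)$ is harmonic, has zero Neumann data on $\partial\Omega\setminus\{p,q\}$, and has $o(\log)$ growth at $p,q$. Even reflection and removability then show it is constant. Alternatively, read it off in the half-plane: the conjugate of $1-\frac2\pi(\theta^1-\theta^0)$ is $\frac2\pi\log\frac{|z-1|}{|z|}$.
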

\begin{proof}
Let $\varphi:\Omega \to \R_+^2$ be a conformal map and $p,q\in \Omega$. Then
$$ G^D(p,q) = -\frac{1}{2\pi} \left\{ \log|\varphi(p)-\varphi(q)| -\log
|\varphi(p)-\overline{\varphi(q)}|\right\}\, . $$ Now, for $p\in \Omega$ and $q\in \partial \Omega$
we obtain the Poisson kernel as
$$K^D(p,q)=\frac{\partial G^D}{\partial \nu_q}(p,q) = \frac{1}{\pi}
\frac{1}{|\varphi(p)-\varphi(q)|^2}\, , $$ where $\nu_q$ denotes the outer normal unit vector at
$q$. For $p,q\in \partial \Omega$ the outer normal derivative in $p$ gives the first equality, that
is,
$$ \frac{\partial^2G^D}{\partial \nu_p \partial \nu_q}(p,q)=\frac{1}{\pi}
\frac{|\varphi'(p)||\varphi'(q)|}{|\varphi(p)-\varphi(q)|^2}\, .$$

For the Neumann conditions, the Green's function for $p\in \Omega$ and $q\in \partial \Omega$ is
$$ G^N(p,q)=-\frac{1}{\pi} \log |p-q| + R^N(p,q)\, ,$$
where $ R^N(p,q) = -\frac{1}{\pi} \log \frac{|\varphi(p)-\varphi(q)|}{|p-q|}$ is the regular part.
Let us note that $R^N(q,q)=-\frac{1}{\pi}\log |\varphi'(q)|$, taking $p\to q\in \partial \Omega$.
Hence, with this notation we deduce directly the last two equalities. 
\end{proof}

\section{Minimization in a closed neighborhood and energy upper bounds}\label{sec:conmin}
In this section we begin our analysis of the main problem by looking at minimizers of the energy
restricted to a suitable subset of $H^1(\Omega)$. 

Recall from \eqref{eq:chif} that we defined $\chi^{p,q}$ as a function that is $1$ on the  counterclockwise oriented segment between $p\in\partial\Omega$ 
and $q\in\dOm$, and $-1$ on the oriented segment between $q$ and $p$.

In this section we consider functions that are close to $\chi^{p,q}$ in $L^2(\dOm)$ and solve a PDE related
 to \eqref{eq:pde}. We will do so 
by solving a  minimization problem associated to \eqref{eq:pde} with a suitable restriction.

We first approximate the discontinuous function $\chi^{p,q}: \dOm\to \{-1,1\}$ by smooth functions.
\begin{lemma}\label{lem42}
For $\e>0$ small enough, there exist smooth functions 
 $\chi^{p,q}_\e:\dOm\to [-1,1]$ with $\chi^{p,q}_\e(z)=\chi^{p,q}(z)$ for $z\in \dOm\setminus (B_\e(p)\cup B_\e(q))$ and
$\|\chi^{p,q}_\e\|_{C^k(\dOm)}\le \frac C{\e^k}$ for $k=1,2$.
\end{lemma}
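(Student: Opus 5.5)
The plan is to build $\chi^{p,q}_\e$ by gluing a fixed smooth monotone transition profile, rescaled to width $\e$, in arclength coordinates near $p$ and $q$.

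\textbf{Step 1 (profile).} Fix once and for all a smooth nondecreasing function $\eta:\R\to[-1,1]$ with $\eta(s)=-1$ for $s\le -1/2$ and $\eta(s)=1$ for $s\ge 1/2$. All of its derivatives are bounded by constants depending only on $\eta$.

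\textbf{Step 2 (arclength coordinates).} Since $\dOm$ is a smooth closed curve, take arclength parametrizations $\gamma_p(s)$ and $\gamma_q(s)$ centered at $p$ and $q$ (that is, $\gamma_p(0)=p$ and $\gamma_q(0)=q$), oriented counterclockwise and defined for $|s|<\delta$. Here $\delta>0$ is chosen small, depending only on $\Omega$, $p$ and $q$, so that three conditions hold:
\begin{itemize}
\item the two arcs are disjoint;
\item each arc is the graph of a smooth map over its tangent line;
\item $|\gamma(s)-\gamma(0)|\ge |s|/2$ on each arc.
\end{itemize}
The last condition follows from the chord-arc comparison for a $C^2$ curve. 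It also lets us shrink $\delta$ so that, for $\e<\delta/2$, the set $B_\e(p)\cap\dOm$ is contained in $\{\gamma_p(s):|s|<2\e\}$. We use the same property at $q$.

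\textbf{Step 3 (definition).} On the arc near $p$, $\chi^{p,q}$ equals $1$ before $p$ and $-1$ after $p$ in the counterclockwise direction. There we set
\[
\chi^{p,q}_\e(\gamma_p(s))=-\eta(s/\e).
\]
On the arc near $q$ we set $\chi^{p,q}_\e(\gamma_q(s))=\eta(s/\e)$. Everywhere else we let $\chi^{p,q}_\e=\chi^{p,q}$.

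This function is smooth. Each piece is smooth, and the pieces agree on overlaps, because for $|s|\ge \e/2$ the formula reproduces the locally constant values of $\chi^{p,q}$. The values lie in $[-1,1]$. Moreover, $\chi^{p,q}_\e$ differs from $\chi^{p,q}$ only on $\{|s|<\e/2\}$. To see that this set lies inside $B_\e(p)$, use that the arc is $C^1$-close to a straight segment, so that $|\gamma(s)-\gamma(0)|\le |s|<\e$.

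\textbf{Step 4 (derivative bounds).} The $C^k(\dOm)$ norm is taken with respect to arclength, possibly up to a constant coming from the choice of parametrization. On the $p$-arc we have $\frac{d^k}{ds^k}\chi^{p,q}_\e=-\e^{-k}\eta^{(k)}(s/\e)$, and the analogous formula holds on the $q$-arc. Elsewhere all derivatives vanish. This gives
\[
\|\chi^{p,q}_\e\|_{C^k}\le C\e^{-k}, \qquad k=1,2,
\]
with $C$ depending only on $\eta$ (and on $\Omega$ if one uses an ambient parametrization).

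There is no real obstacle in this argument. The only point requiring care is Step 2: the arclength neighborhood has to be matched with the Euclidean balls $B_\e(p)$, $B_\e(q)$, and this needs $\e$ small relative to $\delta$. That is exactly why the lemma is stated for $\e$ small enough.
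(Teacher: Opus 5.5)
Your construction is correct and is essentially the paper's argument: the paper's proof only says to rescale by $\e$ a smooth approximation of the signed distance to $p$ and $q$, and that is what your profile $\pm\eta(s/\e)$ in arclength coordinates does, with the $\e^{-k}$ bounds coming from the chain rule. (The containment $B_\e(p)\cap\dOm\subset\{|s|<2\e\}$ in your Step 2 is never used, and its justification is slightly off since it depends on the far part of $\dOm$. The only fact you need is the elementary chord--arc inequality $|\gamma(s)-\gamma(0)|\le|s|$, which puts the modification set inside $B_\e(p)\cup B_\e(q)$.)
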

\begin{proof}
It is straightforward to construct functions with these properties by rescaling a smooth approximation to 
 the signed distance function to $p$ and $q$. 
\end{proof}

For $a>0$, we consider  the sets
\[
\calC_a^\e = \{ u\in H^1(\Omega) : \|u-\chi^{p,q}_\e \|_{L^2(\dOm)}^2 \le a^2 \}.
\]
By the Rellich-Kondrashov theorem, the sets $\calC_a^\e$ are closed under weak $H^1(\Omega)$ convergence. The direct method
in the calculus of variations readily gives the existence of a minimizer $u_\e$ of $E_\e$ among $u\in \calC_a^\e$.
Any such minimizer clearly satisfies that $-1\le u_\e\le 1$. 
We will estimate the energy of a minimizer, and
start by proving a near-optimal upper bound by showing the existence of suitable comparison functions.
Proving the corresponding lower bound will be significantly more involved.

\begin{proposition}\label{prop:ub}
For every $a>0$ and  $p\neq q\in\dOm$, for small $\e>0$ there exist functions $v_\e\in \calC^\e_a $ such that $-1\le v_\e\le 1$, $v_\e\to \chi^{p,q}$ in $L^2(\dOm)$, and 
\[
E_\e(v_\e) \le \frac4\pi \log \frac1\e + W_\Omega(p,q) + 2 C_f + o_\e(1).
\]
\end{proposition}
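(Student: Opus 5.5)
Work in the half-plane via a conformal map $\varphi:\Omega\to\R^2_+$ with $\varphi(p)=0$, $\varphi(q)=1$ (Remark~\ref{rem:crossr}), and build the competitor in three regions: two small neighborhoods of $p$ and $q$, where we paste rescaled layer solutions, and the rest, where we use $u_0^{p,q}$. Fix a small $\rho>0$, to be sent to $0$ after $\e\to0$ (or taken as $\rho=\rho(\e)\to0$ slowly). Then:
\begin{itemize}
\item In $B_\rho(p)\cap\Omega$, first in the flat picture (after a second local conformal/straightening step), put the reflected rescaled layer $U(-\cdot/\e_p)$, with $\e_p$ chosen so that it jumps from $1$ to $-1$ at $p$ with the correct orientation.
\item Similarly, near $q$ put $U(\cdot/\e_q)$.
\item In $\Omega_{2\rho}$ take $v_\e=u_0^{p,q}$, which equals $\pm1$ on $\partial\Omega\setminus(B_{2\rho}(p)\cup B_{2\rho}(q))$. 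So the boundary term vanishes there, and by Theorem~\ref{thm:calcW1} its Dirichlet energy is $\frac4\pi\log\frac1{2\rho}+W_\Omega(p,q)+O(\rho)$.
\item In the annular regions $\rho<|z-p|<2\rho$, interpolate with a cutoff $\eta$ between the layer and $u_0^{p,q}$, truncating to $[-1,1]$ (truncation lowers energy).
\end{itemize}

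To estimate the inner pieces cleanly, transport everything by $\varphi$. Near $0$, $u_0^{p,q}\circ\psi=1-\frac2\pi\theta^0+O(|z|)$ with gradient error $O(1)$ by \eqref{eq:p7a}, since $\theta^1$ is smooth near $0$. On the boundary, $d\ell=|\psi'|\,dx$. Setting $\e_p:=\e|\varphi'(p)|$ makes the boundary weight $\frac1\e|\psi'(x)|=\frac1{\e_p}(1+O(|x|))$. Then the energy in the half ball $B_r^+$, $r=|\varphi'(p)|\rho$, of $U(\cdot/\e_p)$ (or its reflection) is
\[
I(1,r/\e_p)(1+O(r))=\frac2\pi\log\frac{\rho}{\e}+C_f+o(1)
\]
by Proposition~\ref{prop:i_er}(iii), using that $\int G(U)<\infty$ to absorb the $O(|x|)$ weight error. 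The key cancellation is $\log\frac{r}{\e_p}=\log\frac\rho\e$, independent of $|\varphi'(p)|$. Summing the two inner pieces and the outer piece gives
\[
\frac4\pi\log\frac1\e+W_\Omega(p,q)+2C_f+O(\rho)+(\text{gluing errors}).
\]
This bookkeeping also needs Theorem~\ref{thm:calcW1} to be rewritten as excising the preimages of half-balls, which is exactly \eqref{eq:dirintO} plus the $\log|\varphi'|$ terms. The $\log 2$ discrepancies are handled by splitting precisely at the same radius on both sides, with the annulus being a thin collar.

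The main obstacle is the gluing annulus $A^+_{r,2r}$, which must contribute $o(1)$. Write the interpolant there as
\[
V=(1-\frac2\pi\theta^0)+\eta\bigl(U_{\e_p}-(1-\frac2\pi\theta^0)\bigr)+(1-\eta)\bigl(u_0\circ\psi-(1-\frac2\pi\theta^0)\bigr).
\]
Its energy is $\int_{A^+}\frac12|\nabla\theta^0|^2\frac{4}{\pi^2}=\frac2\pi\log 2$, which is exactly accounted for by the main terms, plus error terms bounded by:
\begin{itemize}
\item Theorem~\ref{thm:layerprops}(v) with $s=r$, $M=2$, controlling both $\nabla(U_{\e}-(1-\frac2\pi\theta^0))$ and $r^{-1}|U_\e-(1-\frac2\pi\theta^0)|$, hence the $|\nabla\eta|\sim 1/r$ term, which is $o_\e(1)$;
\item the smooth correction $u_0\circ\psi-(1-\frac2\pi\theta^0)$, which has $H^1$ norm $O(r)$ on the annulus.
\end{itemize}
Cross terms with $\nabla\theta^0$ are handled by Cauchy–Schwarz, since $\|\nabla\theta^0\|_{L^2(A^+)}$ is bounded. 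On the boundary of the annulus, $U_{\e}$ is exponentially—or at least $O(\e/|x|)$—close to $\pm1$, so $\frac1\e\int G$ there is $o(1)$ by the decay $|U(x,0)\mp1|\le C/|x|$ and $G(t)\le C(1\mp t)^2$.

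Finally, $v_\e\to\chi^{p,q}$ in $L^2(\dOm)$ because the layers converge to $\pm1$ off the jump points, and $v_\e\in\calC^\e_a$ for small $\e$. Choosing $\rho=\rho(\e)\to0$ slowly, so that the $o_\e(1)$ gluing errors (which depend on $\rho$) still vanish, yields the claimed upper bound.
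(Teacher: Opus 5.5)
Your proposal is correct and follows essentially the paper's proof. In the half-plane you glue the rescaled (reflected) layers at scales $\e|\varphi'(p)|$ and $\e|\varphi'(q)|$ to $u_0^{p,q}\circ\psi$ with a cutoff on annuli $A^+_{r,2r}$. You then estimate the inner pieces through $I(\e,\rho)$ and Proposition~\ref{prop:i_er}, the outer piece through \eqref{eq:dirintO}, and the gluing error through Theorem~\ref{thm:layerprops}(v) and the algebraic boundary decay of $U$, before sending $\rho\to0$ diagonally.

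The only difference is cosmetic. You count the annulus separately via its $\frac2\pi\log 2$ contribution, whereas the paper keeps the annulus in the outer region and bounds $|\nabla V_\e|^2-|\nabla U_0|^2$ there by $C\rho$.

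One notational slip: the inner energy should read $I(1,r/\e_p)+O(r)$, not $I(1,r/\e_p)(1+O(r))$. The latter would leave an error of order $r\log(\rho/\e)$ that does not vanish as $\e\to0$; your remark that the weight error is absorbed by $\int G(U)<\infty$ already gives the correct form.
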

\begin{proof}
The idea is to combine the discontinuous $u_0$, the extension of $\chi^{p,q}$ to $\Omega$ found in \eqref{hext}, with the layer solution. We will 
 first define a function $V_\e$ in the half-plane and then set $v_\e=V_\e \circ \psi$, where $\psi$ is the 
inverse of the conformal map $\varphi$ chosen as in Remark~\ref{rem:crossr}.
Then
\[
E_\e(v_\e) = \frac12\int_{\R^2_+} |\nabla V_\e|^2 \, dxdy + \frac1\e \int_{\R} |\psi'(x)|G(V_\e) \, dx
\]
With $h_0=|\psi'(0)|=\frac1{|\varphi'(p)|}$ and $h_1=|\psi'(1)|=\frac1{|\varphi'(q)|}$ we now define the function $\hat U_\e$ by
\[
\hat U_\e(z) = \begin{cases} U(\frac{h_0 z}{\e}) & z\in B^+_{\rho/h_0}(0) \\
U(-\frac{h_1 (z-1)}{\e}) & z\in B^+_{\rho/h_1}(1),
\end{cases}
\]
where $U(z)$ is the increasing layer solution found in Theorem~\ref{thm:layerprops}.
With a cutoff function $\xi$ that is $0$ in $B^+_{\rho/h_0}(0)\cup B^+_{\rho/h_1}(1)$, $1$ in 
$\R^2_+\setminus(\ol{B_{2\rho/h_0}(0)}\cup \ol{B_{2\rho/h_1}(1)})$ and satisfies $|\nabla \xi|\le \frac C \rho$, we 
can now set (with $u_0=U_0\circ\psi$ as in the proof of Theorem~\ref{thm:calcW1})
\[
V_\e=\xi U_0 +(1-\xi)\hat U_\e.
\]

We compute the energy of $V_\e$. Near $0$, we have
\begin{multline*}
\frac12\int_{B_{\rho/h_0}^+(0)} |\nabla V_\e|^2 \, dxdy+ \frac1\e \int_{\R\cap B_{\rho/h_0}(0)} |\psi'(x)|G(V_\e) \, dx \\
= \frac12\int_{B_{\rho/h_0}^+(0)} |\nabla V_\e|^2 \, dxdy + \frac{h_0}\e \int_{\R\cap B_{\rho/h_0}(0)} G(V_\e) \, dx \\
\quad+  \frac1\e \int_{\R\cap B_{\rho/h_0}(0)} (|\psi'(x)|-|\psi'(0)|) G(V_\e) \, dx
\end{multline*}
Rescaling by $h_0/\e$ and using $\int_\R G(U) dx <\infty$ and $|\psi'(x)-\psi'(0)|\le C\rho$, we obtain
\begin{multline}\label{eq:ubnear0}
\frac12\int_{B_{\rho/h_0}^+(0)} |\nabla V_\e|^2 \, dxdy + \frac1\e \int_{\R\cap B_{\rho/h_0}(0)} |\psi'(x)|G(V_\e) \, dx \\
= \frac12\int_{B_{\rho/\e}^+(0)} |\nabla U|^2 \, dxdy + \int_{\R\cap B_{\rho/\e}(0)} G(U) \, dx +O(\rho) =I(\e,\rho)+O(\rho).
\end{multline}
The same argument applied to the region near $1$ yields
\begin{equation}\label{eq:ubnear1}
\frac12\int_{B_{\rho/h_1}^+(1)} |\nabla V_\e|^2 \, dxdy + \frac1\e \int_{\R\cap B_{\rho/h_1}(1)} |\psi'(x)|G(V_\e) \, dx  =I(\e,\rho)+O(\rho).
\end{equation}
By Theorem~\ref{thm:calcW1} (compare \eqref{eq:dirintO})
we find 
\begin{equation}\label{eq:tuouter}
\frac12\int_{\R^2_+\setminus(\ol{B_{\rho/h_0}(0)}\cup \ol{B_{\rho/h_1}(1)})} |\nabla U_0|^2dxdy = 
\frac4\pi \log \frac1\rho + W_\Omega(p,q)+O(\rho).
\end{equation}

Computing $|\nabla V_\e|^2-|\nabla U_0|^2 = \nabla (V_\e-U_0)\cdot \nabla (V_\e+U_0)$,
we note that
\[
\nabla (V_\e-U_0) = (\xi-1) \nabla(U_0-\hat U_\e) + \nabla \xi (U_0-\hat U_\e).
\]
hence
\begin{align*}
\int_{A_{\rho/h_0,2\rho/h_0}^+} |\nabla (V_\e-U_0) |^2\, dxdy &\le 2\int_{A_{\rho/h_0,2\rho/h_0}^+(0)} 
|\nabla (U_0-\hat U_\e)|^2 \, dxdy \\ &\qquad + \frac{c}{\rho^2} \int_{A_{\rho/h_0,2\rho/h_0}^+(0)} | U_0-\hat U_\e|^2 \, dx.
\end{align*}
Note that $U_0 =(1-\frac2\pi \theta^0)+ \frac2\pi \theta^1-2$ and 
$\frac2\pi \theta^1-2=O(1)=|\nabla( \frac2\pi \theta^1-2)|$.
By part (v) of Theorem~\ref{thm:layerprops},  $\hat U_\e \to (1-\frac2\pi \theta^0)$ in 
$H^1(A_{\rho/h_0,2\rho/h_0}^+)$  as $\e\to 0$ and 
\begin{equation}\label{eq:annd}
\int_{A_{\rho/h_0,2\rho/h_0}^+} |\nabla (V_\e- U_0) |^2\, dxdy =O(\rho^2).
\end{equation}
By the convergence in Proposition~\ref{prop:i_er} and $|\nabla U_0|\le \frac C \rho$,
\begin{equation}\label{eq:anns}
\int_{A_{\rho/h_0,2\rho/h_0}^+} |\nabla V_\e|^2+|\nabla U_0 |^2\, dxdy \le C.
\end{equation}
Combining \eqref{eq:annd} and \eqref{eq:anns},
we see using Cauchy-Schwarz that
\begin{equation}\label{eq:diffda}
\frac12\int_{A_{\rho/h_0,2\rho/h_0}^+} \left(|\nabla V_\e|^2-|\nabla U_0 |^2\right)\, dxdy  \le C\rho.
\end{equation}
For the boundary term, we note that for $\e$ small, 
\(
G(V_\e)\le G(\hat U_\e),
\)
hencefrom $\int_{\R} G(U)\, dx<\infty$ and rescaling we have 
\begin{equation}\label{eq:penouter}
\frac1\e\int_{A_{\rho/h_0,2\rho/h_0}^0} |\psi'(x)| G(\hat U_\e)\, dx \le (1+C\rho) \int_{A^0_{\rho/\e,{2\rho}/\e}} G(U)\, dx\to 0
\end{equation}
as $\e\to 0$ (and the corresponding result near $1$).

Combining \eqref{eq:ubnear0}, \eqref{eq:ubnear1}, \eqref{eq:tuouter}, \eqref{eq:diffda} and \eqref{eq:penouter}, 
we find
\[
\lim_{\e\to 0} \left(E_\e(v_\e)  - 2I(\e, \rho)- \frac4\pi \log \frac1\rho + W_\Omega(p,q)\right) =O(\rho).
\]
From our results on $I(\e,\rho)$ we now obtain the claim on the energy by letting $\rho\to 0$. From the construction,
it is clear that $-1\le v_\e\le 1$ and $v_\e\to \chi^{p,q}$ in $L^2(\dOm)$.

We also have $v_\e=\chi^{p,q}_\e$ outside $\partial\Omega \cap(B_{C\rho}(p)\cup B_{C\rho}(q))$.
Thus  $\|v_\e-\chi^{p,q}_\e\|^2_{L^2(\partial\Omega)}\le C\rho$, and hence for $\rho$ small enough we find
$v_\e\in \calC_a^\e$.  
\end{proof}

We now consider the Euler-Lagrange equations satisfied by minimizers of the energy in the 
set $\calC_a^\e$. We establish basic smoothness results and a bound on the Lagrange multiplier.
\begin{proposition}\label{pro:conmin}
Minimizers $u_\e$ of $E_\e$  in the set $\calC_a^\e$ satisfy the Euler-Lagrange equations
\begin{equation}\label{eq:elelam}
\begin{cases}
\Delta u_\e &=0\qquad\quad\qquad\qquad\qquad\text{ in $\dOm$}\\
\frac{\partial u_\e}{\partial \nu} &=\frac1\e f(u_\e)+ \lambda_\e (u_\e-\chi^{p,q}_\e) \, \text{ on $\dOm$}
\end{cases}\end{equation}
for some $\lambda_\e\in \R$ that satisfies
\begin{equation}\label{eq:lamu}
|\lambda_\e|\le C\sqrt\frac{|\log\e|}{\e}.
\end{equation}
In addition, all solutions of \eqref{eq:elelam} are $C^{2,\beta}$ smooth up to the boundary and satisfy 
\begin{equation}\label{eq:gradb}
|\nabla u_\e|\le \frac C\e\quad\text{in $\ol\Omega$}.
\end{equation}
\end{proposition}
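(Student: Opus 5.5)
We argue in three steps: derive the Euler--Lagrange equation with a multiplier, bound the multiplier by testing the equation against $u_\e$ itself, and then obtain regularity and the gradient bound by rescaling at scale $\e$.

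\textbf{Euler--Lagrange equation.} We use a Lagrange multiplier argument. The constraint functional $\Phi(u)=\|u-\chi^{p,q}_\e\|^2_{L^2(\dOm)}$ is $C^1$ on $H^1(\Omega)$, with derivative $\Phi'(u)[\xi]=2\int_{\dOm}(u-\chi^{p,q}_\e)\xi$.

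If $\Phi(u_\e)<a^2$, the minimizer is interior. Then $E_\e'(u_\e)=0$, which gives \eqref{eq:elelam} with $\lambda_\e=0$ in weak form.

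If $\Phi(u_\e)=a^2$, then $u_\e-\chi^{p,q}_\e\not\equiv 0$ on $\dOm$, so $\Phi'(u_\e)\neq 0$. The Lagrange multiplier theorem then gives $E_\e'(u_\e)=\mu\Phi'(u_\e)$. Since $u_\e$ minimizes over the sublevel set, the Kuhn--Tucker condition yields $\mu\le 0$. Writing $-\lambda_\e=2\mu$ (sign conventions fixed by matching the boundary term $\int\partial_\nu u\,\xi$), we get the weak form
\[
\int_\Omega\nabla u_\e\cdot\nabla\xi=\int_{\dOm}\Big(\tfrac1\e f(u_\e)+\lambda_\e(u_\e-\chi^{p,q}_\e)\Big)\xi ,
\]
which is \eqref{eq:elelam}. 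We also recall that $-1\le u_\e\le 1$, obtained by truncation.

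\textbf{Bound on $\lambda_\e$.} We test the weak equation with $\xi=u_\e$ itself; here $\lambda_\e\neq 0$ only in the boundary case $\Phi(u_\e)=a^2$. Writing $u_\e=(u_\e-\chi^{p,q}_\e)+\chi^{p,q}_\e$ in the multiplier term gives
\[
\lambda_\e a^2=\int_\Omega|\nabla u_\e|^2-\frac1\e\int_{\dOm}f(u_\e)u_\e-\lambda_\e\int_{\dOm}(u_\e-\chi^{p,q}_\e)\chi^{p,q}_\e .
\]
This direct route has a drawback: the last term may be of size comparable to $a^2$. To avoid it, we test instead with $\xi=u_\e-\chi^{p,q}_\e$, extended harmonically, or more simply with an $H^1$ extension $X_\e$ of $\chi^{p,q}_\e$. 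This gives
\[
\lambda_\e a^2=\int_\Omega\nabla u_\e\cdot\nabla(u_\e-X_\e)-\frac1\e\int_{\dOm}f(u_\e)(u_\e-\chi^{p,q}_\e).
\]
We estimate the three contributions as follows.
\begin{itemize}
\item By minimality and Proposition~\ref{prop:ub}, $\int|\nabla u_\e|^2\le E_\e(u_\e)\le C|\log\e|$.
\item The extension $X_\e$ can be chosen with $\|\nabla X_\e\|^2_{L^2}\le C|\log\e|$, since $\chi^{p,q}_\e$ varies on scale $\e$, as in the construction of $v_\e$.
\item Since $|f(u)|\le C\sqrt{G(u)}$ near $\pm1$ (because $G''(\pm1)>0$) and $f$ is bounded, Cauchy--Schwarz gives
\[
\frac1\e\int|f(u_\e)||u_\e-\chi^{p,q}_\e|\le\frac{C}{\e}\Big(\int G(u_\e)\Big)^{1/2}a\le C a\,\e^{-1/2}\Big(\tfrac1\e\int G(u_\e)\Big)^{1/2}\le Ca\sqrt{|\log\e|/\e}.
\]
\end{itemize}
Dividing by $a^2$ yields $|\lambda_\e|\le C\sqrt{|\log\e|/\e}$, with $C$ depending on $a$. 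The dominant contribution is the third term.

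\textbf{Regularity and gradient bound.} We prove the estimate by blow-up. Near a boundary point we flatten the boundary and rescale by $\e$, setting $w(z)=u_\e(z_0+\e z)$. Then $w$ is harmonic and bounded by $1$, with Neumann data
\[
f(w)+\e\lambda_\e\big(w-\chi^{p,q}_\e(z_0+\e\cdot)\big).
\]
This data is bounded in $C^{0,1}$ uniformly in $\e$, because $\e|\lambda_\e|\le C\sqrt{\e|\log\e|}\to0$ and the rescaled $\chi^{p,q}_\e$ has bounded $C^2$ norm by Lemma~\ref{lem42}. The metric coefficients from flattening also converge smoothly.

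Standard Neumann estimates, bootstrapped as $L^\infty\to C^{0,\beta}\to C^{1,\beta}\to C^{2,\beta}$ using $G\in C^{2,\alpha}$, give $|\nabla w|\le C$ on unit half-balls. Scaling back gives $|\nabla u_\e|\le C/\e$ near $\dOm$. In the interior, the same bound follows from harmonic estimates on balls of radius $\e$, or more simply from the maximum principle applied to $|\nabla u_\e|^2$, which is subharmonic.

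The main obstacle is the multiplier bound: one must choose the test function so that the constraint term isolates $\lambda_\e a^2$, and then control the reaction term using only the logarithmic energy bound.
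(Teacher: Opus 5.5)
Your proposal is correct and follows essentially the paper's route: a Lagrange multiplier, then testing the Euler--Lagrange equation with $u_\e$ minus a function of Dirichlet energy $O(|\log\e|)$ whose trace is (close to) $\chi^{p,q}_\e$, using $f^2\le CG$ for the reaction term, and finally an $\e$-scale blow-up with elliptic bootstrapping that uses $\e\lambda_\e\to 0$. The differences are minor. The paper tests with $u_\e-v_\e$, where $v_\e$ is the competitor of Proposition~\ref{prop:ub}, so its coefficient of $\lambda_\e$ only tends to $a^2$; your extension $X_\e$ gives exactly $\lambda_\e a^2$, at the small cost of justifying $\|\nabla X_\e\|^2_{L^2}\le C|\log\e|$ (true since $[\chi^{p,q}_\e]^2_{H^{1/2}(\dOm)}=O(|\log\e|)$). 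For regularity, the paper maps conformally to the half-plane and integrates in $y$ to reduce to Dirichlet $W^{2,p}$/Schauder estimates, whereas you invoke Neumann estimates directly, starting the bootstrap from $L^\infty$ data.
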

\begin{proof}
The existence of $\lambda_\e$ and the equations \eqref{eq:elelam} follow from the Lagrange multiplier theorem. 

For the bound on the Lagrange multiplier, note that either 
$\lambda_\e=0$ or $\int_\dOm |u_\e-\chi^{p,q}_\e|^2\, d\ell =a^2$. In the latter case, we can use the function $v_\e$ from 
Proposition~\ref{prop:ub} and the energy bound
 $E_\e(u_\e)\le E_\e(v_\e)\le C|\log\e|$. We have $|v_\e-\chi^{p,q}_\e|\to 0$ in $L^2(\dOm)$. Testing $\Delta u_\e=0$ with 
 $(u_\e-v_\e)$, we find from \eqref{eq:elelam} that
 \[
 \lambda_\e \int_\dOm (u_\e-\chi^{p,q}_\e) (u_\e-v_\e)\, d\ell = \int_\Omega \nabla u_\e \cdot (\nabla u_\e -\nabla v_\e)\, dx -\frac1\e
 \int_\dOm f(u_\e)(u_\e-v_\e)\, d\ell.
 \]
We have \[\int_\dOm (u_\e-\chi^{p,q}_\e) (u_\e-v_\e)\, d\ell \to a^2\] and 
\[ \left| \int_\Omega \nabla u_\e \cdot (\nabla u_\e -\nabla v_\e)\, dx  \right| \le C |\log\e|.\]
For the penalty term, we estimate, using $\bigl(f(u)\bigr)^2\le CG(u)$  (this follows from the fact that $G$ has a differentiable square root) that
\begin{align*}
\left|\frac1\e
 \int_\dOm f(u_\e)(u_\e-v_\e)\, d\ell \right| & \le \frac C{\e^{1/2}}  \left(\int_\dOm \frac1\e \bigl(f(u_\e)\bigr)^2 \, d\ell\right)^{1/2} 
 \\ & \le \frac C{\e^{1/2}}  \left(\int_\dOm \frac1\e G(u_\e) \, d\ell\right)^{1/2}  \le \frac{C|\log\e|^{1/2}}{\e^{1/2}}.
\end{align*}
Overall, we obtain
\[
|\lambda_\e|(a^2-o(1)) \le C\left( \frac{|\log\e|^{1/2}}{\e^{1/2}} + |\log\e|\right),
\]
which proves \eqref{eq:lamu} for sufficiently small $\e$.

For the regularity and \eqref{eq:gradb}, we follow with some small modifications the proof of 
\cite[Lemma 2.3]{CabreSola-Mor:2005a}. Interior regularity and the bound 
$|\nabla u_\e |\le \frac C{\mathrm{dist}(\cdot, \dOm)}$
are classical. For boundary regularity and \eqref{eq:gradb}, by compactness of $\dOm$ it
is enough to show that there exists $R>0$ such that
for every $z\in \dOm$ there is $r\ge R\e$ such that $u_\e$ is regular and
the estimate 
holds in $\Omega \cap B_r(z)$. 

Let $\Phi_z:\R^2_+\to \Omega$ be a conformal map with $\Phi_p(0)=p$ and $|\Phi_p'(0)|=1$. 
Then $v_\e=u_\e\circ \Phi_z$ satisfies 
\begin{equation}
\begin{cases}\label{eq:inhalfsp}
\Delta v_\e &=0 \quad\quad\quad \text{ in } \R^2_+ \\
\frac{\partial v_\e}{\partial \nu} &=\frac{a(x)}\e f(v_\e)+ a(x)\lambda_\e (v_\e-\hat\chi^{p,q}_\e) \quad \text{ on }\R,
\end{cases}
\end{equation}
where $a(x)=|\Phi'(x)|$ and 
 $\hat \chi^{p,q}_\e = \chi^{p,q}_\e \circ \Phi_z$.
After rescaling by $\e$, we find that $V_\e(z)=v_\e(\e z)$ satisfies
\begin{equation}
\begin{cases}\label{eq:inhalfsp2}
\Delta V_\e &=0 \quad\quad\quad\text{ in } \R^2_+ \\
\frac{\partial V_\e}{\partial \nu} &={a(\e x)}f(V_\e)+ a(\e x)\e\lambda_\e (V_\e-X^{p,q}_\e)
\quad \text{ on }\R,
\end{cases}
\end{equation}
where $X^{p,q}_\e$ is a function with locally bounded first and second derivatives.
We need to show that in a ball of radius $R$, 
$|\nabla V_\e|\le C$, with $C$ and $R$ chosen independent of $\e$.

We now integrate in $y$-direction, setting 
\[
W_\e(x,y)=\int_0^y V_\e(x,t)\, dt.
\]
Then $(\Delta W_\e)_y =0$. Thus
\begin{equation}\label{eq:WEve}
\Delta W_\e(x,y)=\Delta W_\e(x,0)= V_\e,y(x,0)=-a(\e x) f(V_\e) -a(\e x)\e\lambda_\e (V_\e-X^{p,q}_\e).
\end{equation}
We find $\|\Delta W_\e\|_{L^\infty(B^+_{2R})} \le C_R$, where $C_R$ can be chosen independent of $\e$.
From $W_\e=0$ on $\R$ and elliptic regularity, we see
$\|W_\e\|_{W^{2,p}(B^+_{2R})} \le C_R$ for $2<p<\infty$ and hence $W_\e\in C^{1,\beta}$ for $0<\beta<1$.
This leads to $\|V_\e\|_{C^\beta(B^+_{2R})}\le C_R$ and, plugging this into \eqref{eq:WEve}, we see
using the smoothness of $f$ and $\e\lambda_\e\to 0$ that for $\e<\e_0$,
\[
\|\Delta W_\e\|_{C^\beta(B^+_{2R})} \le C_R.
\]
Schauder estimates then yield $\|W_\e\|_{C^{2,\beta}(B^+_R} \le C_R$, which 
gives $\|V_\e\|_{C^{2,\beta}(B^+_R} \le C_R$. In particular, we have a uniform gradient bound on $V_\e$.
Plugging again into \eqref{eq:WEve}, we see $\|\Delta W_\e\|_{C^\beta(B^+_{R})} \le C_R$, 
and Schauder estimates then show $W_\e\in C^{3,\beta}$, and thus $V_\e\in C^{2,\beta}$. 
\end{proof}

\section{Poho\u{z}aev balls and covering arguments}
 \label{sec:poho}
In this section we work towards a lower bound complementing the results of Proposition~\ref{prop:ub},
by using a Poho\u{z}aev type identity and covering estimates near approximate transitions.
Our results extend ideas in \cite{Kurzke:2006b}, which in turn 
are modeled on results for the Ginzburg-Landau equation 
in \cite{BethuelBrezisHelein:1994a,Struwe:1994a}. Compared to 
\cite{Kurzke:2006b}, we additionally have to deal with the asymptotically
small term coming from the Lagrange multiplier in \eqref{eq:elelam} 
if it is active. We will incorporate this as an extra term $h_\e(x)$ in the Neumann boundary condition.

In the following, we assume that for $0<\e\ll 1$, 
 $(u_\e)$ are $C^2$ solutions of 
\begin{equation}
\begin{cases}
\Delta u_\e & =0 \quad \text{in $\Omega$}\\
\frac{\partial u_\e}{\partial \nu} &= \frac1\e f(u_\e)+ h_\e(x) \quad\text{on $\dOm$}
\label{eq:bc1}
\end{cases}
\end{equation}
and that there exists $\beta_0\in(\frac12,1)$ with 
\begin{equation}\label{eq:eb0heto0}
\e^{\beta_0} h_\e\to 0 \quad\text{in}\quad  L^\infty(\dOm).
\end{equation}
 We also assume 
 \[|\nabla u_\e|\le \frac C \e\] 
 and 
 \[E_\e(u_\e)\le K|\log\e|\] for some $K>0$.
By Proposition~\ref{pro:conmin}, these assumptions are satisfied for  minimizers in the closed neighborhood $\calC_a^\e$.
Note that for sufficiently small $\e$, $K$ can be any number with $K>\frac4\pi$.
Our assumptions on $f$ and $G$ are as in Section~\ref{sec:layer}; in particular, we will now use the assumptions on $G$ to
 choose a number $t_*\in (0,1)$ such that $G$ is convex in each of the two connected components of 
 $\{t\in \R: t_*\le |t|\le 1\}$.

For harmonic functions on 2d domains, the Poho\u{z}aev  (or Rellich) identity takes the following form.
The domain $D$ in the statement will later be an appropriate subset of $\Omega$.
\begin{lemma}\label{lem:rellich}
For any Lipschitz domain $D$ and any harmonic function $u\in C^2(D)\cap C^1(\ol D)$, 
there holds
\begin{equation}\label{eq:rellich}
\int_{\partial D} \frac{\partial u}{\partial \nu} (z\cdot \nabla u) \, d\ell
= \frac12\int_{\partial D} (z\cdot \nu) |\nabla u|^2\, d\ell,
\end{equation}
where $z=(x,y)$.
\end{lemma}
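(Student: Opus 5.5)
The identity will come from the standard Rellich argument: integrate the divergence of a vector field that is built from $u$ and $z$ and is divergence-free when $u$ is harmonic. By density, and since $u\in C^1(\ol D)$, I will assume first that $u$ is smooth up to the boundary and then pass to the limit.

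The field I would use is
\[
X=(z\cdot\nabla u)\,\nabla u-\tfrac12|\nabla u|^2\,z .
\]
Its divergence splits into two pieces.
\begin{itemize}
\item The first term gives
\[
\operatorname{div}\big((z\cdot\nabla u)\nabla u\big)=\nabla(z\cdot\nabla u)\cdot\nabla u+(z\cdot\nabla u)\Delta u .
\]
Here $\nabla(z\cdot\nabla u)=\nabla u+D^2u\,z$, so this equals $|\nabla u|^2+z\cdot D^2u\,\nabla u+(z\cdot\nabla u)\Delta u$.
\item The second term gives
\[
\operatorname{div}\big(\tfrac12|\nabla u|^2 z\big)=\tfrac12|\nabla u|^2\operatorname{div}z+z\cdot D^2u\,\nabla u=|\nabla u|^2+z\cdot D^2u\,\nabla u ,
\]
using $\operatorname{div}z=2$ in dimension two.
\end{itemize}
Subtracting, $\operatorname{div}X=(z\cdot\nabla u)\Delta u$, which vanishes for harmonic $u$. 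This cancellation is special to $n=2$, which is why no volume term remains.

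Applying the divergence theorem on the Lipschitz domain $D$ gives $\int_{\partial D}X\cdot\nu\,d\ell=0$. Since $X\cdot\nu=\frac{\partial u}{\partial\nu}(z\cdot\nabla u)-\frac12(z\cdot\nu)|\nabla u|^2$, this is exactly \eqref{eq:rellich}.

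The only delicate point is regularity at the boundary, because $D^2u$ is not assumed continuous up to $\partial D$. I would approximate $D$ from inside by smooth domains $D_k$; alternatively, I would note that $X\in C^0(\ol D)$ and $\operatorname{div}X=0$ in $D$ in the distributional sense, so that the divergence theorem applies for continuous fields with divergence in $L^1$ on Lipschitz domains. On the $D_k$ the identity holds, and the boundary integrals converge by the $C^1(\ol D)$ assumption.
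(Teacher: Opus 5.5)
Your proof is correct and is essentially the paper's argument. The paper checks that $\nabla u\cdot\nabla(z\cdot\nabla u)=\frac12\nabla\cdot(z|\nabla u|^2)$ in two dimensions and applies the divergence theorem together with $\Delta u=0$, which amounts to saying that your field $X$ is divergence-free; your remarks on boundary regularity (inner approximation by smooth domains, or Gauss--Green for continuous fields with zero divergence) add a detail the paper leaves implicit.
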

\begin{proof}
We can verify by direct calculation that
$\nabla u \cdot \nabla(z\cdot \nabla u) = \frac12\nabla \cdot (z|\nabla u|^2)$, and hence the divergence theorem gives \eqref{eq:rellich}.
\end{proof}
As in \cite{Kurzke:2006b}, one can easily deduce the equivalence of the $L^2$ norms of the tangential and normal derivatives 
of harmonic functions on star-shaped domains, stated next.
\begin{lemma}\label{lem:comparenutau}
Let $D\subset \R^2$ be a Lipschitz domain with the property that 
for some $z^*\in D$ and some $k>0$, $(z-z^*)\cdot \nu > k|z-z^*|$ for all 
$z\in \partial D$ (such domains are strictly starshaped). Then, 
there are constants $0<c_1<c_2$ depending only on the shape of $D$
(more precisely, depending on $k$ and on $\max_{z\in \partial D}|z-z^*| / \min_{z\in \partial D}|z-z^*|$)
such that any harmonic function $u\in C^2(D)\cap C^1(\overline{D})$ satisfies
\[
c_1 \int_{\partial D} \left|\frac{\partial u }{\partial\tau} \right|^2 \,d\ell
\le \int_{\partial D} \left|\frac{\partial u }{\partial \nu} \right|^2\,d\ell
\le c_2\int_{\partial D} \left|\frac{\partial u }{\partial \tau} \right|^2\,d\ell,
\]
where $\tau=\nu^\perp$ denotes a unit tangent to $\partial D$.
\end{lemma}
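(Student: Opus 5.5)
We will apply the Rellich identity of Lemma~\ref{lem:rellich} with the vector field $z-z^*$ in place of $z$, and then absorb the terms that are not of a definite sign. Translating coordinates, we may assume $z^*=0$, so that $z\cdot\nu\ge k|z|$ on $\partial D$. On $\partial D$ we decompose $\nabla u=u_\nu\nu+u_\tau\tau$, where $u_\nu=\partial u/\partial\nu$ and $u_\tau=\partial u/\partial\tau$. Then
\[
z\cdot\nabla u=(z\cdot\nu)u_\nu+(z\cdot\tau)u_\tau,\qquad |\nabla u|^2=u_\nu^2+u_\tau^2 .
\]
Substituting these into \eqref{eq:rellich} and rearranging gives
\[
\frac12\int_{\partial D}(z\cdot\nu)\,u_\nu^2\,d\ell+\int_{\partial D}(z\cdot\tau)\,u_\nu u_\tau\,d\ell=\frac12\int_{\partial D}(z\cdot\nu)\,u_\tau^2\,d\ell .
\]
The boundary is only Lipschitz, but this is harmless: $\nu$ and $\tau$ exist a.e. on $\partial D$, and $u\in C^1(\ol D)$, so every integral makes sense. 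The identity itself is justified in Lemma~\ref{lem:rellich}, by approximating $D$ from inside by smooth domains if needed.

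Set $m=\min_{\partial D}|z|$ and $M=\max_{\partial D}|z|$. On $\partial D$ we have $|z\cdot\tau|\le|z|\le M$ and $km\le k|z|\le z\cdot\nu\le M$. For the cross term we use Young's inequality in weighted form:
\[
|z\cdot\tau|\,|u_\nu u_\tau|\le \frac{\delta}{2}u_\nu^2+\frac{M^2}{2\delta}u_\tau^2 .
\]
Taking $\delta=km/2$, we get
\[
\frac{km}{4}\int_{\partial D}u_\nu^2\,d\ell\le\Bigl(\frac M2+\frac{M^2}{km}\Bigr)\int_{\partial D}u_\tau^2\,d\ell .
\]
Dividing by $km/4$ gives the upper inequality with $c_2=\frac{2M}{km}+\frac{4M^2}{k^2m^2}$. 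The identity is symmetric in $u_\nu$ and $u_\tau$ up to the sign of the cross term, so the same argument with their roles exchanged gives
\[
\int_{\partial D}u_\tau^2\,d\ell\le c_2\int_{\partial D}u_\nu^2\,d\ell .
\]
Hence the lower inequality holds with $c_1=1/c_2$.

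Both constants depend only on $k$ and on $M/m$, as claimed. Even so, I would check that they are scale invariant, since the lemma will later be applied to rescaled domains. Replacing $D$ by $\lambda D$ multiplies both sides of each inequality by the same power of $\lambda$, so it suffices to verify the constants after normalizing $m=1$; this shows the dependence is only on $k$ and $M/m$. The main obstacle is only this bookkeeping of the cross term through the choice of the Young's inequality weight. It is routine, but it is the step that actually uses strict star-shapedness, through the bound $z\cdot\nu\ge km>0$ on $\partial D$.
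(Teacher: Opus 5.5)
Your proof is correct and follows essentially the same route as the paper: apply the Rellich identity with $z-z^*$, split $\nabla u$ into normal and tangential parts, and absorb the cross term by Young's inequality using $(z-z^*)\cdot\nu\ge k\min_{\partial D}|z-z^*|$. The paper bounds the cross-term weight slightly more sharply, by $|(z-z^*)\cdot\tau|\le\sqrt{1-k^2}\max_{\partial D}|z-z^*|$ instead of your $\max_{\partial D}|z-z^*|$, but this does not affect the argument. Your explicit choice $\delta=km/2$ and the resulting constants $c_2$ and $c_1=1/c_2$ check out.
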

\begin{proof}
From \eqref{eq:rellich}, we have after a change of coordinates
\[
\int_{\partial D} \frac{\partial u}{\partial \nu} ((z-z^*)\cdot \nabla u) \, d\ell
= \frac12\int_{\partial D} ((z-z^*)\cdot \nu) |\nabla u|^2\, d\ell
\]
In particular, writing $\nabla u = \frac{\partial u}{\partial \nu}\nu +\frac{\partial u}{\partial \tau}\tau$,  
\begin{align*}
\int_{\partial D} \left|\frac{\partial u}{\partial \nu}\right|^2 & ((z-z^*)\cdot \nu) \, d\ell
+ \int_{\partial D} \frac{\partial u}{\partial \nu}\frac{\partial u}{\partial \tau} ((z-z^*)\cdot \tau) \, d\ell
\\&
=\frac12\int_{\partial D} \left(\left|\frac{\partial u}{\partial \nu}\right|^2+\left|\frac{\partial u }{\partial \tau} \right|^2
\right)(z-z^*)\cdot\nu \,d\ell.
\end{align*}
We can estimate for any $\delta>0$
\[
\left|\frac{\partial u}{\partial \nu}\frac{\partial u}{\partial \tau} \right| \le \frac\delta 2 \left|\frac{\partial u}{\partial \nu}\right|^2
+ \frac1{2\delta} \left|\frac{\partial u }{\partial \tau} \right|^2.
\]
Using $|(z-z^*)\cdot\nu|\ge k|z-z^*|\ge k \min_{z\in \partial D} |z-z^*|$ and 
$|(z-z^*)\cdot \tau| \le \sqrt{1-k^2} \max_{z\in \partial D} |z-z^*|$, we arrive at the claim by suitable
choices of $\delta$.
\end{proof}
Now, returning to our domain $\Omega$,
we define a useful quantity related to the radial derivative of the energy in balls around $z_0\in\dOm$, by
\begin{equation}
\calA_{\e,z_0}(\rho):=
\rho \int_{\partial B_\rho(z_0) \cap \Omega} |\nabla u_\e|^2 \,d\ell
+ \frac\rho\e \int_{\partial \Gamma_\rho(z_0)} G(u_\e) d\mathcal{H}^0,
\end{equation}
where $\Gamma_\rho(z_0)=:\dOm\cap B_\rho(z_0)$. For 
small $\rho$, $\partial\Gamma_\rho$ is a set with two elements, hence the integral over this 
set with respect to counting measure $\mathcal{H}^0$ is just a sum of two values.
We will abbreviate $\calA(\rho)=\calA_{\e,z_0}(\rho)$. We recall that $\beta_0$ is chosen such that
\eqref{eq:eb0heto0} holds.

\begin{proposition}\label{prop:mainpohocalc}
There exist $\e_0>0$ and $C>0$ depending only on $\Omega$ 
such that for every $\beta\in (\beta_0,1)$ and
for every $z_0\in \dOm$, 
$\e<\e_0$
and 
$\rho\in(\e,\e^\beta)$, the set
 $D_\rho=\Omega\cap B_\rho(z_0)$ satisfies the assumptions of Lemma
 \ref{lem:comparenutau} with $D=D_\rho$ and a suitable $z_\rho^*\in D_\rho$,
 with constants uniform in $\rho$.
In addition,
\begin{equation} 
\frac1\e \int_{\Gamma_\rho(z_0)} G(u_\e)\,d\ell \le 
2\calA(\rho)+C\e^{\beta-\beta_0}. 
\end{equation}
\end{proposition}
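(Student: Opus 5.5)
We apply the Poho\u{z}aev identity of Lemma~\ref{lem:rellich} on $D_\rho=\Omega\cap B_\rho(z_0)$, with $z$ replaced by $z-z_0$, and use the boundary condition \eqref{eq:bc1} on the flat-ish part $\Gamma_\rho(z_0)$.

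\emph{Geometry.} Since $\dOm$ is smooth, after a rigid motion $z_0=0$ and locally $\dOm$ is the graph $y=\gamma(x)$ with $\gamma(0)=\gamma'(0)=0$ and $|\gamma''|\le C$. For $\rho<\e_0^\beta$ small, $D_\rho$ is $C^{1}$-close, after scaling by $\rho$, to the half-disc $B_1^+$. We take $z^*_\rho=(0,\rho/2)$. On $\dOm\cap B_\rho$, $(z-z^*_\rho)\cdot\nu\ge \rho/2-C\rho^2$, and on the arc $\partial B_\rho\cap\Omega$, $(z-z_\rho^*)\cdot\nu\ge \rho-\rho/2$. Hence strict star-shapedness holds with $k$ and the radius ratio uniform in $\rho$, and Lemma~\ref{lem:comparenutau} applies. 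Corners are Lipschitz, which suffices.

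\emph{Poho\u{z}aev computation.} Apply \eqref{eq:rellich} with center $z_0=0$. On the arc we have $z\cdot\nu=\rho$, so the arc contributions are bounded by $C\rho\int_{\partial B_\rho\cap\Omega}|\nabla u_\e|^2$. On $\Gamma_\rho$ we have $z\cdot\nu=O(|z|^2)=O(\rho^2)$, so its right-hand contribution is at most $C\rho^2\int_{\Gamma_\rho}|\nabla u_\e|^2$. On the left we split $z\cdot\nabla u=(z\cdot\tau)\partial_\tau u+(z\cdot\nu)\partial_\nu u$. The main term is
\[
\int_{\Gamma_\rho}\tfrac1\e f(u_\e)(z\cdot\tau)\partial_\tau u_\e=-\tfrac1\e\int_{\Gamma_\rho}(z\cdot\tau)\partial_\tau G(u_\e).
\]
Integrating by parts along the curve with $\partial_\tau(z\cdot\tau)=1+O(\rho)$ gives
\[
\tfrac1\e\int_{\Gamma_\rho}G(u_\e)(1+O(\rho))-\tfrac\rho\e\int_{\partial\Gamma_\rho}G(u_\e)\,d\mathcal H^0 .
\]
The $h_\e$-term is bounded by
\[
\rho\|h_\e\|_\infty\int_{\Gamma_\rho}|\nabla u_\e|\le \rho\,\|h_\e\|_\infty\,\rho\,\tfrac C\e=o(\e^{-\beta_0})\rho^2\e^{-1}\le C\e^{2\beta-\beta_0-1},
\]
using $|\nabla u_\e|\le C/\e$. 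This is not good enough as stated, so we instead absorb it as follows. The normal-derivative terms on $\Gamma_\rho$, such as $(z\cdot\nu)|\partial_\nu u|^2$ and $\rho^2|\nabla u|^2$, are controlled by $\rho^2\cdot\rho\cdot C\e^{-2}$, which is small only when $\rho^3\ll\e^2$. This is the \textbf{main obstacle}: the gradient bound alone does not make the curvature error terms small for $\rho$ up to $\e^\beta$.

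The remedy we try first is to estimate $\int_{\Gamma_\rho}|\nabla u_\e|^2$ via $|\partial_\nu u_\e|^2\le \frac{C}{\e^2}G(u_\e)+2h_\e^2$ (using $f^2\le CG$) together with Lemma~\ref{lem:comparenutau} on the whole of $\partial D_\rho$. This converts the tangential energy on $\Gamma_\rho$ into normal energy plus arc terms. The error then becomes
\[
C\rho^2\Big(\tfrac1{\e^2}\int_{\Gamma_\rho}G+\rho\|h_\e\|^2_\infty\Big)+C\rho\int_{\text{arc}}|\nabla u|^2 .
\]
Here $\rho^2/\e^2$ is still large, so we use $\rho|z\cdot\nu|\lesssim\rho^2$ more carefully: choose coordinates so that $z\cdot\nu\le C\rho^2$, and note that $\rho^2\cdot\frac1{\e^2}\cdot\e=\rho^2/\e\le\e^{2\beta-1}$. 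It is plausible that one must instead conformally flatten $\Omega$ near $z_0$ (as in the proof of Proposition~\ref{pro:conmin}), so that $\Gamma_\rho$ becomes exactly flat with $z\cdot\nu=0$ there. In that case only the weight $a(x)=1+O(\rho)$ enters the boundary condition, and the tangential term yields $\frac1\e\int G\,(1+O(\rho))$ directly.

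\emph{Collecting terms.} With the flattening, the $h_\e$ term becomes $\rho\int_{\Gamma_\rho}|h_\e||\partial_\tau u|$. By Cauchy--Schwarz this is at most
\[
\rho^{3/2}\|h_\e\|_\infty\Big(\int_{\Gamma_\rho}|\partial_\tau u|^2\Big)^{1/2},
\]
and $\int_{\Gamma_\rho}|\partial_\tau u|^2$ is controlled via Lemma~\ref{lem:comparenutau} by $\frac1\rho\calA(\rho)+\frac C{\e^2}\int G+\rho\|h_\e\|_\infty^2$. Young's inequality then gives $\tfrac12\tfrac1\e\int_{\Gamma_\rho}G+C\calA(\rho)+C\e^{\beta-\beta_0}$-type errors, using $\rho\le\e^\beta$ and $\e^{\beta_0}h_\e\to0$. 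Absorbing $\frac12\frac1\e\int G$ on the left produces the factor $2$ in front of $\calA(\rho)$ and the stated bound
\[
\frac1\e\int_{\Gamma_\rho(z_0)}G(u_\e)\,d\ell\le 2\calA(\rho)+C\e^{\beta-\beta_0}.
\]
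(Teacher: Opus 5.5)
\textbf{Verdict: genuine gaps.} As written, your argument establishes neither the constant $2$ nor the error $C\e^{\beta-\beta_0}$ on the whole range $\beta\in(\beta_0,1)$, and the conformal flattening you finally rely on is never carried out.

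\textbf{The setup and the ``obstacle''.} Your unflattened setup is essentially the paper's. Splitting $z=(z\cdot\tau)\tau+(z\cdot\nu)\nu$ on $\Gamma_\rho$ plays the role of the paper's tangential field $Z$ (with $|z-Z|\le C\rho^2$). Your choice $z^*_\rho=(0,\rho/2)$ does verify the hypotheses of Lemma~\ref{lem:comparenutau} uniformly in $\rho$.

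The ``main obstacle'' you identify is not one. The remedy you wrote down and then abandoned is exactly how the paper concludes. Lemma~\ref{lem:comparenutau} on all of $\partial D_\rho$, together with $|\partial_\nu u_\e|^2\le \frac{C}{\e^2}G(u_\e)+2h_\e^2$, gives $C\rho^2\int_{\Gamma_\rho}|\nabla u_\e|^2\le C\frac{\rho^2}{\e}\cdot\frac1\e\int_{\Gamma_\rho}G(u_\e)+C\rho^2\int_{\partial B_\rho\cap\Omega}|\nabla u_\e|^2+C\rho^3\|h_\e\|_\infty^2$. Since $\rho^2/\e\le\e^{2\beta-1}\to0$, the first term is absorbed on the left; this is precisely where $\beta>\beta_0>\frac12$ is used. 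The arc term carries $C\rho^2$, not $C\rho$, so it is an $o(1)$ multiple of $\calA(\rho)$.

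Conformal flattening would remove these curvature terms outright and is a legitimate alternative. However, the image of $\partial B_\rho(z_0)\cap\Omega$ is no longer a circular arc. To return to $\calA(\rho)$ you would need, on that image, $w\cdot\nu=\rho(1+O(\rho))$ and $|w\cdot\tau|\le C\rho^2$, and also $\int|\nabla \tilde u_\e|^2\,d\tilde\ell=(1+O(\rho))\int|\nabla u_\e|^2\,d\ell$.

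\textbf{The gaps in your last step.}

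(i) The constant $2$ does not follow.
\begin{itemize}
\item To get it you must bound the arc exactly. Since $z=\rho\nu$ there, its net contribution is $\frac\rho2\int_{\partial B_\rho\cap\Omega}|\nabla u_\e|^2-\rho\int_{\partial B_\rho\cap\Omega}|\partial_\nu u_\e|^2\le\frac\rho2\int_{\partial B_\rho\cap\Omega}|\nabla u_\e|^2$.
\item Every remaining term must then be an $o(1)$ multiple of $\frac1\e\int_{\Gamma_\rho}G(u_\e)$ or of $\calA(\rho)$, or an additive $O(\e^{\beta-\beta_0})$. Dividing by $1-o(1)$ leaves $2$ with room to spare.
\item You instead bound the arc by a generic $C\rho\int|\nabla u_\e|^2$, and your Young step produces $\frac12\cdot\frac1\e\int_{\Gamma_\rho}G(u_\e)+C\calA(\rho)$. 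Absorbing that half doubles the whole right-hand side, so you end with a constant larger than $2$ in front of $\calA(\rho)$.
\end{itemize}

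(ii) The $h_\e$-error is not $O(\e^{\beta-\beta_0})$ on the whole range.
\begin{itemize}
\item After Cauchy--Schwarz, the piece $C\rho^{3/2}\|h_\e\|_\infty\e^{-1/2}\bigl(\frac1\e\int_{\Gamma_\rho}G(u_\e)\bigr)^{1/2}$ leaves, after Young, an error $C\rho^3\|h_\e\|_\infty^2/\e=o(\e^{3\beta-1-2\beta_0})$.
\item This is bounded by $C\e^{\beta-\beta_0}$ only if $\beta\ge\frac{1+\beta_0}2$. Your bound does not even tend to $0$ if $\beta<\frac{1+2\beta_0}3$.
\item The loss comes from Cauchy--Schwarz on $\Gamma_\rho$. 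It costs a factor $\sqrt{\rho/\e}$ over the $O(1)$ total variation of a layer-like profile. The paper's bookkeeping records this contribution at size $\rho\sup|h_\e|$, which your route cannot reproduce.
\end{itemize}

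\textbf{How to close it.} There are two options.
\begin{itemize}
\item Restrict to $\beta\ge\frac{1+\beta_0}2$ and use Young with a fixed small weight on $\frac1\e\int G$. This keeps the constant $2$. It suffices for Lemma~\ref{lem:UpsG} and Proposition~\ref{prop:finitecoverprop}, where $\beta_1<\beta_2<\beta_3$ may be taken near $1$.
\item Use the actual form $h_\e=\lambda_\e(u_\e-\chi^{p,q}_\e)$. Write $(u_\e-\chi^{p,q}_\e)\partial_\tau u_\e=\partial_\tau\tfrac12(u_\e-\chi^{p,q}_\e)^2+(u_\e-\chi^{p,q}_\e)\partial_\tau\chi^{p,q}_\e$, integrate by parts, and use $\int_{\Gamma_\rho}|\partial_\tau\chi^{p,q}_\e|\le C$. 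This gives $\bigl|\int_{\Gamma_\rho}h_\e(z\cdot\tau)\partial_\tau u_\e\bigr|\le C\rho|\lambda_\e|\le C\e^{\beta-\beta_0}$, a purely additive error.
\end{itemize}
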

\begin{proof}
 Without loss of generality $z_0=0$.
Using the smoothness of $\dOm$, we can find $Z\in C^1(\overline{D_\rho};\R^2)$ such that $Z\cdot \nu=0$ 
on $\Gamma_\rho$, $Z(0)=0$, $|\nabla Z-\mathrm{id}_{2\times 2}|\le C|z|$, where $C=C(\Omega)$ and $\mathrm{id}_{2\times 2}=\left(\begin{smallmatrix} 1&0\\0&1\end{smallmatrix}\right)$.
We apply \eqref{eq:rellich} to $u=u_\e$ on $D=D_\rho$. It follows that
\begin{align*}
\frac12\int_{\partial D_\rho} 
(z\cdot \nu) |\nabla u_\e|^2 \,d\ell=
\rho \int_{\partial B_\rho \cap \Omega} \left|\frac{\partial u_\e}{\partial \nu}
\right|^2 \,d\ell & + \int_{\Gamma_\rho}  \frac{\partial u_\e}{\partial \nu}
Z\cdot \nabla u_\e \,d\ell \\ & + \int_{\Gamma_\rho} \frac{\partial u_\e}{\partial \nu}
(z-Z)\cdot \nabla u_\e\,d\ell.
\end{align*}
Using the boundary condition in \eqref{eq:bc1} and 
$Z\cdot \nabla u_\e=(Z\cdot \tau) \frac{\partial u_\e}{\partial\tau}$, 
we obtain after integration by parts
\begin{align*}
\int_{\Gamma_\rho} \frac{\partial u_\e}{\partial \nu} Z\cdot \nabla u_\e\,d\ell
&= \frac{1}{\e} \int_{\Gamma_\rho} f(u_\e) \frac{\partial u_\e}{\partial \tau}
(Z\cdot \tau)\,d\ell + \int_{\Gamma_\rho} h_\e \frac{\partial u_\e}{\partial \tau}
(Z\cdot \tau)\,d\ell \\
&=-\frac1\e \int_{\partial\Gamma_\rho} G(u_\e) |Z\cdot \tau| d\mathcal{H}^0
+\frac1\e \int_{\Gamma_\rho} G(u_\e) \frac{\partial}{\partial\tau}(Z\cdot\tau)\,d\ell
\\ & \quad + \int_{\Gamma_\rho} h_\e \frac{\partial u_\e}{\partial \tau}
(Z\cdot \tau)\,d\ell.
\end{align*}
Hence
\begin{align*}
\frac1\e \int_{\Gamma_\rho} G(u_\e) \frac{\partial}{\partial\tau}(Z\cdot\tau)\,d\ell
&=\frac1\e \int_{\partial\Gamma_\rho} G(u_\e) |Z\cdot \tau| d\mathcal{H}^0
-\int_{\Gamma_\rho} h_\e \frac{\partial u_\e}{\partial \tau}
(Z\cdot \tau)\,d\ell\\
&\quad+
\frac12\int_{\partial D_\rho} 
(z\cdot \nu) |\nabla u_\e|^2\,d\ell-\rho \int_{\partial B_\rho \cap \Omega} \left|\frac{\partial u_\e}{\partial \nu}
\right|^2\,d\ell  \\ & \quad  -\int_{\Gamma_\rho} \frac{\partial u_\e}{\partial \nu}
(z-Z)\cdot \nabla u_\e\,d\ell.
\end{align*} 

Using the bound on $Z$ and the fact that $|z\cdot \nu|\le C\rho^2$ on $\Gamma_\rho$, 
it follows that
\begin{align*}
\left(1-C\rho
 \right) \frac1\e \int_{\Gamma_\rho} & G(u_\e)\,d\ell
 \le (1+C\rho)\frac\rho\e \int_{\partial\Gamma_\rho} G(u_\e) d\mathcal{H}^0
 +C\rho^2 \int_{\Gamma_\rho} |\nabla u_\e|^2\,d\ell \\
 & \, \, +\left(\frac{\rho}{2}+C\rho^2+C\rho \sup|h_\e|\right) \int_{\partial B_\rho\cap\Omega}
 |\nabla u_\e|^2\,d\ell + \rho\sup|h_\e|.
\end{align*}
By Lemma \ref{lem:comparenutau}, 
\[
\int_{\Gamma_\rho} |\nabla u_\e|^2\,d\ell \le C\left(
\int_{\Gamma_\rho} \left|\frac{\partial u_\e}{\partial \nu} \right|^2\,d\ell
+ \int_{\partial B_\rho\cap \Omega} |\nabla u_\e|^2\,d\ell
\right).
\]
We estimate 
\[
\int_{\Gamma_\rho}\left|\frac{\partial u_\e}{\partial \nu} \right|^2\,d\ell
\le C\left( \rho \sup|h_\e|^2 + \frac1{\e^2}\int_{\Gamma_\rho} |f(u_\e)|^2\,d\ell\right).
\]
We recall that 
$|f|^2 \le CG$ for some constant, and obtain
\[
\int_{\Gamma_\rho}\left|\frac{\partial u_\e}{\partial \nu} \right|^2\, d\ell
\le C\left(\rho \sup|h_\e|^2 + \frac1\e\left(\frac1\e\int_{\Gamma_\rho} G(u_\e)\, d\ell \right)  \right).
\]
Combining all terms, we arrive at the estimate
\begin{multline*}
\left(1- C\rho -C\frac{\rho^2}{\e}- C\rho^3 \sup |h_\e|^2
 \right)\frac1\e\int_{\Gamma_\rho} G(u_\e)\, d\ell
 \\
 \le \rho \sup|h_\e|+ 
 \left(\frac{\rho}{2}+C\rho^2+C\rho \sup|h_\e|\right) \int_{\partial B_\rho\cap\Omega}
 |\nabla u_\e|^2\, d\ell
 +(1+C\rho)\frac\rho\e \int_{\partial\Gamma_\rho} G(u_\e) d\mathcal{H}^0.
\end{multline*}

For $\e<\rho<\e^\beta$, $1>\beta>\beta_0$, it follows that 
\begin{align}
\frac1\e \int_{\Gamma_\rho} G(u_\e)\, d\ell & \le 2
\rho \int_{\partial B_\rho \cap \Omega} |\nabla u_\e|^2 \, d\ell
+ 2\frac\rho\e \int_{\partial \Gamma_\rho} G(u_\e) d\mathcal{H}^0
+C\e^{\beta-\beta_0} \notag \\ & =2\calA(\rho)+C\e^{\beta-\beta_0}
\end{align}
for $\e<\e_0$ and 
$\e_0$ sufficiently small (depending only on $\Omega$ and $G$), as desired.
\end{proof}

The use of the quantity $\calA(\rho)$ becomes clear with the following lemma, showing that 
if it is small, then $u_\e$ is away from zero in a smaller region.
\begin{lemma}\label{lem:UpsG}
Under the assumptions of Proposition \ref{prop:mainpohocalc},
there exist positive constants $\kappa$ and $C$ independent of $\e$ for which the following holds.
If
\(
\calA(\rho)<\kappa
\) for some $\rho>0$ 
then
\[
\inf_{\Gamma_{\rho/2}(z_0)} |u_\e|>t_*
\]
and 
\[
\frac1\e\int_{\Gamma_\rho(z_0)} G(u_\e)\, d\ell\le C.
\]
\end{lemma}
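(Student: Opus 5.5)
The second claim follows directly from Proposition~\ref{prop:mainpohocalc}: if $\calA(\rho)<\kappa$ then
\[
\frac1\e\int_{\Gamma_\rho(z_0)} G(u_\e)\,d\ell\le 2\kappa+C\e^{\beta-\beta_0}\le C .
\]
This holds for $\rho$ in the admissible range $(\e,\e^\beta)$. For $\rho$ outside this range I will either restrict the statement to that range, as implicitly assumed, or argue directly from the bound on $\calA$. So the real content is the pointwise lower bound $|u_\e|>t_*$ on $\Gamma_{\rho/2}$, and I will get it from smallness of the oscillation of $u_\e$ along $\Gamma_\rho$ together with smallness of $G(u_\e)$ somewhere.

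The first step is to control the tangential derivative on $\Gamma_\rho$. By Lemma~\ref{lem:comparenutau} applied to $D_\rho$, whose constants are uniform by Proposition~\ref{prop:mainpohocalc},
\[
\int_{\Gamma_\rho}\Bigl|\frac{\partial u_\e}{\partial\tau}\Bigr|^2 d\ell
\le C\Bigl(\int_{\Gamma_\rho}\Bigl|\frac{\partial u_\e}{\partial\nu}\Bigr|^2 d\ell+\int_{\partial B_\rho\cap\Omega}|\nabla u_\e|^2 d\ell\Bigr).
\]
The normal derivative is bounded, as in the proof of Proposition~\ref{prop:mainpohocalc}, by
\[
\frac C\e\cdot\frac1\e\int_{\Gamma_\rho}G(u_\e)\,d\ell+C\rho\sup|h_\e|^2 .
\]
Multiplying by $\rho$ and using $\rho\ge\e$ is the wrong direction, so I will multiply by $\e$ instead, or better, work directly with $\rho$. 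The cleaner path is Cauchy--Schwarz:
\[
\operatorname{osc}_{\Gamma_\rho}u_\e\le \Bigl(2\rho\int_{\Gamma_\rho}|\partial_\tau u_\e|^2\Bigr)^{1/2}.
\]
The $\partial B_\rho$ part contributes $C\calA(\rho)\le C\kappa$. The normal part contributes
\[
\frac{\rho}{\e}\cdot\frac1\e\int_{\Gamma_\rho} G(u_\e)\,d\ell+\rho^2\sup|h_\e|^2 .
\]
Here $\rho^2\sup|h_\e|^2\le \e^{2\beta-2\beta_0}\to0$, but the factor $\rho/\e$ is unbounded. This is the main obstacle.

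To overcome it I will avoid the full $\Gamma_\rho$ and instead use a Sobolev/interpolation argument on the harmonic function near the boundary. The bound $\rho\int_{\partial B_\rho\cap\Omega}|\nabla u_\e|^2<\kappa$ gives
\[
\operatorname{osc}_{\partial B_\rho\cap\Omega}u_\e\le C\sqrt\kappa ,
\]
by one-dimensional Cauchy--Schwarz on an arc of length about $\pi\rho$. At the two endpoints $\partial\Gamma_\rho$ we have $G(u_\e)\le \e\kappa/\rho\le\kappa$, so $u_\e$ is within $\delta(\kappa)$ of $\pm1$ there. Hence on the arc $\partial B_\rho\cap\Omega$, $u_\e$ stays within $\delta(\kappa)+C\sqrt\kappa$ of a single value $\sigma\in\{\pm1\}$. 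This forces both endpoints to be close to the same well.

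Suppose, say $\sigma=1$, that $u_\e\le t_*$ somewhere on $\Gamma_{\rho/2}$. I will use a barrier and maximum principle argument on $D_\rho$, in the spirit of Lemma~\ref{lemSM}. Near the well, $f(u)\le -c(u-1)$ for $u\in[t_*,1]$, so $w=1-u_\e$ satisfies $\partial_\nu w+\frac c\e w\le |h_\e|$ wherever $u_\e\ge t_*$. Together with $w\le\delta'$ on the arc, comparison with a suitable supersolution, namely a small constant $\delta'$ plus a correction of size $\e\sup|h_\e|\to0$, shows $w\le\delta'+o(1)<1-t_*$ on all of $D_\rho$. To rule out a region of $\Gamma_\rho$ where $u_\e<t_*$, I will use a continuity argument. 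Let $\Sigma$ be the connected component containing the arc of the set $\{u_\e>t_*\}\cap\overline{D_\rho}$. The comparison shows $u_\e\ge 1-\delta'-o(1)>t_*$ on $\Sigma$, so $\Sigma$ cannot have boundary inside $D_\rho\cup\Gamma_\rho$, and therefore $\Sigma=\overline{D_\rho}$. The difficulty of running the maximum principle on a region whose boundary is not known in advance is the delicate point. If needed, I will instead rely on the weaker requirement that only $\Gamma_{\rho/2}$ be covered, using the interior estimate $|\nabla u_\e|\le C/\e$ and a blow-up/compactness contradiction at scale $\e$ to handle small pockets. Choosing $\kappa$ small enough so that $\delta'<1-t_*$ concludes the proof.
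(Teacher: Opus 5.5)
Your proof of the second claim is fine, but your argument for $\inf_{\Gamma_{\rho/2}(z_0)}|u_\e|>t_*$ has a genuine gap. The comparison on $\Sigma$ is circular. The relative boundary of $\Sigma$ inside $D_\rho\cup\Gamma_\rho$ consists of points where $u_\e=t_*$, i.e.\ $w=1-t_*$. This exceeds your barrier $\delta'+\frac{\e}{c}\sup|h_\e|$, so the maximum principle on $\Sigma$ yields nothing unless you already know this relative boundary is empty, which is what you are trying to prove.

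More fundamentally, your Robin-type inequality for $w$ holds only where $u_\e\ge t_*$, so the maximum principle by itself cannot exclude a small boundary ``pocket'' where $u_\e<t_*$. At a boundary minimum point $z_1\in\Gamma_\rho$ you only learn $f(u_\e(z_1))\le -\e h_\e(z_1)$, and $f$ has no definite sign on $[-1,t_*]$. Your fallback (``blow-up/compactness at scale $\e$'') is not specified, so the main claim is left unproved. Also, the inequality you need is $f(u)\ge c(1-u)$ on $[t_*,1]$, not $\le$.

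The missing idea is energetic, and you already have both ingredients.

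\emph{Lower bound.} Suppose $|u_\e(z)|\le t_*$ for some $z\in\Gamma_{\rho/2}(z_0)$. The gradient bound $|\nabla u_\e|\le C/\e$ (valid up to the boundary by Proposition~\ref{pro:conmin}) gives $|u_\e|\le\frac12(1+t_*)$ on $\Gamma_{c\e}(z)$ for a fixed small $c$. Since $\rho>\e$, this set lies in $\Gamma_\rho(z_0)$. Hence
\[
\frac1\e\int_{\Gamma_\rho(z_0)}G(u_\e)\,d\ell\;\ge\; c\min_{|t|\le(1+t_*)/2}G(t)=:c_0>0,
\]
with $c_0$ independent of $\e$ and $\rho$.

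\emph{Upper bound.} Proposition~\ref{prop:mainpohocalc} gives $\frac1\e\int_{\Gamma_\rho(z_0)}G(u_\e)\,d\ell\le 2\calA(\rho)+C\e^{\beta-\beta_0}<2\kappa+o(1)$. This is exactly the estimate you used for the second claim.

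Choosing $\kappa<c_0/4$ and $\e$ small gives a contradiction. This is the paper's short proof; no oscillation estimate on the arc, barrier, or level-set analysis is needed.
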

\begin{proof}
If $|u_\e(z)|\le t_*$ 
 for some  
$z\in \Gamma_{\rho/2}(z_0)$, it
follows by the bound $|\nabla u_\e|\le \frac C\e$
 that
$|u_\e(z)| \le \frac12(1+t_*)$ for $z\in \Gamma_{c\e}(z_0)\subset \Gamma_{\rho}(z_0)$,
and thus there is a $C_G>0$ with
\[
\frac1\e \int_{\Gamma_\rho(z_0)} G(u_\e)\, d\ell\ge c C_G
\]
For $\kappa<c C_G$, this leads to a contradiction. 
The second statement is clear from Proposition \ref{prop:mainpohocalc}.
\end{proof}
In the following, we use the localized energies for $S\subset \overline{\Omega}$ given by 
\[
E_\e(u;S) = \frac12 \int_{S\cap \Omega} |\nabla u|^2 \, dxdy + \frac1\e \int_{{\overline{S}}\cap \dOm} G(u)\, d\ell.
\]
The logarithmic bound on the energy allows us to find radii where $\mathcal{A}(\rho)$ is not too large.
\begin{lemma}\label{lem:dis}
Let $1>\alpha_2>\alpha_1>\alpha$, $L>0$. Then for every $z_0\in \dOm$ and every
$\e>0$ sufficiently small, we have
\begin{equation}\label{eq:essinf}
\inf_{L\e^{\alpha_2}<r<L\e^{\alpha_1}} \calA(r)
\le \frac{2}{(\alpha_2-\alpha_1)|\log\e|} E_\e(u_\e;\ol\Omega\cap B_{L\e^{\alpha_1}}(z_0))
 \le \frac{2K}{\alpha_2-\alpha_1}.
\end{equation}
\end{lemma}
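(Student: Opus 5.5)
The idea is a coarea/averaging argument in the logarithmic variable: $\calA(r)$ is essentially $r$ times the derivative of the localized energy. Set
\[
F(r):=E_\e(u_\e;\ol\Omega\cap B_r(z_0))=\frac12\int_{\Omega\cap B_r(z_0)}|\nabla u_\e|^2\,dxdy+\frac1\e\int_{\Gamma_r(z_0)}G(u_\e)\,d\ell .
\]
For $r$ small, polar coordinates around $z_0$ (coarea formula) give
\[
\frac{d}{dr}\,\frac12\int_{\Omega\cap B_r}|\nabla u_\e|^2=\frac12\int_{\partial B_r\cap\Omega}|\nabla u_\e|^2\,d\ell ,
\]
where $u_\e$ is $C^2$ up to the boundary, so this holds for every $r$. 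Since $\dOm$ is smooth and $\partial\Gamma_r(z_0)$ consists of two points meeting $\partial B_r$ almost orthogonally,
\[
\frac{d}{dr}\frac1\e\int_{\Gamma_r}G(u_\e)\,d\ell=\frac1\e\int_{\partial\Gamma_r}G(u_\e)\,\frac{d\ell}{dr}\,d\mathcal H^0 ,
\]
with $\frac{d\ell}{dr}=1+O(r)\ge \frac12$ for $r\le L\e^{\alpha_1}$ and $\e$ small. Hence
\[
\calA(r)\le 2r\,F'(r)\qquad\text{for all } r\in(L\e^{\alpha_2},L\e^{\alpha_1}).
\]
The factor $2$ in front of the Dirichlet part absorbs the $\frac12$ in the energy, and in the boundary part it absorbs the factor $1/(1+O(r))$.

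Next, suppose that $\calA(r)\ge m$ on the whole interval. Then $F'(r)\ge m/(2r)$ there, and integrating from $L\e^{\alpha_2}$ to $L\e^{\alpha_1}$ gives
\[
F(L\e^{\alpha_1})\ge F(L\e^{\alpha_1})-F(L\e^{\alpha_2})\ge \frac m2\log\frac{L\e^{\alpha_1}}{L\e^{\alpha_2}}=\frac m2(\alpha_2-\alpha_1)|\log\e|.
\]
So $m\le \frac{2}{(\alpha_2-\alpha_1)|\log\e|}F(L\e^{\alpha_1})$. Taking $m$ to be the infimum of $\calA$ on the interval yields the first inequality in \eqref{eq:essinf}. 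Since $F(r)$ is monotone in $r$ and absolutely continuous (indeed $C^1$), one can equivalently write $\int \calA(r)\,\frac{dr}{r}\le 2F(L\e^{\alpha_1})$ and bound the infimum by the average with respect to $dr/r$.

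The second inequality is immediate: $F(L\e^{\alpha_1})\le E_\e(u_\e)\le K|\log\e|$.

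The only point needing care is the geometric factor in the derivative of the boundary integral. This is where "sufficiently small $\e$" enters. Because $L\e^{\alpha_1}\to 0$, $B_r(z_0)\cap\dOm$ is a single arc for all relevant $r$, and its endpoints move with speed $1+O(r)$ uniformly in $z_0$ by the smoothness and compactness of $\dOm$. Everything else is routine.
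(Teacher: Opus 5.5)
Your proof is correct and is essentially the paper's argument: the paper writes $E_\e(u_\e;\ol\Omega\cap B_\rho(z_0))\ge \frac12\int_0^\rho \calA(s)\,\frac{ds}{s}$, which is your inequality $\calA(r)\le 2rF'(r)$ in integrated form, then restricts the integral to $(L\e^{\alpha_2},L\e^{\alpha_1})$, bounds $\calA$ below by its infimum, and finishes with $E_\e(u_\e)\le K|\log\e|$. One small simplification: since $|x-z_0|$ is $1$-Lipschitz along $\dOm$, you have $d\ell/dr\ge 1$, so your geometric factor needs no $1+O(r)$ estimate, only that $\Gamma_r(z_0)$ is a single arc for small $r$.
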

\begin{proof}
We have that 
\[
K|\log\e|\ge \frac12\int_{\overline{\Omega}\cap B_\rho(z_0)}| \nabla u_\e|^2dxdy + 
\frac1\e\int_{\Gamma_\rho(z_0)} G(u_\e) d\ell
\ge \frac12\int_{0}^\rho \frac{\calA(s)}{s}ds
\]
Restricting the integration range and estimating $\calA\ge \inf\calA$,
it follows that
\[
K|\log\e| \ge E_\e(u_\e;\overline{\Omega}\cap B_\rho(z_0)) \ge
\frac{\inf \calA}{2} \log \frac{L\e^{\alpha_1}}{L\e^{\alpha_2}} = \frac{\inf \calA}{2} |\log\e| (\alpha_2-\alpha_1),
\]
which gives the claim.
\end{proof}
We now define a ``bad set'' and show that it can be covered by a bounded number of $\e$-balls.
\begin{proposition}\label{prop:finitecoverprop}
There exists a number $N$ independent of $\e$ such that the 
approximate transition set
\[
S_\e=\{ x\in \dOm: |u_\e| \le t_*\}
\]
can be covered by at most $N$ balls of radius $\e$. 
\end{proposition}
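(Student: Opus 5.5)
We follow the Bethuel--Brezis--H\'elein/Struwe strategy as adapted in \cite{Kurzke:2006b}. The argument has two parts. First, every point of $S_\e$ carries an $O(1)$ amount of energy in an $\e$-neighborhood. Second, Poho\u{z}aev balls of mesoscopic size show that the number of well-separated such points is bounded.

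\emph{Step 1: each bad point carries a fixed amount of energy at scale $\e$.} Let $z\in S_\e$. By the gradient bound $|\nabla u_\e|\le C/\e$, we have $|u_\e|\le \frac12(1+t_*)$ on $\Gamma_{c\e}(z)$, exactly as in the proof of Lemma~\ref{lem:UpsG}. Hence
\[
\frac1\e\int_{\Gamma_{c\e}(z)}G(u_\e)\,d\ell\ge c_0>0
\]
with $c_0$ independent of $\e$.

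\emph{Step 2: choose a maximal separated family and a good radius.} Pick a maximal family $z_1,\dots,z_m\in S_\e$ whose pairwise distances are at least $2\e$. The balls $B_{2\e}(z_i)$ then cover $S_\e$; this radius can be adjusted to $\e$ by refining at the cost of a fixed factor. It remains to bound $m$ independently of $\e$. The crude count from Step~1 only gives $m\le CK|\log\e|$, so we use the Poho\u{z}aev estimate.

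Fix $\beta\in(\beta_0,1)$ and exponents $\beta<\alpha_1<\alpha_2<1$. Cover $\dOm$ by $N_0\le C\e^{-\beta}$ arcs of length $\e^{\beta}$, and look at the arcs containing points $z_i$. For such a center $z_0$, Lemma~\ref{lem:dis} gives a radius $r\in(\e^{\alpha_2},\e^{\alpha_1})\subset(\e,\e^\beta)$ with
\[
\calA(r)\le \frac{2}{(\alpha_2-\alpha_1)|\log\e|}\,E_\e(u_\e;\ol\Omega\cap B_{\e^{\alpha_1}}(z_0)).
\]
Proposition~\ref{prop:mainpohocalc} then yields
\[
\frac1\e\int_{\Gamma_r(z_0)}G(u_\e)\,d\ell\le 2\calA(r)+C\e^{\beta-\beta_0}.
\]
Combined with Step~1, this shows that the number of $z_i$ in $\Gamma_r(z_0)$ is at most $C(\calA(r)+1)$.

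\emph{Step 3: summing over centers (the main obstacle).} The difficulty is that the local energies $E_\e(u_\e;B_{\e^{\alpha_1}}(z_0))$ sum over overlapping mesoscopic balls. A naive sum gives $\sum_j \calA(r_j)\le C\,K$ times the overlap multiplicity, which would be acceptable. However, the points $z_i$ lying in arcs where $\calA(r_j)$ is small, below the $\kappa$ of Lemma~\ref{lem:UpsG}, must be excluded directly. For those arcs Lemma~\ref{lem:UpsG} shows there are no bad points at all in $\Gamma_{r/2}$.

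So I would argue as follows. Select, via a Vitali-type argument, disjoint balls $B_{r_j}(z_{0,j})$ whose halves cover all the $z_i$. Each ball containing a bad point has $\calA(r_j)\ge\kappa$, and therefore
\[
E_\e(u_\e;B_{\e^{\alpha_1}}(z_{0,j}))\ge c\kappa(\alpha_2-\alpha_1)|\log\e|.
\]
The covering has bounded overlap at scale $\e^{\alpha_1}$; to arrange this, take $\alpha_1$ so that the enlarged balls can be chosen essentially disjoint. Then the total bound $K|\log\e|$ limits the number of such balls $j$ to $CK/\kappa$. Within each such ball, the number of bad points is at most $C(\calA(r_j)+1)\le C(K+1)$.

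Hence $m\le N:=C(K,\kappa,\Omega,G)$. Securing the disjointness, or bounded overlap, of the enlarged balls $B_{\e^{\alpha_1}}$ is the delicate step. If it fails directly, I would iterate the selection over a finite ladder of exponents $\alpha$, using that there are only $O(1)$ scales available before the energy bound is exhausted.
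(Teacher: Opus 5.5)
\textbf{Gap in Step~3: the number of mesoscopic balls is not bounded.} Your ingredients are the right ones: the $\e$-scale lower bound on the potential energy at a bad point, Lemma~\ref{lem:dis}, Lemma~\ref{lem:UpsG} and the Poho\u{z}aev bound of Proposition~\ref{prop:mainpohocalc}. You also locate the crux correctly, but Step~3 does not resolve it. There are three problems.
\begin{enumerate}
\item No Vitali-type lemma gives disjoint balls whose \emph{halves} cover the $z_i$. Vitali gives disjoint balls whose fivefold enlargements cover. The halves are what you need to place a bad point in $\Gamma_{r_j/2}(z_{0,j})$ and so get $\calA(r_j)\ge\kappa$.
\item More seriously, the disjointness you obtain is for the balls $B_{r_j}(z_{0,j})$ with $r_j\in(\e^{\alpha_2},\e^{\alpha_1})$. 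The energy lower bound $c\kappa|\log\e|$, however, lives on the much larger balls $B_{\e^{\alpha_1}}(z_{0,j})$. Since $\e^{\alpha_1}/\e^{\alpha_2}\to\infty$, the overlap multiplicity of the enlarged balls is only bounded by $C\e^{\alpha_1-\alpha_2}$, so the number of indices $j$ is not controlled. No choice of $\alpha_1$ fixes this, because the range $(\e^{\alpha_2},\e^{\alpha_1})$ must have logarithmic length comparable to $|\log\e|$ to produce energy of order $|\log\e|$. The ``finite ladder of exponents'' is not an argument.
\item The arcs of length $\e^{\beta}$ in Step~2 play no role, since $r\ll\e^\beta$.
\end{enumerate}

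\textbf{Missing idea: decouple the two roles of the mesoscopic ball using two nested scale ranges.} This is how the paper proceeds.
\begin{itemize}
\item Take $1>\beta_3>\beta_2>\beta_1>\beta_0$. Center the balls at points $z\in S_\e$ themselves. Then Lemma~\ref{lem:UpsG} gives $\calA_z(r)\ge\kappa$ for \emph{every} $r\in[\e^{\beta_3},\e^{\beta_2}]$, because $z\in\Gamma_{r/2}(z)$. Lemma~\ref{lem:dis} then yields $E_\e(u_\e;\ol\Omega\cap B_{\e^{\beta_2}}(z))\ge \frac{\kappa(\beta_3-\beta_2)}{2}|\log\e|$.
\item Apply Vitali to the family $\{B_{\e^{\beta_2}}(z): z\in S_\e\}$ of \emph{equal} radii. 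The selected balls $B_{\e^{\beta_2}}(\zeta^j)$ are disjoint, and they are exactly the balls carrying the energy. Hence $|J_\e|\le \frac{2K}{\kappa(\beta_3-\beta_2)}$, while the enlarged balls $B_{5\e^{\beta_2}}(\zeta^j)$ cover $S_\e$.
\item Only then, on the larger range $[5\e^{\beta_2},5\e^{\beta_1}]$, use Lemma~\ref{lem:dis} with the global bound $K|\log\e|$ to pick $\rho_j$ with $\calA(\rho_j)\le \frac{2K}{\beta_2-\beta_1}$. Proposition~\ref{prop:mainpohocalc} then gives $\frac1\e\int_{\Gamma_{\rho_j}(\zeta^j)}G(u_\e)\,d\ell\le C$. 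No disjointness is needed at this second scale, because there are boundedly many $j$.
\item Finally, take a Vitali family of disjoint $\e/5$-balls centered in $S_\e$. Your Step~1 bounds their number by $C|J_\e|$.
\end{itemize}
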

\begin{proof}
We choose $1>\beta_3>\beta_2>\beta_1>\beta_0$.
For every $z\in S_\e$, there exists by Lemma \ref{lem:dis} and Lemma \ref{lem:UpsG}
a radius $r\in [\e^{\beta_3},\e^{\beta_2}]$ such that
\[
\kappa\le \calA(r)\le\frac{2E_\e(u_\e;\overline{\Omega}\cap B_r(z)) }
{(\beta_3-\beta_2)|\log\e|}. 
\]
Using Vitali's covering lemma, we can choose a set 
$\{\zeta_\e^j : j\in J_\e\}\subset S_\e$ such that $B_{\e^{\beta_2}}(\zeta_\e^j)$ 
are disjoint and $S_\e\subset B_{5\e^{\beta_2}}(\zeta_\e^j)$. It follows that
\[
|J_\e|\kappa \le \frac{2K}{\beta_3-\beta_2}.
\] 
Now we choose radii $\rho_j\in [5\e^{\beta_2},5\e^{\beta_1}]$ with the
property that 
\[
\calA(\rho_j) = \calA_{u_\e, \e, \zeta^j_\e}(\rho_j) \le \frac{2K}{\beta_2-\beta_1} .
\]
By Proposition \ref{prop:mainpohocalc}, this leads to
\begin{equation}\label{eq:need53}
\frac1\e \int_{\Gamma_{\rho_j}(\zeta^j_\e)} G(u_\e)\, d\ell \le C.
\end{equation}

Using once more Vitali's covering lemma, we can choose $z_\e^k\in S_\e$, 
$k=1,\dots, N_\e$ such that $B_{\e/5}(z_\e^k)$ are disjoint and 
$S_\e\subset \bigcup_{k=1}^{N_\e} B_\e(z_k)$. Since $z_\e^k\in S_\e$,
it follows that \[\frac1\e\int_{\Gamma_{\e/5}(z_\e^k)} G(u) \ge c>0\] and so
\[
c N_\e \le \frac1\e\int_{\bigcup_{k=1}^{N_\e} \Gamma_{\e/5}(z_\e^k)}
G(u_\e) \, d\ell \le C|J_\e|
\le \frac{2CK}{\kappa(\beta_3-\beta_2)}.
\]
This gives the claim for $N=\limsup_{\e\to 0} N_\e$. 
\end{proof}

The balls we have found so far are not disjoint. Similar to arguments in \cite{BethuelBrezisHelein:1994a}, 
we will show that they can be replaced by larger, disjoint balls. We find it more convenient to work in the 
half-plane. Thus,
we now use conformal mapping to transfer the problem to a half-plane. In the half-plane, we cover the transition set by larger balls of size $M\e$ (for a constant $M$ that depends on the sequence $(u_\e)$, but is not of great importance; we will 
find sharper bounds later) that are 
disjoint and at mutual distances much larger than $\e$. In analogy with the Ginzburg-Landau situation, we continue
to use the word ``balls'' although they are one-dimensional intervals.
\begin{proposition}\label{pro:57}
For any sequence $\e\to 0$ there is a subsequence, a point $p_\infty$ on the oriented segment from $q$ to $p$ on $\dOm$,
and a constant $M>0$ independent of $\e$ such that the following holds. If $\varphi$ is a conformal map as in Remark~\ref{rem:crossr} with inverse $\psi$
and setting $\tilde u_\e = u_\e\circ \psi$,
the set
\[
\tilde S_\e = \left\{ x\in \R : \left|\tilde u_\e(x)\right|\le t_* \right\}
\]
has a cover by $N$ disjoint balls of radius $M\e$ whose centers $x^k_\e$ converge to points $x^k_0\in\R$ as $\e\to 0$ and whose distances satisfy $\frac{|x^k_\e-x^j_\e|}{\e} \to \infty$ 
for $j\neq k$. Here $N$ is the number from the previous proposition.

Additionally, there is $\beta\in(0,1)$ independent of $\e$ such that 
\begin{equation}\label{eq:penbd57}
\frac1\e\int_{(x^k_\e-\e^\beta,x^k_\e+\e^\beta)} G(u_\e) d\ell \le C
\end{equation}
\end{proposition}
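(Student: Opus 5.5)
Throughout, $\tilde u_\e=u_\e\circ\psi$ is the transferred solution. The plan is to transfer the covering of Proposition~\ref{prop:finitecoverprop} to the half-plane and then run the standard Bethuel--Brezis--H\'elein ball-merging procedure.

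\emph{Choice of $\varphi$.} Take the centers $z^k_\e\in S_\e$, $k\le N_\e\le N$, of the $\e$-balls covering $S_\e$. Pass to a subsequence along which $N_\e=N$ is constant and $z^k_\e\to z^k_0\in\dOm$. The limit set $\{z^k_0\}$ is finite, so we can pick $p_\infty$ on the oriented arc from $q$ to $p$ away from all $z^k_0$ (and from $p,q$). Let $\varphi$ be the conformal map of Remark~\ref{rem:crossr} sending $p_\infty$ to $\infty$. On a compact neighborhood of $\{z^k_0\}$ in $\ol\Omega$, the maps $\varphi$ and $\psi$ are bi-Lipschitz with constants depending only on $\Omega$ and $p_\infty$. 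Hence $\tilde S_\e$ is covered by $N$ intervals of radius $C_0\e$ centered at $x^k_\e=\varphi(z^k_\e)$, and $x^k_\e\to x^k_0=\varphi(z^k_0)\in\R$. The transformed boundary condition keeps the same form, with the weight $|\psi'|$ bounded above and below.

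\emph{Merging.} Pass to further subsequences so that, for each pair $j\ne k$, the ratio $|x^j_\e-x^k_\e|/\e$ either tends to $\infty$ or stays bounded. Bounded ratios define an equivalence relation; it is transitive because there are only finitely many points. Within each class, keep one representative center. Every other center of the class lies within $C\e$ of it, so one interval of radius $M\e$, with $M$ equal to $C_0$ plus the largest bounded ratio plus $1$, covers the whole class. Distinct classes have separations divided by $\e$ tending to $\infty$. For small $\e$ these enlarged intervals are therefore disjoint, and their centers still converge. Relabel so that $N$ denotes the number of remaining balls, which is at most the $N$ of the previous proposition.

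\emph{The bound \eqref{eq:penbd57}.} In the proof of Proposition~\ref{prop:finitecoverprop}, the radii $\rho_j\in[5\e^{\beta_2},5\e^{\beta_1}]$ around the Vitali centers $\zeta^j_\e$ give \eqref{eq:need53}. Any $z^k_\e\in S_\e$ lies within $5\e^{\beta_2}$ of some $\zeta^j_\e$, so $\Gamma_{\e^{\beta_2}}(z^k_\e)\subset\Gamma_{\rho_j}(\zeta^j_\e)$ up to a constant in the radius. Alternatively, one can redo Lemma~\ref{lem:dis} together with Proposition~\ref{prop:mainpohocalc} directly around $z^k_\e$. Either way,
\[
\frac1\e\int_{\Gamma_{\e^{\beta_2}}(z^k_\e)}G(u_\e)\,d\ell\le C.
\]
Pulling back by the bi-Lipschitz map $\psi$ changes the radius only by a constant factor. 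Choosing $\beta$ slightly larger than $\beta_2$ absorbs that factor, so the interval $(x^k_\e-\e^\beta,x^k_\e+\e^\beta)$ maps into $\Gamma_{\e^{\beta_2}}(z^k_\e)$, and the weight $|\psi'|$ is harmless.

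\emph{Main obstacle.} The delicate step is that $p_\infty$ must be chosen uniformly along the subsequence, so that all bi-Lipschitz constants are independent of $\e$. This is exactly why we pass to a subsequence before choosing $\varphi$. Everything else is bookkeeping over finitely many points.
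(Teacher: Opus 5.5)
Your proposal is correct and is essentially the paper's proof: pass to a subsequence so the Vitali centers $z^k_\e$ converge, choose $p_\infty$ on the arc from $q$ to $p$ avoiding their limits, push forward by $\varphi$, extract further subsequences so that separations divided by $\e$ converge in $(0,\infty]$ (the paper does this for consecutive points after ordering, you do it pairwise, which is equivalent), keep one representative per cluster with $M$ absorbing the finite ratios and $|\varphi'(z^k_0)|$, and deduce \eqref{eq:penbd57} from \eqref{eq:need53}. Two cosmetic remarks: transitivity of ``bounded ratio'' comes from the triangle inequality rather than from finiteness, and your second route to \eqref{eq:penbd57} (Lemma~\ref{lem:dis} plus Proposition~\ref{prop:mainpohocalc} centered at $z^k_\e$) is the cleaner one, since $\rho_j$ may be as small as $5\e^{\beta_2}$ and the literal inclusion in your first route needs a slightly enlarged range of radii.
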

\begin{proof}
The compactness of $\dOm$ gives the existence of a subsequence (not labeled) such 
 that $S_\e\subset \bigcup_{k=1}^N B_\e(z^k_\e)$ with $z^k_\e\to z^k_0\in\dOm$.
We can now choose a conformal map $\varphi:\Omega\to\R^2_+$ such that $\varphi(p)=0$, $\varphi(q)=1$ and such that the 
point $p_\infty$ on the oriented segment from $q$ to $p$ with $\varphi(p_\infty)=\infty$ satisfies $p_\infty\neq z^k_0$, $k=1,\dots, N$.
We set $x^k_{\e}=\varphi(z^k_\e)$. By relabeling, we can assume
\[
x^1_\e<x^2_\e<\dots<x^N_\e.
\]
We now choose a subsequence such that
\[
\frac{x^2_\e-x^1_\e}{\e} \to \liminf_{\e\to 0}\frac{x^2_\e-x^1_\e}{\e}=C_1 \in(0,+\infty]
\]
Choosing subsequences finitely many times, we may assume
\[
\frac{x^{k+1}_\e-x^k_\e}{\e} \to \liminf_{\e\to 0}\frac{x^{k+1}_\e-x^k_\e}{\e}=:C_k \in(0,+\infty]
\]
for every $k=1,\dots, N-1$.

If $C_k<\infty$, this means that in our chosen subsequence,  $x^{k+1}_\e$ and $x^k_\e$ are at a mutual distance of at most
$(1+C_k)\e$. They are the centers of images of $\e$-balls under $\varphi$.  For $C_k=\infty$, 
$x^{k+1}_\e$ and $x^k_\e$ are at a mutual distance that is larger than $M\e$, for any $M$.

We set
\[M=1+\sum_{k=1}^{N} |\varphi'(z^k_0)|+\sum_{C_k\neq \infty} C_k.\] 
We can now choose from $\{1,\dots, N\}$ a subset $\mathcal{K}$ such that balls with these centers satisfy the claim
by  taking just one representative for all subsets of points that have mutual distances smaller than 
$M\e$. We set 
\[
\mathcal{K} = \{N\} \cup \bigl\{ k \in\{1,\dots, N-1\} : C_{k}=\infty\bigr\}.
\]
We now have for $\e$ small enough in our subsequence
\[
\tilde S_\e \subset \bigcup_{k\in \mathcal{K}} B_{M\e}(x^k_\e),
\]
which is a disjoint union of balls at distance $\gg \e$. We clearly have $x^k_\e\to x^k_0 = \varphi(z^k_0)$. 

The bound \eqref{eq:penbd57} follows from \eqref{eq:need53}.
\end{proof}

\section{Blow-up and the sharp lower bound}\label{sec:blowup}
In this section, we exploit the covering results of the previous one to allow us to locally blow-up the solution 
near the covering sets. As the 
blow-up limits will be solutions of the half-plane problem, we will use our classification result Theorem~\ref{thm:uni} to 
further reduce the number of covering sets, by ruling out ``homoclinic'' balls where $u_\e$ does not exhibit 
a sign change. We will then return to studying the minimization problem in the closed neighborhood
 $\calC_a^\e$
 and show that there must be exactly two ``heteroclinic'' balls with a sign change. We then proceed to find sharp lower bounds for the energy.

Our assumptions continue to be the same as in the previous section, but we now work mostly in $\R^2_+$ thanks to the 
conformal map $\varphi:\Omega\to \R^2_+$ with inverse~$\psi$. The harmonic functions $\tilde u_\e=u_\e\circ \psi$ are defined on $\R^2_+$,
and we now consider their blow-up limits.

\begin{proposition}\label{pro:61}
Let the subsequence, the set $\mathcal{K}$, the points $x_\e^k$ and $M>0$ be chosen as in Proposition~\ref{pro:57}. Choose
$k\in \mathcal{K}$ and
consider the functions $\tilde U_\e$ given by
\[
\tilde U_\e(x,y) = 
\tilde u_\e(x_\e^k+\e x/s_k, \e y/s_k) ,
\]
where $s_k=|\psi'(x_0^k)|$.

Then, for a subsequence, $\tilde U_\e\to U$ locally uniformly in the closure of $\R^2_+$ and in $H^1(B_R^+)$ for every $R>0$,
where $U$ is either one of the constants $\pm 1$ 
or a layer solution.
\end{proposition}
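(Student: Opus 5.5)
The plan is a standard blow-up: derive the rescaled equation, extract a limit by compactness, pass to the limit in the equation, and classify the limit with Theorem~\ref{thm:uni}.

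\textbf{Rescaled equation and uniform bounds.} Since $\tilde u_\e=u_\e\circ\psi$ and $\psi$ is conformal, $\tilde u_\e$ is harmonic in $\R^2_+$. Its boundary condition is
\[
\partial_\nu \tilde u_\e = |\psi'(x)|\left(\tfrac1\e f(\tilde u_\e)+\tilde h_\e\right),
\]
where $\tilde h_\e=h_\e\circ\psi$. After the rescaling $(x,y)\mapsto(x^k_\e+\e x/s_k,\e y/s_k)$, the function $\tilde U_\e$ is harmonic in $\R^2_+$ and satisfies
\[
\partial_\nu\tilde U_\e = \frac{|\psi'(x^k_\e+\e x/s_k)|}{s_k}\Bigl(f(\tilde U_\e)+\e\,\tilde h_\e\Bigr).
\]
The coefficient $|\psi'(x^k_\e+\e x/s_k)|/s_k$ tends to $1$ locally uniformly, because $x^k_\e\to x^k_0$ and $\psi$ is smooth near $x^k_0$. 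This uses that $p_\infty$ was chosen different from the limit points, so $x^k_0$ is finite. By \eqref{eq:eb0heto0} and $\beta_0<1$, the term $\e\tilde h_\e$ tends to $0$ uniformly. We also have $|\tilde U_\e|\le1$, and the bound $|\nabla u_\e|\le C/\e$ gives $|\nabla\tilde U_\e|\le C$ locally.

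\textbf{Compactness and passage to the limit.} I will apply the regularity argument of Proposition~\ref{pro:conmin}: integrate in $y$, then use $W^{2,p}$ and Schauder estimates. This gives uniform $C^{2,\beta}(\overline{B_R^+})$ bounds for $\tilde U_\e$ for every $R$. By Arzel\`a--Ascoli and a diagonal argument, a subsequence converges in $C^2_{\rm loc}(\overline{\R^2_+})$, hence also in $H^1(B_R^+)$, to a harmonic $U$ with $|U|\le1$ and $\partial_\nu U=f(U)$ on $\R$.

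\textbf{Classifying the limit (main obstacle).} To invoke Theorem~\ref{thm:uni} I must show that $|U(x,0)|\to1$ as $x\to\pm\infty$. Using Proposition~\ref{pro:57}, the other centers satisfy $|x^j_\e-x^k_\e|/\e\to\infty$. So for $|x|>M s_k$ fixed and $\e$ small, the point $x^k_\e+\e x/s_k$ lies outside $\tilde S_\e$, which means $|\tilde U_\e(x,0)|>t_*$ and hence $|U(x,0)|\ge t_*$ for $|x|\ge Ms_k$.

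Next I use the penalty bound \eqref{eq:penbd57}. After rescaling, and using $\e^\beta/\e\to\infty$ with $\beta<1$, together with Fatou's lemma, it gives $\int_\R G(U(x,0))\,dx\le C$.

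It remains to get the limits at infinity. On each half-line $\{x>Ms_k\}$ (and likewise $\{x<-Ms_k\}$) the function $U$ is continuous, so it stays in one component of $\{t_*\le|t|\le1\}$. The derivative $U_x$ is bounded, so $U$ is uniformly continuous, and $G(U)$ is integrable with $G>0$ on $[t_*,1)$. Together these force $U(x,0)\to\pm1$ as $x\to\pm\infty$.

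Theorem~\ref{thm:uni} then yields that $U$ is constant $\pm1$ or $U(x,y)=U_{\rm layer}(\pm(x-b),y)$. The one real difficulty is that the convergence must be locally uniform near infinity in the rescaled variables, uniformly in $\e$. This is handled by working at fixed $x$ and passing to the limit pointwise, as above.
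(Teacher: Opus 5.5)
Your proposal is correct and follows the same structure as the paper's proof:
\begin{itemize}
\item the rescaled Neumann condition, with coefficient $b_\e\to1$ and perturbation tending to $0$;
\item local compactness and passage to the limit in the equation;
\item $|U(x,0)|\ge t_*$ for $|x|\ge Ms_k$, from the separation of the covering balls;
\item $\int_\R G(U)\,dx<\infty$, from \eqref{eq:penbd57} via Fatou;
\item limits $\pm1$ at $\pm\infty$, from the Lipschitz bound;
\item the conclusion via Theorem~\ref{thm:uni}.
\end{itemize}

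The one genuine difference is how you obtain strong convergence in $H^1(B_R^+)$. You bootstrap to uniform $C^{2,\beta}$ bounds using the Schauder argument of Proposition~\ref{pro:conmin}. The paper uses only $|\nabla\tilde U_\e|\le C$ and Arzel\`a--Ascoli to get local uniform convergence. It then upgrades weak to strong $H^1_{\rm loc}$ convergence by testing the difference of the two equations with $\eta^2(\tilde U_\e-U)$. This is a Caccioppoli-type estimate whose right-hand side tends to $0$ by the uniform convergence.

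Your route gives more, namely local $C^1$ (even $C^2$) convergence. However, the Schauder step needs uniform H\"older bounds on the perturbation $\e\,\tilde h_\e$ in the rescaled variables.
\begin{itemize}
\item These bounds hold for the Lagrange-multiplier term $h_\e=\lambda_\e(u_\e-\chi^{p,q}_\e)$, because $\e\lambda_\e\to0$ and $\|\chi^{p,q}_\e\|_{C^k}\le C\e^{-k}$. This is the case in which the proposition is actually used.
\item They do not follow from the bare hypothesis \eqref{eq:eb0heto0} of Section~\ref{sec:poho}, which gives only $L^\infty$ control.
\end{itemize}

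The paper's energy argument needs nothing beyond $r_\e\to0$ locally uniformly, so it works in that full generality. To keep your approach there, stop at the $W^{2,p}$ estimate for the primitive $W_\e=\int_0^y\tilde U_\e\,dt$. Since $W_\e=0$ on $\{y=0\}$ and $\Delta W_\e$ converges locally uniformly, this already gives strong $W^{1,p}_{\rm loc}$ convergence of $\tilde U_\e$, and hence $H^1$ convergence.
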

\begin{proof}
The function $\tilde U_\e$ satisfies
\[
\begin{cases}
\Delta \tilde U_\e&=0 \quad\text{in $\R^2_+$}\\
\frac{\partial \tilde U_\e}{\partial \nu} &= b_\e(x) f(\tilde U_\e)+ r_\e(x)\quad \text{on $\R$},
\end{cases}
\]
where $b_\e(x)=\frac1{s_k}\psi'(x^k_\e+\frac{\e}{s_k}x)\to 1$ locally uniformly on $\R$ as $\e\to 0$,  and also
\[r_\e(x)=\frac{\e}{s_k} (h_\e \circ \psi)(x^k_\e+\frac{\e}{s_k}x)
\to 0\] locally uniformly by our assumption \eqref{eq:eb0heto0}.

By \eqref{eq:gradb}, we have $|\nabla \tilde U_\e|\le C$, and hence by Arzel\`a--Ascoli we have (for a subsequence)  that $\tilde U_\e\to U$ locally uniformly on $\ol{\R^2_+}$.
Also, $\tilde U_\e\to U$ locally weakly in $H^1$.

This gives that we can pass to the limit in the weak formulation of the PDE and obtain that $U$ solves
\begin{align*}
\begin{cases}
\Delta U&=0 \quad\text{in $\R^2_+$}\\
\frac{\partial U}{\partial \nu} &= f(U)\quad \text{on $\R$}.
\end{cases}
\end{align*}
Testing the difference of the PDEs with $\eta^2(\tilde U_\e-U)$ for suitable cutoff functions $\eta$,
we find that 
\[
\int_{\R^2_+} \nabla (\eta^2(\tilde U_\e -U)) \cdot \nabla (\tilde U_\e-U) \,dxdy
+ \int_\R \eta^2(\tilde U_\e -U) (b_\e f(\tilde U_\e)+ r_\e -f(U)) dx.
\]
Hence after integrating by parts and using standard estimates,
\begin{align*}
\int_{\R^2_+} \eta^2 |\nabla( \tilde U_\e-U)|^2 \, dxdy
& \le C \int_{\R^2_+} |\nabla \eta|^2 | \tilde U_\e-U|^2 \, dxdy \\ & \qquad + C  \int_\R \eta^2(\tilde U_\e -U) (b_\e f(\tilde U_\e)+ r_\e -f(U)) dx.
\end{align*}
From the locally uniform convergence, 
we see that for $\eta\in C^1$ with compact support in the closed upper half-plane, we must have 
\[
\int_{\R^2_+} \eta^2 |\nabla( \tilde U_\e-U)|^2 \, dxdy \to 0
\]
and hence we obtain strong $H^1$ convergence locally. 

By the uniform convergence we see that $|U(x,0)|\ge t_*$ for $|x|>CM$, for a constant $C$ related to the rescaling $s_k$.
From \eqref{eq:penbd57} and Lebesgue's convergence theorem we note that $\int_\R G(U)\, dx<\infty$. 
As $|\nabla U|\le C$, we must have that $U\to 1$ or $U\to -1$ as $x\to\pm\infty$. 
 By Theorem~\ref{thm:uni}, it follows that $U$ is a layer solution if the limits at $\pm \infty$ are different (the ``heteroclinic'' 
 case), or $U$ is one of the constants $\pm 1$ if the limits are the same (``homoclinic''). 
\end{proof}

The uniform convergence  in the previous proposition allows us to obtain further information about our
cover of the  ``bad set'' $\tilde S_\e$  of approximate transitions: for sufficiently small $\e$, the balls of 
size $M\e$ in the cover must  correspond to sign changes of $\tilde u_\e$, as any hypothetical 
balls without sign changes are disjoint from $\tilde S_\e$. We collect our results as follows.
\begin{corollary}\label{cor:62}
For a subsequence, there is a cover of $\tilde S_\e$ with a bounded number of  balls of radius $M\e$
that are disjoint, have a mutual distance that is large compared to $\e$, and 
such that $\tilde u_\e$ changes sign at the end points of the intervals. The upper bound for the number of these balls depends only on the energy bound, the nonlinearity, 
and the domain.
\end{corollary}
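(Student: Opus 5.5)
We start from the cover of Proposition~\ref{pro:57}: for a subsequence, $\tilde S_\e\subset\bigcup_{k\in\mathcal K}B_{M\e}(x^k_\e)$. These intervals are disjoint, their centers satisfy $|x^k_\e-x^j_\e|/\e\to\infty$, and $|\mathcal K|\le N$, where $N$ depends only on $K$, $G$ and $\Omega$ through Proposition~\ref{prop:finitecoverprop}. The disjointness and separation are already given by that proposition. What remains is to discard the balls at which $\tilde u_\e$ does not change sign. For each $k\in\mathcal K$ we apply Proposition~\ref{pro:61}, passing to a further subsequence finitely many times (once per $k$). This gives that the blow-up $\tilde U_\e$ around $x^k_\e$ converges locally uniformly on $\overline{\R^2_+}$ to either a constant $\pm1$ or a layer solution $U(\pm(x-b),y)$.

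\emph{Homoclinic case.} Suppose the limit at $x^k_\e$ is a constant, say $+1$. Uniform convergence on the compact set $[-M s_k-1,M s_k+1]$ (rescaled interval) gives, for $\e$ small, $\tilde u_\e>t_*$ on all of $B_{M\e}(x^k_\e)\cap\R$. Hence this ball does not meet $\tilde S_\e$, and we may simply drop it from the cover; the remaining balls still cover $\tilde S_\e$.

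\emph{Heteroclinic case.} Suppose the limit is a layer solution. The key point is that $b$ is bounded by a constant depending only on $M$. Indeed, $\tilde S_\e$ contains the point where the rescaled function vanishes, and the layer has $|U(x,0)|\ge t_*$ only for $|x-b|$ large. Moreover, $|U(x,0)|>t_*$ outside a compact interval. By enlarging $M$ by a fixed amount if needed (depending only on the layer $U$ and on $t_*$, thus only on $f$), the two endpoints of the rescaled interval $[-Ms_k,Ms_k]$ lie in the region where the layer is close to $-1$ on one side and to $+1$ on the other. Uniform convergence then shows that $\tilde u_\e$ has opposite signs (indeed $|\tilde u_\e|>t_*$ with different signs) at the two endpoints of $B_{M\e}(x^k_\e)$.

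The main obstacle is making this enlargement of $M$ uniform and compatible with the separation of the balls. The enlargement is a fixed constant, so it is harmless: the mutual distances are $\gg\e$, so disjointness and the ratio $\to\infty$ survive. The bound on the number of balls is inherited from $N$, so it depends only on the energy bound $K$, on $f$ (through $G$, $t_*$, $\kappa$), and on $\Omega$.
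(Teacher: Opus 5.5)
Your proposal is correct and follows the paper's argument: combine the separated cover of Proposition~\ref{pro:57} with the blow-up classification of Proposition~\ref{pro:61}, discard the balls whose blow-up limit is a constant $\pm1$ (uniform convergence makes them disjoint from $\tilde S_\e$), and read off the sign change at the remaining balls from the layer limit. Your enlargement of $M$ is harmless but not needed. The endpoints $x^k_\e\pm M\e$ already lie outside $\tilde S_\e$, so $\tilde u_\e$ has constant sign on each of the rescaled intervals $[Ms_k,R]$ and $[-R,-Ms_k]$, and for $R$ large, uniform convergence to the layer at $\pm R$ forces these signs to be opposite.
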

We can improve this significantly for minimizers in $\calC_a^\e$,
 where we will show that exactly two $M\e$  balls suffice to cover the set $\tilde S_\e$.
 First, we prove a lemma to compute a lower bound in an annulus where $\tilde u_\e$ has a sign change. Recall that the notation $A_{S,R}$, $A_{S,R}^0$ was defined in 
\eqref{eq:notationA}. 

\begin{lemma}\label{lem:l2} Let $R_0>0$ be given.
Let $u\in H^1_{\mathrm{loc}}(\R^2_+)$ with $|u|\le 1$. If $x_0\in\R$ and $[S,R]\subset(0,R_0]$ is an interval such that
$|u(x_0+r,0)|\ge t_*$ and $u(x_0+r,0)u(x_0-r,0)<0$ for $r\in[-R,-S]\cup[S,R]$, then
\[
\frac12\int_{x_0+A_{S,R}^+}  |\nabla u|^2 \, dxdy + \frac1\e \int_{x_0+A^0_{S,R}} |\psi'(x)| G(u)\, dx
\ge \frac2\pi \log \frac RS -C(R_0) \frac \e S,
\]
where $C(R_0)$ is a constant depending only on $R_0$ and $\psi$. 
\end{lemma}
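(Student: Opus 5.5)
The plan is to reduce everything to the identity of Lemma~\ref{l:lem1}, after translating by $x_0$ and possibly changing the sign of $u$. Drop the nonnegative last term of that identity, and absorb the linear defect term into the potential energy using a pointwise inequality. The cost of that absorption is the error $C\e/S$.

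\emph{Normalization.} Translate so that $x_0=0$. I will assume that the trace of $u$ is continuous on $A^0_{S,R}$; otherwise work with a.e.\ statements and the monotone-along-$r$ structure, since $u$ will be smooth in the applications. Then on each of the intervals $[S,R]$ and $[-R,-S]$ the function $u(\cdot,0)$ has constant sign, because $|u|\ge t_*>0$ there. The hypothesis $u(r,0)u(-r,0)<0$ says the two signs are opposite. Set $V=u$ or $V=-u$ so that $V(x,0)\le -t_*$ for $x\in[S,R]$ and $V(x,0)\ge t_*$ for $x\in[-R,-S]$. Both the Dirichlet energy and the lower bound we use for $G$ are insensitive to this sign change, although $G$ need not be even: we only use $G(t)\ge c_G(1-|t|)^2$ for $t_*\le|t|\le1$. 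This lower bound follows from \eqref{potential}: $G(\pm1)=0$, $G''(\pm1)>0$, $G>0$ elsewhere, and the convexity \eqref{convexG} on both components $\{t_*\le |t|\le 1\}$.

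\emph{Applying Lemma~\ref{l:lem1}.} Since $\frac2\pi\theta^0(x,0)-1$ equals $-1$ for $x>0$ and $+1$ for $x<0$, the defect integrand in Lemma~\ref{l:lem1} is $|V(x,0)+1|/|x|$ on $[S,R]$ and $|V(x,0)-1|/|x|$ on $[-R,-S]$. In both cases it equals $(1-|u(x,0)|)/|x|$. Discarding the nonnegative term $\frac12\int|\nabla(V-\frac2\pi\theta^0)|^2$, we obtain
\[
\frac12\int_{A^+_{S,R}}|\nabla u|^2\,dxdy\ \ge\ \frac2\pi\log\frac RS-\frac2\pi\int_{A^0_{S,R}}\frac{1-|u(x,0)|}{|x|}\,dx .
\]

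\emph{Absorbing the defect.} Because $|x|\le R_0$ on the relevant set and $\psi$ is a diffeomorphism onto its image, we have $|\psi'(x)|\ge m(R_0)>0$ there. I take this constant to be uniform in $x_0$, as in the intended applications, where $x_0$ ranges over a bounded set. By Young's inequality, for each $x$,
\[
\frac2\pi\frac{1-|u|}{|x|}\ \le\ \frac{m\,c_G}{\e}(1-|u|)^2+\frac{C\e}{m\,c_G\,x^2}\ \le\ \frac{|\psi'(x)|}{\e}G(u)+\frac{C(R_0)\,\e}{x^2}.
\]
Integrating over $S\le|x|\le R$ gives $\int \frac{C\e}{x^2}\,dx\le 2C\e/S$. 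Adding the potential term on both sides yields the claim.

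The main (mild) obstacle is the sign bookkeeping. One must ensure the sign is constant on each side so that Lemma~\ref{l:lem1} applies with the right orientation, which uses continuity of the trace. One must also have the quadratic lower bound on $G$ near both wells, which is why $|u|\ge t_*$ is assumed rather than merely $u\ne0$.
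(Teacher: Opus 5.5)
Your proposal is correct and follows the paper's proof essentially step by step. You normalize the sign so that $u$ agrees in sign with $\frac2\pi\theta^0-1$, apply Lemma~\ref{l:lem1} and discard its nonnegative gradient term, and then absorb the linear defect $\frac{2}{\pi}\frac{v}{|x|}$ into $\frac{mc}{\e}v^2$ via $G(u)\ge c(1-|u|)^2$ for $|u|\ge t_*$, leaving a pointwise error $\frac{\e}{mc\pi^2x^2}$ that integrates to $O(\e/S)$. Your remark that the lower bound $m$ for $|\psi'|$ depends on $x_0$, and is uniform only for $x_0$ in a bounded set, addresses a point the paper's ``without loss of generality $x_0=0$'' glosses over, and it is the right way to read the constant $C(R_0)$.
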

\begin{proof}
As the trace of $u$ does not have any jump singularities, it follows that the sign of $r\mapsto u(x_0+r)$ is constant in $[S,R]$ and 
in $[-R,-S]$. Without loss of generality we may assume that $x_0=0$ and that $u$ has the same sign as $\frac2\pi \theta^0-1$.
From $|u|\ge t_*$, we find that 
\[
G(u) \ge c\left|u-\frac2\pi \theta^0+1\right|^2.
\]
Setting $m=\min_{r\in[-R,R]} |\psi'(r)|$ and $v=|u-\frac2\pi \theta^0+1|$, we can now estimate using Lemma~\ref{l:lem1}
that 
\[
\int_{A^+_{S,R}} \frac12|\nabla u|^2 \, dx dy+ \frac1\e \int_{A^0_{S,R}} |\psi'(x)| G(u)\, dx  \ge
\frac2\pi \log \frac R S - \int_{A^0_{S,R}} \left(- \frac{2v}{\pi r} + \frac {mc v^2}{\e} \right)\, dr
\]
and optimizing over $v$, we find that $- \frac{2v}{\pi r} + \frac {mc v^2}{\e }  \ge -\frac{\e}{mc\pi^2 r^2}$.
After integrating over $r$, we find that
\[
\int_{A^+_{S,R}} \frac12|\nabla u|^2 \, dx + \frac1\e \int_{A^0_{S,R}} |\psi'(x)| G(u)\, d\ell  \ge\frac2\pi \log \frac RS
 -C\frac\e m \left(\frac1S -\frac1R \right) ,
\]
which gives the claim. 
\end{proof}
Now we can show the following strengthening of Corollary~\ref{cor:62}. Our proof is inspired by 
an argument of Struwe in the proof of \cite[Proposition 3.3]{Struwe:1994a}.
\begin{proposition}\label{pro:just2}
If $u_\e$ are  minimizers in the closed neighborhood $\calC_a^\e$ as in Proposition~\ref{pro:conmin}, then for a subsequence, the 
 set $\tilde S_\e$ can be covered with exactly two balls of radius $M\e$.
\end{proposition}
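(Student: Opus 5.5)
By Corollary~\ref{cor:62}, for a subsequence we already have a cover of $\tilde S_\e$ by finitely many, say $N'\le N$, disjoint balls $B_{M\e}(x^k_\e)$ at mutual distances $\gg\e$. On each of them $\tilde u_\e$ changes sign between the endpoints, so after blow-up (Proposition~\ref{pro:61}) it looks like a layer. The plan is to show $N'=2$ by combining a lower bound proportional to $N'$ with the upper bound of Proposition~\ref{prop:ub}.

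\textbf{Lower bound $N'\ge 2$.} Since $u_\e\in\calC_a^\e$ with $a$ small, the boundary trace is $L^2$-close to $\chi^{p,q}_\e$. Hence $\tilde u_\e$ must be close to $-1$ on most of $(0,1)$ and close to $1$ on most of the complement. It therefore changes sign at least twice. Sign changes of $\tilde u_\e$ only occur inside $\tilde S_\e$, so at least two balls are needed. Moreover, $N'$ must be even, since $\tilde u_\e$ has the same sign near $\pm\infty$ in the chosen chart. So it remains to rule out $N'\ge 4$.

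\textbf{Energy count.} Around each center $x^k_\e$ we choose a radius $R_k$ with $M\e\ll R_k$. We take $R_k$ a fixed small fraction of the mutual distance to the nearest other center, capped by a fixed $R_0$. By construction no other point of $\tilde S_\e$ lies in $x^k_\e+A^0_{M\e,R_k}$. So $|\tilde u_\e|\ge t_*$ there, with opposite signs on the two sides, and Lemma~\ref{lem:l2} gives energy at least $\frac2\pi\log\frac{R_k}{M\e}-C/M$ in each annulus. The annuli are disjoint. If the centers stay at distances bounded below (clusters converging to distinct points), summing gives
\[
E_\e(u_\e)\ge \frac{2N'}{\pi}\log\frac1\e - C.
\]
This contradicts $E_\e(u_\e)\le\frac4\pi\log\frac1\e+C$ from Proposition~\ref{prop:ub} once $N'\ge4$.

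\textbf{Main obstacle: centers that approach each other.} The difficulty is the case where several centers converge to the same limit point, with separation $d_\e$ satisfying $\e\ll d_\e\to0$. Then the individual annuli only yield $\frac2\pi\log\frac{d_\e}{\e}$ each, which may not be enough by itself. Following Struwe's argument, we group the centers hierarchically by scale. Inside a cluster of diameter $d_\e$, each ball contributes $\frac2\pi\log\frac{d_\e}{\e}$. Outside the cluster, on the annulus from $Cd_\e$ to $R_0$, the net number of sign changes matters.

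If the cluster contains an even number of sign changes, the far field is homoclinic. The $L^2$ constraint only controls the limit configuration, so such a cluster costs $\frac2\pi\cdot 2\log\frac{d_\e}{\e}$ and saves nothing. In that case we replace $u_\e$ on the cluster region by a function without those two transitions: we glue a constant using a cutoff on scale $d_\e$. This costs $O(1)$ in Dirichlet energy, stays inside $\calC_a^\e$ (the $L^2$ change is $O(d_\e)\to0$, and we may use a slightly smaller effective $a$), and reduces the energy by about $\frac4\pi\log\frac{d_\e}{\e}\to\infty$. This contradicts minimality.

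If instead a cluster carries an odd number of transitions, we peel off pairs of adjacent opposite transitions in the same way. The same competitor argument then shows that, after passing to a subsequence, only one transition remains near each of $p_0$ and $q_0$. In both cases the key fact is that a cluster of $2m$ centers costs $\frac{2}{\pi}\cdot 2m\log\frac{d_\e}{\e}$ more energy than a competitor that keeps only the net transitions. Making this cut-and-paste competitor precise and keeping it inside $\calC_a^\e$ is the step I expect to require the most care.
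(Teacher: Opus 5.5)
Your setup matches the paper's through the well-separated case: the cover from Corollary~\ref{cor:62}, Lemma~\ref{lem:l2} on disjoint half-annuli, and comparison with Proposition~\ref{prop:ub}. The clustered case, however, has a genuine gap. You conclude that the energy count ``may not be enough by itself'' and switch to a cut-and-paste competitor that is not justified.

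\begin{enumerate}[(i)]
\item \emph{Constraint.} $u_\e$ minimizes over $\calC_a^\e$ for a \emph{fixed} $a$, and one cannot exclude at this stage that $\|u_\e-\chi^{p,q}_\e\|_{L^2(\dOm)}=a$ exactly (the active-multiplier case). Replacing a homoclinic cluster by its far-field constant increases this distance whenever that constant disagrees with $\chi^{p,q}_\e$ near the cluster, e.g.\ a cluster lying between $p$ and $p_0$. So the competitor can leave $\calC_a^\e$. ``Using a slightly smaller effective $a$'' does not help, since $u_\e$ only minimizes for the given $a$.
\item \emph{Gluing cost.} The claimed $O(1)$ cost needs a half-annulus at scale $d_\e$ on which $\int|\nabla\tilde u_\e|^2$ is bounded. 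Nothing you have established provides this, and getting it would require exactly the bookkeeping below.
\item \emph{Odd clusters.} The ``peeling'' of pairs from odd clusters is only sketched.
\end{enumerate}

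The missing observation is that no competitor is needed. The excess $\frac4\pi\log\frac{d_\e}{\e}$ you correctly identified already contradicts the upper bound. The reason is that the constraint forces at least two sign-changing groups at \emph{every} scale up to order one, and these alone pay $\frac4\pi\log\frac1\e-C$.

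The paper makes this precise with the Struwe-type merging. Set $\alpha_1=M\e$ and $\beta_1=\frac12\min_{j\neq k}|x^\e_k-x^\e_j|\gg\e$. One alternately enlarges radii and merges overlapping intervals, producing stages $\ell=1,\dots,m$ with $m\le N$. At each stage the half-annuli $x^\e_k+A^+_{\alpha_\ell,\beta_\ell}$ are disjoint and avoid $\tilde S_\e$, with $\alpha_{\ell+1}\le C\beta_\ell$ and $\beta_m\ge\frac14$. Let $D_\ell$ be the number of sign-changing intervals at stage $\ell$. Then $D_1=N'$, and $u_\e\in\calC_a^\e$ gives $D_\ell\ge 2$ for all $\ell$. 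Summing Lemma~\ref{lem:l2} over the stages and telescoping yields
\[
\frac4\pi\log\frac1\e+C\;\ge\;\frac2\pi\sum_{\ell=1}^{m}D_\ell\Bigl(\log\frac{\beta_\ell}{\alpha_\ell}-C\Bigr)\;\ge\;\frac4\pi\log\frac1\e-C+\frac2\pi\,(N'-2)\log\frac{\beta_1}{M\e}.
\]
Since $\beta_1/\e\to\infty$, this forces $N'=2$. No modification of $u_\e$ is needed, and the constraint is used only through $D_\ell\ge2$.
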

\begin{proof}
With the set $\mathcal{K}$, the points $x_k^\e$ and $M>0$ be chosen as in Proposition~\ref{pro:57}, we can assume, possibly taking a subset 
$\mathcal{K}_1\subset \mathcal{K}$,  by Proposition~\ref{pro:61} and
Corollary~\ref{cor:62} that $\tilde u_\e(x_k^\e-M\e,0) \tilde u_\e(x_k^\e+M\e,0) <0$ while
\[
\tilde S_\e\subset \bigcup_{k\in \mathcal{K}_1} [x_k^\e-M\e,x_k^\e+M\e].
\]
Let $\alpha_1=\alpha_1(\e)=M\e$ and $\beta_1=\beta_1(\e)=\frac12 \min_{j<k\in \mathcal{K}_1} (x_k^\e-x_j^\e)\gg \e$. 
Then for
$r\in(\alpha_1,\beta_1)$, 
the sets  $F(k,\e,r)=[x_k^\e-r,x_k^\e+r]$, $k\in\mathcal{K}_1$ are pairwise disjoint and satisfy $\{x_k^\e-r,x_k^\e+r\}\cap \tilde S_\e=\emptyset$
while $\bigcup_{k\in \mathcal{K}_1} [x_k^\e-r,x_k^\e+r]\supset \tilde S_\e$.
For $r=\beta_1$, the sets $F(k,\e,r)$ are no longer pairwise disjoint.

By alternatingly doubling $r$ and removing indices $k$ where $k'\neq k$ exist with $F(k,\e,r)\cap F(k',\e,r)\neq \emptyset$ (which corresponds to merging the
corresponding intervals into a larger one), we find 
$\alpha_2\le C\beta_1$ and $\mathcal{K}_2\subsetneq \mathcal{K}_1$
with the property that the intervals 
$[x_k^\e-\alpha_2,x_k^\e+\alpha_2]$, $k\in\mathcal{K}_2$ are mutually disjoint and satisfy 
\begin{equation}\label{eq:intergr}
S_\e\bigcup_{k\in \mathcal{K}_1} [x_k^\e-\alpha_1,x_k^\e+\alpha_1]
\subset
\bigcup_{k\in \mathcal{K}_2} [x_k^\e-\alpha_2,x_k^\e+\alpha_2].
\end{equation}
We now set $\beta_2=\frac12 \min_{j<k\in \mathcal{K}_2} (x_k^\e-x_j^\e)$.
Then for 
$r\in(\alpha_2,\beta_2)$, the intervals $[x_k^\e-r,x_k^\e+r]$, $k\in\mathcal{K}_2$ are mutually disjoint,
their union covers the union the intervals from the first step and from \eqref{eq:intergr} we see that
there holds $\{x_k^\e-r,x_k^\e+r\}\cap \tilde S_\e=\emptyset$. Hence, we have 
$|\tilde u_\e|\ge t_*$ in each of the sets $(x_k^\e-\beta_2,x_k^\e-\alpha_2)\cup(x_k^\e+\alpha_2,x_k^\e+\beta_2)$.
 While $\alpha_2$ and $\beta_2$ depend on $\e$, we can take a 
 subsequence such that $\mathcal{K}_2$ is independent of $\e$.

We can repeat this procedure and find intervals 
$(\alpha_\ell,\beta_\ell)$, $1\le \ell\le m$ (which depend on $\e$) and 
subsets $\mathcal{K}_m\subsetneq \dots \subsetneq 
\mathcal{K}_1$ (which do not depend on $\e$)
such that for $r\in (\alpha_\ell,\beta_\ell)$, the intervals $[x_k^\e-r,x_k^\e+r]$, $k\in\mathcal{K}_\ell$ are mutually disjoint, cover the intervals of the previous steps, and their end points are not in $\tilde S_\e$, 
 i. e.  $\{x_k^\e-r,x_k^\e+r\}\cap \tilde S_\e=\emptyset$. We will stop the growth and merging procedure when 
 $\beta_m\ge \frac14$.

Note that by construction, 
\[
\mathcal{A}=\bigcup_{\ell=1}^m \bigcup_{k\in \mathcal{K}_\ell} (x_k^\e+A^+_{\alpha_\ell,\beta_\ell})
\]
is a disjoint union of half-annuli that does not intersect $\tilde S_\e$.

Set 
\[D_\ell=\left | \left\{ k\in \mathcal{K}_\ell :  \tilde u_\e(x_k^\e-\alpha_\ell) \tilde u_\e(x_k^\e+\alpha_\ell) <0 \right\}  \right |\] 
Then $D_\ell$ is the number of intervals at stage $\ell$ such that $\tilde u_\e$ changes sign at their end. By construction, we
have that $D_1\ge  D_2\ge  \dots \ge D_m$. From the fact that $u_\e\in \calC_a^\e$,
we must have two sign changes and so
 we obtain $D_\ell\ge 2$ for all $\ell$. As in the first step, \emph{all} intervals correspond to sign changes and the total number of intervals is 
decreasing, we must have $D_1>D_2$ unless $m=1$. 

Using Lemma~\ref{lem:l2}, we find by integrating over $\mathcal{A}$ and using the upper bound from Proposition~\ref{prop:ub} that
\[
\frac4\pi \log\frac1\e+C
\ge \sum_{\ell} \frac2\pi D_\ell \left( \log \frac{\beta_\ell}{\alpha_\ell} -C\right).
\]
If $D_{\ell_0}>2$ while $D_{\ell_0+1}=2$, we have, using $\alpha_{\ell+1}\le C\beta_\ell$ and telescoping the sum,
\begin{align*}
\frac4\pi \log\frac1\e+C &\ge \frac4\pi \sum_{\ell=1}^m (\log \beta_\ell-\log \alpha_\ell)  -C +(D_{\ell_0}-2) \frac2\pi \sum_{\ell=1}^{\ell_0} (\log \beta_\ell-\log \alpha_\ell) \\
&\ge  \frac 4\pi \log \frac1\e -C + (D_{\ell_0}-2)\frac2\pi \log \frac{\beta_{\ell_0}}{\e}.
\end{align*}
Thus we must have $D_{\ell_0}=2$ or $\beta_{\ell_0}\le C\e$. However, the latter is impossible since $\beta_{\ell_0} \ge \beta_1\gg \e$,
and it follows that $m=1$ and $D_1=2$. 
\end{proof}
An important corollary of Proposition~\ref{pro:just2} is the following.
\begin{corollary}\label{cor:pq}
For a subsequence, we have  $u_\e\to \chi^{p_0,q_0}$ in $L^2(\dOm)$, where $p_0$ and $q_0$ 
can be found as follows. Considering the centers of the intervals covering $\tilde S_\e$ from the previous
computation, we can assume 
 $x_1^\e\to x_1$ and $x_2^\e\to x_2$ and set
 $p_0=\psi(x_1)$, $q_0=\psi(x_2)$. By construction (recall Lemma~\ref{lem42}) and the triangle inequality, we see (after possibly swapping $p_0$ and $q_0$ so that the sign is consistent)
\[
\|\chi^{p_0,q_0}-\chi^{p,q}\|_{L^2(\dOm)} \le \|\chi^{p_0,q_0}-u_\e\|_{L^2(\dOm)} + \|u_\e-\chi^{p,q}_\e\|_{L^2(\dOm)}
+ \|\chi^{p,q}-\chi^{p,q}_\e\|_{L^2(\dOm)}.
\]
Hence passing to the limit $\e\to 0$,
\begin{equation}\label{eq:p0clp}
|p_0-p|+|q_0-q| \le Ca,
\end{equation}
where we recall $a$ from the definition of our constraint set $\calC_a^\e$.

The strong $L^2(\partial\Omega)$ convergence follows immediately by noticing that we have  $|u_\e-\chi^{p_0,q_0}|\le 2$ in two sets of length $O(\e)$
and $|u_\e-\chi^{p_0,q_0}|^2\le cG(u)$ elsewhere, and hence
\[
\int_{\partial\Omega} |u_\e-\chi^{p_0,q_0}|^2 d\ell \le C\e + c\int_{\partial\Omega} G(u) d\ell \le C\e + cK\e \log\frac1\e \to 0
\]
as $\e\to 0$.
\end{corollary}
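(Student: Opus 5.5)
The plan is to make precise the two ingredients already sketched inside the statement: the identification of the limit points $p_0,q_0$, and the $L^2(\dOm)$ convergence. We work on a subsequence along which Proposition~\ref{pro:just2} applies. Then $\tilde S_\e\subset[x_1^\e-M\e,x_1^\e+M\e]\cup[x_2^\e-M\e,x_2^\e+M\e]$, and $\tilde u_\e$ changes sign across each interval. The centers lie in a bounded set, because by Proposition~\ref{pro:57} they converge to finite points $x^k_0=\varphi(z_0^k)$, with $p_\infty$ chosen away from the $z_0^k$. Passing to a further subsequence, $x_i^\e\to x_i$. We also need $x_1\neq x_2$. Suppose instead $x_1=x_2$. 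Then outside a set of length $o(1)$ the function $u_\e$ has a single sign on $\dOm$, since $|u_\e|\ge t_*$ off $\tilde S_\e$ and the sign is constant on each component. Such a $u_\e$ is at $L^2$ distance of order one from $\chi^{p,q}_\e$, which contradicts $u_\e\in\calC_a^\e$ once $a$ is small compared with $\min(|I_{p,q}|,|\dOm\setminus I_{p,q}|)$. We then set $p_0=\psi(x_1)$ and $q_0=\psi(x_2)$, ordering them so that $u_\e$ is negative on the arc from $p_0$ to $q_0$.

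Next we prove the convergence $u_\e\to\chi^{p_0,q_0}$ in $L^2(\dOm)$. Off the two $\psi$-images of the $M\e$ intervals, which have length $O(\e)$, we have $|u_\e|\ge t_*$, and the sign of $u_\e$ there agrees with $\chi^{p_0,q_0}$ except on arcs between $\psi(x_i^\e)$ and $\psi(x_i)$, whose length tends to $0$. On the set where the signs agree and $|u_\e|\ge t_*$, the hypotheses \eqref{potential} give $|u_\e-\chi^{p_0,q_0}|^2\le cG(u_\e)$, because $G$ vanishes to exactly second order at $\pm1$ and is positive on $[t_*,1)$. Hence
\[
\|u_\e-\chi^{p_0,q_0}\|^2_{L^2(\dOm)}\le C\e+o(1)+c\,\e\cdot\frac1\e\int_{\dOm}G(u_\e)\,d\ell\le o(1)+cK\e|\log\e|\to0,
\]
where the last step uses $E_\e(u_\e)\le K|\log\e|$ from Proposition~\ref{prop:ub}.

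Finally we obtain \eqref{eq:p0clp} from the triangle inequality displayed in the statement. As $\e\to0$, the first term tends to $0$ by the step above. The second term is at most $a$ by the definition of $\calC_a^\e$. The third term is $O(\e^{1/2})$ by Lemma~\ref{lem42}. This gives $\|\chi^{p_0,q_0}-\chi^{p,q}\|_{L^2}\le a$. It remains to convert this into a bound on point distances. The symmetric difference of the arcs has measure $\tfrac14\|\chi^{p_0,q_0}-\chi^{p,q}\|^2_{L^2}$, and when the orientation is consistent that measure equals $d(p_0,p)+d(q_0,q)$ in arclength. Since arclength and chord length are comparable on a smooth boundary, we get $|p_0-p|+|q_0-q|\le Ca^2\le Ca$ for $a\le1$.

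The main obstacle is the orientation and labeling. If $p_0$ and $q_0$ were swapped, the symmetric difference would be of order one. That case is excluded because the $L^2$ distance is at most $a$, which is small, and this is the sense of the parenthetical ``after possibly swapping'' in the statement. Choosing $a$ small relative to $|p-q|$ settles it.
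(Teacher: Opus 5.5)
Your proposal is correct and follows the paper's route. You extract convergent centers, bound $\|u_\e-\chi^{p_0,q_0}\|^2_{L^2(\dOm)}$ by the short transition sets plus $c\int_{\dOm}G(u_\e)\,d\ell\le cK\e|\log\e|$, and then pass to the limit in the displayed triangle inequality. Your additions are valid refinements of the paper's sketch: you rule out $x_1=x_2$ via the constraint $u_\e\in\calC_a^\e$, you account for the $o(1)$-length arcs between $\psi(x_i^\e)$ and $\psi(x_i)$ that the paper's ``$O(\e)$'' glosses over, and you convert the $L^2$ distance of characteristic functions into arclength, which even gives $Ca^2$.
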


In the following, we use a slightly different conformal map $\hat \psi$ instead of $\psi$, namely we choose $\hat\psi$ such that $\hat \psi(0)=p_0$ and $\hat \psi(1)=q_0$,
while $\hat \psi(\infty)=p_\infty=\psi(\infty)$. 
(Recall that $\psi(0)=p$ and $\psi(1)=q$).
We thus can assume that $x_1$ and $x_2$, the limits of the centers of the balls covering the approximate transition set for $\tilde u_\e$,
are just the points
 $x_1=0$ and $x_2=1$. 
If $x_1^\e$ and $x_2^\e$ are zeroes of $\tilde u_\e$, then they lie in $\tilde S_\e$, and
we can cover this set  with balls of size $2M\e$ around the zeroes.

With this setup, we now compute asymptotic lower bounds for the energy of a minimizer in
 $\calC_a^\e$ over
 various subdomains.
\begin{proposition}\label{prop:inner}
Let $x_k^\e\in\R$, $k=1,2$ be points such that  $\tilde u_\e(x_k^\e)=0$ and $\tilde S_\e\subset \bigcup_{k=1,2} [x_k^\e-2M\e,x_k^\e+2M\e]$.
For any $R\ge 2M$, the energy of $\tilde u_\e$ satisfies
\[
\int_{B_{R\e}^+(x_k^\e)} \frac12|\nabla \tilde u_\e|^2\, dx dy+ \int_{[x_k^\e-R,x_k^\e+R]} \frac1\e |\hat \psi'(x)| G(\tilde u_\e(x))\, dx
\to \frac2\pi \log (Rs_k) + C_f,
\]
as $\e\to 0$
where $s_k = \lim_{\e\to 0} |\hat \psi'(x_k^\e)|=|\hat \psi'(\lim_{\e\to 0} x_k^\e)|$.  
\end{proposition}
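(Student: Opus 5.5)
The plan is to reduce the statement to a blow-up at the $\e$-scale, using Proposition~\ref{pro:61}, and then to identify the limiting energy through Proposition~\ref{prop:i_er}. I read the boundary integral as being over $[x_k^\e-R\e,x_k^\e+R\e]$, the straight part of $\partial B_{R\e}^+(x_k^\e)$. Fix $k\in\{1,2\}$ and set
\[
\tilde U_\e(x,y)=\tilde u_\e\bigl(x_k^\e+\e x/s_k^\e,\;\e y/s_k^\e\bigr),\qquad s_k^\e:=|\hat\psi'(x_k^\e)|\to s_k .
\]
The only difference from Proposition~\ref{pro:61} is that the centers are now exact zeros and the scale uses $s_k^\e$ rather than its limit; since $s_k^\e\to s_k>0$, this is harmless. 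By the conformal invariance of the Dirichlet integral and the change of variables $x\mapsto x_k^\e+\e x/s_k^\e$ on the boundary, the quantity in the statement equals
\[
\frac12\int_{B^+_{Rs_k^\e}}|\nabla\tilde U_\e|^2\,dxdy+\int_{-Rs_k^\e}^{Rs_k^\e} b_\e(x)\,G(\tilde U_\e(x,0))\,dx ,
\]
where $b_\e(x)=|\hat\psi'(x_k^\e+\e x/s_k^\e)|/s_k^\e\to1$ uniformly on compact sets.

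Next I pass to the limit along a subsequence, as in Proposition~\ref{pro:61}. The gradient bound \eqref{eq:gradb}, the Lagrange multiplier bound \eqref{eq:lamu} (so that $r_\e\to0$), Arzel\`a--Ascoli and the cutoff argument together give $\tilde U_\e\to U$ locally uniformly on $\overline{\R^2_+}$ and strongly in $H^1(B^+_{\rho})$ for every $\rho$. Since the intervals in the cover of Proposition~\ref{pro:just2} are sign-changing, $U$ cannot be a constant, so by Theorem~\ref{thm:uni} it is a layer solution $U(\pm(x-b),y)$. The normalization $\tilde u_\e(x_k^\e)=0$ passes to the limit and gives $U(0,0)=0$. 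Theorem~\ref{thm:layerprops}(iii) then forces $b=0$, so the limit is independent of the subsequence and the whole family converges. The sign is irrelevant because the energy is invariant under $x\mapsto -x$.

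The strong local $H^1$ convergence, together with $s_k^\e\to s_k$ and the continuity of the Dirichlet integral in the radius, makes the Dirichlet part converge to $\frac12\int_{B^+_{Rs_k}}|\nabla U|^2$. Uniform convergence on the compact interval and $b_\e\to1$ make the boundary part converge to $\int_{-Rs_k}^{Rs_k}G(U)\,dx$. Hence the energy converges to $I(1,Rs_k)$ by the first equality in Proposition~\ref{prop:i_er}(iii). The second equality there gives $I(1,Rs_k)=\frac2\pi\log(Rs_k)+C_f+o(1)$, and this $o(1)$ is as $R\to\infty$, not as $\e\to0$.

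This last point is where I expect the only real subtlety. The limit as $\e\to0$ is exactly $I(1,Rs_k)$, and it equals $\frac2\pi\log(Rs_k)+C_f$ only up to an error $o_R(1)$. The statement should therefore be read with that error, which is all that is needed later when $R\to\infty$ in the lower bound \eqref{new-lower}. Verifying the sign-change (heteroclinic) property of the blow-up is immediate from Proposition~\ref{pro:just2}: a homoclinic limit would not contain a zero of $\tilde u_\e$ in the $M\e$-window.
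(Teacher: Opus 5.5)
Your proof is correct and follows the paper's argument, which simply combines the blow-up convergence of Proposition~\ref{pro:61} (with the limit forced to be the layer solution) with the energy asymptotics of Proposition~\ref{prop:i_er}. Your two caveats are also accurate. The boundary interval should be $[x_k^\e-R\e,x_k^\e+R\e]$, and the $\e\to0$ limit is exactly $I(1,Rs_k)$, which equals $\frac2\pi\log(Rs_k)+C_f$ only up to an $o(1)$ error as $R\to\infty$. This is harmless, because $R$ must be sent to infinity in the proof of Proposition~\ref{pro:finallb} anyway, for instance to absorb the $-C/R$ error of Proposition~\ref{prop:middle}.
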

\begin{proof}
This follows from the strong convergence in Proposition~\ref{pro:61} (note that the limit must be the layer solution) 
and the convergence of the energy of the
layer solution in Proposition~\ref{prop:i_er}.
\end{proof}
\begin{proposition}\label{prop:middle}
For $R\ge 2M$ and $R\e<\rho<\frac14$ we have
\begin{multline*}
\int_{B_\rho^+(x_k^\e)\setminus B^+_{R\e}(x_k^\e)}\frac12|\nabla \tilde u_\e|^2\, dxdy + \int_{(x_k^\e-\rho,x_k^\e+\rho)\setminus[x_k^\e-R\e,x_k^\e+R\e]} \frac1\e |\hat \psi'(x)| G(\tilde u_\e(x))\, d\ell 
\\ \ge \frac2\pi \log \frac{\rho}{R\e} - \frac C R
\end{multline*}
\end{proposition}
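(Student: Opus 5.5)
This follows directly from Lemma~\ref{lem:l2}, applied with $x_0=x_k^\e$, $S=R\e$, and outer radius $\rho$ (taking $R_0=\frac14$). The bulk of the work is checking that the lemma's hypotheses hold on the whole range $r\in[R\e,\rho]$.

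\emph{Step 1: $|\tilde u_\e|\ge t_*$ on the annulus.} By hypothesis, $\tilde S_\e\subset\bigcup_{j=1,2}[x_j^\e-2M\e,x_j^\e+2M\e]$. The two centers converge to the distinct limits $0$ and $1$, so for small $\e$ they are at distance larger than $\frac12$. Since $\rho<\frac14$ and $R\ge 2M$, the set $\{R\e\le |x-x_k^\e|\le\rho\}$ does not meet $\tilde S_\e$. Hence $|\tilde u_\e(x_k^\e+r,0)|\ge t_*$ for all $|r|\in[R\e,\rho]$.

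\emph{Step 2: the sign pattern.} The trace is continuous and satisfies $|\tilde u_\e|\ge t_*>0$ on each of the two intervals $[x_k^\e+R\e,x_k^\e+\rho]$ and $[x_k^\e-\rho,x_k^\e-R\e]$. So its sign is constant on each of them. It remains to show that the two signs are opposite.

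This follows from the blow-up in Proposition~\ref{pro:61}. Near $x_k^\e$, the rescaled function converges locally uniformly to a layer solution; it cannot be a constant because $\tilde u_\e(x_k^\e)=0$. The layer takes opposite signs at $\pm R s_k$ for large $R$, and in particular outside the region where $|U|<t_*$. Therefore $\tilde u_\e(x_k^\e+R\e)$ and $\tilde u_\e(x_k^\e-R\e)$ have opposite signs for small $\e$. Combined with Step 1, $\tilde u_\e(x_k^\e+r)\,\tilde u_\e(x_k^\e-r)<0$ for all $r\in[R\e,\rho]$.

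This is the step I expect to need the most care. One has to make sure the uniform convergence is used at scale $R\e$ with $R\ge 2M$ fixed. One also has to note that $M$ was chosen so that the layer already satisfies $|U|\ge t_*$ at those points. Alternatively, one can argue that the sign change across the interval is exactly what Proposition~\ref{pro:just2} and Corollary~\ref{cor:62} encode.

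\emph{Step 3: apply the lemma.} With these hypotheses, Lemma~\ref{lem:l2} (with $\psi$ replaced by $\hat\psi$, which changes nothing in its proof) gives
\[
\frac12\int_{x_k^\e+A^+_{R\e,\rho}}|\nabla\tilde u_\e|^2\,dxdy+\frac1\e\int_{x_k^\e+A^0_{R\e,\rho}}|\hat\psi'|\,G(\tilde u_\e)\,dx\ \ge\ \frac2\pi\log\frac{\rho}{R\e}-C\,\frac{\e}{R\e}.
\]
The last term equals $C/R$, and the domains of integration are exactly those in the statement, which proves the claim. The constant $C$ depends only on $R_0=\frac14$ and on $\min_{[-1/4,5/4]}|\hat\psi'|>0$. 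This minimum is positive because $\hat\psi$ is a smooth conformal map on a compact part of the boundary away from $p_\infty$.
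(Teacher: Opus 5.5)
Your proposal is correct and follows the paper's route. The paper's entire proof is the remark that this is a direct consequence of Lemma~\ref{lem:l2} with $x_0=x_k^\e$, $S=R\e$ and outer radius $\rho$. Your Steps 1--2 spell out the hypothesis check the paper leaves implicit: the annulus contains no points of $\tilde S_\e$, and the two sides have opposite signs, which follows from the blow-up to a nonconstant layer since $\tilde u_\e(x_k^\e)=0$.
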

\begin{proof}
This is a direct consequence of Lemma \ref{lem:l2}.
\end{proof}
\begin{proposition}\label{prop:w1p}
For $0<\rho<\frac14$ and $\tilde \Omega_{\rho}=\R^2_+\setminus (\overline{B_\rho(0)}\cup \overline{B_\rho(1)})$, 
the functions $\tilde u_\e$ satisfy for $\e<\e_0(\rho)$ 
\[
\frac12 \int_{\tilde \Omega_\rho} |\nabla \tilde u_\e|^2 \, dxdy \le \frac4\pi \log \frac 1\rho + C.
\]

In particular, $\tilde u_\e$ converge weakly in $H^1_{\mathrm{loc}}$ to $\tilde u_0$.
\end{proposition}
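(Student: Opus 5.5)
The plan is to subtract local lower bounds near the two jump points from the global upper bound. By conformal invariance of the Dirichlet integral, $E_\e(u_\e)$ equals the energy of $\tilde u_\e=u_\e\circ\hat\psi$ on $\R^2_+$, with boundary term $\frac1\e\int_\R|\hat\psi'(x)|G(\tilde u_\e)\,dx$. Since $u_\e$ minimizes over $\calC_a^\e$, Proposition~\ref{prop:ub} applied with the fixed pair $(p,q)$ gives
\[
E_\e(u_\e)\le E_\e(v_\e)\le \frac4\pi\log\frac1\e + C .
\]
The total energy is the energy in $\tilde\Omega_\rho$, plus the energy in the two half-discs $B^+_\rho(0)$ and $B^+_\rho(1)$, plus the boundary terms. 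Hence it suffices to show that the energy inside each half-disc, together with its boundary term, is at least $\frac2\pi\log\frac\rho\e - C$.

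The key step is the local lower bound. Since $x_k^\e\to x_k\in\{0,1\}$, for $\e<\e_0(\rho)$ we have $B_{\rho/2}(x_k^\e)\subset B_\rho(x_k)$. It is enough to bound the energy on $B^+_{\rho/2}(x_k^\e)$, because energy densities are nonnegative. Split this half-disc into $B^+_{R\e}(x_k^\e)$ and the half-annulus $B^+_{\rho/2}(x_k^\e)\setminus B^+_{R\e}(x_k^\e)$, with $R=2M$ fixed.
\begin{itemize}
\item On the inner half-disc, Proposition~\ref{prop:inner} gives energy at least $\frac2\pi\log(Rs_k)+C_f-o_\e(1)\ge -C$.
\item On the half-annulus, Proposition~\ref{prop:middle} gives at least $\frac2\pi\log\frac{\rho}{2R\e}-\frac CR$.
\end{itemize}
Adding, the energy near each $x_k$ is at least $\frac2\pi\log\frac\rho\e - C$, with $C$ independent of $\rho$ and $\e$. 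Subtracting twice this from the upper bound leaves
\[
\frac12\int_{\tilde\Omega_\rho}|\nabla\tilde u_\e|^2\,dxdy\le \frac4\pi\log\frac1\rho + C,
\]
because the $\log\frac1\e$ terms cancel exactly.

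There is one technical point to check: the overlap between $\tilde\Omega_\rho$ and the half-discs $B^+_{\rho/2}(x_k^\e)$. These half-discs lie inside $B_\rho(x_k)$, which is removed from $\tilde\Omega_\rho$, so the regions are disjoint. This is the only place where the restriction $\e<\e_0(\rho)$ enters.

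For the weak convergence, take any compact $K\subset\ol{\R^2_+}\setminus\{0,1\}$. It lies in some $\tilde\Omega_\rho$ intersected with a bounded set, where $\nabla\tilde u_\e$ is bounded in $L^2$ uniformly in $\e$. Together with $|\tilde u_\e|\le1$, this gives boundedness in $H^1$ of bounded subsets of $\tilde\Omega_\rho$. A diagonal argument over $\rho=1/j$ and exhausting bounded sets yields a subsequence converging weakly in $H^1_{\mathrm{loc}}(\ol{\R^2_+}\setminus\{0,1\})$ to some $\tilde u_0$.

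By Corollary~\ref{cor:pq}, the traces converge to $\chi^{0,1}$ in the half-plane coordinates, via compactness of the trace operator. Since each $\tilde u_\e$ is harmonic, so is the limit. Boundedness then identifies $\tilde u_0$ as the bounded harmonic extension $U_0$ from Theorem~\ref{thm:calcW1}, by uniqueness; so the whole subsequence converges.
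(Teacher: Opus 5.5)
Your proposal is correct and follows essentially the paper's proof. You take the global upper bound from Proposition~\ref{prop:ub}, add the local lower bounds of Propositions~\ref{prop:inner} and~\ref{prop:middle} on the half-discs $B^+_{\rho/2}(x_k^\e)\subset B_\rho(x_k)$, and subtract, so that the $\log\frac1\e$ terms cancel. Your identification of the weak limit, via compactness of the trace and uniqueness of the bounded harmonic extension, fills in the ``in particular'' claim that the paper states without proof.
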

\begin{proof}
For $\e$ sufficiently small, $B_{\rho/2}(x_k^\e)\subset B_\rho(x_k)$. 
The lower bounds of 
Propositions~\ref{prop:inner} and~\ref{prop:middle}
give for $k=1,2$ that
\[
\int_{B_{\rho/2}^+(x_k^\e)}\frac12|\nabla \tilde u_\e|^2\, dx dy+ \int_{(x_k^\e-\rho/2,x_k^\e+\rho/2)} \frac1\e |\hat \psi'(x)| G(\tilde u_\e(x))\, d\ell 
 \ge \frac2\pi \log \frac{\rho}{2\e} - C.
\]
Together with the upper bound on the energy, we 
find 
\[
\frac12 \int_{\tilde \Omega_\rho} |\nabla \tilde u_\e|^2 \, dxdy \le \frac4\pi \log \frac 1\e + C-
\frac4 \pi \log \frac\rho{2\e}+C,
\]
which leads to the claimed upper bound. 
\end{proof}
\begin{proposition}\label{pro:finallb}
We have
\[
E_\e(u_\e) \ge \frac4\pi \log\frac1\e + W_\Omega(p_0,q_0)+ 2C_f -o_\e(1)
\]
as $\e\to 0$.
\end{proposition}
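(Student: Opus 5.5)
Work in the half-plane via the conformal map $\hat\psi$ (so $p_0=\hat\psi(0)$, $q_0=\hat\psi(1)$), using that the Dirichlet integral is conformally invariant and that the boundary term becomes $\frac1\e\int_\R|\hat\psi'(x)|G(\tilde u_\e)\,dx$. Fix small $\rho>0$ and large $R\ge 2M$, and split $\R^2_+$ into three kinds of pieces: the inner half-balls $B^+_{R\e}(x_k^\e)$, the half-annuli $B^+_{\rho}(x_k^\e)\setminus B^+_{R\e}(x_k^\e)$ for $k=1,2$, and the outer region $\tilde\Omega_{2\rho}=\R^2_+\setminus(\overline{B_{2\rho}(0)}\cup\overline{B_{2\rho}(1)})$. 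For $\e$ small these regions are disjoint, since $x_1^\e\to0$ and $x_2^\e\to1$. Drop the (nonnegative) energy in the leftover thin regions and the boundary term outside the balls.

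On the inner pieces, Proposition~\ref{prop:inner} gives energy $\frac2\pi\log(Rs_k)+C_f-o_\e(1)$. On the half-annuli, Proposition~\ref{prop:middle} gives at least $\frac2\pi\log\frac{\rho}{R\e}-\frac CR$. Adding, the $R$-dependence cancels and, for each $k$,
\[
\text{energy near }x_k^\e\ \ge\ \frac2\pi\log\frac1\e+\frac2\pi\log\rho+\frac2\pi\log s_k+C_f-\frac CR-o_\e(1).
\]
For the outer region, use Proposition~\ref{prop:w1p}. The functions $\tilde u_\e$ converge weakly in $H^1(\tilde\Omega_{2\rho})$. The traces converge in $L^2_{\rm loc}$ to $\chi^{0,1}$ composed appropriately, by Corollary~\ref{cor:pq}, so the limit is the harmonic function $U_0$ with those boundary values, i.e.\ $u_0^{p_0,q_0}\circ\hat\psi$. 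Identifying the limit needs uniqueness of the bounded harmonic extension, and a little care near infinity, but the energy bound is uniform. Weak lower semicontinuity together with \eqref{eq:dirintO}, applied with $r_0=r_1=2\rho$, gives
\[
\liminf_{\e\to0}\frac12\int_{\tilde\Omega_{2\rho}}|\nabla\tilde u_\e|^2\ \ge\ \frac2\pi\log\frac1{4\rho^2}+O(\rho).
\]

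Summing the three contributions, the $\log\rho$ terms cancel and the bound becomes
\[
\frac4\pi\log\frac1\e+2C_f+\frac2\pi\log(s_1s_2)-\frac4\pi\log2-\frac CR-O(\rho)-o_\e(1).
\]
Since $s_1s_2=|\hat\psi'(0)||\hat\psi'(1)|=1/(|\varphi'(p_0)||\varphi'(q_0)|)$ with $|\varphi(p_0)-\varphi(q_0)|=1$, the term $\frac2\pi\log(s_1s_2)$ is exactly $W_\Omega(p_0,q_0)$.

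The main obstacle is bookkeeping of radii. The matching of the middle annuli, which are centered at $x_k^\e$ with radius $\rho$, to the outer region excised around $0,1$ must be done without losing constants. The $-\frac4\pi\log 2$ above is an artifact of choosing radius $2\rho$ for the outer region. It is fixed by taking the outer excision radius to be $\rho+|x_k^\e-x_k|$, which tends to $\rho$, and using the $O(r_0)+O(r_1)$ continuity in \eqref{eq:dirintO}, so that radii $\rho'\to\rho$ give the same expansion. Then let $\e\to0$, then $R\to\infty$, then $\rho\to0$. Since $R$ and $\rho$ are arbitrary, the errors $\frac CR+O(\rho)$ are absorbed into $o_\e(1)$ by a diagonal choice, which yields the claim.
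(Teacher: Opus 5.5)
Your proposal is correct and follows the paper's proof. It uses the same decomposition into inner half-balls, middle half-annuli and the outer region, the same inputs (Propositions~\ref{prop:inner}, \ref{prop:middle}, \ref{prop:w1p}, weak lower semicontinuity with \eqref{eq:dirintO}), and the same identification $\frac2\pi\log(s_1s_2)=W_\Omega(p_0,q_0)$. The only difference is how the radii are matched: the paper keeps $\tilde\Omega_\rho$ fixed and shrinks the balls around $x_k^\e$ to radius $\rho(1-\rho)$, so they lie inside $B_\rho(x_k)$ at a cost of $\frac4\pi\log(1-\rho)=O(\rho)$, whereas you enlarge the excised radius to $\rho+|x_k^\e-x_k|\to\rho$, which works equally well by domain monotonicity.
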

\begin{proof}
By conformal mapping, the energy can be calculated in the half-plane. Using Proposition \ref{prop:w1p} and lower semicontinuity of 
the Dirichlet energy, we obtain from \eqref{eq:dirintO} a contribution in $\tilde \Omega_\rho$ of 
\[
\frac4\pi \log \frac1\rho-o_\e(1).
\]
For $\e$ sufficiently small,  $B_{\rho(1-\rho)}(x_k^\e)\subset B_\rho(x_k)$.
Using Proposition~\ref{prop:middle} and Proposition~\ref{prop:inner} we find
a contribution on $B_{\rho(1-\rho)}(x_1^\e)\cup B_{\rho(1-\rho)}(x_2^\e)$ of
\[
2(\frac2\pi \log \frac{\rho(1-\rho)}{R\e}+C_f)+ \frac2\pi\log ( Rs_1) + \frac2\pi\log (Rs_2).
\]
hence putting everything together we see for every $\rho$ that
\begin{align*}
E_\e(u_\e) \ge \frac2\pi\left( \log\frac{1}{\rho}  +\log\frac{\rho}{R\e} + \log R + C_f \right) & + \frac2\pi \log (|\hat \psi'(0)| |\hat \psi'(1)|)\\ & +\log(1-\rho)-o_\e(1).
\end{align*}
Letting $\rho\to 0$ we obtain the result.
\end{proof}

\section{Domains where the  renormalized energy has nontrivial local minimizers}\label{sec:localmin}

The renormalized energy $W_\Omega(p,q)$ tends to $-\infty$ when $p\to q$. In this section we show that under certain conditions it has 
further (nontrivial) local minimizers. In fact, we construct smooth domains such that there are arbitrarily many isolated  local minimizers.

The intuition behind our construction is that computing the energy of a harmonic function with values in $\Ss^0$ and a jump in a convex corner gives a larger
contribution to the leading order of the energy than a jump in the middle of a smooth section of a domain with corners. By considering domains that are 
close to polygons, we can find smooth domains where the renormalized energy becomes large if one of the points is near a corner, and small elsewhere. We start by considering a rectangular domain $(0,L)\times(0,H)$
where we can use an explicit series expression to compute the renormalized energy. 
\begin{theorem}\label{thm:rectangle}
In the rectangle $\Omega=(0,L)\times(0,H)$ with $L\le H$, the renormalized energy $W_\Omega(p,q)$
has an isolated local minima at $(p,q)=((L/2,0),(L/2,H))$.
\end{theorem}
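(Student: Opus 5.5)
Set $p=(L/2+s,0)$ on the bottom side and $q=(L/2+t,H)$ on the top side, with $s,t$ small. Since $p$ and $q$ lie in the interiors of smooth sides, $W_\Omega$ is smooth near $(s,t)=(0,0)$. The plan is to show that $(0,0)$ is a critical point with positive definite Hessian; a nondegenerate local minimum is automatically isolated.

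\emph{Step 1: formula and criticality.} I will use the first characterization in Proposition~\ref{propo:altren},
\[
W_\Omega(p,q)=-\tfrac{2}{\pi}\log K(s,t),\qquad K(s,t)=\frac{\partial^2 G^D}{\partial\nu_p\partial\nu_q}(p,q).
\]
For the rectangle, $G^D$ has the sine-series expansion
\[
G^D=\sum_n \frac{2}{L}\,\frac{\sin(n\pi x/L)\sin(n\pi x'/L)\,\sinh(n\pi y_</L)\,\sinh(n\pi(H-y_>)/L)}{n\pi\sinh(n\pi H/L)} .
\]
Differentiating in the normal directions at $y'=0$ and $y=H$ should give, up to a positive constant,
\[
K(s,t)=\sum_{n\ge1}\frac{n\,\sin(n\pi(L/2+s)/L)\,\sin(n\pi(L/2+t)/L)}{\sinh(n\pi H/L)} .
\]
This series converges exponentially since $H/L\ge1$. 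Only odd $n$ contribute at the midpoint, because $\sin(n\pi/2+\cdot)$ becomes $\pm\cos$ for odd $n$. Writing $\sigma_n=(-1)^{(n-1)/2}$, this gives
\[
K=\sum_{n\ \mathrm{odd}}\frac{n\cos(n\pi s/L)\cos(n\pi t/L)}{\sinh(n\pi H/L)} .
\]
The reflection symmetries $x\mapsto L-x$ (which sends $(s,t)\mapsto(-s,-t)$) and the evenness of this expression make $(0,0)$ a critical point.

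\emph{Step 2: Hessian.} Since $W=-\tfrac2\pi\log K$ and $\nabla K(0,0)=0$, the Hessian of $W$ at the origin is $-\tfrac{2}{\pi K}\,\mathrm{Hess}\,K$. So I need $K>0$ and $\mathrm{Hess}\,K$ negative definite. With $a_n=n/\sinh(n\pi H/L)$ and $c=\pi/L$, the relevant quantities are
\[
K(0,0)=\sum_{n\ \mathrm{odd}}a_n,\qquad K_{ss}=K_{tt}=-c^2\sum_{n\ \mathrm{odd}}n^2a_n,\qquad K_{st}=0 .
\]
The mixed derivative vanishes because it is $\sum a_n n^2 c^2\sin(0)\sin(0)$. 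The signs alternate as $\sigma_n^2=1$, so in this cosine form every term has the same sign. Hence $K>0$ and the Hessian of $K$ is diagonal and negative definite, which would give a strict local minimum of $W$.

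\emph{Main obstacle.} The delicate step is Step~1: getting the double normal derivative and the signs exactly right. If the midpoint terms actually carry alternating factors $\sigma_n\sigma_n'$ with differing parities, positivity requires a dominance argument. That is, the $n=1$ term must dominate the tail, using
\[
\sinh(3\pi H/L)\gg 3\sinh(\pi H/L)\quad\text{when } H/L\ge1 .
\]
This is plausibly where the hypothesis $L\le H$ enters; for $H\ll L$ the minimum should fail, consistent with the limit of a strip. I would therefore first carefully verify the series, either via the conformal map of the rectangle (Schwarz–Christoffel/elliptic functions) or via the separation-of-variables Poisson kernel. I would then bound the tail $n\ge3$ by a geometric series in $e^{-2\pi H/L}\le e^{-2\pi}$. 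The resulting numerical margins, e.g. $9e^{-2\pi}\ll1$, are ample.
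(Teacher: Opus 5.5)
Your route is the paper's (sine-series Dirichlet Green's function, double normal derivative, Hessian at the midpoints), and your series for $K$ is right up to a positive constant. However, Step~2 contains an error, and it removes the only nontrivial part of the proof.

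In the variables $(s,t)$ the even-$n$ terms do not disappear. For even $n$, $\sin(n\pi/2+ncs)=(-1)^{n/2}\sin(ncs)$ with $c=\pi/L$, so
\[
K(s,t)=\sum_{n\ \mathrm{odd}}a_n\cos(ncs)\cos(nct)+\sum_{n\ \mathrm{even}}a_n\sin(ncs)\sin(nct).
\]
The even terms vanish at $(0,0)$ but not nearby, and they give
\[
K_{st}(0,0)=c^2\sum_{n\ \mathrm{even}}\frac{n^3}{\sinh(n\pi H/L)}>0 .
\]
Your expression is even in $s$ and in $t$ separately. The true $K$ is only invariant under the joint reflection $(s,t)\mapsto(-s,-t)$, which yields $\nabla K(0,0)=0$ but says nothing about $K_{st}$.

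Your values of $K(0,0)$, $K_{ss}$, $K_{tt}$ are correct. With $S_{\mathrm{odd}}=\sum_{n\ \mathrm{odd}}n^3/\sinh(n\pi H/L)$ and $S_{\mathrm{even}}$ defined analogously,
\[
\mathrm{Hess}\,K(0,0)=c^2\begin{pmatrix}-S_{\mathrm{odd}}&S_{\mathrm{even}}\\ S_{\mathrm{even}}&-S_{\mathrm{odd}}\end{pmatrix},
\]
which is negative definite if and only if $S_{\mathrm{odd}}>S_{\mathrm{even}}$. The critical direction is $(1,1)$, i.e.\ sliding both points the same way toward the corners, where the second derivative of $K$ is $2c^2(S_{\mathrm{even}}-S_{\mathrm{odd}})$. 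As written, your argument never uses $L\le H$ and would prove the statement for every rectangle; that is the symptom of the dropped terms.

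The inequality $S_{\mathrm{odd}}>S_{\mathrm{even}}$ is exactly where $L\le H$ enters. The paper proves it by pairing $n=2k-1$ with $n=2k$ and using that $t\mapsto t^3/\sinh(\alpha t)$ is decreasing for $\alpha t\ge t_0$, where $t_0\approx 2.985$ solves $t=3\tanh t$. Since $\alpha=\pi H/L\ge\pi>t_0$, each paired difference is positive.

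Your proposed dominance estimate $\sinh(3\pi H/L)\gg 3\sinh(\pi H/L)$ targets the wrong quantity. Positivity of $K$ is never at issue: every term of $K(0,0)$ in the cosine form is positive, and the sign worry about $\sigma_n\sigma_n'$ is moot since $\sigma_n^2=1$. What must be controlled is the cubic-weighted even sum. For example, the leading comparison $1/\sinh\alpha>8/\sinh(2\alpha)$ is equivalent to $\cosh\alpha>4$, which holds comfortably for $\alpha\ge\pi$. A geometric bound on the remaining even terms would then also close the argument.
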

\begin{proof}
We use the characterisation of the renomalized energy in terms of the Green's function 
from Proposition~\ref{propo:altren}. It is well known that the Dirichlet Green's function for $\Omega$ 
can be written for $(p,q)=((x,y),(\tilde x,\tilde y))$ as 
\[
G(x,y;\tilde x, \tilde y) = -\frac2\pi \sum_{n=1}^\infty  
\frac{\sin\frac{n\pi x}{L}\sin \frac{n\pi \tilde x}{L}}{n \sinh \frac{n\pi H}{L}} \cdot
\begin{cases}
\sinh \frac{n\pi(\tilde y-H)}{L} \sinh\frac{n\pi y}{L} & y<\tilde y \\
\sinh \frac{n\pi( y-H)}{L} \sinh\frac{n\pi \tilde y}{L} & y >\tilde y. \\
\end{cases}
\]
We compute $\frac{\partial^2G}{\partial \nu_p \partial \nu_q}$ for $p=(x,0)$ and $q=(\tilde x,H)$. We have that
$\frac{\partial^2G}{\partial \nu_p \partial \nu_q} = -\frac{\partial^2G}{\partial y \partial \tilde y}(x,0,\tilde x,H)=:\phi(x,\tilde x)$.
We find 
\begin{equation}\label{eq:phiforrect}
\phi(x,\tilde x)= \frac{2\pi}{L^2}\sum_{n=1}^\infty \frac{n}{\sinh \frac{n\pi H}{L}}
\sin\frac{n\pi x}{L}\sin \frac{n\pi \tilde x}{L}.
\end{equation}
We compute the derivatives
\[
\phi_x(x,\tilde x)=\frac{2\pi^2}{L^3}\sum_{n=1}^\infty \frac{n^2}{\sinh \frac{n\pi H}{L}}
\cos\frac{n\pi x}{L}\sin \frac{n\pi \tilde x}{L}
\]
(the $\tilde x$ derivative is analogous by symmetry),
\[
\phi_{xx}(x,\tilde x)=\phi_{\tilde x\tilde x}(x,\tilde x)=-\frac{2\pi^3}{L^4}\sum_{n=1}^\infty \frac{n^3}{\sinh \frac{n\pi H}{L}}
\sin\frac{n\pi x}{L}\sin \frac{n\pi \tilde x}{L},
\]
and 
\[\phi_{x\tilde x}(x,\tilde x) = \frac{2\pi^3}{L^4}\sum_{n=1}^\infty \frac{n^3}{\sinh \frac{n\pi H}{L}}
\cos\frac{n\pi x}{L}\cos \frac{n\pi \tilde x}{L}.\]
At $x=\tilde x = \frac L2$ we find $\phi_x(\frac L2,\frac L2)=\phi_{\tilde x}(\frac L2,\frac L2)=0$ 
while 
\[
\phi_{xx}(\frac L2,\frac L2)= \phi_{\tilde x\tilde x}(\frac L2,\frac L 2) 
= -\frac{2\pi^3}{L^4}\sum_{n=1}^\infty \frac{n^3}{\sinh \frac{n\pi H}{L}}
\sin^2\frac{n\pi}{2} <0
\]
and 
\[
\phi_{x\tilde x}(\frac L2,\frac L2) = \frac{2\pi^3}{L^4}\sum_{n=1}^\infty \frac{n^3}{\sinh \frac{n\pi H}{L}}
\cos^2\frac{n\pi}{2}.
\]
The point $(\frac L2,\frac L2)$ then corresponds to an isolated local maximum of $\phi$ if the matrix
$D^2\phi(\frac L2, \frac L2)$ is negative definite, which is true if and only if 
\[
\phi_{x\tilde x}(\frac L2,\frac L2) < -\phi_{xx}(\frac L2,\frac L2).
\]
Using that $\sin^2\frac{n\pi}2 =1$ exactly for odd $n$ and $\cos^2\frac{n\pi}2 =1$
exactly for even $n$, we see
\[
 -\phi_{xx}(\frac L2,\frac L2)-\phi_{x\tilde x}(\frac L2,\frac L2) 
 =
 \frac{2\pi^3}{L^4}\sum_{k=1}^{\infty} \left(\frac{(2k-1)^3}{\sinh\frac{\pi H (2k-1)}{L} }
 -  \frac{(2k)^3}{\sinh\frac{\pi H (2k)}{L} }\right)
\]
This quantity is certainly positive if all the terms in the sum are positive.
As the function $t\mapsto \frac{t^3}{\sinh \alpha t}$ is decreasing if
$\alpha t\ge t_0$, where $t_0= 2.9847\dots$ is the positive solution of $t=3\tanh t$, 
our quantity will be positive if $\frac{\pi H}{L}\ge t_0$, in particular for $H\ge L$. 
Thus in these cases we find that $(\frac L2,\frac L2)$ is an isolated local maximum of $\phi$.

An isolated local maximum of $\phi$ immediately furnishes an isolated local minimum of $W_\Omega$ 
thanks to Proposition~\ref{propo:altren}.
\end{proof}

We now construct domains where the renormalized energy
has many local minimizers. We use that in a polygon, the renormalized energy will tend to plus infinity 
whenever the two points are on non-adjacent edges and one of them tends to a corner. The same still holds
true for some nearby smooth domains.

\begin{theorem}\label{thm:lomi}
For any positive integer $k$, there is a smooth convex domain $\Omega$ such that the renormalized energy $W_\Omega$ for $\Omega$ has at least $k$
isolated  local minimizers
on $\partial\Omega\times\partial\Omega$.
\end{theorem}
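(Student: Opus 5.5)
The plan is to work first in a convex polygon $P$ with $n$ sides, with $n$ large depending on $k$ (for instance a regular $n$-gon), and show that $W_P$ has at least $k$ strict local minimizers lying in disjoint compact sets. A stability argument will then carry these minimizers over to a smooth convex domain $\Omega$ close to $P$. We use Definition~\ref{defi:renen} throughout:
\[
W(p,q)=\frac2\pi\log\frac{|\varphi(p)-\varphi(q)|^2}{|\varphi'(p)||\varphi'(q)|},
\]
together with its conformal invariance (Remark~\ref{rem:crossr}).

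\emph{Step 1: blow-up at corners.} Let $P$ be a convex polygon and let $\varphi:P\to\R^2_+$ be conformal. This map is given by the Schwarz--Christoffel formula. Near a vertex $v$ with interior angle $\gamma<\pi$ we have $|\varphi'(z)|\sim c|z-v|^{\pi/\gamma-1}\to 0$. Fix $p$ and $q$ on two non-adjacent edges, and let $p\to v$ while $q$ stays away from $v$. The numerator $|\varphi(p)-\varphi(q)|^2$ stays bounded away from $0$, while $|\varphi'(p)|\to0$. Hence $W_P(p,q)\to+\infty$.

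\emph{Step 2: existence of minimizers.} Pick $k$ disjoint pairs of non-adjacent closed edges $(e_j,e_j')$; for large $n$ this is possible, for example by taking opposite edges. Consider $K_j=e_j\times e_j'$. Then $W_P$ is continuous on the interior of $K_j$ and tends to $+\infty$ at its boundary by Step 1. So $W_P$ attains its minimum over $K_j$ at an interior point, and this point is a local minimizer of $W_P$ on $\partial P\times\partial P$.

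\emph{Step 3: isolation.} The local minimizers found in Step 2 need to be isolated. In the interior of an edge, $W_P$ is real-analytic in the two arclength parameters, since $\varphi$ extends analytically across open edges by reflection. A connected set of minima of an analytic function is therefore either an isolated point or an analytic curve. To rule out the curve case, I would choose the configuration with symmetry, for example opposite edges of a regular $2m$-gon. The reflections of the polygon then force a critical point at the midpoints, and there I would check nondegeneracy of the Hessian. If that computation is too hard, the fallback is to perturb the polygon slightly, so that the minimizers become nondegenerate (generic Morse property).

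\emph{Step 4: perturbation to a smooth domain.} Take smooth strictly convex domains $\Omega_\delta\to P$, obtained by rounding corners at scale $\delta$ and bending edges slightly. By Carath\'eodory kernel convergence together with local smooth convergence of the conformal maps away from the corners, $W_{\Omega_\delta}\to W_P$ in $C^2$ on compact subsets of the open product of edge interiors. The key point is uniformity near corners: we need $W_{\Omega_\delta}$ to be large when a point is in a $\eta$-neighborhood of a rounded corner. This follows because $|\varphi_\delta'|$ is small there, uniformly in $\delta$; one can argue via harmonic measure or comparison with the polygon. Given this, each sublevel region inside $K_j$ still contains a strict local minimizer, and nondegeneracy persists under $C^2$ perturbation. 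This yields $k$ isolated local minimizers.

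I expect the main obstacle to be Step 4, namely the uniform control of $|\varphi'_\delta|$ near rounded corners. The approach I would try first is to write $\log|\varphi_\delta'|$ as a harmonic function whose boundary data are determined by curvature (the conjugate of the tangent angle). With this representation, the concentrated turning of the tangent angle near the corner produces the required decay, uniformly in $\delta$.
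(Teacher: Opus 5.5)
Your Steps 1--2 are sound and use the same mechanism as the paper. At a convex corner $|\varphi'|\to 0$, so $W\to+\infty$ when one point runs into a vertex while the other stays on a non-adjacent edge, and this forces an interior minimizer in each box $e\times e'$. The proof is not complete, however. The two remaining steps are only announced, and they are where the work lies.

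In Step 4, the uniform-in-$\delta$ smallness of $|\varphi_\delta'|$ near a rounded corner is not proved; you only name harmonic measure, comparison, or a conjugate-function representation as possible tools. Nor is the $C^2$ convergence $W_{\Omega_\delta}\to W_P$ on compact subsets of the edge interiors proved, and that needs boundary-regularity estimates uniform over the family $\Omega_\delta$. Without both, nothing passes from $P$ to a smooth domain.

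In Step 3, isolation is not established. The Hessian at the symmetric critical point is never computed. That critical point need not be the minimizer produced in Step 2, since it could be a saddle. A ``generic Morse'' perturbation within the finite-dimensional family of polygons would need a transversality argument that is not given. Your Step 4 relies on nondegeneracy at the polygon level to get isolated minimizers of $W_{\Omega_\delta}$, so this gap carries over.

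The missing idea, which removes both difficulties, is to smooth the polygon conformally instead of geometrically. Let $\psi$ be a Schwarz--Christoffel map from $\R^2_+$ onto a convex $N$-gon; the paper takes prevertices $a_j=j$ and exponents $\alpha_j=2/N$. Set $\Omega=\psi(\{\mathrm{Im}\,w>b\})$. This domain has real-analytic boundary and is convex by Study's theorem. Moreover, $\psi^{-1}-ib$ is an explicit conformal map onto $\R^2_+$. Hence, in the coordinates $p=\psi(P+ib)$, $q=\psi(Q+ib)$, one has $W_\Omega=\frac2\pi\log F$ with $F(P,Q)=|P-Q|^2|\psi'(P+ib)||\psi'(Q+ib)|$.

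The blow-up at the rounded corners then becomes the explicit bound $|\psi'(A+ib)|\ge b^{-2/N}(N^2+1)^{1/N-1}$ at integers $A$. Compare this with $F(A+\tfrac12,B+\tfrac12)\le 16N^2$: for $b$ small there is a minimizer in every square $(A,A+1)\times(B,B+1)$ with $A+1<B$. Here $b$ plays the role of your $\delta$, but there is no limit to take and no uniform estimate to prove. Also, $F$ is an explicit real-analytic function of $(P,Q)$, and the paper uses that to get isolation.
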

\begin{proof}
The idea of the proof is to use a smooth approximation of a convex $N$-gon in the complex plane
and to show that the renormalized energy becomes large close to the vertices. To do so, we use
a fairly explicit representation of the derivative of the conformal map. 

We recall (see e.g. \cite{Nehari:1975a}) that the Schwarz-Christoffel formula for mapping of a half-plane conformally onto a convex polygon with exterior angles 
$\pi\alpha_j$ is given by 
\[
\psi(z) = \int_0^z \frac{dw}{(w-a_1)^{\alpha_1}\dots(w-a_N)^{\alpha_N}}.
\]
By a theorem of E. Study \cite{EStudy}, it follows that for any $b>0$, the image $\Omega = \psi(\{ x+iy : y>b\})$ of the shifted half-plane
$\{ x+iy : y>b\}$ is also convex. 
This~$\Omega$ clearly is a smooth domain. 
We compute the renormalized energy related to~$\Omega$. We will choose $b$ later
to ensure the existence of local minimizers.

Recall that for $p,q\in \dOm$ we have
$$ W_{\Omega}(p,q)=\frac{2}{\pi}\, \log \frac{|\varphi(p)-\varphi(q)|^2}{|\varphi'(p)||\varphi'(q)|}
\, .$$
Transforming this to $P,Q\in\R$ such that $\psi(P+ib)=p$, $\psi(Q+ib)=q$, 
we compute
\[
W_\Omega(p,q)=\tilde W(P,Q)= \frac2\pi \log \left(|P-Q|^2 |\psi'(P+ib)||\psi'(Q+ib)|\right).
\]
It suffices to show that $(P,Q)\mapsto \tilde W(P,Q)$ has isolated  local minimizers in $\R\times \R$. 
To simplify matters, we choose as the pre-vertices of the polygon the points  $a_k = k$ on the real line, $k=1,\dots, N$, and choose the angles $\alpha_k = \frac2N$ for $k=1,\dots, N$. Then for $x\in \R$, 
\begin{equation}\label{eq:psidxb}
|\psi'(x+ib)| = |(x-1)+ib|^{-\frac2N} \dots |(x-N)+ib|^{-\frac2N},
\end{equation}
For $N\ge 4$ and integers $A,B$ with $1\le A<A+1<B<B+1\le N$, we will show that there is a local minimizer of $\tilde W$ in the square
$(A,A+1)\times (B,B+1)$, by showing that $\tilde W$ does not take its minimum on the boundary. 

For $x\in[1,N]$ we have, estimating $|(x-k)+ib|\le |N+ib|$ for $k=1,\dots, N$ and assuming $0<b<1$,
\[
|x-k+ib|^{-2}\ge \frac1{N^2+b^2}\ge \frac1{N^2+1}. 
\]
If $x=A\in[1,N]\cap\NN$ is an integer, one of the factors in \eqref{eq:psidxb} has the value $|b|^{-\frac2N}$, hence we find
\[
|\psi'(A+ib)|\ge b^{-\frac2N} (N^2+b^2)^{\frac1N-1}\ge b^{-\frac2N} (N^2+1)^{\frac1N-1}.
\]
At the same time $|A+\frac12+ib-k|\ge \frac12$ for every $k$ so
\[
\left|\psi'(A+\frac12+ib)\right| \le \left((\frac12)^{-\frac 2 N}\right)^N =4. 
\]
This means that the expression
\[
F(P,Q)=|P-Q|^2 |\psi'(P+ib)||\psi'(Q+ib)|
\]
satisfies 
for $(P,Q)\in \{A,A+1\} \times [B,B+1] \cup [A,A+1]\times \{ B, B+1\}=\partial ((A,A+1)\times(B,B+1))$ the bound
\[
F(P,Q) \ge  b^{-\frac2N} (N^2+1)^{\frac1N-2}
\] 
while for $P=A+\frac12$, $Q=B+\frac12$ we have 
\[
F(P,Q)\le 16 N^2 .
\]
Choosing $b$ sufficiently small such that 
\[
16N^2 < b^{-\frac2N} (N^2+1)^{\frac1N-1}
\]
 (clearly we have to choose $b=b(N)\to 0$ as $N\to\infty$) , it follows that 
\[F(A+\frac12,B+\frac12)<\min_{(P,Q)\in\partial((A,A+1)\times(B,B+1))} F(P,Q).\] 
Hence $F$ must have a local minimum in $(A,A+1)\times (B,B+1)$. 
Clearly any local minimimum of $F$ is a   local minimum of  $\tilde W$ and yields a corresponding local minimum for $W$.
From the identity theorem applied to the analytic function $F$, it follows that this minimum must be isolated.
Counting the possible choices of $A$ and $B$ that satisfy $1\le A$, $A+1<B$ and $B\le N-1$, 
we find $(N-3)+(N-4)+\dots +1 = \frac12(N-2)(N-3)$  local minimizers $(P,Q)$ of $F$ in $\R\times \R$
with $P<Q$. Each of these provides an isolated local minimizer of 
 $W_\Omega$ in 
$\partial\Omega\times\partial\Omega$. Choosing $N$ sufficiently large we then find as many 
local minimizers as desired. 
\end{proof}

\begin{remark}
The previous argument can be adapted to find smooth convex domains close to a \emph{regular} polygon such that 
the renormalized energy has many local minimizers. A Schwarz-Christoffel type formula for a 
conformal map from the unit disk to a regular $N$-gon is given by 
\[
\psi(z)=\int_0^z \frac{dw}{(1-w^N)^{2/N}},
\]
and we see that $|\psi'(z)|\to \infty$ as $z$ tends to an $N$th root of unity.
Taking $\psi(B_r(0))$ for suitably large $N$ and $r$ close to $1$, we can then construct 
smooth convex domains where the renormalized energy has many local minimizers. 
By choosing $N$ large enough, we can make this domain very close to 
a circle in the sense that its inradius is as close to its outradius as we wish. 
\end{remark}

\appendix
\section{Computing the value of $C_f$ in a special case}
For the case $f(u)=\frac1{\pi a} \sin(\pi u)$ 
 we can compute the constant $C_f$ 
explicitly, following a related computation in \cite{IK_jac}.
\begin{theorem}\label{thm:Cf}
For the nonlinearity $f(u)=f^a(u)=\frac1{\pi a} \sin(\pi u)$ (corresponding to the potential 
$G(u)=\frac{2}{a\pi^2}\cos^2(\frac{\pi u}{2})$), we have 
\begin{equation}\label{eq:CfT}
C_{f^a}=\frac2\pi \left( 1-\log a -a \log 2\right) 
\end{equation}
\end{theorem}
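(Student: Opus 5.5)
The plan is to use that for this nonlinearity the layer solution is explicit, so $C_f$ can be read off from part (iii) of Proposition~\ref{prop:i_er} by a direct computation. First I verify that
\[
U^a(x,y)=\frac2\pi\arctan\frac{x}{y+a}=\frac2\pi\phi^a(x,y)
\]
is the layer solution for $f^a$. It is harmonic, and $U^a_x>0$. It tends to $\pm1$ as $x\to\pm\infty$, and $U^a(0,0)=0$. On $\partial\R^2_+$,
\[
-\partial_y U^a(x,0)=\frac{2}{\pi}\frac{x}{x^2+a^2}=\frac{1}{\pi a}\sin\Big(2\arctan\frac xa\Big)=f^a(U^a(x,0)),
\]
using $\sin 2t=2\tan t/(1+\tan^2t)$. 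By the uniqueness in Theorem~\ref{thm:layerprops}(iii), $U^a$ is the layer, and Proposition~\ref{prop:i_er}(iii) gives
\[
C_{f^a}=\lim_{R\to\infty}\Big(\tfrac12\int_{B_R^+}|\nabla U^a|^2\,dxdy+\int_{-R}^R G(U^a)\,dx-\tfrac2\pi\log R\Big).
\]

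The potential term is elementary. Since $\cos^2(\arctan(x/a))=a^2/(x^2+a^2)$,
\[
G(U^a(x,0))=\frac{2a}{\pi^2(x^2+a^2)},
\]
so $\int_{-R}^RG(U^a)\,dx=\frac2\pi-O(1/R)$. This produces the term $\frac2\pi\cdot1$ in \eqref{eq:CfT}.

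For the Dirichlet term, $|\nabla U^a|^2=\frac{4}{\pi^2}\,\frac{1}{x^2+(y+a)^2}$, so
\[
\tfrac12\int_{B_R^+}|\nabla U^a|^2\,dxdy=\frac{2}{\pi^2}\,J(R),\qquad J(R)=\int_{B_R^+}\frac{dx\,dy}{|z+ia|^2}.
\]
It remains to show $J(R)=\pi\log R-\pi\log a-\pi a\log2+o(1)$ as $R\to\infty$.

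I expect the asymptotics of $J(R)$ to be the main obstacle, so I would split it into two pieces. The first piece is the integral over the half-plane strip region $\{y>0,\ |z+ia|<R\}$, computed in polar coordinates centred at $-ia$. In the angle $\vartheta$ measured from the horizontal, the radial variable runs from $a/\sin\vartheta$ to $R$, which gives
\[
\int_0^\pi\log\frac{R\sin\vartheta}{a}\,d\vartheta=\pi\log\frac{R}{2a}.
\]
The second piece is the correction between $B_R^+(0)$ and this shifted region. It lies in a thin crescent of width $O(a)$ near $|z|=R$, where the integrand is $\approx R^{-2}$. I must evaluate this correction to order $O(1)$ rather than merely bound it, and the correction must then be recombined with the $-\pi\log 2$ coming from $\int_0^\pi\log\sin\vartheta\,d\vartheta=-\pi\log2$. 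That recombination is delicate, and it is exactly where the claimed term $-a\log2$ (rather than $-\log 2$) must emerge.

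If the polar computation becomes messy, I would instead use the integration by parts $\frac12\int|\nabla U|^2=\frac12\int_{\partial B_R^+}U\,\partial_\nu U$. This reduces everything to explicit one-dimensional integrals: $\int_{-R}^R U^a f^a(U^a)\,dx$ on the flat part, plus the arc term, which by Theorem~\ref{thm:layerprops}(iv)--(v) can be computed with $U^a$ replaced by $1-\frac2\pi\theta^0$ up to $o(1)$. The flat-part integral is handled by the substitution $x=a\tan t$, producing integrals of $t\tan t$ that give $\log 2$ contributions. Summing the pieces and subtracting $\frac2\pi\log R$ should yield $C_{f^a}=\frac2\pi(1-\log a-a\log2)$.
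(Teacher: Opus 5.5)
\textbf{There is a genuine gap: the step you flag as delicate cannot be carried out, because the crescent contributes nothing in the limit.} On $B_R^+\setminus\{y>0,\ |z+ia|<R\}$ you have $R\le |z+ia|<R+a$. So the integrand is at most $R^{-2}$ on a set of area at most $\pi(2aR+a^2)$, and the correction is $O(a/R)$.

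Your own polar computation therefore already gives $J(R)=\pi\log\frac{R}{2a}+o(1)$, that is, $\frac12\int_{B_R^+}|\nabla U^a|^2=\frac2\pi\log R-\frac2\pi\log a-\frac2\pi\log 2+o(1)$. Your integration-by-parts route agrees. With $x=a\tan t$ the flat term is $\frac{4}{\pi^2}\int_0^{\arctan(R/a)}t\tan t\,dt=\frac2\pi\log\frac Ra-\frac2\pi\log 2+o(1)$. Since $|\partial_r U^a|\le \frac{2a}{\pi R^2}$ on the arc, the arc term is $O(a/R)$. Adding the potential term $\frac2\pi$, a correct execution of either of your routes yields $C_{f^a}=\frac2\pi(1-\log a-\log 2)$, not the displayed formula. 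No recombination can turn $-\log 2$ into $-a\log 2$.

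This is not a defect of your method: the statement as written is false for $a\neq 1$. Since $U^a(x,y)=U^1(x/a,y/a)$ and $G^a=G^1/a$, the scale invariance of the planar Dirichlet integral gives
$\frac12\int_{B_R^+}|\nabla U^a|^2+\int_{-R}^R G^a(U^a)\,dx=\frac12\int_{B_{R/a}^+}|\nabla U^1|^2+\int_{-R/a}^{R/a} G^1(U^1)\,dx$.
Hence $C_{f^a}=C_{f^1}-\frac2\pi\log a$ exactly, and an $a$-dependence of the form $-\frac2\pi a\log 2$ is impossible.

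The paper's proof follows your first route: shift by $ia$, use polar coordinates, substitute $s=a/\sin t$, and use $\int_0^{\pi/2}t\cot t\,dt=\frac\pi2\log 2$. However, it writes $ds/s=-a\cot t\,dt$ where the correct Jacobian is $ds/s=-\cot t\,dt$. That spurious factor $a$ is the sole source of $-a\log 2$. Your boundary check via $\sin 2t$ is right; the paper's ``$\sin\arctan t$'' should read $\sin(2\arctan t)$.

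So the formula holds only for $a=1$, and the correct general value is $C_{f^a}=\frac2\pi\log\frac{e}{2a}$. You should trust the computation you already did rather than look for a term that matches the stated target.
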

\begin{proof}
It is a straightforward computation to see that the increasing layer solution with $U^a(0,0)=0$ is given by 
\[U^a(x,y)=\frac2\pi\arctan{\frac{x}{y+a}}.\]
Indeed $\Delta U^a=0$ and $-U^a_y(x,0)= \frac2\pi \frac{x}{a^2+x^2}$,
while the identity $\sin \arctan t = \frac{2t}{1+t^2}$
yields
 $f^a(U^a)(x,0) =\frac1{\pi a} \frac{2ax}{a^2+x^2}$.
From Proposition~\ref{prop:i_er}, we can then compute 
\[
C_{f^a} = \lim_{R\to\infty} \left(\frac12 \int_{B_R^+} |\nabla U^a|^2 \,dxdy + \int_{-R}^R  
\frac{2}{a\pi^2}\cos^2(\frac{\pi U^a}{2})dx-\frac2\pi \log R\right).
\]
We find by direct computation
\[
|\nabla U^a(x,y)|^2 = \frac4{\pi^2} \frac1{x^2+(y+a)^2}.
\]
Shifting the integration domain, we see that 
\[
\int_{B_R^+} |\nabla U^a(x,y)|^2dxdy = \frac4{\pi^2} \int_{\{y>a, |x^2+(y-a)^2|<R^2\} } \frac1{x^2+y^2}dxdy.
\]
Now 
\[
B_R^+\cap\{ y>a\}\subset \{y>a, |x^2+(y-a)^2|<R^2\} \subset B_{R+a}^+\cap \{ y>a\}
\]
Using polar coordinates $x=s\cos \theta$, $y=s \sin \theta$,  
\[
\int_{B_R^+ \cap \{ y>a\}} \frac1{x^2+y^2} dxdy = \int_a^R  \int_{\arcsin\frac{a}s}^{\pi-\arcsin \frac{a}s} 
\frac1s d\theta ds = \int_a^R \left( \frac{\pi - 2\arcsin \frac{a}s}{s} \right)ds.
\]
Changing variables $s=\frac{a}{\sin t}$, we have $ds/s=-a\cot t dt$. It yields
\[
\int_{B_R^+ \cap \{ y>a\}} \frac1{x^2+y^2} dxdy = \pi \log\frac{R}{a} - 2 a\int_{\arcsin \frac{a}{R}}
^{\frac\pi2} t\cot t dt
\]
As $R\to\infty$, we have $\log\frac{R+1}{a}-\log \frac{R}{a} = \log(1+\frac1R)=O(\frac1R)$ 
and (since we have $x\cot x \to 1$ as $x\to 0$)
\[
\int_{\arcsin \frac{a}{R}}
^{\frac\pi2} t\cot t dt = \int_{0}
^{\frac\pi2} t\cot t dt - \int_0^{\arcsin \frac{a}{R}} t\cot t dt  = \int_{0}
^{\frac\pi2} t\cot t dt -O(\frac1R).
\]
Now integrating by parts,
\[
\int_{0}^{\frac\pi2} t\cot t dt  = -\int_0^{\frac\pi2}\log \sin t dt=\frac\pi 2 \log 2,
\]
where the final equality can be shown as follows: By symmetry,
\[
I=\int_0^{\frac\pi 2} \log \sin x dx =\int_{\frac\pi2}^{\pi} \log \sin x dx =
\int_0^{\frac\pi2} \log \cos x dx. 
\]
Hence 
\begin{align*}
2I &= \int_0^{\frac\pi2} \log(\sin x \cos x) dx = \int_0^{\frac\pi2} \log(\frac12\sin 2x)dx 
= \frac12\int_0^\pi \log(\frac12\sin y) dy \\ &= \frac\pi 2 \log\frac12+ \frac12 
\int_0^{\pi} \log\sin y dy \\ &= \frac\pi 2 \log\frac12+I.
\end{align*}

Hence
\begin{align*}
\frac12\int_{B_R^+} |\nabla U^a(x,y)|^2dxdy  & = \frac2{\pi^2} (\pi \log \frac R a  - \pi a \log 2 )+O(\frac1R)\\
&=\frac2\pi \log R -\frac2\pi \log a - \frac{2a}{\pi} \log 2 + O(\frac1R).
\end{align*}
We now compute
\[
\int_{-R}^R G(U^a)dx =\frac{2}{a\pi^2}\int_{-R}^R\cos^2(\frac{\pi U^a}{2})dx
\]
As $\cos^2\arctan t = \frac1{1+t^2}$,  
\[
\cos^2(\frac{\pi U^a}{2}) = \frac{1}{1+(\frac{x}{a})^2} = \frac{a^2}{a^2+x^2}.
\]
Hence 
\[
\int_{-R}^R G(U^a)dx  = \frac{2a}{\pi^2}\int_{-\infty}^\infty \frac{1}{a^2+x^2}dx   -O(\frac1R) 
=\frac2\pi-O(\frac1R). 
\]
Combining everything, we conclude that
\[
C_{f^a}= \frac2\pi \left( 1-\log a -a \log 2\right) = \frac2\pi \log \frac{e}{a 2^a}.
\]
\end{proof}

\bibliographystyle{siam}
\small
\bibliography{cck2}

\end{document}